\documentclass[10pt,a4paper]{amsart}
\usepackage{amsmath, amssymb, amsfonts, amsthm, float, stmaryrd, epsfig, adjustbox}
\usepackage[abbrev,alphabetic]{amsrefs}
\usepackage[
  unicode=true,
  bookmarksdepth=3,
  bookmarksopen=true,
  pdfborder={0 0 0 [0 0]},
  colorlinks,
  breaklinks
]{hyperref}
\hypersetup{
  linkcolor=black,
  citecolor=black,
  urlcolor=blue
}
\usepackage[latin1,utf8]{inputenc}
\usepackage{textalpha}
\usepackage{color}
\usepackage[ps,all,arc,rotate]{xy}
\usepackage{bbm} 
\usepackage{todonotes}
\usepackage{amsrefs}
\usepackage{mathtools}
\usepackage{tikz-cd}
\usepackage{pdfsync}  
\usepackage{fancyhdr}
\usepackage{comment}

\usepackage{ifthen}
\newcounter{thmcount}
\newcommand*{\numberedtheorem}[3]{\theoremstyle{plain}\newtheorem*{makethm\thethmcount}{#1}
\ifthenelse{\equal{#2}{}}{\begin{makethm\thethmcount}#3\end{makethm\thethmcount}\stepcounter{thmcount}}
{\begin{makethm\thethmcount}[#2]#3\end{makethm\thethmcount}\stepcounter{thmcount}}}

\newtheorem{theorem}{Theorem}[section]
\newtheorem{corollary}[theorem]{Corollary}
\newtheorem{lemma}[theorem]{Lemma}

\newtheorem{proposition}[theorem]{Proposition}
\theoremstyle{definition}
\newtheorem{definition}[theorem]{Definition}
\newtheorem{eg}[theorem]{Example}
\newtheorem{convention}[theorem]{Convention}
\newtheorem{remark}[theorem]{Remark}
\theoremstyle{theorem}
\newtheorem{thmx}{Theorem}

\newcommand{\R}{\mathbb{R}}
\newcommand{\C}{\mathbb{C}}
\newcommand{\Z}{\mathbb{Z}}
\newcommand{\N}{\mathbb{N}}

\newcommand{\HH}{\mathbb{H}}
\newcommand{\M}{\textbf{M}}
\newcommand{\F}{\mathcal{F}}
\newcommand{\Homology}{{H}}

\newcommand{\inv}[1]{#1^{-1}}

\newcommand{\supp}{\mathrm{supp}}
\newcommand{\Hom}{\mathrm{Hom}}
\newcommand{\id}{\mathrm{id}}
\newcommand{\im}{\mathrm{im}}
\newcommand{\qker}{\mathcal{Q}ker}
\newcommand{\fd}{\mathrm{fd}}
\newcommand{\coarse}{\mathrm{coarse}}
\newcommand{\cd}{\mathrm{cd}}

\newcommand{\ccd}{\mathrm{ccd}}

\newcommand{\diam}{\mathrm{diam}}
\newcommand{\FP}{\mathrm{FP}}

\newcommand{\PD}{\mathrm{PD}}
\newcommand{\Hd}{\mathrm{Hd}}

\makeatletter
\newcommand{\myitem}[2]{% #1 = texto visible del label, #2 = clave de \label
  \item[#1]%
  \phantomsection% ancla para hyperref
  \protected@edef\@currentlabel{#1}% forzar lo que \ref devolverá
  \label{#2}%
}
\makeatother
\def\MR#1{}
\title{Quasimorphisms and Poincar\'e duality in dimension $3$}
\author[Paula Heim]{Paula Heim}
\address{Max Planck Institute for Mathematics in the Sciences \& Mathematical Institute, University of Oxford}
\email{paula.heim@maths.ox.ac.uk}
\author[William Thomas]{William Thomas}
\address{Mathematical Institute, University of Oxford}

\email{william.thomas@maths.ox.ac.uk}

\begin{document}

\begin{abstract}

We study $\PD^3$ groups which admit an unbounded quasimorphism to $\R$ with coarsely connected quasikernel. We show that such a group $G$ must either arise as the fundamental group of a torus or Klein-bottle bundle over~$S^1$, or be quasiisometric to a Riemannian manifold~$(\R^3,g)$ of bounded geometry, with the quasikernel being coarsely equivalent to~$\HH^2$. If~$G$ is moreover hyperbolic, it admits a faithful action on $S^1$ by quasisymmetric homeomorphisms. Our approach features a coarse generalisation of Shapiro's lemma, and a new definition of homological isoperimetric inequalities for metric spaces; these tools make use of Margolis's framework for coarse homological algebra \cite{MR123}.

\end{abstract}

\maketitle

\section{Introduction}

A satisfying example of the interplay between group theory and the topology of $3$-manifolds is the main result of Stallings's seminal paper \cite{MR158375}, which offers the following group theoretic characterisation of $3$-manifolds fibring over $S^1$.

\begin{theorem}[Stallings's Fibring Theorem]

Let $M$ be a compact irreducible $3$-manifold and $\psi\colon \pi_1(M)\to\Z$ be a nontrivial homomorphism. If $\ker(\psi)\ne\Z/2$ is finitely generated, then $\psi$ is induced by a fibring of $M$ over $S^1$ and $\ker(\psi)\cong \pi_1(\Sigma)$ is the fundamental group of a compact surface. 
    
\end{theorem}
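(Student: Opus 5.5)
The plan is to follow Stallings's original argument, which realises $\psi$ geometrically and then uses finite generation of $\ker(\psi)$ to make the realising surface a fibre. Let $K=\ker(\psi)$.

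\textbf{Step 1: realise $\psi$ by a surface.} Choose a map $f\colon M\to S^1$ inducing $\psi$; this exists because $S^1$ is a $K(\Z,1)$. Make $f$ transverse to a point $p\in S^1$. The preimage $F=f^{-1}(p)$ is a properly embedded two-sided surface that is Poincar\'e dual to $\psi$. Compress $F$ along compressing discs, which exist by Dehn's lemma and the loop theorem. Use irreducibility of $M$ to discard sphere components, since each bounds a ball, and discard parallel copies. The result is that $F$ can be taken nonempty, incompressible and two-sided, with $\psi$ still the intersection number with $F$. Next replace $F$ by a single component: choose a minimal collection of components that is still dual to $\psi$. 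Since $\psi$ is surjective after passing to the image $n\Z$ and rescaling, one connected incompressible component $F$ carries $\psi$, and $M\setminus F$ is connected.

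\textbf{Step 2: cut open and use finite generation.} Let $N=M\setminus\setminus F$ be $M$ cut along $F$. It has two copies $F_\pm$ of $F$ in its boundary. Then $\pi_1(M)$ is the HNN extension of $\pi_1(N)$ along $\pi_1(F)$, with the inclusions injective by incompressibility. Consider the infinite cyclic cover $\tilde M$ associated to $K$. It is a $\Z$-indexed chain of copies of $N$ glued along copies of $F$. Thus $K$ is an infinite iterated amalgam $\cdots *_{\pi_1 F}\pi_1 N *_{\pi_1 F}\pi_1N*\cdots$. If $K$ is finitely generated, its generators lie in finitely many consecutive pieces. Standard normal-form arguments for amalgams then show that the inclusions $\pi_1(F_\pm)\to\pi_1(N)$ are surjective. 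Otherwise the ascending union would be strictly increasing, contradicting finite generation.

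\textbf{Step 3: recognise a product.} We now have a compact irreducible $N$ with incompressible boundary piece $F_+$ such that $\pi_1(F_+)\to\pi_1(N)$ is an isomorphism. We want to conclude that $N\cong F\times[0,1]$. This is the main obstacle, a Stallings/Brown-type $h$-cobordism statement. I would try the following. First treat $F$ not a disc or sphere, so that $F$ is aspherical. Then $N$ is aspherical by irreducibility and the sphere theorem. Hence the inclusion $F_+\hookrightarrow N$ is a homotopy equivalence. Next, use Waldhausen's theorem that homotopy equivalences of Haken manifolds preserving the boundary pattern are homotopic to homeomorphisms, or argue directly by induction on a hierarchy, to produce a product structure. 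The $\Z/2$ exclusion rules out $M=\mathbb{RP}^3\#\mathbb{RP}^3$-type degenerate cases where $F$ is one-sided or a projective plane. The disc case gives $M$ a solid torus fibred by discs.

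\textbf{Step 4: conclude.} Gluing $F\times[0,1]$ back along $F$ exhibits $M$ as a mapping torus, so $f$ can be isotoped to a fibration. This gives $K\cong\pi_1(F)$, with $F$ a compact surface.
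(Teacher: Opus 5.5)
The paper does not prove this statement. It quotes it from Stallings as motivation, so the benchmark is Stallings's own argument, and your outline follows its architecture: an incompressible surface dual to $\psi$, $\ker(\psi)$ as an infinite amalgam in the infinite cyclic cover, finite generation forcing $\pi_1(F_\pm)\to\pi_1(N)$ to be isomorphisms, then a product theorem. Step~2 is correct as written.

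The genuine gap is at the end of Step~1, where you pass to a single component. If $f^{-1}(p)=F_1\sqcup\dots\sqcup F_k$, then $\psi$ is the sum of the classes dual to the $F_j$. A subcollection stays dual to $\psi$ only if the omitted classes sum to zero, and surjectivity of $\psi$ says nothing about any individual component. For instance, glue in pairs the four boundary tori of (the compact core of) a hyperbolic manifold with four cusps. This gives disjoint nonseparating incompressible tori $F_1,F_2$ with independent classes and connected complement. The class $\psi$ dual to $F_1\sqcup F_2$ is onto $\Z$, yet neither $F_j$ is dual to $\psi$, and tubing them together only yields a compressible surface. Likewise, deleting one of two parallel copies changes the dual class; only oppositely co-oriented pairs can be removed, and then both at once. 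So Step~1 does not produce the connected, nonseparating, incompressible $F$ dual to $\psi$ on which Steps~2--4 rest. Nothing in it uses finite generation of $\ker(\psi)$, which is exactly what is needed here.

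The fix is to use finite generation already at this stage.
\begin{itemize}
\item Keep the possibly disconnected incompressible surface $S=f^{-1}(p)$ and its dual graph $\Gamma$, whose vertices are the complementary regions and whose edges are the components of $S$. Then $f\simeq h\circ q$ with $q\colon M\to\Gamma$ and $h\colon\Gamma\to S^1$.
\item Each region misses $p$ and each component of $S$ maps to $p$, so all of them lift homeomorphically to the infinite cyclic cover $\tilde M$. Hence $\tilde M$ is a graph of spaces over the connected $\Z$-cover $\tilde\Gamma$ of $\Gamma$, with connected vertex and edge spaces, and $K=\pi_1(\tilde M)$ surjects onto the free group $\pi_1(\tilde\Gamma)$.
\item If $K$ is finitely generated, $\tilde\Gamma$ has finite rank. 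Large powers of the deck transformation push any embedded cycle off itself, so $\tilde\Gamma$ must be a tree. Thus $h_*$ is injective on $\pi_1(\Gamma)$, and $\Gamma$ is a circle with trees attached.
\item Choose an edge $e$ of the circle whose co-orientation agrees with the direction in which $h$ has degree $+1$; one exists because the signed edge count is $1$. Then $\delta_e=h^*(\text{generator})$ in $H^1(\Gamma)\cong\Z$, so the single component $S_e$ is dual to $\psi$ and nonseparating, and Step~2 applies to $F=S_e$.
\end{itemize}

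Two smaller points.
\begin{itemize}
\item The Waldhausen route in Step~3 needs more than you state. You must first identify $\partial N$ as $F_+\cup F_-$ plus annuli joining $\partial F_+$ to $\partial F_-$, for example via duality from $H_*(N,F_\pm)=0$. You must then build a homotopy equivalence $F\times[0,1]\to N$ that restricts to a homeomorphism on the boundary. When $\partial F\neq\emptyset$, $N$ has compressible boundary, so only a boundary-pattern version can apply. The classical product theorem for $I$-bundles, proved by induction with the loop theorem, is the more economical route.
\item $f^{-1}(p)$ is automatically two-sided, and $\mathbb{RP}^3\#\mathbb{RP}^3$ is irrelevant: it is reducible, and $\Z/2*\Z/2$ does not surject onto $\Z$. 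The hypothesis $\ker(\psi)\neq\Z/2$ excludes $F=\mathbb{RP}^2$, which by Step~2 happens exactly when $\ker(\psi)\cong\Z/2$, a case the classical argument does not cover.
\end{itemize}
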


One can ask what happens in the setting of Stallings's theorem when homomorphisms $\psi\colon \pi_1(M)\to\Z$ are replaced with quasimorphisms $\phi\colon \pi_1(M)\to \R$. Roughly speaking, these are functions that are at bounded distance from being a group homomorphism, and the correct analogue of finite generation of $\ker(\psi)$ is \textit{coarse connectedness} of the quasikernel $\qker(\phi)$. The motivation for considering such a generalisation is twofold. 

\begin{enumerate}
   
    \item By the work of Epstein--Fujiwara \cite{MR1452851},  quasimorphisms are in abundance in the world of hyperbolic groups. It is thus not unreasonable to expect that many such groups (at least virtually) admit those with coarsely connected quasikernel.
    
    \item  Quasimorphisms with coarsely connected quasikernel arise naturally on the fundamental groups of $3$-manifolds that \textit{slither over $S^1$}, a structure introduced by Thurston as a generalisation of fibring \cite{MR473827927}. In that paper, examples were given of closed hyperbolic $3$-manifolds that cannot fibre over~$S^1$, but do still admit slitherings.
\end{enumerate}

In view of this, we prove the following coarse analogue of Stallings's Fibring Theorem in the setting of 3-dimensional Poincaré-duality groups.

\begin{thmx}\label{thm:manifold}
    Let $G$ be a $\PD^3$ group and $\phi\colon G\to\R$ be a nontrivial homogeneous quasimorphism. If $\qker(\phi)$ is coarsely connected, then either $G$ is the fundamental group of a torus or Klein-bottle bundle over $S^1$, or $\qker(\phi)$ is coarsely equivalent to $\HH^2$. In the latter case, $G$ is quasiisometric to a Riemannian manifold $(\R^3,g)$ of bounded geometry, and is hence finitely presented.
\end{thmx}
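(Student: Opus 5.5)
The plan is to treat $Q=\qker(\phi)$ as a coarse analogue of the kernel in Stallings's theorem and to compute its coarse (co)homological dimension via a coarse version of Shapiro's lemma.

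\textbf{Step 1: coarse structure.} Since $\phi$ is homogeneous and unbounded, $Q$ is coarsely connected by hypothesis. The map $G\to Q\times\R$ given by $g\mapsto(\pi(g),\phi(g))$ should be a coarse equivalence, where $\pi$ is a coarse projection to $Q$. So $G$ is coarsely a "product-like" fibration of $Q$ over $\R$.

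\textbf{Step 2: coarse Shapiro.} Using Margolis's coarse homological algebra, we compare the coarse cohomology of $G$ with compact supports to that of $Q$. A coarse Shapiro lemma should identify $H^*_c(G)$ with $H^{*-1}_c(Q)$. Since $G$ is $\PD^3$, this should show that $Q$ is a coarse $\PD^2$ space: it has coarse cohomological dimension $2$, and its coarse cohomology with compact supports is concentrated in degree $2$ and equal to $\Z$. It is also uniformly acyclic up to degree $1$, and this is where coarse connectedness enters as the degree $0$ input.

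\textbf{Step 3: classification of coarse $\PD^2$ spaces.} Next we classify $Q$, which is quasi-homogeneous under the coarse action of $G$ (left translation composed with projection). We want to show it is coarsely equivalent to $\R^2$ or $\HH^2$. For this we define homological isoperimetric inequalities for metric spaces and test which one $Q$ satisfies.
\begin{itemize}
\item If $Q$ satisfies a linear homological isoperimetric inequality, we expect $Q\simeq\HH^2$. The route is to thicken $Q$ to a bounded-geometry simplicial surface that is contractible, hence a plane, and then use hyperbolicity.
\item Otherwise we expect a quadratic inequality and polynomial growth, so that $G$ has growth of degree $3$. By Gromov's theorem $G$ is then virtually nilpotent and $\PD^3$, hence $\Z^3$, Nil, or a virtual torus bundle. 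The requirement that a homogeneous quasimorphism be nontrivial and have coarsely connected quasikernel should force a genuine homomorphism to $\Z$ with finitely generated kernel. Stallings's theorem then gives a torus or Klein-bottle bundle.
\end{itemize}

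\textbf{Step 4: the conclusion in the $\HH^2$ case.} Here $G$ is quasi-isometric to $Q\times\R\simeq\HH^2\times\R$ in a twisted sense. Gluing a bounded-geometry model of $Q$ along $\R$ should yield a Riemannian metric $g$ on $\R^3$ of bounded geometry. Finite presentability then follows because being finitely presented is a quasi-isometry invariant, and a bounded-geometry simply connected manifold is coarsely simply connected.

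\textbf{Main obstacle.} The hard step is Step 3, the dichotomy for coarse $\PD^2$ spaces with no group acting. It requires showing that the isoperimetric behaviour is either linear or quadratic and translating each case into a geometric model. I would first try to prove that a sub-linear failure of the linear inequality forces thin triangles to fail uniformly. Uniformity would come from the quasi-action of $G$, and would give quadratic area and thus polynomial growth.
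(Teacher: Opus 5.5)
The coarse product in Step 1 is false in general. The closest-point projections $c^i.\qker(\phi)\to\qker(\phi)$ are quasi-isometries whose constants degenerate with $|i|$. If $g\mapsto(\pi(g),\phi(g))$ were a coarse equivalence, a closed hyperbolic fibred $3$-manifold group would be quasi-isometric to $\HH^2\times\R$, and a Sol lattice to $\R^3$. Your own Step 4 implicitly concedes the twist.

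So nothing in your plan produces the degree shift of Step 2. The coarse Shapiro lemma has no shift: it gives $\Homology^\ast_{\coarse}(\qker(\phi);\mathbf M)\cong\Homology^\ast(G;\Hom_{\fd}(R[G]|_{\qker(\phi)},\mathbf M))$. The shift $\Homology^{i-1}_{\coarse}(\qker(\phi);R)\cong\Homology^{i}(G;R[G])$ comes from the coefficient sequence
\[0\to R[G]\to\widehat{R[G]}^{\phi}\oplus\widehat{R[G]}^{-\phi}\to\Hom_{\fd}(R[G]|_{\qker(\phi)},R)\to 0\]
together with the vanishing of $\Homology^\ast(G;\widehat{R[G]}^{\pm\phi})$. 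That vanishing is where coarse connectedness is really used. It gives $\Homology_1(G;\widehat{R[G]}^{\pm\phi})=0$ (Heuer--Kielak), and Poincar\'e duality propagates this to all Novikov homology and cohomology.

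Likewise, the coarse uniform $1$-acyclicity of $\qker(\phi)$ does not follow from coarse connectedness as a ``degree~$0$ input''. Margolis's criterion for coarse $\PD^2$ needs it, and it comes from a Sikorav-type theorem turning vanishing of Novikov homology in degrees $\le 2$ into coarse $1$-acyclicity. Without the Novikov ring you have neither the cohomology computation nor the finiteness hypothesis.

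Step 3 uses the wrong dichotomy. Take a Sol lattice $G=\Z^2\rtimes_A\Z$ with $A$ Anosov and $\phi$ the projection to $\Z$. It satisfies all the hypotheses, and $\qker(\phi)=\Z^2$ is not coarsely $\HH^2$. Yet $G$ has exponential growth and is not virtually nilpotent. So no argument via quadratic isoperimetry, polynomial growth and Gromov's theorem can handle the non-hyperbolic case.

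The correct dichotomy is amenability.
\begin{itemize}
\item If $X=\qker(\phi)$ is non-amenable, use coarse Poincar\'e duality to dualise a length-$2$ resolution $0\to\mathbf P\to\mathbf C_1(i)\to R[X]\to R\to 0$. In the dual resolution, the coboundary of $\alpha$ has support at least $|\partial_L A|/C\geq(\varepsilon/C)|A|$, where $A=\supp\alpha$. This is a linear homological isoperimetric inequality in dimension $1$, which gives hyperbolicity. Kapovich--Kleiner's theorem on hyperbolic coarse $\PD^n$ spaces then yields $\HH^2$. No surface model of $X$ is ever needed, whereas your ``thicken to a contractible surface'' step would have to build one from nothing.
\item If $X$ is amenable, you assemble F\o lner sets of $G$ from translates $E.c^i$, so $G$ is amenable and $\phi$ is a homomorphism. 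Hillman's theorem then makes $\ker\phi$ a $\PD^2$ group, hence $\Z\rtimes\Z$. The bundle comes from realising the monodromy by a homeomorphism. Stallings's theorem itself is unavailable, since $G$ is not known to be a $3$-manifold group.
\end{itemize}

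Finally, Step 4 omits the key construction. The twist is encoded by the quasi-isometry $f_c$ of $\HH^2$: left translation by some $c$ with $\phi(c)$ large, followed by projection. This map must be straightened via Douady--Earle to a bi-Lipschitz diffeomorphism with derivatives bounded up to order $3$. Only then can the iterated mapping cylinder metric be defined and its curvature and injectivity radius controlled.
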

Here, we understand a complete Riemannian manifold to have  \textit{bounded geometry} if there exists a uniform lower bound on the injectivity radius as well as uniform upper and lower bounds on sectional curvature.  

Our result take inspiration from an unpublished article of Danny Calegari \cite{calegari2010boundedcochains3manifolds}, who investigated closed $3$-manifolds admitting quasimorphisms of a similar nature. His approach is continued in collaboration with Ino Loukidou through the theory of \textit{zippers} \cite{calegari2024zippers}. In the case that $\phi$ is a homomorphism, our theorem follows from work of Hillman \cite[Theorem $1.19$]{MR1943724} combined with the classification of $\PD^2$ groups obtained by Eckmann et al. \cite{MR604709}   \cite{MR699010} (one also requires the Dehn--Nielsen--Baer Theorem \cite{MR1555345} and its nonorientable analogue due to Mangler \cite{MR1545786}). This has been written down more explicitly and in greater generality by Bridson--Kielak--Kudlinska \cite{MR4861506}.

An old question of Wall \cite[Question G$2$, p.$391$]{wall1979homological} asks whether $\PD^3$ groups ought actually to be fundamental groups of closed aspherical $3$-manifolds. Theorem~\ref{thm:manifold} can be seen as evidence in favour of an affirmative answer to this question. Restricting to the class of torsion-free hyperbolic groups, work of Bestvina \cite{MR1381603} (building on Bestvina--Mess \cite{MR1096169}) shows that Wall's question becomes equivalent to the Cannon conjecture \cite{MR1130181}, which says that a hyperbolic group $G$ with $\partial G\cong S^2$ should be virtually Kleinian. The following is then an immediate consequence of Theorem \ref{thm:manifold}.

\begin{corollary}\label{cannon}
    Let $G$ be a torsion-free hyperbolic group for which~$\partial G\cong S^2$. If $G$ admits a nontrivial homogeneous quasimorphism to $\R$ with coarsely connected quasikernel, then it is quasiisometric to a Riemannian manifold~$(\R^3,g)$ of bounded geometry.
\end{corollary}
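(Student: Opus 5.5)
The plan is to reduce Corollary~\ref{cannon} to Theorem~\ref{thm:manifold}. This requires checking two things: that $G$ is a $\PD^3$ group, and that the alternative in which $G$ is a torus or Klein-bottle bundle group cannot occur.

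\emph{Step 1: $G$ is a $\PD^3$ group.} Since $G$ is torsion-free and hyperbolic, it admits a finite $K(G,1)$, namely a Rips complex quotient. In particular $G$ is of type $F$ and has finite cohomological dimension. By Bestvina--Mess \cite{MR1096169}, $H^i(G;\Z G)\cong \check{H}^{i-1}(\partial G;\Z)$ for $i\ge 1$, together with the corresponding statements in degree $0$. With $\partial G\cong S^2$ this gives $H^i(G;\Z G)=0$ for $i\neq 3$ and $H^3(G;\Z G)\cong \check H^2(S^2;\Z)\cong\Z$. Combining this with type $F$, the criterion of Bieri--Eckmann shows that $G$ is a $\PD^3$ group. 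This is exactly the input from Bestvina \cite{MR1381603} quoted in the introduction.

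\emph{Step 2: apply Theorem~\ref{thm:manifold}.} Let $\phi$ be the given nontrivial homogeneous quasimorphism with coarsely connected quasikernel. Theorem~\ref{thm:manifold} says that either $G$ is the fundamental group of a torus or Klein-bottle bundle over $S^1$, or $G$ is quasiisometric to a bounded-geometry Riemannian manifold $(\R^3,g)$. In the first case the fibre group $\Z^2$, or the Klein bottle group, which contains $\Z^2$ with finite index, embeds in $G$. A hyperbolic group contains no $\Z^2$ subgroup, so the first case is impossible and the second conclusion holds.

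I do not expect any step to be a real obstacle. The only point needing care is Step~1, specifically checking that the Bestvina--Mess identification of $H^*(G;\Z G)$ with the Čech cohomology of the boundary, together with finiteness properties, gives exactly the duality criterion. This is standard and is available from the cited work.
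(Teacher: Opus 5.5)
Your proposal is correct and is essentially the paper's argument, which treats the corollary as immediate: Bestvina--Mess (with the Rips complex giving a finite $K(G,1)$) makes $G$ a $\PD^3$ group, Theorem~\ref{thm:manifold} then applies, and the torus/Klein-bottle bundle alternative is ruled out because hyperbolic groups contain no $\Z^2$ (equivalently, non-elementary hyperbolic groups are non-amenable, cf.\ Proposition~\ref{thm: approx_kernel_qi_to_H2}). One small precision: the Bestvina--Mess isomorphism $H^i(G;\Z G)\cong \check H^{i-1}(\partial G;\Z)$ uses \emph{reduced} \v{C}ech cohomology, and it is this that gives $H^1(G;\Z G)=0$ here.
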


We remark here that if the Riemannian manifold obtained from Corollary \ref{cannon} is in fact quasiisometric to $\HH^3$, then Sullivan's Theorem \cite{MR624833} implies that $G$ is virtually Kleinian; this is particularly relevant given the ubiquity of quasimorphisms on hyperbolic groups.

To prove Theorem \ref{thm:manifold}, we invoke Margolis's recently introduced framework of \textit{coarse cohomology}  \cite{MR123}, which builds upon previous work of Roe \cite{MR1147350} and Kapovich--Kleiner \cite{MR2168506}, to associate to metric spaces a cohomology theory that is in many ways very similar to ordinary group cohomology. A key starting point to our approach is the following coarse analogue of Shapiro's lemma, which crucially relates coarse cohomology and honest group cohomology.

\begin{thmx}\label{Intro coarse Shapiro}
    Given a coarse embedding $\imath\colon X\to G$ of a metric space into a finitely generated group, there is an isomorphism of cohomology groups \[\Homology^\ast_{\coarse}(X;\textbf{M})\xrightarrow{\sim} \Homology^\ast(G; \Hom_{\fd}( R[G]|_X, \textbf{M}))\]for every $R$-module $\M$ over the metric space $X$.
\end{thmx}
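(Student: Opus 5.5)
The approach follows the classical proof of Shapiro's lemma: compute both sides from one resolution and identify the cochain complexes. I will use Margolis's framework, where coarse cohomology $\Homology^\ast_{\coarse}(X;\M)$ is computed from a projective resolution of the trivial module $R$ over $X$ by finite-displacement (fd) modules. The canonical model is the coarse Roe-type resolution $\cdots\to C_1(X)\to C_0(X)\to R\to 0$. It is built from chains on tuples of points of $X$ of uniformly bounded diameter, taken as a direct limit (or as a coarse equivalence class) over the scale. Cohomology is then the homology of $\Hom_{\fd}(C_\bullet(X),\M)$.

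\textbf{Step 1.} Transport the problem to $G$. Since $\imath$ is a coarse embedding, $X$ is coarsely equivalent to its image $\imath(X)\subseteq G$, and coarse cohomology is invariant under coarse equivalence. I may therefore assume $X\subseteq G$ with the induced word metric. Here $R[G]|_X$ denotes the free $R$-module on $X$, viewed as a module over $X$ with its $G$-action by right multiplication recording displacement. The coefficient module $\Hom_{\fd}(R[G]|_X,\M)$ is then a $G$-module: $g$ acts by precomposition with right multiplication, and fd-homomorphisms are preserved because right multiplication by $g$ moves points a bounded distance $|g|$.

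\textbf{Step 2.} Resolve. Take the standard free resolution $F_\bullet=R[G^{\bullet+1}]\to R$ of $G$-modules, or the bar resolution, which suffices since $G$ is finitely generated. Restrict it to $X$ by setting $F_\bullet|_X = R[G]|_X\otimes_{R[G]}F_\bullet$. Concretely, these are chains $(x,g_1,\dots,g_n)$ with $x\in X$, filtered by word length $|g_i|\le r$. The key claim is that $F_\bullet|_X$ is a projective resolution of $R$ in the category of $R$-modules over $X$. Exactness holds because $R[G]|_X$ is free, hence flat, over $R[G]$. Each term is a direct limit (over $r$) of free fd-modules over $X$, and is projective in Margolis's sense. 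Granting this, the adjunction $\Hom_{\fd,X}(R[G]|_X\otimes_{R[G]}F_n,\M)\cong\Hom_{R[G]}(F_n,\Hom_{\fd}(R[G]|_X,\M))$ is natural and commutes with differentials. Taking cohomology identifies $\Homology^\ast_{\coarse}(X;\M)$ with $\Homology^\ast(G;\Hom_{\fd}(R[G]|_X,\M))$.

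\textbf{Main obstacle.} I expect the hard part to be the finiteness and displacement bookkeeping in the adjunction and in projectivity. A $G$-equivariant map from $F_n$ involves arbitrarily long group elements. The corresponding map over $X$ must nevertheless have finite displacement on each bounded-scale piece, with bounds depending on the scale. This is exactly where the direct-limit/filtered nature of coarse cohomology matters: $\Hom$ out of a colimit becomes a limit of fd-homs at each scale. I would treat it by working scale by scale, with $F_n^{(r)}$ generated by tuples of length $\le r$, and checking compatibility of the isomorphisms as $r$ grows.

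A second point is comparison. The resolution $F_\bullet|_X$ must compute the same groups as Margolis's standard coarse resolution. I would handle this with his fundamental lemma, that any two fd-projective resolutions of $R$ over $X$ are chain-homotopy equivalent through fd-maps. This reduces the comparison to verifying that $F_\bullet|_X$ is uniformly acyclic at each scale. That in turn follows from the standard contracting homotopy of $R[G^{\bullet+1}]$, which has displacement $0$ in the $X$-coordinate.
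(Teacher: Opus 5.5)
Your strategy is the paper's. You take a free $R[G]$-resolution that is also a free resolution over $G$ (the paper uses the simplicial chains of $P_\infty(G)$ rather than the bar resolution, which is immaterial), pull it back to $X$, and identify
\[\Hom_{\fd}(\oplus_I(R[G]|_X),\M)\cong\prod_I\Hom_{\fd}(R[G]|_X,\M)\cong\Hom_{R[G]}(\oplus_I R[G],\Hom_{\fd}(R[G]|_X,\M)),\]
naturally in $R[G]$-linear maps. Your adjunction is exactly the paper's map $f\mapsto\sum_i f(e_i)\circ\pi_i$. Your ``scale by scale'' fix for the main obstacle is the staggering of Remark~\ref{staggering} behind Lemma~\ref{hom sets and products}. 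However, two steps fail as written.

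First, $R[G]|_X$ is not the free $R$-module on $X$. It is the pullback of $R[G]$ along $\imath$: its underlying module is all of $R[G]$, supports are obtained by closest-point projection to $X$, and the filtration is by distance to $\imath(X)$. On your literal reading there is no right $G$-action, since right multiplication does not preserve $X$, and $R[X]\otimes_{R[G]}F_n$ is meaningless. Your ``chains $(x,g_1,\dots,g_n)$ with $x\in X$'' are also not a subcomplex, because the face $xg_1[g_2|\cdots|g_n]$ leaves $X$. With the correct module, $R[G]|_X\otimes_{R[G]}F_n$ is just $F_n$ with the pulled-back structure, i.e.\ $\oplus_I(R[G]|_X)$ by Lemma~\ref{direct sum and pullback}. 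Exactness is then tautological, and your Step~1 is unnecessary because pullbacks exist along any coarse embedding.

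Second, the real content of ``$F_\bullet|_X$ is a projective resolution over $X$'' is not exactness. It is the uniform-preimage condition, measured with the projected supports and the distance-to-$X$ filtration. The standard contracting homotopy does not give this. The map $(g_0,\dots,g_n)\mapsto(1,g_0,\dots,g_n)$ moves the controlling vertex to $1$ and raises the filtration level to at least $|g_0|$, so it has unbounded displacement, not displacement $0$. Instead, you must cone a cycle off from a point near its own support, then use the control functions of $\imath$ to convert $G$-distances into $X$-distances. This is Margolis's Proposition~6.16, which the paper simply cites. With these two repairs your argument coincides with the paper's.
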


     The $R[G]$-module $\Hom_{\fd}(R[G]|_X,\M)$ appearing in the statement of Theorem \ref{Intro coarse Shapiro} is a coarse analogue of the coinduced module in group cohomology. We delegate the task of defining this and the other objects in the statement to Section \ref{coarse cohomology intro}, but note that they all live inside the theory of modules over metric spaces introduced in \cite{MR123}, and are naturally parallel to standard constructions in group cohomology. Theorem~\ref{Intro coarse Shapiro} is preceded by similar Shapiro-type lemmata in group cohomology due to Sauer~\cite{MR2231471}, and Li~\cite{MR3868227}, who additionally assume that the coarsely embedded space is itself a finitely generated group.

      If $\phi\colon G\to\R$ is a nontrivial homogeneous quasimorphism, and Theorem \ref{Intro coarse Shapiro} is applied with $X=\qker(\phi)$, we show that the right hand side is closely related to the group cohomology of $G$ with coefficients in the Novikov ring $\widehat{R[G]}^\phi$; this builds on work in the case that $\phi$ is a homomorphism due to Hillman--Kochloukova \cite{MR2282258} and  Fisher \cite{MapstoZ}. We then extend a result of Heuer--Kielak \cite{QBNS} to higher dimensions in proving Theorem \ref{Novikov homology new}, a Sikorav-style theorem relating the coarse finiteness properties of $\qker(\phi)$ with group homology over $\widehat{R[G]}^\phi$. Exploiting Poincar\'e duality, we are able to combine Theorem \ref{Intro coarse Shapiro} and \cite[Theorem $B$]{MR123} to show that if $G$ is a $\PD^3$ group as in Theorem \ref{thm:manifold}, then~$\qker(\phi)$ is a \textit{coarse $\PD^2$ space} in the sense of Kapovich--Kleiner \cite{MR2168506} and Margolis \cite{MR123}. We provide the following partial classification of such spaces, where amenability here is understood to mean the existence of a F\o lner sequence.

\begin{thmx}\label{intro-characterising PD^2}
    If $X$ is a discrete quasigeodesic bounded-geometry coarse $\PD^2(R)$ space over a commutative ring $R$, then it is either amenable or quasiisometric~to~$\HH^2$.
\end{thmx}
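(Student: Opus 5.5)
We argue by the dichotomy amenable versus non-amenable. Suppose $X$ is not amenable, i.e.\ admits no F\o lner sequence. The first step is to convert non-amenability into a linear homological isoperimetric inequality in degree $1$. For a discrete bounded-geometry quasigeodesic space, the absence of F\o lner sequences gives a linear isoperimetric inequality for finite subsets $A$, namely $|A| \le C|\partial A|$. This is the classical equivalence of Følner's criterion with a linear isoperimetric profile. We then express it in coarse-cohomological language: the coarse $H^0$-type boundary map from $1$-chains to $0$-chains is surjective with linear control. Equivalently, via the coarse $\PD^2$ duality, which identifies coarse cohomology in degree $k$ with coarse homology in degree $2-k$, non-amenability should force the vanishing of a certain reduced coarse cohomology group in the top degree. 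In the amenable case this group is nonzero, since a Følner sequence produces a nontrivial invariant class, much as $H^2_{\coarse}$ detects the fundamental class.

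The second step is to upgrade this to a hyperbolic-type statement. By duality between degrees $0$ and $2$, the $0$-dimensional isoperimetric inequality becomes a linear filling inequality for $1$-cycles by $2$-chains, i.e.\ a homological linear isoperimetric inequality in the sense of the new definition mentioned in the abstract. Since $X$ is quasigeodesic, we can pass to a Rips complex that is coarsely simply connected and use the coarse $\PD^2$ property to control filling. A linear homological isoperimetric inequality for $1$-cycles then implies Gromov hyperbolicity of $X$, following the argument of Gromov, Bowditch and Mineyev that linear homological filling implies hyperbolicity. Here one needs coefficients in $R$ and must handle torsion phenomena; we would reduce to the case where $R$ is a field, or work with $R$-norms.

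The third step identifies the boundary of the hyperbolic space. Coarse $\PD^2$ gives one-endedness, since coarse $H^1$ with compact supports behaves like $\R^2$. Bounded geometry together with the duality then lets us compute the Čech cohomology of $\partial X$ via the standard isomorphism $H^{k+1}_{\coarse}(X) \cong \check H^{k}(\partial X)$ for hyperbolic spaces, which is the Bestvina--Mess argument adapted to metric spaces. This shows that $\partial X$ is a compact metrizable space with the cohomology of $S^1$. It is connected, has no cut points, and has local cohomology of dimension $1$, so it is a circle.

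Finally, a visual, bounded-geometry hyperbolic space with circle boundary is quasiisometric to $\HH^2$. Without a group action we cannot invoke convergence-group theorems. Instead we would use that the boundary carries a visual metric that is Ahlfors regular and linearly connected, hence quasisymmetric to $S^1$ by Tukia--Väisälä; the Bonk--Schramm extension then gives the quasiisometry to $\HH^2$.

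The main obstacle is this last step, together with proving that $\partial X \cong S^1$ without group actions. For the latter, the first thing to try is to verify that the local homology of $\partial X$ is that of a $1$-manifold, pointwise via coarse $\PD^2$ applied to half-spaces, and then apply the characterisation of $S^1$ as the unique compact connected $1$-dimensional homology manifold.
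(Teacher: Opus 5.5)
Your first two steps follow the paper's route in outline. The proof fails at the last stage, passing from ``$X$ is a Gromov-hyperbolic coarse $\PD^2(R)$ space'' to ``$X$ is quasiisometric to $\HH^2$''. You flag this as the main obstacle and leave it unresolved, and the sketch you give there does not work as stated.

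\v{C}ech cohomology of $S^1$, connectedness and absence of cut points do not characterise the circle: the Warsaw circle is a $1$-dimensional continuum with all three properties. What is needed is that $\partial X$ is a homology $1$-manifold at every point. You give no argument for this beyond the suggestion to apply coarse duality to half-spaces.

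The final step has the same problem. Tukia--V\"ais\"al\"a needs the visual metric on $\partial X$ to be doubling and of bounded turning with uniform constants, and the Bonk--Schramm extension needs $X$ to be visual. Without a cocompact group action none of this is automatic. This is exactly where the coarse $\PD^2$ hypothesis would have to be used quantitatively, and your proposal does not do so.

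The paper does not reprove any of this. Once $X$ is hyperbolic, the Rips complex $P_k(X)$ is contractible for large $k$. Its simplicial chain complex is then a proper finite-height projective resolution to which coarse Poincar\'e duality transfers \cite[Lemma~12.2, Example~12.3]{MR123}. So $P_k(X)$ is a coarse $\PD^2$ space in the sense of Kapovich--Kleiner, and their theorem on boundaries of hyperbolic coarse $\PD^n$ spaces \cite[Corollary~3]{HyperbolicPD^n} gives directly that $P^1_k(X)$, hence $X$, is quasiisometric to $\HH^2$. This bridge from Margolis's notion to Kapovich--Kleiner's, together with that citation, is the missing ingredient.

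On Steps~1--2, the mechanism that works is the one in your second step (Proposition~\ref{amenable or linear iso inequality}). Dualise a resolution $0\to\textbf{P}\to\textbf{C}_1(i)\to R[X]\to R\to 0$, so that $\Hom_{\fd}(R[X],R)\cong R[X]$ sits in degree~$2$ with injective boundary map $\delta'$. Then apply the failure of the F\o lner condition to $A=\supp(\alpha)$: each edge $[w,z]$ with $w\in A$, $z\notin A$ has coefficient $\alpha(w)\neq 0$ in $\delta'(\alpha)$. Hence $|\supp(\delta'(\alpha))|\geq c\,|\supp(\alpha)|$ over any commutative $R$. Three consequences for your write-up:
\begin{itemize}
\item No reduction to a field or to $R$-norms is needed.
\item The map you name in Step~1 (the boundary from $1$-chains onto $0$-chains) is not the relevant one; the relevant map is the coboundary out of finitely supported $0$-cochains.
\item A coarsely simply connected Rips complex is not available before hyperbolicity is proved, since coarse $\PD^2$ only gives homological control. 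The thick-triangle argument of Proposition~\ref{Linear iso implies hyperbolic} only needs the homological filling inequality, so this does not block Step~2.
\end{itemize}
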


A similar characterisation is obtained in work of Kapovich--Kleiner \cite{Quasiplanes} (see also \cite{MR49321}) under the assumption that $X$ is coarsely simply-connected; we however require such a result without these finiteness hypotheses. Our approach introduces a theory of homological isoperimetric inequalities over the projective resolutions over metric spaces defined in \cite{MR123}, thereby providing a natural setting for the development of homological methods in the spirit of Kielak--Kropholler~ \cite{kielak2021isoperimetric}. Complementing Theorem~\ref{intro-characterising PD^2}, we prove in Proposition~\ref{amenable quasikernel implies amenable group} that if~$\qker(\phi)$ is amenable, then so is the ambient group~$G$, from which it follows that such a quasimorphism is an honest homomorphism.

We consider now the case that $\qker(\phi)$ is coarsely equivalent to $\HH^2$. There is then a natural action of $G$ on $\partial\HH^2\cong S^1$, first investigated in \cite{calegari2010boundedcochains3manifolds}, which we prove to be faithful when $G$ is hyperbolic.

\begin{thmx}\label{introthm:circle_action}
    Let $G$ be a hyperbolic $\PD^3$ group and $\phi\colon G\to\R$ be a nontrivial homogeneous quasimorphism. If $\qker(\phi)$ is coarsely connected, then $G$ acts faithfully on $S^1$ by quasisymmetric homeomorphisms.
\end{thmx}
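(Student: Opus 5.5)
We first reduce to the case where $\qker(\phi)$ is coarsely equivalent to $\HH^2$, using Theorem~\ref{thm:manifold}. Suppose $G$ is the fundamental group of a torus or Klein-bottle bundle over $S^1$. Then $G$ contains $\Z^2$, which is impossible for a hyperbolic group. Hence $Q:=\qker(\phi)$, say $Q=\{g\in G: |\phi(g)|\le C\}$ for suitable $C$, is quasiisometric to $\HH^2$. This gives an identification of its Gromov boundary (as a quasigeodesic space) with $S^1$. It is well defined up to quasisymmetric homeomorphism, since quasiisometries of $\HH^2$ extend to quasisymmetric maps of $S^1$.

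\textbf{Defining the action.} Because $\phi$ is a homogeneous quasimorphism, for each $g\in G$ the translate $gQ$ lies at finite Hausdorff distance from $Q$. Indeed, $\phi(gh)$ is within the defect $D(\phi)$ of $\phi(g)+\phi(h)$, so $gQ$ sits in a slab at height roughly $\phi(g)$. Coarse connectedness of $Q$ and the coarse geometry of the slabs should then let us move $gQ$ back to $Q$ by a bounded-distance map. Concretely, we fix an element $t$ with $\phi(t)>0$ and take a closest-point or ``flow'' projection $\pi_n\colon t^nQ\to Q$. We show it is a quasiisometry whose constants depend only on $|\phi(g)|$. Composing left multiplication by $g$ with such a projection gives a quasiisometry $f_g\colon Q\to Q$ that is well defined up to bounded distance. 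The identity $f_{gh}\sim f_g\circ f_h$ holds up to bounded error, because any two such coarse maps between $Q$-translates agree up to bounded distance. For this last point we use that $Q$ is coarsely a $\PD^2$ space, so a quasiisometry is determined up to bounded distance by its coarse class. Passing to boundaries therefore yields a homomorphism $\rho\colon G\to \mathrm{QS}(S^1)$.

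\textbf{Faithfulness.} This is the step I expect to be the main obstacle. Suppose $\rho(g)=\id$. A quasiisometry of $\HH^2$ inducing the identity on $\partial\HH^2$ lies at bounded distance from the identity, so $f_g$ moves every point of $Q$ a bounded amount. For hyperbolic $G$, the plan is to compare with the Gromov boundary $\partial G\cong S^2$. Since $Q$ is a coarsely embedded copy of $\HH^2$ that is undistorted in a suitable sense (as for fibers or slithering leaves), its limit set $\Lambda Q\subset\partial G$ is a circle-like set, or at least a continuum on which $\partial Q$ maps via a Cannon--Thurston type map. The element $g$ then acts on $\partial G$ by fixing pointwise the image of the Cannon--Thurston map for $Q$.

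First I would show this image is all of $\partial G$. I would argue that the limit set of $Q$ is $G$-invariant, because $gQ$ is Hausdorff close to $Q$. Hence it is a nonempty closed invariant subset, and by minimality of the action on $\partial G$ it equals $\partial G$. The element $g$ then acts trivially on $\partial G$. The kernel of the action of a nonelementary hyperbolic group on its boundary is the maximal finite normal subgroup. Since a $\PD^3$ group is torsion-free, $g=1$.

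The delicate points are the existence of the boundary map $\partial Q\to\partial G$, which is needed so that ``$g$ fixes $\partial Q$'' transfers to $\partial G$, and the compatibility of the two actions. I would try first to avoid Cannon--Thurston altogether. Instead, I would argue directly that if $f_g$ is at bounded distance from the identity on $Q$, then $g$ coarsely commutes with every element of $Q$ in the sense that $d(gq,q)$ is bounded up to slab-height translation. Taking $q$ ranging over loxodromic elements of $G$ near $Q$, this would force $g$ to fix the endpoints of many independent loxodromics, and hence to be trivial in a torsion-free hyperbolic group.
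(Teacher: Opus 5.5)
\textbf{Overall.} Your reduction (a hyperbolic group contains no $\Z^2$, so Theorem~\ref{thm:manifold} forces $Q=\qker(\phi)$ to be coarsely equivalent to $\HH^2$) and your construction of the action agree with the paper. Two small corrections there. In the subspace metric, $Q$ is only coarsely equivalent to $\HH^2$, not quasiisometric, since it is typically distorted; this is harmless, because each $f_g$ is a coarse self-equivalence of the geodesic space $\HH^2$. Your appeal to coarse $\PD^2$ to justify $f_{gh}\sim f_g\circ f_h$ is not an argument. What is needed is that a composite of closest-point projections between mutually Hausdorff-close translates of $Q$ is uniformly close to the direct projection (Lemma~\ref{lem:composition_of_nearest_point_projections_is_close_to_nearest_point_projection}).

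\textbf{The gap in faithfulness.} As written, your faithfulness step runs through a Cannon--Thurston map $\partial Q\to\partial G$ together with an equivariance statement. Neither is established here, and each would be a substantial result in its own right, so this is a genuine gap. Your side remarks are also false: $Q$ is not ``undistorted'' (Proposition~\ref{quasiconvex} says it is not even quasiconvex), and its limit set is not ``circle-like'' (your own invariance argument shows $\Lambda Q=\partial G$).

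\textbf{How to close it.} No boundary map is needed.
\begin{itemize}
\item If $\rho(g)=\id$, Lemma~\ref{lemma:characterise_trivial_action_on_circle_new} gives $d(\pi_g(gq),q)\le C$ for all $q\in Q$.
\item Since $gQ=Qg$, we have $d(gq,\pi_g(gq))=d(gq,Q)\le d_{\Hd}(gQ,Q)\le |g|$.
\item Hence $\sup_{q\in Q}d(gq,q)<\infty$.
\item Now take $q_n\in Q$ with $q_n\to\xi\in\partial G$. Then $gq_n\to g\xi$ by continuity of the action. But also $gq_n\to\xi$, because $gq_n$ stays at bounded distance from $q_n$. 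So $g$ fixes $\Lambda Q$ pointwise.
\item Your minimality argument for $\Lambda Q=\partial G$ is correct.
\item A nontrivial element of the torsion-free hyperbolic group $G$ is loxodromic, with exactly two fixed points on the infinite set $\partial G$. So $g=1$.
\end{itemize}
This is what your last paragraph is reaching for. The qualifier ``up to slab-height translation'' is unnecessary, as is the detour through loxodromics lying in $Q$.

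\textbf{Comparison with the paper.} With this fix, your route is correct and genuinely different from the paper's. The paper splits into two cases.
\begin{itemize}
\item When $\phi(g)=0$, the map $n\mapsto g^n$ is a quasi-axis inside $Q$. Hinkkanen's theorem upgrades the quasi-action of $\langle g\rangle$ on $\HH^2$ to an isometric one, which has exactly two boundary fixed points.
\item When $\phi(g)\neq 0$, the uniform quasigeodesics $k\mapsto g^k x$ and a quadrilateral argument would make $Q$ quasiconvex. This contradicts Proposition~\ref{quasiconvex}, which rests on the Cordes--Hartnick--Toni\'c theory of approximate groups.
\end{itemize}
Your completed argument handles both cases at once. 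It needs neither Hinkkanen's theorem nor approximate groups, only the limit-set minimality argument that the paper itself uses inside the proof of Proposition~\ref{quasiconvex}.
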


 A closely related result has recently been obtained in \cite{calegari2024zippers}. For an arbitrary hyperbolic group admitting such a quasimorphism they obtain a structure on the Gromov boundary called a \textit{zipper}, and when this group is the fundamental group of a closed hyperbolic $3$-manifolds this gives rise to an action on a universal circle.
 
To prove Theorem \ref{thm:manifold}, it remains to construct, under the assumption that the quasikernel is coarsely equivalent to $\HH^2$, a Riemannian manifold of bounded geometry quasiisometric to $G$. We do this by encoding the coarse geometry of the group in a certain self-quasiisometry of $\HH^2$, which should be thought of as the analogue of the monodromy of a closed fibred $3$-manifold with fibre a hyperbolic surface. The celebrated Douady--Earle extension allows one to straighten such a self-quasiisometry of $\HH^2$ to a bi-Lipschitz diffeomorphism, from which the desired manifold can roughly speaking be built as an iterated mapping cylinder.

The paper is structured as follows. Section~\ref{section:preliminaries} contains preliminary
material, where in particular we give an introduction to Margolis's theory of coarse cohomology in Subsection~\ref{coarse cohomology intro}. 
We then prove Theorem~\ref{Intro coarse Shapiro}, a coarse version of Shapiro's lemma, in Section~\ref{section:shapiros_lemma}, which we link to Novikov cohomology in Section~\ref{section:novikov_cohomology}. Section~\ref{novikov homology section} relates Novikov homology to the coarse finiteness properties of the quasikernel. Homological isoperimetric inequalities are introduced in Section~\ref{section:corase_pr2_spaces}, which are then used to classify coarse $\PD^2$ spaces, proving Theorem~\ref{intro-characterising PD^2}.
Section~\ref{section:group_theory_and_the_quasikernel} generalises some group theoretic results to the setting of quasimorphisms.
In Section~\ref{section:quasimorphisms_on_pd3_groups}, we combine all of the previous sections to study the quasikernels of quasimorphisms on $\PD^3$ groups. Section~\ref{section:circle_action} uses this structure to obtain a faithful circle action for Theorem~\ref{introthm:circle_action}. Finally, we construct Riemannian manifolds quasiisometric to $\PD^3$ groups in Section~\ref{section:constructing_Riemannian_manifolds} and prove that they are of bounded geometry in Section~\ref{section:bounded_geometry}, completing the proof of Theorem~\ref{thm:manifold}. 

Shortly before the completion of this article, Calegari--Loukidou released a preprint \cite{calegari2026catherinewheels} exhibiting new connections between quasimorphisms with coarsely connected quasikernel on the fundamental group of a closed hyperbolic $3$-manifold, there called \textit{uniform} quasimorphisms, and certain dynamical structures on the manifold itself. Combined with recent work of Calegari--Zung \cite{2648284748}, it is for instance shown that the existence of such a quasimorphism is equivalent to having a pseudo-Anosov flow with no perfect fits. They also obtain that each connected component of the space of uniform quasimorphisms inside the space of all quasimorphisms is an open convex cone, yielding a natural analogue to the cones on the (open) fibred faces of the Thurston norm ball in the classical setting of fibring over $S^1$.

\subsection*{Acknowledgements}  
The second author would like to thank Martin Bridson and Dawid Kielak for helpful conversations and advice on this work. 

We would also like to thank Dawid again for carefully reading and commenting on previous versions of this article, Anna Wienhard for clarifying the proof of Lemma~\ref{lem:boundedness_of_derivatives_of_f}, Alex Margolis for insightful comments and explanations, Shaked Bader, Sam Ketchell, Harry Petyt and Ralfs Pundurs for fruitful discussions, and Sam Fisher for sharing ideas at the conception of the project. The second author was supported by an Engineering and Physical Sciences Research Council studentship (Project Reference 2928333).

\section{Preliminaries}\label{section:preliminaries}

\subsection{Conventions}
All rings considered shall be unital and associative and $R$ will always denote a commutative ring. Modules will be acted on on the left unless explicitly stated otherwise. finitely generated groups $G$ will be equipped with a left-invariant word metric with respect to some finite generating set, the choice of which is unique up to quasiisometry.
Whenever there is no risk of ambiguity, $d$ will denote a metric of the metric space under consideration.
Graphs are metrised with the usual graph metric that assigns length~1 to edges unless stated otherwise, and subspaces of metric spaces are equipped with the subspace metric.

\subsection{Coarse geometry} 
We first introduce some notation. Let $Y$ be a subspace of a metric space $(X,d)$. For $r\in\R_{\geq 0}$, we define the \textit{metric $r$-neighbourhood of $Y$ in $X$} by $N_r(Y):=\{y\in X:d(y,Y)\leq i\}$; the \textit{metric ball of radius $r$ around $x\in X$} is $B_r(x)\coloneq N_r(\{x\})$. A subspace $Y\subseteq X$ is \textit{coarsely dense} if there is $r\geq 0$ such that $N_r(Y)=X$.

Given metric spaces $(X,d_X)$, $(Y,d_Y)$ and maps $f,g\colon X\to Y$, we say that $f$ and~$g$ are \textit{close} or \textit{at bounded distance} if there is $C\geq 0$ such that $d_Y(f(x),g(x))\leq C$ for all $x\in X$; we denote this as
\begin{equation*}
    d_\infty(f,g)\leq C.
\end{equation*}

     A map $f\colon X\to Y$ is called a \textit{coarse embedding} if there is a pair of proper functions $\rho_{\pm}\colon \R_{\geq0}\to \R$ such that for all $x_1, x_2\in X$ 
    \begin{equation*}
        \rho_- (d_X(x_1,x_2))\leq d_Y(f(x_1),f(x_2))\leq\rho_+ (d_X(x_1,x_2)).
    \end{equation*}
    We call $\rho_{\pm}:\R_{\geq 0}\to\R_{\geq 0}$ the \textit{control functions} of $f:X\to Y$.
    If in addition there exists a coarse embedding $g\colon Y\to X$ such that $d_\infty(f\circ g,\id_Y)<\infty$ and $d_\infty(g\circ f,\id_X)<\infty$, we say that $f\colon X\to Y$ is a \textit{coarse equivalence}.

     A coarse embedding between metric spaces $f\colon X\to Y$ is a \textit{quasiisometric embedding} if the control functions~$\rho_{\pm}$ are affine linear; we will call the coefficients of such an affine linear function \textit{quasiisometry constants}. If~$f$ is also a coarse equivalence, it is called a \textit{quasiisometry}.
     It is well-known that a quasiisometric embedding is a quasiisometry if and only if its image is coarsely dense. A metric space is called \textit{quasigeodesic} (resp. \textit{coarse geodesic}) if it is quasiisometric (resp. coarsely equivalent) to a geodesic metric space.

A prominent example of a coarse embedding that is not necessarily a quasiisometric embedding is given by a subgroup inclusion between finitely generated groups, with both equipped with word metrics. Contrasting this, a coarse equivalence between quasigeodesic metric spaces is a quasiisometry, so all coarsely equivalent such groups are quasiisometric.

Let $Y$ be a discrete bounded-geometry metric space and $\imath \colon X\to Y$ be a coarse embedding. A  \textit{closest-point projection map} $\pi\colon Y\to X$ is a map sending every $y\in Y$ to some $\pi(y)\in X$ such that \[d_Y(\imath(\pi(y)), y)=\min_{x\in X}\{d_Y(x,y)\}.\] 

For a metric space $X$ and subsets $A,B\subseteq X$, we define the \textit{Hausdorff distance} \[d_{\Hd}(A,B)\coloneq \max\{\sup_{a\in A} d(a,B),\sup_{b\in B}d(A,b)\}.\]
The following two lemmata are straightforward, so we choose to omit their proofs. Note that the assumptions of discreteness and bounded geometry are not strictly necessary, but they are made in order to avoid a discussion of closest-point projections in a more general setting, which will not be required in this paper. 
\begin{lemma}\label{Projecting is a coarse equivalence}
    Let~\(X, Y\subseteq Z\) be subspaces of a discrete bounded-geometry metric space~\(Z\) at finite Hausdorff distance $d_{\Hd}(X, Y)<\infty$ from one another. Then any closest-point projection map \(\pi\colon X\to Y\) induces a quasiisometry between the metric spaces~$X$ and~$Y$, whose quasiisometry constants depend only on $d_{\Hd}(X,Y)$.
\end{lemma}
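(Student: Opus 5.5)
Set $D\coloneq d_{\Hd}(X,Y)$. The plan is to show that the closest-point projection $\pi\colon X\to Y$ is at uniformly bounded distance from the identity, and then to read off all quasiisometry constants from the triangle inequality. Here $X$ and $Y$ both carry the subspace metric from $Z$. Since $Z$ is discrete with bounded geometry, balls in $Z$ are finite, so for each $x\in X$ the minimum $\min_{y\in Y} d(x,y)$ is attained and $\pi$ is well defined. If $D$ is only approached rather than attained, we replace $D$ by $D+1$; balls are finite, so in fact the infimum is attained anyway. Every $x\in X$ has some $y\in Y$ with $d(x,y)\leq D$, so the minimality of $\pi(x)$ gives
\begin{equation*}
  d(x,\pi(x))\leq D \qquad\text{for all } x\in X.
\end{equation*}

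With this bound, the quasiisometric embedding inequalities follow from the triangle inequality. For $x_1,x_2\in X$ we get
\begin{equation*}
  d(x_1,x_2)-2D\leq d(\pi(x_1),\pi(x_2))\leq d(x_1,x_2)+2D.
\end{equation*}
Hence $\pi$ is a $(1,2D)$-quasiisometric embedding of $X$ into $Y$, where the metric on $Y$ is the restriction of $d$.

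It remains to check coarse density of the image. Given $y\in Y$, pick $x\in X$ with $d(x,y)\leq D$. Then
\begin{equation*}
  d(\pi(x),y)\leq d(\pi(x),x)+d(x,y)\leq 2D,
\end{equation*}
so $N_{2D}(\pi(X))=Y$. By the characterisation recalled above (a quasiisometric embedding with coarsely dense image is a quasiisometry), $\pi$ is a quasiisometry. If an explicit quasi-inverse is wanted, take the closest-point projection $\pi'\colon Y\to X$. The same bound gives $d(\pi'\circ\pi(x),x)\leq 2D$ and $d(\pi\circ\pi'(y),y)\leq 2D$, and all constants depend only on $D$.

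There is no real obstacle here. The only subtlety is the existence of minimisers, which is exactly what the discreteness and bounded-geometry hypotheses on $Z$ provide, together with the fact that the constants depend only on $D$. Neither the bounded-geometry constants nor any further structure of $X$, $Y$ or $Z$ enters them.
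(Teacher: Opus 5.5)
Your proposal is correct. It is the routine verification the paper omits as straightforward: finiteness of balls gives $d(x,\pi(x))=d(x,Y)\leq D$, so the triangle inequality makes $\pi$ a $(1,2D)$-quasiisometric embedding with $2D$-dense image, and the reverse closest-point projection is a quasi-inverse up to error $2D$, with all constants depending only on $D=d_{\Hd}(X,Y)$.
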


\begin{lemma}\label{lem:composition_of_nearest_point_projections_is_close_to_nearest_point_projection}
    Let $X, Y, Z$ be subspaces of a discrete bounded-geometry metric space $W$ such that there exists a constant~$C>0$ with 
    \begin{equation*}
        \max\{d_{\Hd}(X, Y), d_{\Hd}(X, Z), d_{\Hd}(Y, Z)\}\leq C.
    \end{equation*}
    If $\pi_{X, Y}\colon X\to Y$, $\pi_{Y, Z}\colon Y\to Z$ and $\pi_{X, Z}\colon X\to Z$ are closest-point projections, then we have 
    \begin{equation*}
        d_\infty(\pi_{Y,Z}\circ \pi_{X,Y})\leq 3C.
    \end{equation*}

\end{lemma}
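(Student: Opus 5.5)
The statement has a typo: the left-hand side should read $d_\infty(\pi_{Y,Z}\circ\pi_{X,Y},\pi_{X,Z})\leq 3C$. We prove this by a pointwise triangle-inequality argument. Fix $x\in X$ and set $y:=\pi_{X,Y}(x)$, $z:=\pi_{Y,Z}(y)$ and $z':=\pi_{X,Z}(x)$. The goal is to show $d(z,z')\leq 3C$ with the constant independent of $x$. Since all maps are closest-point projections, the definition gives $d(x,y)=d(x,Y)$, $d(y,z)=d(y,Z)$ and $d(x,z')=d(x,Z)$. The Hausdorff bounds then give $d(x,Y)\leq d_{\Hd}(X,Y)\leq C$, $d(y,Z)\leq d_{\Hd}(Y,Z)\leq C$ and $d(x,Z)\leq d_{\Hd}(X,Z)\leq C$. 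The minima defining the projections are attained because $W$ is discrete with bounded geometry: balls are finite, so a minimising point exists once the distance is finite.

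The key step is then the chain
\begin{equation*}
d(z,z')\leq d(z,y)+d(y,x)+d(x,z')\leq C+C+C=3C.
\end{equation*}
This bound is uniform in $x$, so taking the supremum over $x\in X$ gives $d_\infty(\pi_{Y,Z}\circ\pi_{X,Y},\pi_{X,Z})\leq 3C$, as required.

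There is no real obstacle here. The only points to check are the interpretation of the typo and the existence of closest points. The second was settled above. Discreteness and bounded geometry are used only for that existence, consistent with the remark preceding Lemma~\ref{Projecting is a coarse equivalence}. The estimate does not need $\pi_{X,Z}$ to be related to the other two maps in any way beyond each being a closest-point projection. Note also that the argument never uses the hypothesis $d_{\Hd}(X,Y)\le C$ in its full two-sided form. It only needs the one-sided bounds $\sup_{x\in X}d(x,Y)\le C$, $\sup_{y\in Y}d(y,Z)\le C$ and $\sup_{x\in X}d(x,Z)\le C$.
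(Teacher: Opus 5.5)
Your proof is correct, including your reading of the typo as $d_\infty(\pi_{Y,Z}\circ\pi_{X,Y},\pi_{X,Z})\leq 3C$, which is how the lemma is used in Lemma~\ref{lem:phi_for_circle_action_is_a_group_homomorphism}. The paper omits the proof as straightforward, and your pointwise triangle inequality $d(z,z')\leq d(z,y)+d(y,x)+d(x,z')\leq 3C$ is the intended argument.
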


    We now consider properties of metric spaces. A discrete metric space $X$ has \emph{bounded geometry} if for all $r\geq 0$ there exists $C(r)>0$ such that
    \begin{equation*}
        |B_r(x)|\leq C(r)
    \end{equation*}
    for all points $x\in X$. A space $X$ is \textit{coarsely homogeneous} if for every $x,y\in X$ there is a coarse equivalence $\phi\colon X\to X$ such that $\phi(x)=y$, and the control functions of $\phi$ can be chosen independently of $x$ and $y$.

   The \textit{Rips complex} $P_r(X)$ of a metric space $X$ is the flag simplicial complex with $0$-simplices given by the points of $X$, and $n$-simplices spanning any subset $\{x_0,\dots,x_n\}\subset X$ with $d(x_i,x_j)\leq r$ for all $i,j\in\{0,\dots,n\}$. For $X\subseteq Y$ and $r\leq s$ there are natural inclusions $P_r(X)\subseteq P_s(Y)$. We define the \textit{Rips graph} $P_r^1(X)$ to be the $1$-skeleton of the Rips complex $P_r(X)$, which will be metrised with the usual graph metric.

\subsection{Coarse finiteness properties}

In this subsection we introduce the well-known theory of coarse homological finiteness properties of metric spaces. 

A metric space is \textit{coarsely connected} if $P_r(X)$ is connected for some $r\geq 0$. It is an observation, often attributed to Gromov, that a subgroup of a finitely generated group is itself finitely generated if and only if it is 
coarsely connected as a subspace. This can be generalised to define homological finiteness of metric spaces as follows.

\begin{definition}\label{Coarse acyclicity}
     A metric space $X$ is \textit{coarsely $(n-1)$-acyclic over $R$} if for all $r\in\R_{\geq 0}$ there is $s\geq r$ such that the map on reduced simplicial homology \[\tilde{H}_i(P_r(X),R)\to \tilde{H}_i(P_s(X),R)\]is trivial for all $i\leq n-1$.
\end{definition}

The work of Alonso \cite{MR1293049} implies that this definition is invariant under coarse equivalence. The following is implicit in Brown's criterion \cite[Theorem $2.2$]{MR885095}. 

\begin{proposition}\label{FP_n and coarse acyclicity}
    If $G$ is a finitely generated group equipped with a word metric, then it is of type $\FP_n(R)$ if and only if it is coarsely $(n-1)$-acyclic over $R$.
\end{proposition}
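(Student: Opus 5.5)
The plan is to compare the Rips complexes of $G$ with a free contractible $G$-complex and apply Brown's criterion, in the form of the filtration argument of \cite[Theorem $2.2$]{MR885095}. Fix a finite generating set $S$ and its word metric. For each $r\geq 0$ the Rips complex $P_r(G)$ is a simplicial complex with a free $G$-action by left multiplication. Since $G$ is discrete with bounded geometry, $P_r(G)$ has only finitely many $G$-orbits of simplices in each dimension. The increasing union $\bigcup_r P_r(G)$ is the full simplex on $G$, which is contractible. So $\{P_r(G)\}_{r\ge 0}$ is a filtration of a free contractible $G$-complex by $G$-finite subcomplexes. It is cocompact in every skeleton, since each $n$-skeleton of each $P_r(G)$ has finitely many orbits of cells.

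The simplicial chain complexes $C_\ast(P_r(G);R)$ are complexes of finitely generated free $R[G]$-modules. The simplicial chain complex of the full simplex on $G$ is a free resolution of $R$, which however is not finitely generated in any degree. Brown's criterion states that, for such a filtration with finite type skeleta, $G$ is of type $\FP_n(R)$ if and only if the directed system $\{\tilde H_i(P_r(G);R)\}_r$ is essentially trivial for every $i\le n-1$. Essential triviality means exactly that for each $r$ there is some $s\ge r$ with $\tilde H_i(P_r)\to\tilde H_i(P_s)$ zero. This is word for word Definition~\ref{Coarse acyclicity}, so the proposition follows once the hypotheses of Brown's criterion are checked.

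For a more self-contained account I would argue the two directions separately.

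For ($\Leftarrow$), assume $G$ is coarsely $(n-1)$-acyclic. I would build a partial resolution $F_n\to\cdots\to F_0\to R$ by finitely generated free modules inductively.
\begin{itemize}
\item Start with $F_\ast=C_\ast(P_{r_0}(G))$ in low degrees.
\item Each time a cycle in degree $i\le n-1$ must be killed, use acyclicity. Every cycle of $P_r$ bounds in $P_s$, so the finitely many orbit representatives of new cycles can be coned off in $P_s$.
\item Replace $P_r$ by $P_s$ and iterate.
\end{itemize}
Equivalently, one can invoke Brown's algebraic lemma directly. The cleanest route is still to cite Brown's criterion.

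For ($\Rightarrow$), assume $G$ is of type $\FP_n(R)$. Take a projective resolution $P_\ast$ that is finitely generated up to degree $n$. Comparison maps give chain maps $P_\ast\to C_\ast(\text{full simplex})$, and these land in some $C_\ast(P_r)$ by finiteness. Chain maps back $C_\ast(P_r)\to P_\ast$ also exist, because $P_\ast$ is acyclic in low degrees and $C_\ast(P_r)$ is free. Composing $C_\ast(P_r)\to P_\ast\to C_\ast(P_s)$ gives a map chain homotopic to the inclusion in degrees $\le n-1$. This uses the acyclicity of the big complex and the fact that homotopies only need finitely many generators, so they live in some $P_{s'}$. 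The composite factors through $P_\ast$, whose reduced homology vanishes, so the inclusion is zero on $\tilde H_i$ for $i\le n-1$.

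The main obstacle is the bookkeeping in the homotopy argument. One must ensure the chain homotopy between the composite and the inclusion lands in a single finite-radius $P_{s'}(G)$. This holds because the generators in each degree $\le n$ are finitely many $G$-orbits, so their images have bounded diameter, and $G$-equivariance keeps that bound uniform.

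Finally, the word metric choice is irrelevant: a change of generating set is a quasiisometry, and Alonso's invariance \cite{MR1293049} shows coarse acyclicity is unchanged.
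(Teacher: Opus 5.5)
Your route is the same as the paper's. The paper gives no argument beyond remarking that the statement is implicit in Brown's criterion \cite[Theorem $2.2$]{MR885095}, applied exactly as you do to the filtration of the full simplex on $G$ by the $G$-cocompact Rips complexes $P_r(G)$.

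One correction is needed when $G$ has torsion. The action on $P_r(G)$ is free on vertices but not on simplices: for $r$ large, a finite subgroup $H$ spans a simplex, and $H$ stabilises it while permuting its vertices. So the oriented simplicial chain modules need not be free, or even projective, over $R[G]$. For example, over $\Z$ a flipped edge generates a copy of $\Z[G]\otimes_{\Z[H]}\Z_{-}$, which is not projective. This invalidates two things as written: your appeal to Brown's criterion in the form for free complexes, and the lifting step ``because $C_\ast(P_r)$ is free'' in ($\Rightarrow$).

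The fix is routine, and either of two repairs works:
\begin{enumerate}
\item Use Brown's theorem in its general form. After barycentric subdivision, cell stabilisers are finite and hence of type $\FP_\infty(R)$, which is what that version requires.
\item Work with the ordered chains on tuples $[x_0,\dots,x_n]$ of diameter at most $r$, as in Example~\ref{free module over X}. These are finitely generated free $R[G]$-modules, since $G$ acts freely on first coordinates. They compute the same homology as $P_r(G)$, and they exhaust the standard free resolution of $R$, so your ($\Rightarrow$) argument then goes through verbatim.
\end{enumerate}
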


Following Gromov \cite{MR1253544}, Margolis uses in \cite{MR123} the more restrictive notion of coarse \textit{uniform} acyclicity, see Definition \ref{coarse uniform acylicity}, which turns out to have an equivalent definition in terms of \textit{projective resolutions over metric spaces} analogous to that of type $\FP_n(R)$ for groups. It is noted in Lemma \ref{Uniform acyclicity vs acyclicity} that for discrete coarsely homogeneous bounded-geometry metric spaces, which are those we shall mainly be concerned with in this article, coarse acyclicity and coarse uniform acylicity agree, so we will often not require the more technical latter definition.

We now observe that for sufficiently nice coarsely connected metric spaces, there is an analogue of the Cayley graph associated to a finitely generated group. The following is a combination of \cite[Propositions $2.5$ and $2.18$]{MR3816385} and Lemma~\ref{Uniform acyclicity vs acyclicity}.

\begin{proposition}\label{X and Rips graph}
    Let $X$ be a discrete coarsely homogeneous and coarsely connected bounded-geometry metric space. There is $r\geq 0$ such that the Rips graph $P_r^1(X)$ is connected and the inclusion $X\to P_r^1(X)$ is a coarse equivalence.
\end{proposition}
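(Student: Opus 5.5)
Since $X$ is coarsely connected, I will fix $r_0\geq 0$ such that the Rips complex $P_{r_0}(X)$, and hence the Rips graph $P^1_{r_0}(X)$, is connected. For every $r\geq r_0$ the graph $P^1_r(X)$ is then also connected, and the identity map $X\to P^1_r(X)$ on vertices is $1/r$-Lipschitz from below: if $d(x,y)\leq r$ then $x$ and $y$ are adjacent, so a path of $n$ edges has metric length at most $nr$, giving $d_X(x,y)\leq r\,d_{P_r^1}(x,y)$. It therefore suffices to produce an upper control. That is, I need a proper function $\rho_+$, independent of the points, with $d_{P^1_r}(x,y)\leq \rho_+(d_X(x,y))$. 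Once this is established, the inclusion $X\to P^1_r(X)$ is a coarse embedding with coarsely dense image, since every vertex lies in the image. Its inverse on vertices, extended by sending each edge point to an endpoint, is a coarse inverse, so the inclusion is a coarse equivalence.

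The upper control is where bounded geometry and coarse homogeneity enter. For a fixed $t\geq 0$ and a fixed basepoint $x_0$, the ball $B_t(x_0)$ is finite by bounded geometry. Since $P^1_r(X)$ is connected, the number $D(x_0,t):=\max_{y\in B_t(x_0)} d_{P^1_r}(x_0,y)$ is finite. The task is to bound $D(x,t)$ uniformly in $x$. I take a coarse equivalence $\phi\colon X\to X$ with $\phi(x_0)=x$, whose control functions $\sigma_\pm$ are independent of $x$. A point $y\in B_t(x)$ is close to the image of $\phi$, uniformly: the coarse inverse of $\phi$ can also be chosen with uniform constants, which I will verify from the definition of coarse homogeneity, possibly after enlarging the constants. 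So $y$ lies within a uniform distance $C$ of some $\phi(z)$ with $z\in B_{t'}(x_0)$, where $t'$ depends only on $t$ and the controls.

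Choose $r\geq \max(r_0,\sigma_+(r_0),C)$. Then $\phi$ maps edges of $P^1_{r_0}(X)$ to pairs at distance at most $r$, which are edges of $P^1_r(X)$. Hence a path of length $m$ from $x_0$ to $z$ in $P^1_{r_0}$ maps to a path of length $m$ from $x$ to $\phi(z)$ in $P^1_r$, and one further edge reaches $y$. This gives $D_r(x,t)\leq D_{r_0}(x_0,t')+1$, which is finite and independent of $x$, as desired. The function $t\mapsto \sup_x D_r(x,t)$ is then a valid upper control, and it is finite-valued; properness in the required sense only needs finiteness and monotonicity.

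The main obstacle is arranging that the single scale $r$ works for all $t$ simultaneously. I fix $r$ once, depending only on $r_0$, $\sigma_+$ and the uniform density constant $C$, none of which depend on $t$. The dependence on $t$ enters only through $t'$, so the argument goes through. A secondary point is justifying the uniform coarse inverse. Should that fail, I would instead invoke Lemma~\ref{Uniform acyclicity vs acyclicity} to pass to coarse uniform $0$-acyclicity, which directly gives uniform path-length bounds in $P_r(X)$ between points at distance at most $t$. This is exactly the required upper control.
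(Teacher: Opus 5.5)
Your proof is correct, but only once coarse homogeneity is read as providing \emph{uniform} coarse equivalences. That means the quasi-inverses $\bar\phi$ and the closeness constant $C$ must also be independent of the points. This does not follow from uniformity of the control functions $\sigma_\pm$ of $\phi$ alone, so it cannot be ``verified from the definition''. For example, the maps $n\mapsto n+k$ on $\Z_{\geq 0}$ are isometric embeddings and coarse equivalences with identical control functions, yet their images are only $k$-dense. You should therefore state it as part of the hypothesis. This is the standard meaning, and the paper tacitly uses it too. The sketch of Lemma~\ref{Uniform acyclicity vs acyclicity} needs $N_a(x)$ and $N_a(y)$ to be uniformly coarsely equivalent, and the maps built in Lemma~\ref{coarselyhomogeneous} do have uniform quasi-inverses.

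With that reading, your transport argument is complete. The point $z=\bar\phi(y)$ lies in $B_{\sigma_+(t)+C}(x_0)$, the path from $x_0$ transports under $\phi$, and one extra edge reaches $y$. This gives the upper control $\rho_+(t)=D_{r_0}(x_0,\sigma_+(t)+C)+1$ at the fixed scale $r=\max\{r_0,\sigma_+(r_0),C\}$. Add $t$ to $\rho_+$ if you want it proper; monotonicity and finiteness alone do not make a function proper.

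Your route differs from the paper's. The paper first uses Lemma~\ref{Uniform acyclicity vs acyclicity} to upgrade coarse connectedness to coarse uniform $0$-acyclicity. It then cites Margolis's Propositions 2.5 and 2.18 in \cite{MR3816385} to get the Rips graph statement.

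Your main argument bypasses both steps. It uses bounded geometry only once, for the finiteness of a single ball $B_{t'}(x_0)$, and uses homogeneity to transport that bound everywhere. This yields an explicit scale and explicit control functions.

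Your fallback is essentially the paper's route. There the Margolis citation is replaced by the elementary remark that a path inside the finite set $N_b(x)$ can be taken simple, so it has length at most $|N_b(x)|$. The fallback relies on the same uniform reading of homogeneity, though, so it is not an independent safety net.

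The paper's approach fits the coarse uniform acyclicity framework it needs in higher degrees anyway. Yours is self-contained and makes transparent exactly where each hypothesis enters.
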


\subsection{Quasimorphisms and quasikernels}

We now introduce homogeneous quasimorphisms and their quasikernels, which will be some of the main objects of study in this article.

\begin{definition}
    A map $\phi\colon G\to\R$ is called a \textit{quasimorphism} if there exists $D\geq0$ such that for all $g,h\in G$\[|\phi(gh)-\phi(g)-\phi(h)|\leq D.\]We call the minimal such $D$ the \textit{defect of $\phi$}, denoted $D(\phi)$. If $\phi$ is a homomorphism on cyclic subgroups, we say that it is 
    \textit{homogeneous}. Such a quasimorphism will be called \textit{nontrivial} if it is nonzero.
\end{definition}

    \begin{remark}
       By \cite[Lemma $2.21$]{calegari2009scl}, every quasimorphism is at bounded pointwise distance from a unique homogeneous one, so there is no loss of generality in considering only the latter, as we shall do for the remainder of this article. The definition of the quasikernel for a general quasimorphism can be pulled back from its unique homogenisation, from which one can easily see the equivalence of the statements in the abstract of this article and those in the main theorems.
    \end{remark}

    \begin{definition}
        For $\phi\colon G\to\R$ a homogeneous quasimorphism, we define the \textit{quasikernel} of $\phi$ by \[\mathcal{Q}ker(\phi)\coloneq \phi^{-1}([-2D(\phi),2D(\phi)]).\] We will always equip this with the subspace metric inherited from $G$.
    \end{definition}

\begin{lemma}\label{Good element for quasimorphisms}
    Given a group $G$ and a nontrivial homogeneous quasimorphism $\phi$, there is an element $a\in G$ such that $-3D(\phi)\leq\phi(a)\leq-2D(\phi)$. 
\end{lemma}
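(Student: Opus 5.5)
The plan is to work entirely with the value set $S\coloneq\phi(G)\subseteq\R$ and exploit homogeneity to move along arithmetic progressions inside $S$.

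First dispose of the degenerate case $D(\phi)=0$. Then $\phi$ is a homomorphism and the required interval is $\{0\}$, so $a=e$ works. From now on assume $D\coloneq D(\phi)>0$. Since $\phi$ is nontrivial there is $g$ with $\phi(g)\neq 0$. Homogeneity gives $\phi(g^{-1})=-\phi(g)$, so after inverting we may assume $c\coloneq\phi(g)<0$. Moreover $\phi(g^n)=nc$ for all $n\in\Z$, so $S\supseteq c\Z$. The target interval $[-3D,-2D]$ has length exactly $D$. Hence if some $g$ satisfies $-D\leq\phi(g)<0$, the progression $\{nc\}_{n\geq 1}$ has step at most $D$ and tends to $-\infty$. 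The first $n$ with $nc\leq -2D$ then satisfies $nc\geq -2D+c\geq -3D$, and $a=g^n$ works.

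It therefore remains to produce an element $h$ with $0<|\phi(h)|\leq D$, or directly one with value in the target interval. I would try to do this with the quasimorphism inequality. For any $x,y$ the value $\phi(xy)$ lies within $D$ of $\phi(x)+\phi(y)$. Start from $g$ with $c\leq -D$, possibly very negative. Consider the elements $g^{n}y$ for a suitable auxiliary $y$, together with $g$-conjugates, which have the same $\phi$-value by homogeneity and conjugation invariance. Choose $y$ by minimality of the defect: for every $\varepsilon>0$ there are $x,y$ with $|\phi(xy)-\phi(x)-\phi(y)|>D-\varepsilon$. This gives a defect term $\delta$ with $|\delta|\in(D-\varepsilon,D]$. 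Using suitable words built from $x,y$ and powers of $g$, I would try to shift a value of the progression $c\Z$ by roughly $\delta$. The aim is to land within an interval of length $D$ adjacent to $-2D$. Concretely, let $n$ be minimal with $nc\leq -2D$. If $nc<-3D$, then $(n-1)c>-2D$, and we want some element whose value lies between these two, achieved by multiplying $g^{n-1}$ by an element of value about $c'$ with error at most $D$.

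The main obstacle is exactly this last step. The naive bound from $\phi(uv)\approx\phi(u)+\phi(v)$ has error $D$, and this matches the length of the target interval, so there is no slack. I would first try a discrete-continuity argument. Write $g=s_1\cdots s_k$ as a word in elements of $G$ chosen so that each prefix changes $\phi$ by at most $D$, using homogeneity to pass to powers $g^N$. The partial products of $g^{N}$ then interpolate between $0$ and $Nc\ll -3D$ in steps of size at most $D$. Some partial product must therefore fall in $[-3D,-2D]$. The delicate point is guaranteeing consecutive steps of size at most $D$, rather than $|\phi(s_i)|+D$. To get this, I would choose the decomposition so that the factors themselves have small $\phi$-value. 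If that fails, I would fall back on a maximality argument on $S\cap(-\infty,0)$ near $-2D$.
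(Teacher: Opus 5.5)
Your reduction is sound. Once you have $h$ with $-D\le\phi(h)<0$, the first power $h^n$ with $\phi(h^n)\le -2D$ satisfies $\phi(h^n)>-2D+\phi(h)\ge -3D$; this is a slightly more flexible version of the paper's final step. But the proposal never produces such an $h$. The step you flag as the main obstacle is left open, and it is the whole content of the lemma.

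None of your suggested routes closes it as stated. The defect $\delta=\phi(xy)-\phi(x)-\phi(y)$ of a near-extremal pair is not a priori the $\phi$-value of any element. So ``shifting by $\delta$'' only controls values up to an error $D$, which is exactly the length of the target interval. In the prefix argument, consecutive prefixes differ by up to $|\phi(s_i)|+D$. You have no way to choose factors of small value without already having the elements you are looking for. Note also that $G$ is an arbitrary group here, not necessarily finitely generated.

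The paper supplies the missing idea through Bavard's identity $D(\phi)=\sup_{g,h}|\phi([g,h])|$ for homogeneous $\phi$ \cite[Lemma~2.24]{calegari2009scl}. If $D>0$, there is a commutator $b$ with $\tfrac23D\le|\phi(b)|\le D$. After possibly inverting, $-D\le\phi(b)\le-\tfrac23D$, and $a=b^3$ works.

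Alternatively, your prefix idea can be rescued by taking the factors to be commutators:
\begin{enumerate}
\item First, $|\phi([x,y])|\le D$ always, by conjugation invariance and $\phi(y^{-1})=-\phi(y)$. So if some commutator has nonzero value, your power argument applies directly.
\item If every commutator has value $0$ but some $w\in[G,G]$ has $\phi(w)\neq 0$, choose a power $w^{\pm N}$ with value $<-2D$ and write it as a product of commutators. Consecutive prefixes now differ by at most $D$, so some prefix lands in $[-3D,-2D]$.
\item Finally, suppose $\phi$ vanishes on $[G,G]$. Writing $(xy)^n=x^ny^nc_n$ with $c_n\in[G,G]$ gives $|n(\phi(xy)-\phi(x)-\phi(y))|\le 2D$ for all $n$. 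This forces $\phi$ to be a homomorphism, i.e.\ $D=0$.
\end{enumerate}
Without one of these two arguments, the proposal does not prove the lemma.
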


\begin{proof}
    Recall that 
    \(D(\phi)=\underset{g, h\in G}{\sup}\{|{\phi([g,h])}|\}\) by~\cite[Lemma~2.24]{calegari2009scl} and choose a commutator $b\in G$ such that $2/3D(\phi)\leq |\phi(b)|\leq D(\phi)$; after possibly replacing $b$ with its inverse, we may assume that $-D(\phi)\leq \phi(b)\leq -2/3D(\phi)$. We conclude by setting $a\coloneq b^3$ so that $-3D(\phi)\leq \phi(a)\leq -2D(\phi)$.
\end{proof}

We now establish a key relationship between the distance of a group element~$g$ to the quasikernel and the value of $\phi(g)$. 
\begin{proposition}\label{filtration vs quasimorphism-moved}
Let $G$ be a finitely generated group and $\phi\colon G\to\R$ be a nontrivial homogeneous quasimorphism.
Then there exist proper increasing functions $\lambda, \mu\colon\N\to\N$ such that 
\begin{enumerate}
    \item 
    for every $g\in N_i(\qker(\phi))$ we have $|\phi(g)|\leq \lambda(i)$, and
    \item 
    for $g\in G$ with $|\phi(g)|\leq iD(\phi)$ we have $g\in N_{\mu(i)}(X)$.
\end{enumerate}
\end{proposition}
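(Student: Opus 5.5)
Here $X$ denotes $\qker(\phi)$, and $S$ a finite generating set defining the word metric. Write $D=D(\phi)$ and $K=\max_{s\in S}|\phi(s)|$. The two parts are proved separately. Part (1) uses only the quasimorphism inequality along geodesic words. Part (2) uses the element $a$ of Lemma~\ref{Good element for quasimorphisms} to walk an element back into $X$ in boundedly many steps.

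For (1), let $g\in N_i(X)$. Write $g=xw$ with $x\in X$ and $w$ a word of length at most $i$ in $S^{\pm1}$. Iterating the defect inequality along $w$ gives
\[
|\phi(g)|\le |\phi(x)|+\sum|\phi(s_j)|+iD\le 2D+i(K+D).
\]
We take $\lambda(i)=\lceil 2D+i(K+D)\rceil+i$. This is increasing and proper.

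For (2), suppose $|\phi(g)|\le iD$. We may assume $\phi(g)>2D$, since otherwise $g\in X$ already. The case $\phi(g)<-2D$ is symmetric, using $a^{-1}$ and homogeneity, $\phi(a^{-1})=-\phi(a)$.

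Set $g_k=ga^k$. The defect inequality gives
\[
\phi(g_{k+1})\in\bigl[\phi(g_k)+\phi(a)-D,\ \phi(g_k)+\phi(a)+D\bigr]\subseteq\bigl[\phi(g_k)-4D,\ \phi(g_k)-D\bigr].
\]
So each step lowers $\phi$ by at least $D$ and at most $4D$.

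Let $k$ be the first index with $\phi(g_k)\le 2D$. Then $k\le i$, since we start at most at $iD$ and drop by at least $D$ per step. By minimality $\phi(g_{k-1})>2D$, so $\phi(g_k)>2D-4D=-2D$. Hence $g_k\in X$.

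Now $d(g,g_k)\le k|a|_S\le i|a|_S$. So $\mu(i)=i(|a|_S+1)$ works, and it is proper and increasing. Here $|a|_S\ge 1$ because $\phi(a)\ne 0$ forces $a\ne e$.

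The main obstacle is the step-size window in (2). We need a step of size at least $D$ so that the walk terminates. We also need a step of size at most $4D$ so that it cannot jump over the window $[-2D,2D]$, whose width $4D$ exactly matches. Lemma~\ref{Good element for quasimorphisms} is tailored to provide this. One should also check $D>0$: if $D=0$, then $\phi$ is a homomorphism and $X=\ker\phi$. In that case one instead uses an element $a$ with $\phi(a)<0$ and the discreteness of $\phi(G)$ near the relevant values. The statement as written, in terms of $iD$, implicitly assumes $D>0$, so I would add a remark restricting to this case.
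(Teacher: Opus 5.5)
Your proof is correct and is essentially the paper's argument: part (1) is the same defect estimate along a word of length at most $i$, and in part (2) your walk $g_k=ga^k$ is the paper's induction on $i$ written out explicitly, with one right-multiplication per step by the element $a$ of Lemma~\ref{Good element for quasimorphisms}, each step lowering $\phi$ by between $D(\phi)$ and $4D(\phi)$. One small correction to your closing remark: the case $D(\phi)=0$ needs no separate treatment, since then $|\phi(g)|\le iD(\phi)$ forces $\phi(g)=0$, so $g\in\qker(\phi)$ and part (2) holds trivially (your suggested fallback via discreteness of $\phi(G)$ would also fail in general, as $\phi(G)\subseteq\R$ need not be discrete).
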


\begin{proof}
   Let $X=\qker(\phi)$. For any $g\in N_i(X)$, there is by definition a sequence of group elements $x,  xs_{n_1}, xs_{n_1}s_{n_2},\dots,xs_{n_1}s_{n_2}\dots s_{n_i}=g$ where $s_{n_j}\in S$ is the symmetric generating set associated to our choice of left-invariant metric. As $\phi\colon G\to \R$ is a quasimorphism, we have \[|\phi(g)-(\phi(x)+\phi(s_{n_1})+\dots+\phi(s_{n_i}))|\leq (i+1)D(\phi).\]Note that as $S$ is a finite set, we can bound $|\phi(s_{n_j})|$ uniformly from above by some $K\in \R_{\geq 0}$. It follows that \[|\phi(g)|\leq iK+(i+3)D(\phi),\] and we conclude (1) by setting $\lambda(i)\coloneq iK+(i+3)D(\phi)$.
   
   To prove (2), we construct the function $\mu:\N\to\N$ by induction on $i$. The case when $i\leqslant 2$ is immediate, setting $\mu(0)=0$. 
   Let $i\geq 3$  and choose $g\in G$ such that $(i-1)D(\phi)<\phi(g)\leq iD(\phi)$ for some $x\in X$ (the case of $\phi(g)$ negative is analogous).
 
   Choose $a\in G$ as in Lemma \ref{Good element for quasimorphisms}, then $(i-5)D(\phi)<\phi(ga)\leq (i-1)D(\phi)$, so by the induction hypothesis we have $ga\in N_{\mu(i-1)}(X)$ for some function $\mu\colon \{1,\dots,i-1\}\to\N$. But then \[d(g, X)\leq d(g,ga)+d(ga, X)\leq d(a,1) +\mu(i-1)\eqcolon \mu(i),\] from which we conclude.
\end{proof}

\begin{corollary}\label{cor:filtration_vs_quasimorphism-quantitative}
    There exist proper increasing functions $\Lambda_{\pm}\colon \R_{\geq 0}\to \R_{\geq 0}$ with \[
        \Lambda_-(|\phi(g)-\phi(h)|)\leq d(g,h.\qker(\phi))\leq \Lambda_+(|\phi(g)-\phi(h)|)
  \]
    for every $g,h\in G$.
\end{corollary}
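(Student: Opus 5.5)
Write $X=\qker(\phi)$ and $D=D(\phi)$. By left-invariance of the word metric, $d(g,h.X)=d(h^{-1}g,X)$. So the plan is to reduce to Proposition~\ref{filtration vs quasimorphism-moved} applied to the single element $h^{-1}g$. What remains is to relate $\phi(h^{-1}g)$ to $\phi(g)-\phi(h)$, which we do using homogeneity and the defect. Since $\phi$ is homogeneous, $\phi(h^{-1})=-\phi(h)$. The quasimorphism inequality then gives
\[
|\phi(h^{-1}g)-(\phi(g)-\phi(h))|\leq D .
\]
Thus $|\phi(h^{-1}g)|$ and $|\phi(g)-\phi(h)|$ differ by at most $D$, which is a uniform constant.

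\emph{Upper bound.} Set $t=|\phi(g)-\phi(h)|$, so $|\phi(h^{-1}g)|\leq t+D\leq \lceil t/D\rceil D+D$. Part (2) of Proposition~\ref{filtration vs quasimorphism-moved} then yields
\[
d(h^{-1}g,X)\leq \mu(\lceil t/D\rceil+1).
\]
We define $\Lambda_+(t)\coloneq \mu(\lceil t/D\rceil+1)+t$. The extra $+t$ makes $\Lambda_+$ proper and strictly increasing. The function $t\mapsto \mu(\lceil t/D\rceil +1)$ is nondecreasing because $\mu$ is increasing. We have $D>0$ since $\phi$ is nontrivial: a homogeneous quasimorphism with $D=0$ is a homomorphism, and in that degenerate case the same argument works using $|\phi(h^{-1}g)|=t$ directly.

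\emph{Lower bound.} Let $i=d(h^{-1}g,X)$, rounded up to an integer; in a word metric it is already an integer. Part (1) of Proposition~\ref{filtration vs quasimorphism-moved} gives $|\phi(h^{-1}g)|\leq\lambda(i)$, hence $t\leq\lambda(i)+D$. Since $\lambda$ is proper and increasing, we can invert it coarsely by defining
\[
\Lambda_-(t)\coloneq \max\{0,\ \max\{j\in\N : \lambda(j)+D< t\}\},
\]
and then smoothing to a proper increasing function on $\R_{\geq 0}$, for instance by piecewise-linear interpolation kept below this step function. We have $\lambda(i)+D\geq t$. If $j$ satisfies $\lambda(j)+D<t\leq\lambda(i)+D$, then $j<i$ because $\lambda$ is increasing. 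Hence $\Lambda_-(t)\leq i=d(g,h.X)$. Properness of $\Lambda_-$ follows from the finiteness of $\lambda(j)$ for each $j$.

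The argument is elementary throughout. The only step needing care is producing a genuinely proper, increasing, real-valued $\Lambda_-$ from the integer-valued inverse of $\lambda$, while keeping it below $d(g,h.X)$. We handle this with a monotone interpolation that lies beneath the step function; since $\lambda(i)\geq i$ by its explicit form $\lambda(i)=iK+(i+3)D$, the linear function $\Lambda_-(t)=\max\{0,(t-4D)/(K+D)\}$ already works, provided $K+D>0$.
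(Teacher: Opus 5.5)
Your proof is correct and is exactly the argument the paper leaves implicit. You reduce by left-invariance to $d(h^{-1}g,\qker(\phi))$, use homogeneity and the defect to get $|\phi(h^{-1}g)-(\phi(g)-\phi(h))|\leq D(\phi)$, and apply parts (1) and (2) of Proposition~\ref{filtration vs quasimorphism-moved}; your explicit affine $\Lambda_-$ is a nice touch.

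One correction to your aside about the degenerate case. Nontriviality does not force $D(\phi)>0$, and when $D(\phi)=0$ part (2) of that proposition is vacuous, so the argument does not ``work the same''. In fact the upper bound genuinely fails for $\phi(a,b)=a+\sqrt{2}\,b$ on $\Z^2$, where $\qker(\phi)=\{0\}$. So the statement, like the paper's use of it, tacitly needs $D(\phi)>0$.
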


The following basic observation allows us to view the quasikernel as coarsely being a normal subgroup.

 \begin{lemma}\label{Normal subgroupness}
  Any nontrivial homogeneous quasimorphism $\phi\colon G\to\R$ is constant on conjugacy classes. In particular, $g\qker(\phi)g^{-1}=\qker(\phi)$ for every $g\in G$.
   \end{lemma}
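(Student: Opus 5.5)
The plan is to compare $\phi(ghg^{-1})$ with $\phi(h)$ by passing to powers and using homogeneity. Fix $g,h\in G$. Since conjugation is a homomorphism, $(ghg^{-1})^n = gh^ng^{-1}$ for every $n\geq 1$. Homogeneity, in the form $\phi(x^n)=n\phi(x)$, then gives
\[
n\,\phi(ghg^{-1}) = \phi(gh^ng^{-1}).
\]

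Next I apply the defect bound twice to split off the two outer factors. This yields
\[
|\phi(gh^ng^{-1}) - \phi(g) - \phi(h^n) - \phi(g^{-1})| \leq 2D(\phi).
\]
Homogeneity also gives $\phi(g^{-1}) = -\phi(g)$, since $\phi$ restricts to a homomorphism on the cyclic subgroup $\langle g\rangle$. Together with $\phi(h^n)=n\phi(h)$, the estimate becomes
\[
n\,|\phi(ghg^{-1}) - \phi(h)| \leq 2D(\phi).
\]
Dividing by $n$ and letting $n\to\infty$ shows $\phi(ghg^{-1})=\phi(h)$. So $\phi$ is constant on conjugacy classes.

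For the statement about the quasikernel, recall that $\qker(\phi)$ is by definition a preimage $\phi^{-1}([-2D(\phi),2D(\phi)])$. Conjugation by $g$ preserves $\phi$, so $g\qker(\phi)g^{-1}\subseteq\qker(\phi)$. Applying the same inclusion with $g^{-1}$ in place of $g$ and conjugating back gives the reverse inclusion, hence equality.

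There is no real obstacle here. The only care needed is to use homogeneity, not just the quasimorphism property, so that the additive error $2D(\phi)$ does not grow with $n$ and disappears after dividing by $n$.
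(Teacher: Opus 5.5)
Your proof is correct and is essentially the paper's argument: both pass to $(ghg^{-1})^n=gh^ng^{-1}$, use homogeneity together with two applications of the defect bound, and obtain $|\phi(ghg^{-1})-\phi(h)|\leq 2D(\phi)/n$ for every $n$. The paper does not spell out the statement about $\qker(\phi)$, and your two-inclusion argument supplies it correctly.
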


   \begin{proof}
       It suffices to note that for every $g,h\in G$, \[|\phi(ghg^{-1})-\phi(h)|=\frac{1}{n}|\phi(gh^ng^{-1})-\phi(h^n)|\leq \frac{2D(\phi)}{n}\] for every $n\in\N$. 
   \end{proof}

Our next lemma will later be used to understand certain groups as having the coarse geometry of an iterated mapping cylinder over the quasikernel of a quasimorphism. 

\begin{lemma}\label{G as a disjoint union of level sets}
    Let $G$ be a finitely generated group and $\phi\colon G\to\R$ be a nontrivial homogeneous quasimorphism. There is a constant $K>0$ depending only on $D(\phi)$ such that if $c\in G$ has $\phi(c)>K$, the following hold. 

    \begin{enumerate}

        \item The translates $\{c^i.\qker(\phi)\}_{i\in\Z}$ are pairwise disjoint.
        \item The subspace $\bigsqcup_{i\in\Z}c^i.\qker(\phi)$ is coarsely dense in~$G$.

    \end{enumerate}
\end{lemma}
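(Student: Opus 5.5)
The plan is to control how $\phi$ behaves on products $c^i x$ with $x\in\qker(\phi)$, using homogeneity and the quasimorphism inequality, and to take $K=5D(\phi)$ (any constant a few multiples of $D(\phi)$ should work). Write $D=D(\phi)$ and $X=\qker(\phi)$. Since $\phi$ is homogeneous, $\phi(c^i)=i\phi(c)$. For $x\in X$ the defect bound gives
\[
|\phi(c^ix)-i\phi(c)|\leq |\phi(x)|+D\leq 3D.
\]
Thus $\phi(c^i.X)$ is contained in the interval $[i\phi(c)-3D,\,i\phi(c)+3D]$.

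For part (1), if $\phi(c)>6D$ then the intervals attached to different $i$ are disjoint, so the translates $c^i.X$ are pairwise disjoint. It is therefore cleanest to take $K=6D$. The only subtlety is the degenerate case $D=0$, where $\phi$ is a homomorphism. In that case $X=\ker\phi$, and disjointness follows directly because $\phi(c^ix)=i\phi(c)$ are distinct values whenever $\phi(c)>0$; so any $K>0$ suffices there.

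For part (2), we must show that every $g\in G$ lies within bounded distance of some $c^i.X$. Given $g$, choose $i\in\Z$ with $|\phi(g)-i\phi(c)|\leq \phi(c)$. Then
\[
|\phi(c^{-i}g)|\leq |\phi(g)-i\phi(c)|+D\leq \phi(c)+D.
\]
Setting $m=\lceil(\phi(c)+D)/D\rceil$ when $D>0$, Proposition~\ref{filtration vs quasimorphism-moved}(2) gives $c^{-i}g\in N_{\mu(m)}(X)$. Left multiplication by $c^i$ is an isometry of the left-invariant word metric, so $g\in N_{\mu(m)}(c^i.X)$. This bound is uniform in $g$, which is exactly coarse density. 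In the case $D=0$ the same argument works with Corollary~\ref{cor:filtration_vs_quasimorphism-quantitative} in place of the proposition, applied with $h=c^i$: it gives $d(g,c^i.X)\leq\Lambda_+(|\phi(g)-\phi(c^i)|)\leq\Lambda_+(\phi(c))$.

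The main obstacle is the dependence of the constants. Disjointness needs only $K$ depending on $D$, but the density constant depends on $c$ through $\phi(c)$. That is acceptable, since the statement asks only that $K$ depend on $D(\phi)$, not the density radius. Beyond that bookkeeping, nothing seems delicate.
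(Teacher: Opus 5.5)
For $D(\phi)>0$ your argument is correct and matches the paper's. Part (1) uses the same $3D(\phi)$ window estimate. The paper proves (2) by citing Corollary~\ref{cor:filtration_vs_quasimorphism-quantitative}, and that corollary's upper bound is exactly your computation: shift by $c^{-i}$, apply Proposition~\ref{filtration vs quasimorphism-moved}(2), then translate back by left multiplication by $c^i$. On your hesitation between $5D$ and $6D$: $K=5D(\phi)$ also suffices for (1). Indeed, $c^ix=c^jy$ with $i\neq j$ would force $c^{i-j}=yx^{-1}$, but $|\phi(yx^{-1})|\leq 5D(\phi)<|i-j|\,\phi(c)$.

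The step that fails is your patch for $D(\phi)=0$. The Corollary is deduced from Proposition~\ref{filtration vs quasimorphism-moved}(2). As you noticed, that proposition says nothing when $D(\phi)=0$, since its hypothesis $|\phi(g)|\leq iD(\phi)$ then just means $g\in\qker(\phi)$. So citing the Corollary there has no support, and in fact both the Corollary and part (2) are false in that case.

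A counterexample: take $G=\Z^2$ and the homomorphism $\phi(a,b)=a+\sqrt{2}\,b$. Then $D(\phi)=0$ and $\phi$ is injective, so $\qker(\phi)$ is the trivial subgroup. For every $c$, the union $\bigsqcup_{i}c^i.\qker(\phi)=\langle c\rangle$ is a cyclic subgroup of $\Z^2$, which is not coarsely dense.

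The paper's one-line proof of (2) has the same blind spot, so the defect lies in the statement rather than in your strategy. One needs either $D(\phi)>0$, or, when $D(\phi)=0$, that $\phi(G)=t\Z$ is cyclic. In the cyclic case:
\begin{itemize}
\item $\qker(\phi)=\ker\phi$ is normal;
\item $\bigsqcup_i c^i\ker\phi=\phi^{-1}(\phi(c)\Z)$ is a subgroup of index $\phi(c)/t$;
\item so coarse density is immediate.
\end{itemize}
That argument, not the Corollary, is the one to give in the degenerate case.
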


\begin{proof}

For any $g\in c^i.\qker(\phi)$ we have the inequality $|\phi(g)-i\phi(c)|\leq 3D(\phi)$. In particular, if $\phi(c)>0$ is chosen to be large, it is not possible that $\phi(c^ix)=\phi(c^jy)$ for some $x,y\in \qker(\phi)$ and $i\ne j$, so the translates $\{c^i.\qker(\phi)\}_{i\in\Z}$ are pairwise disjoint. Part $(2)$ follows directly from Corollary \ref{cor:filtration_vs_quasimorphism-quantitative}.
\end{proof}

We conclude with a basic observation.

\begin{lemma}\label{coarselyhomogeneous}

 If $G$ is a finitely generated group and $\phi\colon G\to\R$ is a nontrivial homogeneous quasimorphism, then $\qker(\phi)$ is a coarsely homogeneous metric space.
    
\end{lemma}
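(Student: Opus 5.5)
To show that $X=\qker(\phi)$ is coarsely homogeneous, I will realise the required self-maps of $X$ as left translations by group elements followed by a closest-point projection back onto $X$. Fix $x,y\in X$ and set $g\coloneq yx^{-1}$, so that left multiplication $L_g\colon G\to G$ is an isometry of the word metric with $L_g(x)=y$.

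The first step is to check that $g.X$ lies at uniformly bounded Hausdorff distance from $X$. For $z\in X$, the quasimorphism inequality gives
\[
|\phi(gz)-\phi(y)-\phi(x^{-1})-\phi(z)|\leq 2D(\phi).
\]
Homogeneity gives $\phi(x^{-1})=-\phi(x)$, and $|\phi(y)|,|\phi(x)|,|\phi(z)|\leq 2D(\phi)$. Hence $|\phi(gz)|\leq 8D(\phi)$, and Proposition~\ref{filtration vs quasimorphism-moved}(2) puts $gz$ in $N_{\mu(8)}(X)$. The same argument with $g^{-1}=xy^{-1}$ shows $X\subseteq N_{\mu(8)}(g.X)$, since $g^{-1}.X\subseteq N_{\mu(8)}(X)$ and $L_g$ is an isometry. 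Thus $d_{\Hd}(X,g.X)\leq \mu(8)$, a bound independent of $x$ and $y$.

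Next, $G$ is a discrete bounded-geometry metric space. Lemma~\ref{Projecting is a coarse equivalence} then says that a closest-point projection $\pi\colon g.X\to X$ is a quasiisometry whose constants depend only on $\mu(8)$. The map $\psi\coloneq \pi\circ L_g|_X\colon X\to X$ is therefore a quasiisometry, in particular a coarse equivalence, with uniform control functions. Its coarse inverse is built in the same way from $g^{-1}$, and Lemma~\ref{lem:composition_of_nearest_point_projections_is_close_to_nearest_point_projection} together with the isometry $L_g$ bounds $d_\infty$ of the compositions from the identity uniformly.

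The remaining issue, and the only delicate point, is exact basepoint matching: we need $\psi(x)=y$. Since $gx=y\in X$, we have $d(gx,X)=0$, so any closest-point projection sends $gx$ to $y$ itself. If necessary, I will simply choose $\pi$ so that points of $g.X\cap X$ are fixed, which is compatible with the definition of a closest-point projection. This gives $\psi(x)=y$ and completes the argument.
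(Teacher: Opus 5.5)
Your proof is correct and follows the paper's argument: translate by the group element taking one point to the other, use the quasimorphism inequality and Proposition~\ref{filtration vs quasimorphism-moved} to bound $d_{\Hd}(g.\qker(\phi),\qker(\phi))$ uniformly, then compose with a closest-point projection and invoke Lemma~\ref{Projecting is a coarse equivalence} for uniform constants. You are slightly more explicit than the paper on three points: you check the reverse Hausdorff inclusion via $g^{-1}$, you exhibit the coarse inverse, and you verify that the projection fixes $gx=y$, so the basepoint condition holds exactly.
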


\begin{proof}
    First note that for every $p,q\in \qker(\phi)$ there is $g\in G$ such that $g.p=q$. As $\phi:G\to\R$ is a quasimorphism, we see that $|\phi(g)|\leq 5D(\phi)$ and further that $|\phi(gx)|\leq 8D(\phi)$ for every $x\in \qker(\phi)$. But now by Corollary \ref{cor:filtration_vs_quasimorphism-quantitative} there is a function $\eta\colon \N\to\N$ such that \[d_{\Hd}(g.\qker(\phi),\qker(\phi))\leq\eta(D(\phi)),\] where $\eta$ is independent of $g,x\in G$. We consider the following composition map $f_{g}\colon \qker(\phi)\to \qker(\phi)$ given by $f_g(x)\coloneq \pi_g(g.x)$ where $\pi_g\colon g.\qker(\phi)\to \qker(\phi)$ is the closest-point projection map. Amongst group elements $g\in G$ such that $d_{\Hd}(g.\qker(\phi),\qker(\phi))\leq\eta(D(\phi))$, the map $f_g\colon \qker(\phi)\to \qker(\phi)$ is readily a coarse equivalence as it is the composition of an isometry and a closest-point projection between subspaces at uniformly bounded Hausdorff distance from one another. By Lemma~\ref{Projecting is a coarse equivalence} the control functions can be chosen uniformly, from which we conclude.
\end{proof}

\subsection{Group cohomology}

    Let~$M$ be an $R[G]$-module. A \textit{projective resolution} of~$M$ over $R[G]$ is an exact sequence \[
        \cdots \to P_1\to P_0\to M\to 0
    \] with $P_i$ a projective $R[G]$-module for all~$i$.

    A group $G$ is of type $\FP_n(R)$ if there is a projective resolution of $R$ over $R[G]$ such that $P_i$ is a finitely generated $R[G]$-module for $i\leq n$.

\begin{definition}
    Let~$G$ be a group, $R$ a ring and $P_\bullet\to R\to 0$ be a projective resolution of $R$ over $R[G]$. If $M$ is a left $R[G]$-module and $N$ a right $R[G]$-module, we define the \textit{group cohomology of $G$ with coefficients in $M$} by \[\Homology^\ast (G;M)\coloneq  \Homology^\ast(\Hom_{R[G]}(P_\bullet,M))\]and the \textit{group homology of $G$ with coefficients in $N$} by\[\Homology_\ast(G;N)\coloneq  \Homology_\ast(N\otimes_{R[G]}P_\bullet).\]
    
\end{definition}

\begin{definition}\label{def: PD group}
    A group $G$ is a \emph{Poincar\'e-duality group of dimension $n$ over $R$} ($\PD^n(R)$ group) if it is of type $\FP_n(R)$ and there are $R$-module isomorphisms \[H^i(G;R[G])\cong \begin{cases}
    R&\text{  if } i=n\\
    0  &\text{  if } i\leq n-1.
\end{cases}
\]
If $H^n(G;R[G])$ has the trivial $R[G]$-module structure then $G$ is said to be \textit{orientable}.
\end{definition}

Given a subgroup $H\leqslant G$, there is a well known-relation between the (co)homology of $H$ and that of $G$, usually referred to as \textit{Shapiro's lemma}. We state only the cohomological version, as this is what we shall generalise.

\begin{lemma}[Shapiro's lemma]\label{thm: Shapiros_lemm}
Given \(H\leqslant G\) a subgroup and \(M\) a left \(R[H]\)-module, there is an isomorphism\[
\Homology ^\ast(H;M)\xrightarrow{\sim} \Homology ^\ast(G;\Hom_{R[H]}(R[G],M)),
\]where the left action of $R[G]$ on $\Hom_{R[H]}(R[G],M)$ is given by $(g.f)(x)\coloneq f(xg)$.
\end{lemma}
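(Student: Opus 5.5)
The plan is the classical one. Fix a projective resolution $P_\bullet\to R\to 0$ over $R[G]$ and observe that it is also a projective resolution of $R$ over $R[H]$. Indeed, $R[G]$ is a free $R[H]$-module, with basis a set of coset representatives for $H\backslash G$. Hence every projective $R[G]$-module, being a direct summand of a free $R[G]$-module, is projective over $R[H]$. Exactness does not depend on the ring. Therefore
\[
\Homology^\ast(H;M)=\Homology^\ast(\Hom_{R[H]}(P_\bullet,M)),
\]
and it remains to identify this cochain complex with $\Hom_{R[G]}(P_\bullet,\Hom_{R[H]}(R[G],M))$ naturally in $P_\bullet$.

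The key step is the tensor--hom adjunction, in the form of the following natural isomorphism for every left $R[G]$-module $P$:
\[
\Phi\colon \Hom_{R[G]}(P,\Hom_{R[H]}(R[G],M))\to\Hom_{R[H]}(P,M),\qquad \Phi(F)(p)\coloneq F(p)(1).
\]
Its inverse is $\Psi(f)(p)(x)\coloneq f(xp)$ for $x\in R[G]$.

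I will check the relevant properties directly from the given action $(g.f)(x)=f(xg)$.
\begin{enumerate}
\item $\Psi(f)(p)$ is $R[H]$-linear in $x$, since $f$ is $R[H]$-linear.
\item $\Psi(f)$ is $R[G]$-linear: $\Psi(f)(gp)(x)=f(xgp)=\Psi(f)(p)(xg)=(g.\Psi(f)(p))(x)$.
\item $\Phi(F)$ is $R[H]$-linear: $F(hp)(1)=(h.F(p))(1)=F(p)(h)=hF(p)(1)$, using $R[H]$-linearity of $F(p)$.
\item $\Phi$ and $\Psi$ are mutually inverse: $\Psi\Phi(F)(p)(x)=F(xp)(1)=F(p)(x)$, and $\Phi\Psi(f)=f$ trivially.
\end{enumerate}

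Naturality in $P$ is clear, since both maps are defined pointwise in $p$. So $\Phi$ commutes with the coboundaries induced by $P_\bullet$, and we obtain an isomorphism of cochain complexes. Taking cohomology gives the stated isomorphism. Independence from the choice of resolution follows from the usual comparison theorem on both sides.

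There is no real obstacle in this argument. The only point needing care is the bookkeeping of left versus right actions in verifying that $\Psi(f)$ is $G$-equivariant, which is exactly why the action on the coinduced module is defined by right multiplication in the argument.
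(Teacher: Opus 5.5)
Your proof is correct and complete. You restrict a projective $R[G]$-resolution of $R$ to $R[H]$, then apply the natural adjunction isomorphism $\Hom_{R[G]}(P,\Hom_{R[H]}(R[G],M))\cong\Hom_{R[H]}(P,M)$, $F\mapsto F(-)(1)$. Your checks of $R[H]$- and $R[G]$-linearity for the action $(g.f)(x)=f(xg)$ are all right.

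The paper states this lemma as a classical fact and gives no proof of it. However, its proof of the coarse analogue, Theorem~\ref{Intro coarse Shapiro}, follows your template exactly:
\begin{enumerate}
\item The pulled-back resolution $P_\bullet|_X$ plays the role of your restricted resolution.
\item The paper's map $f\mapsto\sum_{i\in I}f(e_i)\circ\pi_i$ on $\Hom_{R[G]}(\oplus_I R[G],-)$ is your $\Phi$ specialised to free modules, since both send $f$ to the map $ge_i\mapsto f(e_i)(g)$.
\end{enumerate}
The only difference is that the paper works basis-wise with free modules and checks compatibility with each boundary map by hand. It needs this because in the coarse setting it must also control finite displacement, via Lemma~\ref{hom sets and products}. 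Your abstract adjunction instead applies to arbitrary projective $P$ and gives naturality for free.
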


\subsection{Coarse cohomology}\label{coarse cohomology intro} 

In this section we introduce coarse cohomology in the sense of Margolis \cite{MR123}, who builds upon the work of Roe \cite{MR1147350} and Kapovich--Kleiner \cite{MR2168506} to give a theory sharing many similarities with ordinary group cohomology.
We give several examples that serve to illustrate the definitions as well as to introduce and standardise specific objects that are referred to throughout the remainder of the article. Apart from some examples and basic observations, all material is lifted directly from the first source.

\begin{definition}    Given a metric space $X$, an \textit{$R$-module over $X$} consists of the tuple $\textbf{M}=(M,B,p, \delta,\{B(i)\}_{i\in\N})$ where

    \begin{enumerate}
        \item $M$ is an $R$-module, called the \textit{underlying $R$-module}.
        \item $B$ is an indexing set (often a basis for $M$ as an $R$-module).
        \item $\delta\colon B\to \Hom_R(M,R)$ is a function, where we denote by $\delta_b$ the image of $b$ under $\delta$. We call $\delta_b$ a \textit{coordinate} (this often behaves like a dual basis to $B$). 
        \item $\delta_b(m)=0$ for all $b\in B$ if and only if $m=0$.
        \item $p\colon B\to X$ is a function called the \textit{control function}.
        \item $\{B(i)\}_{i\in\N}$ is a filtration of $B$ such that for all $m\in M$ there is $i\in\N$ such that $\delta_b(m)=0$ for all $b\in B\setminus B(i)$. 
    \end{enumerate}

    For every $m\in M$ we define the support \[\supp_{\textbf{M}}(m)\coloneq \{p(b): b\in B,\delta_b(m)\ne 0\}.\]We also define a filtration of $\textbf{M}$ corresponding to that of the indexing set $B$ \[\textbf{M}(i)\coloneq \{m\in M:  \delta_b(m)=0 \text{ for all } b\in B\setminus B(i)\},\] which are $R$-modules over $X$ also. The support function and filtration of $\M$ are the most important features of a module over a metric space, as in practice this is almost always the data one is required to work with.
\end{definition}

\begin{remark}
    A module $\M$ over a metric space $X$ does not itself interact with the geometry of $X$, but the supports of elements of $\M$ are subsets of $X$. In particular our morphisms of modules, called finite displacement maps, places restrictions on the distortion of the supports of module elements as subsets of the metric space under such a map of $R$-modules. 
\end{remark}

We will repeatedly invoke the following lemma, whose proof is elementary.
\begin{lemma}\cite[Lemma~3.4]{MR123}\label{lem:Margolis-Lemma3_4}
Let $\M$ be an $R$-module over $X$, and let $m_1, m_2 \in \M$ with $r_1, r_2 \in R$. Then
\[
\operatorname{supp}_\M(r_1 m_1 + r_2 m_2) \subseteq \operatorname{supp}_\M(m_1) \cup \operatorname{supp}_\M(m_2).
\]
\end{lemma}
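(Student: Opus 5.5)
The plan is to prove the inclusion pointwise on $X$ by unwinding the definition of the support through the coordinate functionals. By definition,
\[
\operatorname{supp}_\M(m)=\{p(b): b\in B,\ \delta_b(m)\neq 0\},
\]
so it is enough to show that every indexing element $b\in B$ which contributes to the support of $r_1m_1+r_2m_2$ already contributes to the support of $m_1$ or of $m_2$.

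Fix $b\in B$ with $p(b)\in\operatorname{supp}_\M(r_1m_1+r_2m_2)$ and $\delta_b(r_1m_1+r_2m_2)\neq 0$. The only structural input is that each coordinate $\delta_b$ lies in $\Hom_R(M,R)$, so it is $R$-linear, giving
\[
\delta_b(r_1m_1+r_2m_2)=r_1\delta_b(m_1)+r_2\delta_b(m_2).
\]
If both $\delta_b(m_1)$ and $\delta_b(m_2)$ were zero, the right-hand side would vanish, which is a contradiction. Hence $\delta_b(m_j)\neq 0$ for some $j\in\{1,2\}$, and so $p(b)\in\operatorname{supp}_\M(m_j)$.

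A point of the left-hand support may arise from several indices $b$ with the same image $p(b)$. This causes no difficulty: we only need one witnessing $b$, and that same $b$ witnesses membership on the right.

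There is no genuine obstacle. The single step carrying content is the linearity of the coordinates, and none of the filtration data or the nondegeneracy axiom (4) is needed.
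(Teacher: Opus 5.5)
Your argument is correct: the coordinate $\delta_b$ is $R$-linear, so $\delta_b(r_1m_1+r_2m_2)\neq 0$ forces $\delta_b(m_1)\neq 0$ or $\delta_b(m_2)\neq 0$, and the same witnessing index $b$ puts $p(b)$ in the right-hand union. The paper gives no proof of its own; it cites Margolis and calls the lemma elementary, and your linearity argument is exactly that elementary proof.
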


\begin{definition}
    An $R$-module $\M$ over a metric space $X$ is of \textit{finite height} if there exists $i_0\in\N$ such that $\M=\M(i_0)$. The module $\M$ is said to be \textit{proper} if for every bounded subset $C\subseteq X$, the set $p^{-1}(C)\cap B(i)$ is finite for every $i\in\N$.
\end{definition}

\begin{remark}\label{finite height remark}
    finite-height $R$-modules over a metric space $X$ are designed to exactly be the analogue of finitely generated $R[G]$-modules, see Proposition \ref{uniform acyclicity and resolutions}.
\end{remark}

The following will be our notion of morphism of $R$-modules over a metric space, and as such should be thought of as a generalisation of $R[G]$-linear maps between $R[G]$-modules, see Example \ref{Maps between R[G] modules}.

\begin{definition}
    Let $\textbf{M}$ and $\textbf{N}$ be $R$-modules over a metric space $X$. We say that an $R$-module homomorphism $f\colon M\to N$ has \textit{finite displacement}, denoted $f\colon \textbf{M}\to\textbf{N}$, if there exists an increasing function $\Phi\colon \N\to\N$ such that for all $i\in \N$ the following hold:

    \begin{enumerate}
        \myitem{(FD1)}{finite_displacement_1} $f(\M(i))\subset \textbf{N}(\Phi(i))$,
        \myitem{(FD2)}{finite_displacement_2} for all $m\in \M(i)$, we have $\supp_{\textbf{N}}(f(m))\subseteq N_{\Phi(i)}(\supp_{\textbf{M}}(m))$.
    \end{enumerate}

    In this case we say that $f\colon \M\to \textbf{N}$ has displacement $\Phi$. A finite displacement map $f\colon\M\to\textbf{N}$ is called a \textit{geometric isomorphism} if there is a finite displacement map $g\colon\textbf{N}\to\M$ such that $g\circ f=\id$ and $f\circ g=\id$. A geometric isomorphism $f$ is moreover called a \textit{canonical isomorphism} if it is induced by the identity map on the underlying $R$-modules.
\end{definition}

 As one would expect, a composition of finite displacement maps is also of finite displacement, see \cite[Lemma $3.11$]{MR123}. It follows that we have a category of $R$-modules over metric spaces, whose morphisms are given by finite displacement maps.

\begin{remark}\label{staggering}
     By \cite[Remark $3.22$ and Proposition $3.23$]{MR123} we may always assume, up to canonical isomorphisms of domains and codomains, that a countable family of finite displacement maps of $R$-modules $\{f_i\colon\M_i\to \textbf{N}_i\}_{i\in \N}$ over a metric space $X$ have a uniform displacement function $\Phi:\N\to\N$.
\end{remark}

We illustrate these definitions with some examples.

\begin{eg}
    When $M=R$ and $X$ is any metric space, we define the \textit{constant} $R$-module structure of $R$ over $X$ by $M=R$, $B=\{1\}$, $\delta_1(r)=r$, $B(i)=B=\{1\}$ and $p\colon B\to X$ by $1\mapsto x_0$ for some fixed $x_0\in X$; in particular $\supp_R(r)=x_0$ for every $r\in R$. This should be thought of as the analogue of the trivial $R[G]$-module~$R$.
\end{eg}

\begin{eg}\label{free R[G] module}
    Let $G$ be a finitely generated group. We will equip the free $R[G]$-module $R[G]$ with the structure of a module over the metric space $G$. Let $B=G$, which is the $R$-module basis for $R[G]$, and $\{\delta_b\}_{b\in B}$ be the dual $R$-basis to $B$. It is then natural to set $p=\id_G$ and $B(i)=B$ for every $i\in\N$. In particular we have $$\supp_{R[G]}\left(\sum_{g\in G}\lambda_gg\right)=\{g\in G:\lambda_g\ne 0\}.$$ 
    \end{eg}

 A similar structure can be put on any countable direct sum $\oplus_IR[G]$, see Definition~\ref{direct sum def}. From now on, whenever~$\oplus_I R[G]$ is viewed as an $R$-modules over $G$, it will be equipped with this structure.

\begin{eg}\label{free module over X}
 Let $X$ be a metric space and for $n\in \N\cup\{0\}$ define the set of formal tuples \[B_n\coloneq \{[x_0,\dots,x_n]\colon x_j\in X \text{ for all } j\} \] with a filtration 
 \[B_n(i)\coloneq \{[x_0,\dots,x_n]\colon d(x_j,x_k)\leq i \text { for all } j,k\}.\]
 We equip the free $R$-module on the set $B_n$ with the structure of an $R$-module over~$X$, which we will denote by  $\textbf{C}_n(X)$. The maps $\{\delta_b\}_{b\in B}$ will be the dual $R$-basis to $B$. We choose the control function $p\colon B\to X$ to be given by $[x_0,\dots,x_n]\mapsto x_0 $, and the filtration by $\{B_n(i)\}_{i\in\N}$. Note then that $\supp_{\textbf{C}_n(X)}([x_0,\dots ,x_n])=x_0$, and the submodules $\textbf{C}_n(X)(i)$ are given by the free $R$-module on $B_n(i)$ for each $i\in\N$.
 
 The reader might be reminded of the definition of the Rips complex of $X$, and in fact, if one equips $X$ with an arbitrary total order and considers the subset of~$B_n$ consisting of ordered tuples, the free $R$-module on this set can be identified with the $n$th chain module in the simplicial $R$-chain complex of the infinite Rips complex $P_\infty(X)$. If one considers the filtration of each chain module coming from the Rips filtration $\{P_i(X)\}_{i\in\N}$, it is not hard to see that there is a structure of an $R$-module over $X$ arising on these modules in a natural way, see \cite[Lemma $6.25$]{MR123}.
\end{eg}

\begin{eg}\label{Maps between R[G] modules}
    Given a finitely generated group $G$, consider a left $R[G]$-module homomorphism $f \colon  R[G]\to  R[G]$ given by right multiplication by some $\alpha\in R[G]$. We show that this is a finite displacement map when $R[G]$ is equipped with the structure of an $R$-module over~$G$ from Example~\ref{free R[G] module}. Condition~$(\mathrm{FD}1)$ is clear as the filtration of $R[G]$ is trivial. To see $(\mathrm{FD}2)$, note that for every $g\in G$ we have
    \[\supp_{R[G]}(f(g))=\supp_{R[G]}(g\alpha)=g.\supp_{R[G]}(\alpha)\subset B_K(g)\] for $K>0$ such that $\supp(\alpha)\subseteq B_{K}(1)\subseteq G$. A similar argument shows that any $R[G]$-linear map between countably generated free $R[G]$-modules has finite displacement.
\end{eg}

We now introduce a construction due to Margolis which will be employed crucially throughout this paper:  the pullback of modules. We adopt here a different notation, as often we wish to emphasise the spaces involved when the map is implicit.

\begin{definition}\label{pullback def}
    Let $\imath\colon X\to Y$ be a coarse embedding, and $\M$ be an $R$-module over $Y$. We define the \textit{pullback of $\M$ along $\imath$} to be the $R$-module over $X$ given by \[\M|_X=(M,B, \delta,\pi\circ p, \{B|_X(i)\})\] where $\pi\colon Y\to X$ is a choice of closest-point projection and \[B|_X(i):=\{b\in B:d(p(b),\imath(X))\leq i.\] By \cite[Lemma $3.18$]{MR123} this definition does not depend on the choice of $\pi$, 
    up to canonical isomorphism.
\end{definition}

\begin{eg}\label{pullback example}

    Let $H\leq G$ be a subgroup of a finitely generated group. Recall that for any right transversal $T$ of $H$ in $G$, the restriction of scalars $R[G]|_{R[H]}$ is isomorphic as an $R[H]$-module to $\bigoplus_{t\in T}R[H]t$. It is not hard to see that there is a choice of right transversal $T$ of $H$ in $G$ and a closest-point projection $\pi\colon G\to H$ such that for every $g\in G$, $g=\pi(g)t$ for some $t\in T$.
    
    But now it follows that if Definition \ref{pullback def} is applied with this choice of $\pi$, with $H$ equipped with the subspace metric, there is a natural identification of $R$-modules $(R[G]|_H)(i)\cong \oplus_{t\in T_i}R[H]t$ for some finite subset $T_i\subseteq T$. This filtration of $R[G]|_H$ then corresponds to a filtration by $R[H]$-submodules of the restriction of scalars $R[G]|_{R[H]}\cong\oplus_{t\in T}R[H]t$, a situation supporting the perspective of Remark \ref{finite height remark}.
\end{eg}

 We now consider the basic construction of direct sums in the category of modules over metric spaces, the following is \cite[Proposition $3.25$]{MR123} stated for a countable indexing set.

\begin{proposition}\label{direct sum def}
   Let $J$ be a countable indexing set, and for every $j\in J$ let $\M_j=(M_j,B_j,\delta_j,p_j,\{B_j(i)\}_{i\in\N})$ be an $R$-module over a metric space $X$. 
    We define the \textit{direct sum} of $\{\M_j\}_{ J}$ by \[\oplus_{ J}\M_j:=(\oplus_{ J}M_j, \sqcup_{ J}B_j,\sqcup_{ J}\delta_j, \sqcup_{ J}p_j,\{\sqcup_{ J}B_j(i)\}_{i\in\N}) \]where $\delta_j:B\to \Hom_R(M_j,R)\to \Hom_R(\oplus_{ J}M_j, R)$. This equips $\oplus_J \M_j$ with the structure of an $R$-module over $X$ satisfying the following:
    \begin{enumerate}
        \item $(\oplus_{ J}\M_j)(i)=\oplus_{ J}(\M_j(i))$,
        \item $\supp_{\oplus_{ J}\M_j}(\sum_{ J}m_j)=\cup_{ J}\supp_{\M_j}(m_j)$ for all $m_j\in M_j$.
\end{enumerate}
    
\end{proposition}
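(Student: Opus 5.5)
We must verify that the tuple defining $\oplus_J\M_j$ satisfies axioms (1)--(6) of an $R$-module over $X$, and then check properties (1) and (2). First, fix notation: for $b\in B_j$, the coordinate $\delta_b$ on $\oplus_J M_j$ is the composite of the projection $\mathrm{pr}_j\colon\oplus_J M_j\to M_j$ with $(\delta_j)_b\in\Hom_R(M_j,R)$. With this convention, axioms (1), (2), (3) and (5) are immediate. The underlying module is an $R$-module, the indexing set is $\sqcup_J B_j$, each $\delta_b$ is $R$-linear as a composite of linear maps, and $\sqcup_J p_j\colon\sqcup_J B_j\to X$ is a function. The filtration $\{\sqcup_J B_j(i)\}_i$ is increasing because each $\{B_j(i)\}_i$ is.

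For axiom (4), take $m=\sum_J m_j$, a finite sum. Suppose $\delta_b(m)=0$ for all $b\in\sqcup_J B_j$. Then for each $j$ we have $(\delta_j)_b(m_j)=0$ for all $b\in B_j$, so $m_j=0$ by axiom (4) for $\M_j$, and hence $m=0$. The converse direction is trivial.

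For axiom (6), the key point is that $m$ has only finitely many nonzero components $m_{j_1},\dots,m_{j_k}$. Choose $i_s$ witnessing (6) for $m_{j_s}$ in $\M_{j_s}$, and set $i=\max_s i_s$. Since the filtrations are increasing, for any $b\in\sqcup_J B_j\setminus\sqcup_J B_j(i)$ we get $\delta_b(m)=0$. Indeed, if $b\in B_{j_s}$ then $b\notin B_{j_s}(i_s)$, and if $b$ lies in some other $B_j$ then the corresponding component of $m$ is zero. This finiteness of the support in $J$ is the only non-formal step, and it is where the direct sum (rather than the product) is used.

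Finally, both properties follow by unwinding the definitions. For (1), an element $m$ lies in $(\oplus_J\M_j)(i)$ if and only if $\delta_b(m)=0$ for all $b\notin\sqcup_J B_j(i)$. Restricting to each $B_j$, this says exactly that each $m_j$ lies in $\M_j(i)$, that is, $m\in\oplus_J(\M_j(i))$. For (2), we have
\[
\supp(m)=\{p_j(b): j\in J,\ b\in B_j,\ (\delta_j)_b(m_j)\neq 0\}=\bigcup_J\supp_{\M_j}(m_j).
\]
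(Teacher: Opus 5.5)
Your proof is correct. The paper gives no proof of its own here; it quotes the result from Margolis \cite[Proposition~3.25]{MR123}, and your direct check of the axioms and of properties (1)--(2) is the intended routine argument. The only non-formal input is, as you say, that an element of $\oplus_J M_j$ has finitely many nonzero components, used in axiom (6). If ``filtration'' is also meant to include exhaustiveness $\sqcup_J B_j=\bigcup_i\sqcup_J B_j(i)$, this follows immediately from the same property of each $B_j$.
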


The operation of taking the direct sum interacts well with pullbacks.
\begin{lemma}\label{direct sum and pullback}
     If $\imath\colon X\to Y$ is a coarse embedding, then there is a canonical isomorphism of $R$-modules over $X$, $(\oplus_{ J}\M_j)|_X\cong \oplus_{ J}(\M_j|_X)$.
    
\end{lemma}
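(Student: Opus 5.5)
The plan is to take the identity map on the common underlying $R$-module $\oplus_J M_j$ and check that it has finite displacement in both directions; the isomorphism is then canonical by definition. Unwinding Definition~\ref{pullback def} and Proposition~\ref{direct sum def}, both sides have underlying module $\oplus_J M_j$, indexing set $\sqcup_J B_j$, and coordinate maps $\sqcup_J \delta_j$. So it remains to compare the control functions and the filtrations.

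For the control functions, the left-hand side uses $\pi\circ(\sqcup_J p_j)$ for a single closest-point projection $\pi\colon Y\to X$. The right-hand side uses $\sqcup_J(\pi_j\circ p_j)$, where each summand may a priori use its own closest-point projection $\pi_j$. I will fix one closest-point projection $\pi$ and use it for every summand $\M_j|_X$. This is legitimate because, by \cite[Lemma~3.18]{MR123}, the pullback is independent of the choice of projection up to canonical isomorphism. The subtle point is that a countable family of such canonical isomorphisms must assemble into a single finite displacement map. To handle this, I will either note that the displacement bound in \cite[Lemma~3.18]{MR123} depends only on the coarse embedding $\imath$ and not on the module, or appeal to Remark~\ref{staggering} to arrange a uniform displacement function. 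With this uniform choice, the two control functions agree literally.

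For the filtrations, the left-hand side has
\[
(\sqcup_J B_j)|_X(i)=\{b\in\sqcup_J B_j : d(p(b),\imath(X))\le i\},
\]
where $p=\sqcup_J p_j$. The right-hand side has $\sqcup_J (B_j|_X(i))$, and each $B_j|_X(i)$ is cut out by the same condition applied to $p_j$. Hence the two filtrations coincide as subsets for every $i$. Consequently the identity map satisfies (FD1) and (FD2) with $\Phi=\id$ in both directions. By Proposition~\ref{direct sum def}(2), supports are computed summandwise on both sides with the same data, so they agree.

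The step I expect to be the main obstacle is the uniformity across the countably many summands in replacing each $\pi_j$ by the common $\pi$; everything else is bookkeeping. If the displacement constants of \cite[Lemma~3.18]{MR123} turn out to depend on the module, I will instead argue directly. Any two closest-point projections $\pi,\pi'$ satisfy $d(\pi(y),\pi'(y))\le \rho_-^{-1}(2d(y,\imath(X)))$, up to the control functions of $\imath$. For $b$ in filtration level $i$ we have $d(p(b),\imath(X))\le i$, so $d(\pi(p(b)),\pi'(p(b)))$ is bounded by a function of $i$ alone. This gives a uniform $\Phi$ that works simultaneously for all $j\in J$.
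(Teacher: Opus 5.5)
Your proposal is correct and matches the paper's proof. Both fix a single closest-point projection $\pi\colon Y\to X$ for every summand and check that the defining tuples then agree set-theoretically: the control maps agree since $\pi\circ(\sqcup_J p_j)=\sqcup_J(\pi\circ p_j)$, and the filtrations agree since both are cut out by $d(p(b),\imath(X))\le i$, so the identity is a canonical isomorphism. Your extra argument is also correct: two closest-point projections differ, on filtration level $i$, by an amount bounded by a function of $i$ alone. This justifies the uniformity across summands that the paper's choice of a common $\pi$ leaves implicit.
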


\begin{proof}
   First we observe that the pullback module $(\oplus_J \M_j)\vert_X$ is given by
\[
(\oplus_J M_j,\ \sqcup_J B_j,\ \sqcup_J \delta,\ \pi\circ(\sqcup_J p_j),\ \{\sqcup_J B_j\vert_X(i)\}_{i\in\mathbb{N}}),
\]
where $\pi\colon Y\to X$ is a closest-point projection and
\[
\sqcup_J B_j\vert_X(i)=\{b\in \sqcup_J B_j : d(p(b),\imath(X))\le i\}.
\]

We claim that this module is canonically isomorphic to $\oplus_J(\M_j\vert_X)$, and that the elements of the defining tuple agree set-theoretically. Indeed, it is easily verified that
\[
\pi\circ(\sqcup_J p_j)=\sqcup_J(\pi\circ p_j),
\]
and
\[
\{b\in \sqcup_J B_j : d((\sqcup_J p_j)(b),\imath(X))\le i\}
= \sqcup_J \{b\in B_j : d(p_j(b),\imath(X))\le i\}.\qedhere
\]
\end{proof}

We next introduce free and projective $R$-modules over metric spaces. Recall that for an $R$-module $M$, a \textit{projective basis} for $M$ is a set of elements $\{m_b\}_{b\in B}\subset M$ and maps $\delta_b:B\to \Hom_R(M,R)$ such that for any $m\in M$, $m=\sum_{b\in B}\delta_b(m)m_b$. An $R$-module is \emph{projective} if and only if it has a projective basis, see \cite[Proposition $\mathrm{I}.8.2$]{Brown}, and for a free $R$-module this can be chosen to be a free basis.

\begin{definition}\label{def free module over X}
    Let $X$ be a metric space and $\M=(M,B,\delta,p,\{B(i)\})$ be an $R$-module over $X$. We say that $\M$ is a \emph{projective} (resp. \emph{free}) $R$-module over $X$ if the underlying $R$-module $M$ is a projective (resp. free) $R$-module with projective (resp. free) basis $\{m_b\}_{b\in B}$, $\{\delta_b\}_{b\in B}$ indexed by $B$ such that there is an increasing function $\Phi\colon \N\to \N$ with

    \begin{enumerate}
        \item $m_b\in M(\Phi(i))$ for all $b\in B(i)$,
        \item $\supp_{\M}(m_b)\subset N_{\Phi(i)}(p(b))$ for all $b\in B(i)$.
    \end{enumerate}
\end{definition}

    It is not hard to verify that Examples \ref{free R[G] module} and \ref{free module over X} yield free $R$-modules over their respective metric spaces.

\begin{definition}
    A non-negative chain complex of $R$-modules $\textbf{C}_\bullet$ is called an \textit{$R$-chain complex over $X$} if 

    \begin{enumerate}
        \item each $\textbf{C}_i$ is an $R$-module over $X$,
        \item each boundary map $\partial:\textbf{C}_i\to \textbf{C}_{i-1}$ is of finite displacement.
    \end{enumerate}

    An $R$-chain complex is \textit{augmented} if there is a map of $R$-modules $\epsilon:\textbf{C}_0\to R$ such that the composition $\textbf{C}_1\to \textbf{C}_0\to R$ is the trivial map. An $R$-chain complex over $X$ is \textit{proper} (resp. \textit{finite height}) if all chain modules are so. An $R$-linear chain map is a \textit{finite displacement chain map} if all maps on $R$-modules over $X$ are finite displacement; we define \textit{finite displacement chain homotopies (resp. homotopy equivalences, nullhomotopies)} likewise.
\end{definition}

The following is a technical condition we impose on the boundary maps of a chain complex as part of the definition of a projective $R$-resolution over a metric space. Whilst finite displacement maps are designed to model $R[G]$-linearity, uniform preimages is what will distinguish between coarse acyclicity and coarse \textit{uniform} acyclicity.

\begin{definition}\label{uniform preimages}
    An $R$-module homomorphism $f:\M\to \textbf{N}$ between $R$-modules over $X$ has \textit{uniform preimages} if there is an increasing function $\Omega:\N\times\N\to\N$ such that for all $i\in\N$ and all $n\in \textbf{N}(i)\cap \im(f)$, there is some $m\in f^{-1}(n)$ such that 

    \begin{enumerate}
        \item $m\in\M(\Omega(i,D))$,
        \item $\supp_{\M}(m)\subset N_{\Omega(i,D)}(\supp_{\textbf{N}}(n))$,

    \end{enumerate} where $D=\lceil \diam(\supp_{\textbf{N}}(n))\rceil$.
\end{definition}
\begin{definition}\label{projective res over $X$}
   Let $X$ be a metric space. A \emph{projective} (resp. \emph{free}) \emph{$R$-resolution over $X$} is an augmented $R$-chain complex $\textbf{C}_\bullet$ over $X$ with augmentation $\epsilon$ such that

    \begin{enumerate}
        \item each $\textbf{C}_\bullet $ is a projective (resp. free) $R$-module over $X$,
        \item the augmented complex $\textbf{C}_\bullet\xrightarrow{\epsilon}R\to 0$ is exact,

    \item all boundary maps are finite displacement and have uniform preimages,

    \item there is $D\geq 0$ such that for every $x\in X$, there is $m_x\in \textbf{C}_0(D)$ such that $\epsilon(m_x)=1$ and $\supp_{\textbf{C}_0}(m_x)\subset N_D(x)$.
    \end{enumerate}

    We will denote this by $\textbf{C}_\bullet$ when the augmentation is implicit.
\end{definition}

   Most of our use of projective resolutions over metric spaces is restricted to the following key examples.

   \begin{eg}\label{examples of free resolutions}
       For any metric space $X$, there is a free $R$-resolution over $X$ called the \textit{standard resolution}. Recall from Example \ref{free module over X} the free $R$-modules over $X$ \[\textbf{C}_n(X)=R\{[x_0,\dots,x_n]: x_j\in X\text{ for all } j\leq n\}\] for any $n\in\N$. There is then a free $R$-resolution over $X$ given by \[\dots\to\textbf{C}_n(X)\to \textbf{C}_{n-1}(X)\to\dots\to \textbf{C}_2(X)\to\textbf{C}_1(X)\to R[X]\xrightarrow{\epsilon} R\to 0,\] where the boundary maps and augmentation are the obvious ones, see \cite[Definition $6.18$]{MR123}. Likewise, the simplicial $R$-chain complex of the Rips complex $P_\infty(X)$ naturally gives a free $R$-resolution over $X$, see \cite[Lemma $6.25$]{MR123}.
   \end{eg}

\begin{eg}\label{Pullback resolution example} 

Given a coarse embedding $\imath\colon X\to Y$ and $\textbf{C}_\bullet$ a projective $R$-resolution over $Y$, the pullback augmented chain complex $\textbf{C}_\bullet|_X$ is a free $R$-resolution over $X$, see \cite[Proposition $6.16$]{MR123}. This is the analogue of how, given a group cohomological projective resolution of $R$ over $R[G]$ and a subgroup $H\leqslant G$, one can restrict scalars to obtain a projective resolution over $R[H]$.

\end{eg}

\begin{definition}
    Let $\M$, $\textbf{N}$ be modules over a metric spaces $X$, with underlying $R$-modules $M$, $N$. We define $\Hom_{\fd}(\M,\textbf{N})$ to be the $R$-submodule of $\Hom_R(M,N)$ consisting of finite displacement maps. 
\end{definition}

Having defined projective resolutions and $\Hom$-sets in the category of $R$-modules over metric spaces, we are ready to introduce coarse cohomology groups.

\begin{definition}
    Let $X$ be a metric space and $\mathbf{C}_\bullet$ be a free $R$-resolution over $X$, then we define the \emph{coarse cohomology of $X$ with coefficients in $\M$} by \[\Homology^\ast_{\coarse}(X;\M)\coloneq \Homology^\ast(\Hom_{\fd}(\mathbf{C}_\bullet,\M)).\]
\end{definition}

    One of the key features of coarse cohomology, going back to the work of \cite{MR1147350}, is that it recovers the cohomology at infinity \[\Homology^\ast_{\coarse}(G;R)\cong \Homology^\ast(G;R[G])\] when $G$ is a finitely generated group,  see \cite[Corollary $8.9$]{MR123}. We can now also define the natural notion of cohomological dimension in this setting, which turns out to be an improved version of the ordinary virtual cohomological  dimension for groups, for instance being finite for all CAT$(0)$ and hyperbolic groups \cite[Theorem $E$]{MR123}.

\begin{definition}
    Given a metric space $X$ and a ring $R$, we define the \textit{coarse cohomological dimension of $X$ over $R$}, denoted $\ccd_R(X)$, to be the minimal $n$ such that $\Homology^k_{coarse}(X;\M)=0$ for all $k>n$ and all $R$-modules $\M$ over $X$.
\end{definition}

\begin{proposition}{\cite[Proposition $8.2$]{MR123}}
    The coarse cohomological dimension of a metric space $X$ over $R$ is the minimal length of a projective $R$-resolution over~$X$.
\end{proposition}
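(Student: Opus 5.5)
The plan is to adapt the classical argument that $\cd_R(G)$ equals the minimal length of a projective resolution (Brown, VIII.2.1). Throughout, we use that coarse cohomology does not depend on the choice of projective $R$-resolution over $X$. This follows from the coarse analogue of the fundamental lemma of homological algebra: any two projective $R$-resolutions over $X$ are finite-displacement chain homotopy equivalent. The proof of that lemma builds chain maps degree by degree, lifting along boundary maps. Uniform preimages together with condition (4) of Definition~\ref{projective res over $X$} make the lifts finite displacement. Projectivity over $X$ in the sense of Definition~\ref{def free module over X} lets us define each map on the projective basis $\{m_b\}$ and extend via $m\mapsto \sum_b \delta_b(m)\,(\text{lift of } m_b)$.

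\emph{Easy direction.} Suppose there is a projective $R$-resolution $\textbf{P}_\bullet$ over $X$ of length $n$. Compute $\Homology^\ast_{\coarse}(X;\M)$ using $\textbf{P}_\bullet$. Then $\Hom_{\fd}(\textbf{P}_k,\M)=0$ for $k>n$, so $\Homology^k_{\coarse}(X;\M)=0$ for all $k>n$ and all $\M$. Hence $\ccd_R(X)\le n$.

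\emph{Converse.} Assume $\Homology^{k}_{\coarse}(X;\M)=0$ for all $k>n$ and all $\M$. Take the standard free resolution $\textbf{C}_\bullet$ of Example~\ref{examples of free resolutions}, and set $\textbf{K}_k:=\im(\partial\colon \textbf{C}_k\to \textbf{C}_{k-1})$. Give $\textbf{K}_k$ the structure of an $R$-module over $X$ restricted from $\textbf{C}_{k-1}$: same indexing set, coordinates restricted to the submodule, same control function and filtration. With this structure the inclusion $\textbf{K}_k\hookrightarrow\textbf{C}_{k-1}$ has finite displacement, and so does the corestriction $\partial\colon\textbf{C}_k\to\textbf{K}_k$. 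Consider the truncated complex
\[0\to \textbf{K}_n\to \textbf{C}_{n-1}\to\cdots\to \textbf{C}_0\to R\to 0 .\]
It is exact, and its boundary maps inherit finite displacement and uniform preimages from $\textbf{C}_\bullet$. So it suffices to show that $\textbf{K}_n$ is projective over $X$.

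For this, use the vanishing of $\Homology^{n+1}_{\coarse}(X;\textbf{K}_{n+1})$. The corestriction $\partial\colon\textbf{C}_{n+1}\to\textbf{K}_{n+1}$ is a finite-displacement cocycle in $\Hom_{\fd}(\textbf{C}_{n+1},\textbf{K}_{n+1})$. Its vanishing class means it equals $r\circ\partial$ for some $r\in\Hom_{\fd}(\textbf{C}_n,\textbf{K}_{n+1})$. Thus $r$ restricts to the identity on $\textbf{K}_{n+1}$, that is, $r$ is a finite-displacement retraction of the inclusion $\textbf{K}_{n+1}\hookrightarrow \textbf{C}_n$. Then $e:=\id-\iota r$ is a finite-displacement idempotent on $\textbf{C}_n$ with kernel $\textbf{K}_{n+1}$. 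By exactness, $\partial\colon\textbf{C}_n\to\textbf{K}_n$ induces an isomorphism $\im(e)\cong\textbf{K}_n$. Its inverse $s\colon \textbf{K}_n\to\textbf{C}_n$ is given by taking any preimage under $\partial$ and applying $e$; this is well defined because $e$ kills $\ker\partial=\textbf{K}_{n+1}$. Choosing the preimage via uniform preimages shows $s$ has finite displacement, with control depending on the diameter of supports. We get a projective basis for $\textbf{K}_n$ by taking elements $\partial(m_b)$ with coordinates $\delta_b\circ s$, where $\{m_b\},\{\delta_b\}$ is the free basis of $\textbf{C}_n$. Indeed, for $k\in \textbf{K}_n$,
\[k=\partial s(k)=\sum_b\delta_b(s(k))\,\partial(m_b).\]
The support and filtration bounds of Definition~\ref{def free module over X} then follow from those for $m_b$ and the displacement functions of $\partial$ and $s$. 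This makes the truncated complex a projective $R$-resolution of length $n$.

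\emph{Main obstacle.} I expect the hard part to be the geometric bookkeeping in the last step. Two things need care. First, $s$ must genuinely be finite displacement, which requires uniform preimages whose control depends only on the filtration level and not on the diameter of supports. Second, the filtration placed on $\textbf{K}_n$ must be chosen (up to canonical isomorphism, using Remark~\ref{staggering}) so that the projective-basis elements satisfy the two uniform bounds. If the naive restricted structure fails here, I would instead transport the structure of $\im(e)\subseteq \textbf{C}_n$ to $\textbf{K}_n$ along $\partial$, since $\im(e)$ is visibly a finite-displacement summand of a free module over $X$.
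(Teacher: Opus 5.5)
Your easy direction and the start of the converse are fine: the cocycle $\partial\colon\mathbf{C}_{n+1}\to\mathbf{K}_{n+1}$, the finite-displacement retraction $r$, and the idempotent $e=\id-\iota r$ all work. The step after that fails as written, and, as you suspected, it is not a bookkeeping issue. If $\mathbf{K}_n$ carries the structure restricted from $\mathbf{C}_{n-1}$, then $s$ is not of finite displacement, and $\mathbf{K}_n$ is genuinely not projective over $X$.

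The reason is that uniform preimages give control in terms of $i$ and $D=\lceil\diam\supp(k)\rceil$, whereas (FD1)--(FD2) need control in terms of $i$ alone. A fixed filtration level of $\mathbf{K}_n$ contains elements of arbitrarily large diameter. In your sketch of the comparison lemma this is harmless, because there you only lift basis elements $m_b$, whose supports have diameter at most $2\Phi(i)$.

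Here is a concrete counterexample. Take $X=\Z$ and $n=1$; the simplicial chains of the line form a length-one free resolution over $\Z$, so the hypothesis holds. Then $\mathbf{K}_1=\ker\epsilon\subseteq R[X]$ has indexing set $X$, coordinates $\delta_x$ and trivial filtration. A projective basis $\{m_x\}$ as in Definition~\ref{def free module over X} must satisfy $x-y=m_x-m_y$ for all $x,y$. Hence $m_x=x+c$ for one fixed $c$ with $\epsilon(c)=-1$. For $x$ far from $\supp(c)$, the support of $m_x$ contains $\supp(c)\neq\emptyset$, so it does not lie in a uniformly bounded neighbourhood of $x$. Your proposed basis $\{\partial(m_b)\}$ with coordinates $\delta_b\circ s$ does not rescue this, since it is indexed by $B_n$ rather than by the indexing set of $\mathbf{K}_n$.

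The fallback you mention at the end is the correct argument, and it removes $s$ entirely, so make it the main line. Set $\mathbf{P}:=\im(e)\subseteq\mathbf{C}_n$, with the structure restricted from $\mathbf{C}_n$, and let $\Phi_e$ be a displacement function for $e$.
\begin{itemize}
\item $\mathbf{P}$ is projective over $X$: for $p\in\mathbf{P}$ we have $p=e(p)=\sum_b\delta_b(p)\,e(m_b)$, and the bounds on $e(m_b)$ follow from those on $m_b$ together with $\Phi_e$.
\item Since $\partial_n\iota=0$, we have $\partial_n e=\partial_n$. Hence $\partial_n|_{\mathbf{P}}$ is injective, because $\mathbf{P}\cap\ker\partial_n=\im(e)\cap\ker(e)=0$, and its image is $\ker\partial_{n-1}$.
\item $\partial_n|_{\mathbf{P}}$ has uniform preimages. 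If $\tilde k$ is a uniform preimage of $k\in\mathbf{C}_{n-1}(i)$, then $e(\tilde k)\in\mathbf{P}$ is a preimage lying in filtration level $\Phi_e(\Omega(i,D))$. Its support lies in the $(\Phi_e(\Omega(i,D))+\Omega(i,D))$-neighbourhood of $\supp(k)$, and dependence on $D$ is permitted in Definition~\ref{uniform preimages}.
\end{itemize}
Thus $0\to\mathbf{P}\to\mathbf{C}_{n-1}\to\cdots\to\mathbf{C}_0\to R\to0$ is a projective resolution of length $n$.

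The paper gives no proof of this statement and only cites Margolis. However, its use in Lemma~\ref{ccd=2}, where $\mathbf{C}_2$ is replaced by a projective summand $\mathbf{P}$, indicates that this summand version is the intended truncation, not the restricted $\mathbf{K}_n$.
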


We now introduce uniform versions of the coarse homological finiteness properties given in Definition \ref{Coarse acyclicity}, which we then characterise in terms of projective resolutions over metric spaces.

\begin{definition}\label{coarse uniform acylicity}
    A metric space $X$ is \textit{coarsely uniformly $(n-1)$-acyclic} if for all $ r\in\R_{\geq  0}$, there exists $ s\geq r$ such that for all $a\in \R_{\geq 0}$ there exists $ b\geq a$  such that for every $x\in X$, the map on reduced simplicial homology \[\tilde{H}_i(P_r(N_a(x)),R)\to \tilde{H}_i(P_s(N_b(x)),R)\] is trivial for all $i\leq n$.
\end{definition}

\begin{proposition}{\cite[Proposition $9.3$]{MR123}}\label{uniform acyclicity and resolutions}
    A metric space $X$ is \textit{coarsely uniformly $(n-1)$-acyclic over $R$} if and only if there is a projective $R$-resolution $\textbf{C}_\bullet$ over $X$ such that $\textbf{C}_i$ is of finite height for all $i\leq n$.  
\end{proposition}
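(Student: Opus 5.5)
The plan is to prove the two directions separately. Both run through one comparison: between a projective $R$-resolution over $X$ and the simplicial chain complexes of the Rips complexes $P_r(X)$, restricted to balls $N_a(x)$. Throughout, I would use Remark~\ref{staggering} to assume that all boundary maps and the chain maps I build have a common displacement function $\Phi$ and a common uniform-preimage function $\Omega$.

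\emph{($\Leftarrow$)} Let $\textbf{C}_\bullet$ be a projective $R$-resolution over $X$ with $\textbf{C}_i$ of finite height, say $\textbf{C}_i=\textbf{C}_i(h_i)$, for $i\le n$.
\begin{enumerate}
\item For each scale $r$ I build an augmentation-preserving finite displacement chain map $f\colon C_\bullet(P_r(X))\to\textbf{C}_\bullet$ by induction on degree. A vertex $x$ goes to the element $m_x$ from condition (4) of Definition~\ref{projective res over $X$}. For a simplex $\sigma$ of degree $i\ge 1$, $f(\partial\sigma)$ is a cycle (an element of $\ker\epsilon$ when $i=1$) whose support has diameter bounded in terms of $r$. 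By exactness it lies in $\im(\partial)$, and I let $f(\sigma)$ be a uniform preimage. This keeps $f(\sigma)$ in a controlled filtration level and supported near $\sigma$.
\item In the other direction I build $g\colon\textbf{C}_{\le n}\to C_\bullet(P_s(X))$ on the projective bases $\{m_b\}$, sending $m_b$ to a chain supported near $p(b)$, again by induction on degree. Finite height is what makes this possible: there are only finitely many filtration levels $B(h_i)$. Hence $\partial m_b$ has support of uniformly bounded diameter, so one bounded scale $s$ works for every $b$. The existence of a filling at each stage uses that Rips complexes of large scale on bounded sets are cones, so they are acyclic.
\item The composite $g\circ f$ is a chain map on the Rips chains of bounded displacement, so a prism/cone construction gives a chain homotopy $g\circ f\simeq \iota$ at a larger scale, where $\iota$ is the scale-increasing inclusion.
\item Now take an $i$-cycle $z$ in $P_r(N_a(x))$ with $i\le n-1$. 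Then $f(z)$ is a cycle in $\textbf{C}_i$ supported within a controlled distance of $N_a(x)$. Exactness and uniform preimages give $w\in\textbf{C}_{i+1}$ with $\partial w=f(z)$, supported in $N_{b'}(x)$. It follows that $z=\partial\big(g(w)+H(z)\big)$ in $P_s(N_b(x))$, where $H$ is the homotopy from step 3. All the constants are independent of $x$, which is exactly the uniformity required in Definition~\ref{coarse uniform acylicity}.
\end{enumerate}

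\emph{($\Rightarrow$)} Given uniform acyclicity, I choose scales $r_0\le r_1\le\dots\le r_n$ from the definition and try $\textbf{C}_i:=C_i(P_{r_i}(X))$ for $i\le n$. Each is filtered at a single level, so it has finite height, and the control map sends a simplex to its first vertex. A genuine chain complex needs a single scale, so I would rather take simplices of $P_{r_n}(X)$ in degrees up to $n$. Uniform acyclicity is exactly what gives uniform preimages for the boundary maps in degrees $\le n$: a bounded-diameter cycle near $x$ is filled inside $N_b(x)$ at the larger scale. Exactness fails in degree $n$, so I set $\textbf{C}_{n+1}$ to be the free module on all $(n+1)$-simplices of $P_\infty(X)$, filtered by diameter, together with a projection of this onto the $n$-cycles. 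Above that, I continue with the standard resolution of Example~\ref{examples of free resolutions}, which is exact and has uniform preimages via cone constructions. These higher modules need not be of finite height, and that is permitted.

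\emph{Expected main obstacle.} The hardest step will be the bookkeeping in degrees $n$ and $n+1$ of ($\Rightarrow$). There I must simultaneously get exactness, uniform preimages, and finite height up to degree $n$, and truncating a single Rips scale does not immediately give exactness. My first attempt would be to replace $C_n(P_{r_n}(X))$ by its image under the map to $C_n(P_{r_{n+1}}(X))$ induced by inclusion, and then check that uniform preimages survive this modification.
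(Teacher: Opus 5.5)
\textbf{There is a genuine gap in ($\Rightarrow$).} Your ($\Leftarrow$) direction is the standard comparison argument and it works. One small repair: a map out of a projective, non-free module cannot be prescribed on the $m_b$. Define it as $m\mapsto\sum_b\delta_b(m)y_b$, with $y_b=\epsilon(m_b)\,p(b)$ in degree $0$ so that the augmentation is preserved.

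The gap in ($\Rightarrow$) is not confined to degrees $n$ and $n+1$.
\begin{itemize}
\item Uniform $(n-1)$-acyclicity says that an $i$-cycle of $P_r(X)$ bounds in $P_s(X)$ for some $s\geq r$. It never says that it bounds in $P_r(X)$.
\item So once you truncate $C_\bullet(P_{r_n}(X))$ at a single scale, the fillings you invoke for ``uniform preimages'' are not elements of $C_{i+1}(P_{r_n}(X))$.
\item The truncated complex is exact at $C_i$, $1\le i\le n-1$, only if $\tilde H_i(P_{r_n}(X);R)=0$, which the hypothesis does not give. Enlarging $r_n$ just reproduces the problem at the new scale.
\item Your degree-$n$ patch has the same defect. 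The boundary of an $(n+1)$-simplex of $P_\infty(X)$ is not a chain in $P_{r_n}(X)$. A ``projection onto the $n$-cycles'' compatible with $\partial$ is a chain map $C_{\le n}(P_\infty(X))\to C_{\le n}(P_{r_n}(X))$, and building it needs exactly the missing exactness.
\item Passing to the image in $C_n(P_{r_{n+1}}(X))$ changes nothing, since that map on chains is injective.
\item Stacking the standard resolution on top gives exactness at $\textbf{C}_{n+1}$ only if the kernel of your projection is spanned by boundaries of $(n+2)$-simplices, which you do not address.
\end{itemize}

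\textbf{The missing idea.} Build the resolution inductively rather than looking for it among Rips complexes, and prove at each stage that the kernel is coarsely finitely generated. Your steps 1--3 are exactly the tool for this.
\begin{itemize}
\item Suppose $\textbf{C}_0=R[X],\dots,\textbf{C}_i$ ($i\le n-1$) have finite height, with exactness and uniform preimages below degree $i$.
\item Finite height gives $g\colon\textbf{C}_{\le i}\to C_\bullet(P_s(X))$ as in your step 2.
\item Exactness below degree $i$ gives, for a scale $t\geq s$ fixed in the next step, a chain map $f\colon C_{\le i}(P_t(X))\to\textbf{C}_{\le i}$ as in step 1. It also gives a finite-displacement $H$ with $\id-fg=\partial H+H\partial$ in degrees $<i$.
\item Then $\theta:=\id-fg-H\partial$ maps $\textbf{C}_i$ into $Z_i:=\ker\partial_i$, and every $z\in Z_i$ satisfies $z=\sum_b\delta_b(z)\theta(m_b)+f(g(z))$.
\item Now $g(z)$ is an $i$-cycle supported near $\supp(z)$. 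Uniform acyclicity gives $g(z)=\partial c$ with $c$ a chain in $P_{t}(N_\beta(x))$. Here $t$ depends only on $s$ (take this $t$ in the previous step), and $\beta$ depends on $\diam\supp(z)$. Hence $f(g(z))=\sum_\sigma c_\sigma f(\partial\sigma)$.
\item So $Z_i$ is generated, with uniform preimages, by the cycles $\theta(m_b)$ and $f(\partial\sigma)$, where $\sigma$ runs over $(i+1)$-simplices of $P_t(X)$. All of these have uniformly bounded height and support diameter.
\item The free module over $X$ on these symbols is therefore a finite-height $\textbf{C}_{i+1}$ making the complex exact at $\textbf{C}_i$.
\end{itemize}
Once $\textbf{C}_n$ is built, extend to a full resolution by \cite[Proposition~$6.26$]{MR123}.

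The paper itself only imports this proposition from Margolis, so there is no in-paper proof to compare against. But this kind of step --- upgrading the next module to finite height --- is what the paper borrows from the proof of \cite[Proposition~$9.3$]{MR123} in Lemma~\ref{ccd=2}.
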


In this article we will mainly concern ourselves with the study of discrete coarsely homogeneous bounded-geometry metric spaces. The following result allows us to use the less involved Definition \ref{Coarse acyclicity} instead of Definition \ref{coarse uniform acylicity}.

\begin{lemma}\label{Uniform acyclicity vs acyclicity}
    A discrete coarsely homogeneous bounded-geometry metric space $X$ is coarsely uniformly $(n-1)$-acyclic if and only if it is coarsely $(n-1)$-acyclic.
\end{lemma}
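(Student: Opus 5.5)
One direction is immediate: if $X$ is coarsely uniformly $(n-1)$-acyclic, take $a=0$ in Definition~\ref{coarse uniform acylicity} to get, for each $r$, some $s$ with the uniform property. Given a cycle $z$ in $P_r(X)$, its support is finite and hence contained in $N_a(x)$ for some $a$ and $x$. The uniform statement supplies $b$ with $z$ bounding in $P_s(N_b(x))\subseteq P_s(X)$, so $\tilde H_i(P_r(X))\to\tilde H_i(P_s(X))$ is zero for all $i\le n-1$.

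For the converse, assume $X$ is coarsely $(n-1)$-acyclic. Fix $r$ and let $s\ge r$ be as in Definition~\ref{Coarse acyclicity}. Fix also $a$ and a basepoint $x_0\in X$. By bounded geometry, $N_a(x_0)$ is finite, so $P_r(N_a(x_0))$ is a finite complex and its reduced homology in degrees $\le n-1$ is finitely generated over $R$; in fact its chain groups are finitely generated free $R$-modules. Choose finitely many cycles representing generators, or simply a finite generating set of all cycles, which is available in the chain groups of a finite complex. Each such cycle bounds some chain in $P_s(X)$, and this chain has finite support, so there is $b_0$ with all of these chains supported in $N_{b_0}(x_0)$. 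Hence $\tilde H_i(P_r(N_a(x_0)))\to\tilde H_i(P_s(N_{b_0}(x_0)))$ is zero. This gives the required conclusion at the single point $x_0$, but with $s$ depending only on $r$.

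The key step, and the main obstacle, is making $b$ and $s$ independent of the basepoint; this is where coarse homogeneity enters. For $x\in X$, let $\phi_x\colon X\to X$ be a coarse equivalence with $\phi_x(x)=x_0$, with control functions $\rho_\pm$ independent of $x$, and let $\psi_x$ be a coarse inverse with uniform constants; uniformity of these constants for the inverses follows from the uniform control functions. Then $\phi_x$ maps $N_a(x)$ into $N_{a'}(x_0)$ with $a'=\rho_+(a)$, and induces simplicial maps $P_r(N_a(x))\to P_{r'}(N_{a'}(x_0))$ with $r'=\rho_+(r)$. First apply the argument above with $r'$ in place of $r$, obtaining $s'$ and $b_0'$. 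Then push back along $\psi_x$ into $P_{s''}(N_{b''}(x))$, where $s''=\rho_+(s')$ and $b''$ is determined by the controls. Finally, use that $\psi_x\circ\phi_x$ is uniformly close to the identity, say within $C$. The standard prism homotopy then shows that the induced map is chain homotopic to the inclusion $P_r(N_a(x))\to P_{s''+2C}(N_{a+C}(x))$, with the homotopy staying inside $P_{\max(r,\,s'')+2C}(N_{b''+C}(x))$. Consequently every cycle in $P_r(N_a(x))$ bounds in $P_{s}(N_{b}(x))$, where $s$ depends only on $r$ and $b$ depends only on $r$ and $a$. The remaining work is bookkeeping of these constants, which I expect to be routine; the order of quantifiers in Definition~\ref{coarse uniform acylicity} ($s$ chosen before $a$) is respected because $s''$ depends only on $r$.
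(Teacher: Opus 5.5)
Your argument is essentially the paper's. You use coarse homogeneity to reduce to a single basepoint; your transfer along $\phi_x$, filling near $x_0$, transfer back along $\psi_x$ and prism homotopy spell out what the paper delegates to Alonso's Corollary~5. You then use finiteness of $P_r(N_a(x_0))$ to confine finitely many fillings in $P_s(X)$ to a single $P_s(N_b(x_0))$, with $s$ depending only on $r$.

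Two caveats apply, both implicit in the paper's sketch too. First, finite generation of the cycles (or of $\tilde H_i$) of a finite complex needs $R$ Noetherian: $H_2(\mathbb{RP}^2;R)\cong\mathrm{Ann}_R(2)$ need not be finitely generated. This is harmless for the paper's uses, which are over a PID or in degree $0$. Second, uniform constants for the inverses $\psi_x$ do not follow from uniform $\rho_\pm$ alone, since the translations $k\mapsto k+n$ of $\N$ are isometric embeddings with unbounded density constant. This uniformity has to be read into the definition of coarse homogeneity rather than deduced.
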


\begin{proof}
    This is a standard proof, so we give only a sketch. As $X$ is coarsely homogeneous, any two neighbourhoods $N_a(x)$ and $N_a(y)$ are coarsely equivalent, and the control functions do not depend on $x,y\in X$ or $a\in\R_{\geq 0}$. Combining this with the analogue of \cite[Corollary $5$]{MR1293049} for a coarse equivalence, we reduce to the case of a single point $x\in X$. 
    
    Now, $P_r(N_a(x))$ is compact as~$X$ is discrete and of bounded geometry, hence it has finitely generated reduced homology in all degrees. In particular, by applying coarse acyclicity of $X$, every cycle of the requisite dimension in $P_s(N_a(x))$ can be filled in $P_s(N_b(x))$ for some $s\geq r$ and $b\geq a$, from which we conclude.
\end{proof}

Finally, we give the following 
definition of a coarse Poincar\'{e}-duality space over a commutative ring $R$ due to Margolis \cite[Definition $12.1$]{MR123}, inspired by that of Kapovich--Kleiner \cite{MR2168506}.

\begin{definition}
    A proper metric space $X$ is a \textit{coarse Poincar\'{e}-duality space over~$R$} (coarse $\PD^n(R)$ space) if there exists a proper finite-height projective $R$-resolution $\textbf{C}_\bullet$ over $X$, satisfying the following.  There exist finite displacement chain maps $f_\bullet \colon \textbf{C}_\bullet\to \textbf{C}_c^{n-\bullet}$, $\bar{f}_\bullet\colon \textbf{C}_c^{n-\bullet}\to \textbf{C}_{\bullet}$ such that $f_\bullet\circ \bar{f}_\bullet\simeq\id$, $\bar{f}_\bullet\circ f_\bullet\simeq \id$ are finite displacement chain nullhomotopic, where $\textbf{C}_c^{\bullet}:=\Hom_{\fd}(\textbf{C}_\bullet,R)$ (this is a chain complex of modules over metric spaces by \cite[Proposition $5.20$]{MR123}). 
\end{definition}

\begin{remark}\label{Remark about coarse Poincare duality}
It is worth noting that Margolis's definition is slightly more general than that of \cite{MR2168506} and the readily equivalent \cite{HyperbolicPD^n}. The only place in which we refer to the latter definition will be to invoke \cite[Corollary $3$]{HyperbolicPD^n} in the proof of Theorem~\ref{intro-characterising PD^2}. 
\end{remark}

Apart from the proof of Proposition \ref{amenable or linear iso inequality}, we will not need to work with this definition explicitly due to the following coarse cohomological characterisation of coarse $\PD^n(R)$ spaces, which is a consequence of \cite[Theorem B]{MR123}. This is the coarse analogue of Definition \ref{def: PD group}, which deals with Poincar\'e-duality groups.

\begin{theorem}\label{charactersing PD^n}
    Let $R$ be a PID and $X$ be a coarsely homogeneous proper metric space that is coarsely uniformly $(n-1)$-acyclic over $R$ for some $n\geq 1$. If \[\Homology_{\coarse}^i(X;R)\cong \begin{cases}
    R&\text{  if } i=n\\
    0  &\text{  if } i\leq n-1.
\end{cases}
\] then $X$ is a coarse $\PD^n(R)$ space.
\end{theorem}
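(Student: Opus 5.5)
The statement is a consequence of Margolis's Theorem~B, so the plan is to translate its hypotheses into ours and check that they hold.

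First, recall what Theorem~B provides. A coarsely uniformly $(n-1)$-acyclic space $X$ satisfies
\[\Homology^i_{\coarse}(X;R[X]\text{-type modules})\text{-duality}\]
once its compactly supported coarse cohomology $\Homology^\ast_{\coarse}(X;R)$ is concentrated in degree~$n$, where it equals~$R$ (or at least is free of rank one, hence projective). Under this hypothesis, a finite-height projective resolution $\textbf{C}_\bullet$ of length $\leq n$ exists. Its dual complex $\textbf{C}_c^{n-\bullet}$ is then again a projective resolution of $R$ over~$X$. Finally, the fundamental lemma of coarse homological algebra, namely uniqueness of projective resolutions up to finite displacement chain homotopy, gives the chain maps $f_\bullet$, $\bar f_\bullet$ with the required homotopies.

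The verification then runs in the following order.

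\textbf{Step 1: finite-height resolution in degrees $\leq n$.} By Proposition~\ref{uniform acyclicity and resolutions}, coarse uniform $(n-1)$-acyclicity yields a projective resolution $\textbf{C}_\bullet$ over $X$ with $\textbf{C}_i$ of finite height for $i\leq n$.

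\textbf{Step 2: properness.} Properness of $X$, together with coarse homogeneity, is what allows us to choose the $\textbf{C}_i$ proper. For this I would invoke the corresponding statement in \cite{MR123}, which takes the Rips-complex resolution of Example~\ref{examples of free resolutions} and truncates it appropriately.

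\textbf{Step 3: vanishing above degree $n$ and projectivity of $\ker\partial_{n-1}$.} This is where coarse homogeneity is used. It ensures the uniformity constants hold at every basepoint, which is exactly what is needed to show that $\Homology^{n+k}_{\coarse}(X;\M)=0$ for all $k>0$ and all modules $\M$. That vanishing in turn gives $\ccd_R(X)\leq n$, and hence that $\ker(\partial_{n-1})$ is a finite-height projective module over~$X$. The hypothesis that $R$ is a PID enters here: it ensures that $\Homology^n_{\coarse}(X;R)\cong R$ is projective, so the dual complex remains exact.

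\textbf{Step 4: dualise.} The vanishing of $\Homology^i_{\coarse}(X;R)$ for $i\leq n-1$ shows that $\Hom_{\fd}(\textbf{C}_\bullet,R)$, reindexed as $\textbf{C}_c^{n-\bullet}$, is exact except in the top degree. There its homology is $\Homology^n_{\coarse}(X;R)\cong R$, so the reindexed complex is a projective resolution of $R$ over~$X$. Uniform preimages for this dual complex come from \cite[Proposition~5.20]{MR123}.

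\textbf{Step 5: compare resolutions.} Finally, apply the comparison theorem for projective resolutions over metric spaces to obtain $f_\bullet$ and $\bar f_\bullet$, together with the finite displacement nullhomotopies of $f_\bullet\circ\bar f_\bullet-\id$ and $\bar f_\bullet\circ f_\bullet-\id$.

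I expect the main obstacle to be the dualisation in Step~4. One must show that $\textbf{C}_c^{n-\bullet}$ satisfies the uniform-preimage condition and the augmentation condition~(4) of Definition~\ref{projective res over $X$}, and not merely exactness. This is precisely the content of Margolis's Theorem~B. Accordingly, I would quote Theorem~B directly, checking that its hypotheses (coarse homogeneity, properness, and type ``$\FP_n$'') are exactly the ones listed in the statement, rather than reprove it.
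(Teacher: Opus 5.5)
Your proposal lands where the paper does: the paper gives no separate argument and records the statement as a consequence of Margolis's Theorem~B, whose hypotheses (PID, coarse homogeneity, properness, coarse uniform $(n-1)$-acyclicity and the stated coarse cohomology) are the ones listed, so quoting it is the intended proof. Keep your Steps~1--5 out of the proof itself, because some of their heuristics are wrong: $R$ is projective over itself for any ring, so that is not where the PID hypothesis enters, and the passage to a length-$n$ resolution in Step~3 is asserted rather than derived.
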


We conclude with noting the following basic lemma in coarse homological algebra.

\begin{lemma}\label{hom sets and products}
    For any metric space $X$ and a countable family of $R$-modules $\{\textbf{P}_i\}_{i\in I}$ over $X$, there is an isomorphism of $R$-modules \[\Hom_{\fd}(\bigoplus_{i\in I} \textbf{P}_i,\textbf{M})\cong \prod_{i\in I}\Hom_{\fd}(\textbf{P}_i,\textbf{M})\] for any $R$-module $\textbf{M}$ over $X$.
\end{lemma}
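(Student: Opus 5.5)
The map will be written down explicitly and checked in each direction. Let $\iota_j\colon \textbf{P}_j\to\bigoplus_{i\in I}\textbf{P}_i$ be the canonical inclusions. Define
\[\Theta\colon \Hom_{\fd}\Bigl(\bigoplus_{i\in I}\textbf{P}_i,\textbf{M}\Bigr)\to\prod_{i\in I}\Hom_{\fd}(\textbf{P}_i,\textbf{M}),\qquad f\mapsto (f\circ\iota_i)_{i\in I}.\]
The $R$-linear version of this map is the usual isomorphism $\Hom_R(\oplus P_i,M)\cong\prod\Hom_R(P_i,M)$. So it suffices to show two things: $\Theta$ lands in finite displacement maps, and its inverse does too.

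\textbf{Forward direction.} By Proposition~\ref{direct sum def}, $\iota_j(\textbf{P}_j(k))\subseteq(\bigoplus\textbf{P}_i)(k)$, and supports are preserved by $\iota_j$. Hence each $\iota_j$ has displacement $\Phi(k)=k$. The composition $f\circ\iota_j$ therefore has finite displacement by \cite[Lemma $3.11$]{MR123}; in fact it has the same displacement function as $f$.

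\textbf{Inverse direction.} This is the main obstacle. Given a family $(f_i)$ of finite displacement maps, set $f(\sum m_i)=\sum f_i(m_i)$. Here each $m\in\bigoplus\textbf{P}_i$ has only finitely many nonzero components. The trouble is that the $f_i$ may have different displacement functions $\Phi_i$, and $f$ needs a single $\Phi$. For this I invoke Remark~\ref{staggering}, which applies because $I$ is countable. Up to canonical isomorphisms of domains and codomains, it lets us assume that all $f_i$ share a uniform displacement $\Phi$. The codomains are all the same $\textbf{M}$, so some care is needed that these canonical isomorphisms are compatible. If necessary, I will use instead the staggering in \cite[Proposition $3.23$]{MR123}, which refilters the domains $\textbf{P}_i$ only; this induces a canonical isomorphism of $\bigoplus\textbf{P}_i$, and that does not change $\Hom_{\fd}$.

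With a uniform $\Phi$ in hand, take $m=\sum m_i\in(\bigoplus\textbf{P}_i)(k)=\bigoplus\textbf{P}_i(k)$.
\begin{itemize}
\item (FD1): each $f_i(m_i)\in\textbf{M}(\Phi(k))$, so the finite sum $f(m)$ lies there as well.
\item (FD2): by Lemma~\ref{lem:Margolis-Lemma3_4},
\[\supp f(m)\subseteq\bigcup_i\supp f_i(m_i)\subseteq\bigcup_iN_{\Phi(k)}(\supp m_i)=N_{\Phi(k)}(\supp m),\]
where the final equality uses Proposition~\ref{direct sum def}(2).
\end{itemize}
Thus $f$ has finite displacement. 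The two constructions are mutually inverse and $R$-linear, which gives the isomorphism.
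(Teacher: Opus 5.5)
Your proposal follows the paper's proof step for step: restriction along the inclusions, the inverse given by summing the trivially extended $f_i$, a uniform displacement function obtained from Remark~\ref{staggering}, and then (FD1)/(FD2) via Proposition~\ref{direct sum def} and Lemma~\ref{lem:Margolis-Lemma3_4}. The forward direction is fine. The gap is at exactly the point you flagged, and your proposed fix does not close it. The paper's proof makes the same reduction and has the same problem. Staggering replaces each $\textbf{P}_i$ by a canonically isomorphic $\textbf{P}_i'$. Each isomorphism is fine on its own, but the displacement functions of the identity maps $\textbf{P}_i\to\textbf{P}_i'$ are in general not uniform in $i$. Since $(\bigoplus_i\textbf{P}_i)(k)=\bigoplus_i\textbf{P}_i(k)$ contains level $k$ of \emph{every} summand, (FD1) then fails for the identity $\bigoplus_i\textbf{P}_i\to\bigoplus_i\textbf{P}_i'$. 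So $\bigoplus_i\textbf{P}_i'$ is not canonically isomorphic to $\bigoplus_i\textbf{P}_i$, and swapping one for the other does change $\Hom_{\fd}$.

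In fact $\Theta$ need not be surjective. Take $I=\N$ and a point $x_0\in X$. Let each $\textbf{P}_i$ be $R$ with the constant structure, so $\textbf{P}_i(k)=R$ for all $k$. Let $\textbf{M}$ be the free module $\bigoplus_{n\in\N}Re_n$ with basis $\N$, dual coordinates, control map constant at $x_0$, and filtration $B(k)=\{0,\dots,k\}$. Each $f_i\colon r\mapsto re_i$ has finite displacement, with $\Phi_i(k)=\max\{k,i\}$. However, the only $R$-linear $f$ with $f\circ\iota_i=f_i$ maps every level $(\bigoplus_i\textbf{P}_i)(k)=\bigoplus_iR$ onto all of $M$, which lies in no $\textbf{M}(j)$. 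Staggering to $\textbf{P}_i'(k)=0$ for $k<i$ does give the $f_i$ the uniform displacement $\Phi(k)=k$, but then $(\bigoplus_i\textbf{P}_i')(k)=\bigoplus_{i\le k}R$, which illustrates the failure above.

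What your computation does prove is weaker but still useful. First, $\Theta$ is an isomorphism onto the submodule of families admitting a common displacement function. Second, $\Theta$ is an isomorphism outright if the direct sum is given a staggered filtration in which each level meets only finitely many summands, e.g.\ $\bigsqcup_{i\le k}B_i(k)$. In that case $\Psi(k)=\max_{i\le k}\Phi_i(k)$ works in your (FD1)/(FD2) check, and the inclusions $\iota_i$ still have finite displacement. This staggered structure is also the one the Rips chain modules in the proof of Theorem~\ref{Intro coarse Shapiro} naturally carry, since only finitely many $G$-orbits of simplices have diameter at most $k$.
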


\begin{proof}
    We define a map 
    \begin{align*}
        F\colon \Hom_{\fd}(\bigoplus_{i\in I} \textbf{P}_i,\textbf{M})&\to \prod_{i\in I}\Hom_{\fd}(\textbf{P}_i,\textbf{M})\\
        f'&\mapsto (f'|_{\textbf{P}_i})_{i\in I}. 
    \end{align*}
    It is clear that the map $F$ is a homomorphism of $R$-modules, where well-definedness follows from the fact that each $f'|_{\textbf{P}_i}\colon \textbf{P}_i\to \textbf{M}$ is a finite displacement map of $R$-modules over $X$.
    We obtain an inverse to~$F$ by
    \begin{equation*}
        (f_i)_{i\in I}\mapsto f:=\sum_{i\in I}\tilde{f}_i,    
    \end{equation*}
    %$(f_i)_{i\in I}\in \prod_{i\in I}\Hom_{\fd}(\textbf{P}_i,\M)$ to the sum $f=\sum_{i\in I}f_i$,
    where we extend each $f_i:\textbf{P}_i\to\M $ trivially to a map $\tilde{f}_i:\bigoplus_{i\in I}\textbf{P}_i\to \M$. 
    Every  $x\in\bigoplus_{i\in I} \textbf{P}_i$ has only finitely many nonzero summands, so that $f(x)$ is a finite sum. It follows that the map~$f$ is well-defined. 

    We have to prove that~$f$ has finite displacement.
    By Remark \ref{staggering}, we may assume that there is a uniform choice of displacement function $\Psi\colon \N\to\N$ for all $f_i\colon \textbf{P}_i\to\textbf{M}$. 
    Recall that the direct sum becomes a module over a metric  space by choosing $ B(j)=\bigsqcup_{i\in I} B_i(j)$,  
    so it is clear that for each $m\in (\bigoplus_{i\in I} \textbf{P}_i) (j)=\bigoplus_{i\in I} \textbf{P}_i(j)$, its image $f(m)=\sum_{i\in I}\tilde{f}_i(m_i)$ is contained in $\textbf{M}(\Psi(j))$, because this holds for every summand as each $f_i$ has displacement~$\Psi$, verifying~\ref{finite_displacement_1}.

    It remains to show that for $m\in (\bigoplus_{i\in I} \textbf{P}_i) (j) $ and $y\in \supp_{\textbf{M}}(f(m))$, we have that $y\in N_{\Psi(j)}(\supp_{\bigoplus_{i\in I} \textbf{P}_i}(m)$. But Lemma~\ref{lem:Margolis-Lemma3_4} implies that $y\in \supp_{\textbf{M}}(f(m_i))$ for some $m_i\in \textbf{P}_i$, and $f(m_i)=f_i(m_i)$ by definition. As $f_i$ has displacement $\Psi$ over $g$, we have\[\supp_{\textbf{M}}(f_i(m_i))\subset N_{\Psi(j)}(\supp_{\textbf{P}_i}(m_i)),\] and $y\in \supp_{\M}(f(m_i))=\supp_{\M}(f_i(m_i))$, which confirms~\ref{finite_displacement_2}.
\end{proof}

\section{Coarse coinduced modules and Shapiro's Lemma}\label{section:shapiros_lemma}

In this section we define a coarse coinduced $R[G]$-module associated to the data of a coarse embedding of a metric space $X$ into a group $G$ along with an $R$-module over $X$. This module should be thought of as an analagoue of the coinduced module $\Hom_{R[H]}(R[G],M)$ attached to a subgroup inclusion $H\leqslant G$ and left $R[H]$-module~$M$. We use this to state and prove an analogue of Shapiro's lemma in this context, which will relate the coarse cohomology of the coarsely embedded space to the group cohomology of the ambient group; we are particularly interested in the case of the embedded space being the quasikernel of a quasimorphism. 

Recall from Definition \ref{pullback def} that for a coarse embedding $\imath\colon X\to G$ of a  metric space~$X$ into a finitely generated group~$G$, one can construct the pullback module $R[G]|_X$, which is an $R$-module over~$X$. The support of an element $m\in R[G]\vert_X$ is described by the image of its support in~$R[G]$ under a closest-point projection onto~$X$, and the filtration of $B=G$ is given by subsets $B(i)\coloneq N_i(\imath(X))$.

\begin{definition}\label{coinduced action}
    Given a coarse embedding $\imath\colon X\to G$ of a metric space into a finitely generated group, and an $R$-module $\textbf{M}$ over $X$, we define the \textit{coarse coinduced module} to be the right $R$-module \[\Hom_{\fd}( R[G]|_X, \textbf{M}).\] The $R$-action is given by $(f.r)(x):=f(xr)=rf(x)$ for any $r\in R$ and $x\in R[G]|_X$, where $f\colon R[G]|_X\to\M$ is a finite displacement map.
\end{definition}
We now show that the coarse coinduced module is a right $R[G]$-module as follows. 
\begin{proposition}\label{action on coarse coinduced module}
    If $\imath\colon X\to G$ is a coarse embedding of a metric space into a finitely generated group and $\M$ is an $R$-module over $X$, then
    $\Hom_{\fd}(R[G]|_X,\textbf{M})$ is a right $R[G]$-module with action given by 
    \begin{equation*}
        (f.h) (x):=f(xh^{-1}), 
    \end{equation*}
    where $f\in\Hom_{\fd}(R[G]|_X,\textbf{M})$,  $h\in G$ and $x \in R[G]|_X$.
   
\end{proposition}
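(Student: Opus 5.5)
Two things need checking. First, that $f.h$ again lies in $\Hom_{\fd}(R[G]|_X,\M)$. Second, that the formula satisfies the axioms of a right $R[G]$-action commuting with the given $R$-structure.

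The algebraic part is immediate once well-definedness is known. The map $\rho_h\colon R[G]\to R[G]$, $x\mapsto xh^{-1}$, is $R$-linear, so $f.h=f\circ\rho_h$ is $R$-linear. We have $(f.1)=f$, and
\[((f.h).k)(x)=(f.h)(xk^{-1})=f(xk^{-1}h^{-1})=f(x(hk)^{-1})=(f.(hk))(x),\]
which is the right action law. Compatibility with the $R$-action is clear because $\rho_h$ is $R$-linear. Extending $R$-linearly from $G$ to $R[G]$ then gives a right $R[G]$-module structure.

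The substantive step is showing that $\rho_h$ is a finite displacement map $R[G]|_X\to R[G]|_X$. Then $f.h=f\circ\rho_h$ has finite displacement by \cite[Lemma $3.11$]{MR123}. Note that $\rho_h$ sends the basis element $g$ to the basis element $gh^{-1}$, and $d(g,gh^{-1})=|h|_S$ by left-invariance of the word metric.

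For \ref{finite_displacement_1}, an element of $(R[G]|_X)(i)$ is supported on basis elements $g$ with $d(g,\imath(X))\le i$. Then $d(gh^{-1},\imath(X))\le i+|h|_S$, so $\rho_h$ maps $(R[G]|_X)(i)$ into $(R[G]|_X)(i+|h|_S)$.

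For \ref{finite_displacement_2}, the support in $R[G]|_X$ of $g$ is $\pi(g)$ for a closest-point projection $\pi\colon G\to X$. I must bound $d_X(\pi(g),\pi(gh^{-1}))$ whenever $d(g,\imath(X))\le i$. We have
\[d_G(\imath\pi(g),\imath\pi(gh^{-1}))\le d(g,\imath\pi(g))+|h|_S+d(gh^{-1},\imath\pi(gh^{-1}))\le i+|h|_S+(i+|h|_S).\]
Properness of the lower control function $\rho_-$ of $\imath$ then bounds $d_X(\pi(g),\pi(gh^{-1}))$ by a constant $\Phi_h(i)$ depending only on $i$ and $|h|_S$. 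For a general element $m=\sum\lambda_g g$, the support of $\rho_h(m)$ is the union of the supports of the $gh^{-1}$ with $\lambda_g\neq0$, by Lemma~\ref{lem:Margolis-Lemma3_4} (with equality here since $\rho_h$ permutes the basis). This gives $\supp(\rho_h(m))\subseteq N_{\Phi_h(i)}(\supp(m))$. Taking $\Phi(i)=\max(i+|h|_S,\Phi_h(i))$, increasing, finishes the argument.

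The main (mild) obstacle is the second condition. One needs the filtration level to control the distance from $g$ to $\imath(X)$, so that nearby group elements project to nearby points of $X$. Without this, closest-point projections of far-away points could jump. This is exactly why the filtration of the pullback module is by distance to $\imath(X)$.
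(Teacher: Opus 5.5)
Your proof is correct and follows essentially the same route as the paper: right multiplication by $h^{-1}$ raises the filtration level by at most $|h|$, and the closest-point projections of $g$ and $gh^{-1}$ stay uniformly close because $\imath$ is a coarse embedding. The differences are organisational. You package the estimate as finite displacement of $\rho_h$ and invoke the composition lemma \cite[Lemma~3.11]{MR123} rather than estimating $f\circ\rho_h$ directly. You also check the action axioms explicitly, and you correctly use properness of the lower control function $\rho_-$ where the paper's text writes $\rho_+$.
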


\begin{proof}
    It suffices to show that $f.h:R[G]|_X\to\M$ is a finite displacement map for every $f\in \Hom_{\fd}(R[G]\vert_X, \M)$ and $h\in G$.
    For $g\in G$ we observe that
    \[g\in (R[G]|_X)(i)\quad\text{implies} \quad g\in N_i(\imath(X)),\] so in particular we can bound  
    \begin{equation}\label{eq:distance_upper_bound_ghinverse_to_X}
        d(gh^{-1}, \imath(X))\leq d(g,\imath(X))+d(g,gh^{-1}) \leq i+|h|. 
    \end{equation}
    As $f$ is a finite displacement map with displacement function~$\Phi$, we see that
    \begin{equation*}
        (f.h)(g)=f(gh^{-1})\in \M(\Phi(i+|h|)).
    \end{equation*}
    
     This is independent of the choice of~$g\in (R[G]|_X)(i)$, from which we conclude that \ref{finite_displacement_1} holds as we may extend to $R$-linear combinations of group elements $g\in G$ to see that $f((R[G]|_X)(i))\subseteq \M(\Phi(i))$. 
    
    To verify condition~\ref{finite_displacement_2}, note that for every $g\in N_i(\imath(X))$ we have by~\eqref{eq:distance_upper_bound_ghinverse_to_X}  
    and the triangle inequality that  \[d(\imath(\pi(g)),\imath(\pi(gh^{-1})))\leq 
    2i+2|h|,\] where $\pi\colon G\to X$ is a closest-point projection. As $\imath\colon X\to G$ is a coarse embedding, there exists a proper function $\rho_+$ such that 
    \[d(\pi(g), \pi(gh^{-1}))\leq \rho_+(2i+2|h|).\] 
But now this implies that there is an inclusion 
    \[N_{\Phi(i)}(\pi(gh^{-1}))\subseteq N_{\rho_+(2i+2|h|)+\Phi(i)}(\pi(g)).\] 
    By the finite displacement condition on~$f$, for every $g\in (R[G]|_X)(i)$ we have that 
    \[\supp_{\M}(f(gh^{-1}))\subseteq N_{\Phi(i)}(\pi(gh^{-1})),\] and by Lemma~\ref{lem:Margolis-Lemma3_4} the same containment holds for $R$-linear combinations of group elements $g\in G$. We conclude that  \[\supp_{\M}((f.h)(g))=\supp_{\M}(f(gh^{-1}))\subseteq N_{\rho_+(2i+2|h|)+\Phi(i)}(\pi(g)),\] so that ~\ref{finite_displacement_2} follows.
\end{proof}
\begin{remark}\label{actions on coinduced module}
    
  Note that we may view $\Hom_{\fd}(R[G]|_X,\textbf{M})$ as a left $R[G]$-module in the standard way by setting $h\ast f(x)=f(xh)$.
\end{remark}

It will be useful to establish an identification of the coarse coinduced module when $\M=R$ with an $R[G]$-submodule of $R^G$ consisting of functions whose support grows in a controlled way in increasingly large neighbourhoods of~$\imath(X)\subseteq G$.
\begin{lemma}\label{Coarse coinduced module when $M=R$}
    If $\imath\colon X\to G$ is a coarse embedding, then we have the following isomorphism of right $R[G]$-modules
    \[\Hom_{\fd}(R[G]|_X,R)\cong\{f\colon G\to R \mid  \supp(f)\cap N_i(\imath(X))\text{ is finite for all }i\in\R \},\] where the right hand side is an $R[G]$-submodule of $R^G$.

\end{lemma}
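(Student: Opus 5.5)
The natural map sends a finite displacement map $f\colon R[G]|_X\to R$ to the function $\hat f\colon G\to R$, $\hat f(g)\coloneq f(g)$, i.e.\ we restrict $f$ to the $R$-basis $G$ of $R[G]$. Conversely, a function $F\colon G\to R$ extends $R$-linearly to $\tilde F\colon R[G]\to R$, which is well defined because every element of $R[G]$ is a finite sum. These two assignments are mutually inverse on the level of all $R$-linear maps $R[G]\to R$ versus all functions $G\to R$, since $\Hom_R(R[G],R)\cong R^G$. The content of the lemma is therefore two things. First, finite displacement of $f$ must be equivalent to the finiteness condition on $\supp(\hat f)\cap N_i(\imath(X))$. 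Second, the identification must intertwine the right $R[G]$-actions. For the action on $R^G$ I will take $(F.h)(g)=F(gh^{-1})$, which is forced by Proposition~\ref{action on coarse coinduced module}; equivariance is then a one-line check on basis elements.

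For the characterisation of finite displacement, recall the structure of the constant module $R$ over $X$: $B=\{1\}$, $B(i)=B$, and $\supp_R(r)=\{x_0\}$ for $r\neq0$. Condition \ref{finite_displacement_1} is therefore vacuous. Condition \ref{finite_displacement_2} says the following. Take $m\in(R[G]|_X)(i)$, i.e.\ $m$ is supported on $N_i(\imath(X))$, with $f(m)\neq0$. Then $x_0\in N_{\Phi(i)}(\pi(\supp_{R[G]}(m)))$, where $\pi\colon G\to X$ is the closest-point projection.

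Suppose first that $f$ has finite displacement and let $g\in\supp(\hat f)\cap N_i(\imath(X))$. Applying \ref{finite_displacement_2} to $m=g$ gives $d_X(\pi(g),x_0)\le\Phi(i)$. Since $d(g,\imath\pi(g))\le i$, and $\imath$ has upper control $\rho_+$, the element $g$ lies in the ball in $G$ of radius $i+\rho_+(\Phi(i))$ about $\imath(x_0)$. This ball is finite because $G$ is a finitely generated group.

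Conversely, suppose the support condition holds. Define $\Phi(i)$ to be the maximum of $d_X(\pi(g),x_0)$ over the finite set $\supp(F)\cap N_i(\imath(X))$, adjusted to be increasing in $i$. Let $m\in(R[G]|_X)(i)$ with $\tilde F(m)\neq0$. Then some $g\in\supp_{R[G]}(m)$ has $F(g)\neq0$ and $g\in N_i(\imath(X))$. Hence $x_0\in N_{\Phi(i)}(\pi(g))$, which is contained in $N_{\Phi(i)}(\supp_{R[G]|_X}(m))$. This gives \ref{finite_displacement_2}.

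I expect the main obstacle to be bookkeeping rather than any real difficulty. One must be careful that supports in the pullback module are computed after applying $\pi$, and that $\Phi$ is only required to be increasing, which we arrange by taking cumulative maxima. It also needs checking that the right-hand side is closed under the action $F\mapsto F.h$. This follows because $N_i(\imath(X))h\subseteq N_{i+|h|}(\imath(X))$, so $\supp(F.h)\cap N_i(\imath(X))$ equals $\big(\supp(F)\cap N_i(\imath(X))h^{-1}\big)h$ and is contained in $\big(\supp(F)\cap N_{i+|h|}(\imath(X))\big)h$, which is finite. Alternatively, closure is automatic once the bijection and Proposition~\ref{action on coarse coinduced module} are in place.
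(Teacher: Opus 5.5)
Your proof is correct, but it takes a more elementary route than the paper.

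\textbf{The paper's route.} It does not verify the finite displacement conditions by hand. It notes that $R[G]|_X$ is a proper module (Margolis, Lemma 3.27(3)) and then invokes Margolis's Lemmata 5.6 and 5.9, which describe $\Hom_{\fd}(\M,R)$ for proper $\M$. This gives an intermediate description: those $f\colon G\to R$ for which $\pi(\supp(f)\cap N_i(\imath(X)))$ is finite for every $i$. The paper then removes $\pi$ using two facts: $\supp(f)\cap N_i(\imath(X))$ lies in the $i$-neighbourhood of $\imath\pi(\supp(f)\cap N_i(\imath(X)))$, and balls in $G$ are finite.

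\textbf{Your route.} You unpack the definitions directly for the constant module $R$ over $X$. (FD1) is vacuous. (FD2) says exactly that $\pi$ maps $\supp(\hat f)\cap N_i(\imath(X))$ into the $\Phi(i)$-ball about $x_0$. You convert this into finiteness via the upper control function of $\imath$, and for the converse you build $\Phi$ as a cumulative maximum. In effect you reprove the special case of Margolis's lemmas that is needed here.

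\textbf{Comparison.} The paper's argument is shorter and places the lemma inside Margolis's general description of compactly supported duals of proper modules. Yours is self-contained and shows exactly where the coarse embedding hypothesis and the finiteness of balls in $G$ enter. It also checks explicitly that the right-hand side is closed under $F\mapsto F.h$, which the paper only asserts.

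\textbf{Minor bookkeeping.} Two small points should be added. Take $\rho_+$ nondecreasing, or use the bornologous bound directly, so that $d_X(\pi(g),x_0)\le\Phi(i)$ gives $\rho_+(\Phi(i))$. Round $\Phi$ up to integer values. Both are harmless.
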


\begin{proof}
    Observe that $R[G]|_X$ is proper \cite[Lemma $3.27$, $(3)$]{MR123}, and that one can identify $R$-linear maps $f\colon R[G]\to R$ with functions $f\colon G\to R$.
    Then by \cite[Lemmata~$5.6$ and~$5.9$]{MR123}
     there exists an isomorphism of $R$-modules
    \[\Hom_{\fd}(R[G]|_X,R)\cong\{f\colon G\to R\colon  \pi(\supp(f)\cap N_i(\imath(X))) \text{ is finite for all } i\in\N \},\] 
    where $\pi\colon G\to \imath(X)$ is a closest-point projection.
    We claim that $\supp(f)\cap N_i(\imath(X))$ is finite if $\pi(\supp(f)\cap N_i(\imath(X)))$ is so. As~$G$ is a discrete bounded-geometry metric space, 
    it is sufficient to note that
    \[\supp(f)\cap N_i(\imath(X))\subset N_i(\pi(\supp(f)\cap N_i(\imath(X)))),\] 
    where the latter is finite, from which we conclude the existence of an $R$-module isomorphism between our modules of interest.
    
The fact that the $R$-module isomorphism extends to an isomorphism of $R[G]$-modules follows directly from the natural right action of $G$ on $R^G$ given by $(f.g)(x)=f(xg^{-1})$, which coincides with the action defined in Proposition~\ref{action on coarse coinduced module} upon identifying functions $f:G\to R$ with $R$-linear maps $f:R[G]\to R$.
\end{proof}

We are now ready to prove a version of the Shapiro lemma in coarse cohomology.

\numberedtheorem{Theorem~\ref{Intro coarse Shapiro}}{}{Given a coarse embedding $\imath\colon X\to G$ of a metric space into a finitely generated group, there is an isomorphism of cohomology groups \[\Homology^\ast_{\coarse}(X;\textbf{M})\xrightarrow{\sim} \Homology^\ast(G; \Hom_{\fd}( R[G]|_X, \textbf{M}))\]for every $R$-module $\M$ over the metric space $X$.}

\begin{proof}
Let us equip the simplicial chain complex of $P_\infty(G)$ with the structure of a free $R$-resolution over $G$ outlined in Example \ref{examples of free resolutions}. In particular, all chain modules are countably generated free $R[G]$-modules and all boundary maps are $R[G]$-linear. By \cite[Proposition $6.16$]{MR123},
the pullback resolution $ P_\bullet|_X$ is a projective $R$-resolution over $X$, so it can be used to compute the coarse cohomology of~$X$. Note also that, by Lemma \ref{direct sum and pullback}, we may assume up to canonical isomorphism that the chain modules of $P_{\bullet}|_X$ are of the form $\oplus_{ I}(R[G]|_X)$, where the direct sum is taken in the category of modules over metric spaces.

We now view $\Hom_{\fd}(R[G]|_X,\textbf{M})$ as a left $R[G]$-module as in Remark \ref{actions on coinduced module} in order to make sense of it as a coefficient module for group cohomology over a projective resolution of left $R[G]$-modules.

     We construct an isomorphism between  $\Hom_{R[G]}(\oplus_IR[G], \Hom_{\fd}(R[G]|_X, \textbf{M})$ and $ \Hom_{\fd}(\oplus_I(R[G]|_X),\textbf{M})$ as $R$-modules that is natural in the following sense: for any map of left $R[G]$-modules $\partial\colon \oplus_IR[G]\to \oplus_JR[G]$, we want an induced commutative square of $R$-modules 
\[\begin{tikzcd}[column sep=1em]
{\Hom_{R[G]}(\oplus_J R[G],\Hom_{\fd}(R[G]|_X,\textbf{M}))} \arrow[r]   \arrow[d] & {\Hom_{R[G]}(\oplus_IR[G],\Hom_{\fd}(R[G]|_X,\textbf{M}))} \arrow[d] \\
{\Hom_{\fd}(\oplus_J(R[G]|_X),\textbf{M})} \arrow[r]                        & {\Hom_{\fd}(\oplus_I(R[G]|_X),\textbf{M}),}                       
\end{tikzcd}\]
where the horizontal maps are given by $f\mapsto f\circ\partial$.    Define 
    \begin{align*}
        F\colon  \Hom_{R[G]}(\oplus_IR[G], \Hom_{\fd}(R[G]|_X, \textbf{M}))&\to \Hom_{\fd}(\oplus_I(R[G]|_X),\textbf{M})\\
        f & \mapsto \sum_{i\in I}f(e_i)\circ \pi_i, 
    \end{align*}
    
    where $\{e_i\}_{i\in I}$ is a basis of $\oplus_IR[G]$ and $\pi_i\colon \oplus_{i\in I}R[G]\to R[G]$ denotes the projection onto the $i$th coordinate. 
 The map $F$ is well defined and an isomorphism of $R$-modules by Lemma~\ref{hom sets and products}. 
  
  It now only remains to prove that the square commutes, so in particular that \[(\sum_{j\in J} f(e_j)\circ \pi_j)(\partial (ge_i))=(\sum_{i\in I}(f(\partial(e_i))\circ \pi_i)(ge_i) \]
 for every $f\in \Hom_{R[G]}(\oplus_JR[G], \Hom_{\fd}(R[G]|_X, \textbf{M}))$,  $i\in I$ and $g\in G$. Let therefore $i\in I$ and $g\in G$.
 We choose a basis $\{e_j\}_{j\in J}$ of $\oplus_J R[G]$ and write $\partial(e_i)=\sum_{j\in J}\alpha_je_j$ for $\alpha_j\in R[G]$. Note first that there is an equality \[(\sum_{i\in I}(f(\partial(e_i))\circ \pi_i)(ge_i)=f(\partial(e_i))(g).\] But then by definition of the left $R[G]$-action on $\Hom_{\fd}( R[G]|_X, \textbf{M})$, we may write this as
 \[f(\partial(e_i))(g)=\sum_{j\in J}(\alpha_jf(e_j))(g)=\sum_{j\in J}f(e_j)(g\alpha_j).\]
To conclude, it suffices to note that
 \[(\sum_{j\in J}f(e_j)\circ \pi_j)(\partial(ge_i))=\sum_{j\in J}f(e_j)(g\alpha_j),\] as desired. 
\end{proof}

\section{Novikov cohomology and coarse cohomology}\label{section:novikov_cohomology}

In this section we study the coarse coinduced module $\Hom_\fd(R[G]\vert_X, R)$  
when $X$ is the quasikernel of a quasimorphism. A strong link between this $R[G]$-module and the Novikov ring associated to the quasimorphism emerges, which we use to give cohomological applications. 

\begin{definition}
    Given a ring $R$, a group $G$ and a homogeneous quasimorphism $\phi\colon G\to\R$, we define the \textit{Novikov ring}\[\widehat{R[G]}^\phi\coloneq \{x=\sum_{g\in G}\lambda_g g: \supp(x)\cap \phi^{-1}((-\infty, \alpha])\text{ is finite for all } \alpha\in\R\}.\]
\end{definition}

\begin{proposition}\label{Novikov vs coinduced}

Let $G$ be a finitely generated group and $\phi\colon G\to\R$ be a nontrivial homogeneous quasimorphism. We have the following isomorphism of right $R[G]$-modules,
 \[\Hom_{\fd}(R[G]|_{\mathcal{Q}ker(\phi)},R)\cong\{f\colon G\to R: \supp(f)\cap\phi^{-1}([-\alpha,\alpha])\text{ is finite }\forall\alpha\in\R \},\] where the right hand side is an $R[G]$-submodule of $R^G$. 
\end{proposition}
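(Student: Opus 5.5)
The plan is to deduce the statement directly from Lemma~\ref{Coarse coinduced module when $M=R$}, applied to the inclusion $\imath\colon \qker(\phi)\to G$. This inclusion is a coarse embedding: with the subspace metric it is in fact an isometric embedding. The lemma then identifies $\Hom_{\fd}(R[G]|_{\qker(\phi)},R)$, as a right $R[G]$-module, with
\[
\{f\colon G\to R \mid \supp(f)\cap N_i(\qker(\phi)) \text{ is finite for all } i\}\subseteq R^G .
\]
The right action here is $(f.g)(x)=f(xg^{-1})$. So it suffices to show that this subset of $R^G$ coincides with the one in the statement, defined via the sets $\phi^{-1}([-\alpha,\alpha])$.

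Both families of sets are increasing and exhaust $G$. Hence the two finiteness conditions agree once each family is cofinal in the other, that is:
\begin{enumerate}
\item each $N_i(\qker(\phi))$ is contained in some $\phi^{-1}([-\alpha,\alpha])$;
\item each $\phi^{-1}([-\alpha,\alpha])$ is contained in some $N_j(\qker(\phi))$.
\end{enumerate}
Both inclusions come from Proposition~\ref{filtration vs quasimorphism-moved}. Part~(1) gives $|\phi(g)|\le\lambda(i)$ for every $g\in N_i(\qker(\phi))$, so $N_i(\qker(\phi))\subseteq \phi^{-1}([-\lambda(i),\lambda(i)])$. For the reverse, given $\alpha$, pick an integer $k\ge \alpha/D(\phi)$. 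This is possible since $D(\phi)>0$: a homogeneous quasimorphism with zero defect is a homomorphism, and there the quasikernel is an honest kernel. In that degenerate case one can replace $2D(\phi)$ by a positive constant, or argue directly with Corollary~\ref{cor:filtration_vs_quasimorphism-quantitative}. Part~(2) of the proposition then gives $\phi^{-1}([-\alpha,\alpha])\subseteq N_{\mu(k)}(\qker(\phi))$.

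It follows that $\supp(f)\cap N_i(\qker(\phi))$ is finite for all $i$ if and only if $\supp(f)\cap\phi^{-1}([-\alpha,\alpha])$ is finite for all $\alpha$. So the two subsets of $R^G$ coincide. Since the isomorphism from Lemma~\ref{Coarse coinduced module when $M=R$} is already one of right $R[G]$-modules, nothing further is needed for the module structure. Still, I would check directly that the right-hand side is closed under the action. For $f$ in it and $g\in G$ we have $\supp(f.g)=\supp(f)g$. The quasimorphism inequality gives $|\phi(xg)-\phi(x)|\le |\phi(g)|+D(\phi)$, so $\supp(f.g)\cap\phi^{-1}([-\alpha,\alpha])$ lies in a right $g$-translate of $\supp(f)\cap \phi^{-1}([-\alpha-c,\alpha+c])$ with $c=|\phi(g)|+D(\phi)$. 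This set is finite.

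The only delicate point is the bookkeeping with $D(\phi)$, and in particular the case $D(\phi)=0$ where Proposition~\ref{filtration vs quasimorphism-moved}(2) degenerates. There I would use Corollary~\ref{cor:filtration_vs_quasimorphism-quantitative} with $h=1$ instead: it bounds $d(g,\qker(\phi))$ by $\Lambda_+(|\phi(g)|)$, which gives inclusion~(2) with $j=\Lambda_+(\alpha)$ uniformly in all cases.
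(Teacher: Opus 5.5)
Your core argument is exactly the paper's proof, and it is correct when $D(\phi)>0$. You apply the lemma computing $\Hom_{\fd}(R[G]|_X,R)$ to the isometric inclusion $\qker(\phi)\hookrightarrow G$, then show the families $\{N_i(\qker(\phi))\}_i$ and $\{\phi^{-1}([-\alpha,\alpha])\}_\alpha$ are mutually cofinal using parts (1) and (2) of Proposition~\ref{filtration vs quasimorphism-moved}. Your direct check that the right-hand side is closed under $(f.g)(x)=f(xg^{-1})$ is correct but not needed.

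What does not work is your treatment of $D(\phi)=0$.

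\begin{itemize}
\item Corollary~\ref{cor:filtration_vs_quasimorphism-quantitative} is deduced from part (2) of Proposition~\ref{filtration vs quasimorphism-moved}, which is vacuous when $D(\phi)=0$. So you cannot fall back on it there.
\item The corollary is in fact false in that case. Take $G=\Z^2$ and $\phi(a,b)=a+b\sqrt{2}$. Then $D(\phi)=0$ and $\qker(\phi)=\phi^{-1}(0)=\{(0,0)\}$. Hence $d(g,\qker(\phi))=|g|$ is unbounded on the infinite set $\phi^{-1}([-1,1])$, and no $\Lambda_+$ exists.
\item The proposition itself fails in this example. Every $N_i(\qker(\phi))$ is a finite ball, so the left-hand side is all of $R^G$, which contains the nonzero $G$-invariant constant functions. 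But $\phi^{-1}([-\alpha,\alpha])$ is infinite for every $\alpha>0$, since $\phi$ has dense image. So the right-hand side contains no nonzero invariants, and the two modules are not even abstractly isomorphic.
\item Your other suggestion, replacing $2D(\phi)$ by a positive constant, changes $\qker(\phi)$ (here from a point to a strip). That changes the left-hand side, so it proves a different statement.
\end{itemize}

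Drop the claim that the argument works ``uniformly in all cases'' and state the hypothesis $D(\phi)>0$, which the paper uses tacitly. If you want to cover homomorphisms, restrict to discrete image $\phi(G)=\varepsilon\Z$. There inclusion (2) follows directly: choose $t$ with $\phi(t)=\varepsilon$, set $k=\phi(g)/\varepsilon$, and note $gt^{-k}\in\ker\phi$, so $d(g,\ker\phi)\le |k|\,|t|$.
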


\begin{proof} 
By Proposition \ref{filtration vs quasimorphism-moved}, $\supp(f)\cap \phi^{-1}([-\alpha,\alpha])  $ is finite for all $\alpha\in\R$ if and only if $\supp(f)\cap N_i(\qker(\phi))$ is finite for all $i\in \N$, so that the isomorphism of $R[G]$-modules follows from 
Lemma \ref{Coarse coinduced module when $M=R$}.
\end{proof}

\begin{corollary}\label{Novikov ring and coinduced} Let $G$ be a finitely generated group and $\phi\colon G\to\R$ be a nontrivial homogeneous quasimorphism. There is an isomorphism of rings \[\widehat{R[G]}^\phi\cong\{f\in\mathrm{\Hom}_{\fd}(R[G]|_{\mathcal{Q}ker(\phi)},R):  \exists\alpha\in \R \text{ with } f(g)=0 \text{ for all } \phi(g)<\alpha\},\] and so an inclusion of $R[G]$-modules \[\widehat{R[G]}^\phi\hookrightarrow \Hom_{\fd}(R[G]|_{\qker(\phi)},R).\]
\end{corollary}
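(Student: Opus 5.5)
The plan is to reduce the corollary to the description of the coarse coinduced module in Proposition~\ref{Novikov vs coinduced}. That proposition identifies $\Hom_{\fd}(R[G]|_{\qker(\phi)},R)$ with the right $R[G]$-submodule
\[
\mathcal{F}\coloneq\{f\colon G\to R:\supp(f)\cap\phi^{-1}([-\alpha,\alpha])\text{ is finite for all }\alpha\in\R\}
\]
of $R^G$. Inside $\mathcal{F}$, consider the subset $\mathcal{F}^+$ of those $f$ for which some $\alpha\in\R$ has $f(g)=0$ whenever $\phi(g)<\alpha$. The claim then reduces to two things. First, $\widehat{R[G]}^\phi$ is in bijection with $\mathcal{F}^+$ via $\sum\lambda_g g\mapsto (g\mapsto\lambda_g)$. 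Second, this bijection respects the relevant structure.

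For the set-theoretic bijection, I check both inclusions. Take $x=\sum\lambda_g g\in\widehat{R[G]}^\phi$. Since $\supp(x)\cap\phi^{-1}((-\infty,\alpha])$ is finite for every $\alpha$, the set $\supp(x)\cap\phi^{-1}([-\alpha,\alpha])$ is finite, so the associated function lies in $\mathcal{F}$. Choosing $\alpha$ below $\min\phi$ on the finite set $\supp(x)\cap\phi^{-1}((-\infty,0])$ (or $\alpha=0$ if that set is empty) shows that $f$ vanishes on $\phi^{-1}((-\infty,\alpha))$, so $f\in\mathcal{F}^+$. Conversely, take $f\in\mathcal{F}^+$ vanishing below $\alpha_0$. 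Then for any $\beta$,
\[
\supp(f)\cap\phi^{-1}((-\infty,\beta])\subseteq\supp(f)\cap\phi^{-1}([-\gamma,\gamma]),\qquad \gamma=\max(|\alpha_0|,|\beta|),
\]
and the right-hand side is finite, so $f$ defines an element of the Novikov ring. This step is entirely formal.

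The ring structure is the only point needing care. I will transport the Novikov product to $\mathcal{F}^+$, so the ring isomorphism holds essentially by definition. I still need to check that the Novikov product is well defined for a homogeneous quasimorphism, which is not a homomorphism. For $x,y\in\widehat{R[G]}^\phi$ and fixed $k$, the pairs $(g,h)$ with $gh=k$ and $g\in\supp x$, $h\in\supp y$ satisfy $\phi(g)+\phi(h)\le\phi(k)+D(\phi)$. Since $\phi$ is bounded below on $\supp x$ and on $\supp y$, each of $\phi(g)$ and $\phi(h)$ is then bounded above, so only finitely many such pairs occur. The same estimate shows that $\supp(xy)\cap\phi^{-1}((-\infty,\beta])$ lies in a product of two finite sets, hence is finite. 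So $\widehat{R[G]}^\phi$ is a ring, and I expect this quasimorphism bookkeeping to be the only genuine, though mild, obstacle.

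The inclusion of $R[G]$-modules then follows from $\mathcal{F}^+\subseteq\mathcal{F}$ together with compatibility of the actions. The right action on $\mathcal{F}$ is $(f.h)(x)=f(xh^{-1})$. Under the identification $f\leftrightarrow\sum f(g)g$, this becomes $\sum f(g)\,gh$, which is right multiplication by $h$ in the Novikov ring. It preserves $\mathcal{F}^+$ because $\phi(gh)\ge\phi(g)-|\phi(h)|-D(\phi)$.
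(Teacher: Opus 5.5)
Your proposal is correct and takes the paper's approach: the paper states this corollary without proof as an immediate consequence of Proposition~\ref{Novikov vs coinduced}, since Novikov series are exactly the functions in that description whose support is bounded below in $\phi$, with the ring structure transported from $\widehat{R[G]}^\phi$ and the right action $(f.h)(x)=f(xh^{-1})$ corresponding to right multiplication by $h$. Your check that the Novikov product is well defined, via $\phi(g)+\phi(h)\le\phi(gh)+D(\phi)$, fills in a detail the paper leaves implicit.
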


With this understanding of the coarse coinduced module in hand, we can relate the coarse cohomology of quasikernels to the vanishing of Novikov cohomology. For the remainder of this article, we will abuse notation by taking (co)homology with coefficients in $\widehat{R[G]}^{\pm\phi}$, by which we mean that with coefficients in both $\widehat{R[G]}^{\phi}$ and $\widehat{R[G]}^{-\phi}$ separately, when both (co)homology groups are isomorphic.

\begin{proposition}\label{cohomology at infinity}
    Let $G$ be a finitely generated group and $\phi\colon G\to\R$ be a nontrivial homogeneous quasimorphism such that \[\Homology^{i}(G;\widehat{R[G]}^{\pm\phi})\cong\Homology^{i+1}(G;\widehat{R[G]}^{\pm\phi})=0\] for some $i\geq 0$. Then \[\Homology_{\coarse}^i(\mathcal{Q}ker(\phi);R)\cong \Homology^{i+1}(G;R[G]).\]
\end{proposition}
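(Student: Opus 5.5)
The plan is a Mayer--Vietoris type argument for modules: a short exact sequence of $R[G]$-modules whose middle term is the Novikov modules and whose ends are $R[G]$ and the coarse coinduced module. Write $X=\qker(\phi)$ and $N\coloneq\Hom_{\fd}(R[G]|_X,R)$. By Proposition~\ref{Novikov vs coinduced}, $N$ is the submodule of $R^G$ of functions whose support meets every slab $\phi^{-1}([-\alpha,\alpha])$ finitely. By Corollary~\ref{Novikov ring and coinduced}, $\widehat{R[G]}^{\phi}$ and $\widehat{R[G]}^{-\phi}$ both embed in $N$: they are the functions in $N$ whose support is bounded below, respectively above, in $\phi$. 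Since $\phi$ is constant on conjugacy classes (Lemma~\ref{Normal subgroupness}) and $|\phi(gh)-\phi(g)-\phi(h)|\le D(\phi)$, these are indeed $R[G]$-submodules for the relevant (left, via Remark~\ref{actions on coinduced module}) action. Translating by $h$ shifts $\phi$-values only by a bounded amount, so boundedness of $\phi$ on the support is preserved.

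The key step is to establish the short exact sequence of left $R[G]$-modules
\[
0\to R[G]\xrightarrow{\ x\mapsto (x,x)\ } \widehat{R[G]}^{\phi}\oplus\widehat{R[G]}^{-\phi}\xrightarrow{\ (a,b)\mapsto a-b\ } N\to 0 .
\]
\begin{itemize}
\item Injectivity of the first map is clear.
\item For exactness in the middle, an element lying in both Novikov modules has support bounded above and below in $\phi$, hence lies in a slab. Such a support meets the slab finitely, so the element is in $R[G]$.
\item For surjectivity, given $f\in N$ split $f=f\cdot\mathbf 1_{\phi\ge 0}+f\cdot\mathbf 1_{\phi<0}$. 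The first piece has support bounded below in $\phi$ and slab-finite, so it lies in $\widehat{R[G]}^{\phi}$. Symmetrically, minus the second piece lies in $\widehat{R[G]}^{-\phi}$.
\end{itemize}
The splitting is only $R$-linear, which suffices for surjectivity. Some care is needed to match the left and right actions and the sign conventions of $\pm\phi$ in the definition of the Novikov ring, possibly via the involution $g\mapsto g^{-1}$. This bookkeeping, namely checking that the Novikov rings sit in $N$ compatibly with the $G$-action used in Theorem~\ref{Intro coarse Shapiro}, is where I expect the main friction.

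Next take the long exact sequence in group cohomology:
\[
H^i(G;\widehat{R[G]}^{\phi})\oplus H^i(G;\widehat{R[G]}^{-\phi})\to H^i(G;N)\to H^{i+1}(G;R[G])\to H^{i+1}(G;\widehat{R[G]}^{\phi})\oplus H^{i+1}(G;\widehat{R[G]}^{-\phi}).
\]
By hypothesis the outer terms vanish, so the connecting map gives $H^i(G;N)\cong H^{i+1}(G;R[G])$.

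Finally, Theorem~\ref{Intro coarse Shapiro} with $\M=R$ identifies $H^i(G;N)$ with $H^i_{\coarse}(X;R)$, which completes the proof. One should also check that $N$, viewed as a left module, agrees with the coinduced module appearing in that theorem; this is immediate from Remark~\ref{actions on coinduced module}.
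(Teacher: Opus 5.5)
Your proposal is correct and is essentially the paper's proof. The paper uses the same short exact sequence $0\to R[G]\to\widehat{R[G]}^{\phi}\oplus\widehat{R[G]}^{-\phi}\to\Hom_{\fd}(R[G]|_{\qker(\phi)},R)\to 0$, followed by the long exact sequence and Theorem~\ref{Intro coarse Shapiro}; the only cosmetic difference is that it writes the kernel as pairs $(x,-x)$ of finitely supported elements rather than your diagonal, and the involution bookkeeping you flag is harmless since $g\mapsto g^{-1}$ merely interchanges $\widehat{R[G]}^{\phi}$ and $\widehat{R[G]}^{-\phi}$, both of which the hypothesis covers.
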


\begin{proof}
    Consider the short exact sequence of left $R[G]$-modules \[0\to N\to \widehat{R[G]}^\phi\oplus \widehat{R[G]}^{-\phi}\to \Hom_{\fd}(R[G]|_{\mathcal{Q}ker(\phi)},R)\to 0\] induced by the inclusions 
    from Corollary~\ref{Novikov ring and coinduced}, where the kernel $N$ consists of precisely those pairs $(x,-x)$ such that  $\supp(x)$ is finite. 
  But then then $N\cong R[G]$ and we conclude the existence of a short exact sequence of left $R[G]$-modules \[0\to R[G]\to \widehat{R[G]}^\phi\oplus \widehat{R[G]}^{-\phi}\to \Hom_{\fd}(R[G]|_{\mathcal{Q}ker(\phi)},R)\to 0.\] By combining the long exact sequence in cohomology, the fact that \[\Homology^{\ast}(G;\widehat{R[G]}^{\phi}\oplus \widehat{R[G]}^{-\phi})\cong \Homology^{\ast}(G;\widehat{R[G]}^{\phi})\oplus \Homology^{\ast}(G;\widehat{R[G]}^{-\phi}) \] 
    and Theorem \ref{Intro coarse Shapiro},
    we conclude that\[\Homology^{i+1}(G;R[G])\cong \Homology^{i}(G;\Hom_{\fd}(R[G]|_{\mathcal{Q}ker(\phi)},R))\cong \Homology^{i}_{coarse}(\mathcal{Q}ker(\phi),R).\qedhere\]
   
\end{proof}

\begin{remark}
    It is likely that under some more assumptions, an alternative proof of Proposition \ref{cohomology at infinity} could be given using the K\"unneth theorem for coarse bundles \cite[Theorem $4.5$]{MR3281809382}, which builds on ideas of Kapovich--Kleiner \cite{MR2168506}. It is not clear to the authors how one might verify those assumptions in the setting that we apply this result, but we believe that it should be possible.
\end{remark}

The following is a coarse analogue of a result of Fisher~\cite[Theorem $D$]{MapstoZ}, that we believe may be of independent interest.

\begin{theorem}\label{ccd drop}
    Let $G$ be a group of type $\FP_n(R)$ with $\cd_R(G)=n$. If $\phi\colon G\to \R$ is a nontrivial homogeneous quasimorphism such that \[\Homology^n(G; \widehat{R[G]}^{\pm\phi} )=0,\] then $\ccd_R(\qker(\phi))\leq n-1$.
\end{theorem}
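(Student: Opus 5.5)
We need $\Homology^k_{\coarse}(\qker(\phi);\M)=0$ for every $k\geq n$ and every $R$-module $\M$ over $X=\qker(\phi)$. The plan is to combine the coarse Shapiro lemma with a dimension-shifting argument for group cohomology, taking the Novikov hypothesis as the input in the top degree.

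\textbf{Step 1 (degrees above $n$).} By Theorem~\ref{Intro coarse Shapiro}, $\Homology^k_{\coarse}(X;\M)\cong \Homology^k(G;\Hom_{\fd}(R[G]|_X,\M))$. Since $\cd_R(G)=n$, the right-hand side vanishes for $k>n$. So the only degree that needs work is $k=n$.

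\textbf{Step 2 (reduction in degree $n$).} We first fix a convenient resolution. Because $G$ is of type $\FP_n(R)$ with $\cd_R(G)=n$, there is a resolution $P_\bullet$ of length $n$ by projective modules with $P_0,\dots,P_{n-1}$ finitely generated. Adding a free complement to $P_n$ and using the usual Eilenberg swindle, we may take $P_n$ to be free as well. Then $\Homology^n(G;A)=\mathrm{coker}(\Hom(P_{n-1},A)\to\Hom(P_n,A))$, and $\Homology^n(G;-)$ is right exact.

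The key claim is that $\Homology^n(G;\Hom_{\fd}(R[G]|_X,\M))$ is a quotient of a direct product of copies of $\Homology^n(G;\widehat{R[G]}^{\pm\phi})$, or that it can be computed through modules of that shape.

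Concretely, I would present the coinduced module $C_\M:=\Hom_{\fd}(R[G]|_X,\M)$ as a quotient of a product of Novikov-type modules. An element $f\in C_\M$ is determined by its values $f(g)$ for $g\in G$. Finite displacement of $f$ forces $f(g)$ to lie in a filtration level that grows with $d(g,X)$, and to have support near $\pi(g)$. We split $G$ into the part where $\phi\geq 0$ and the part where $\phi<0$, as in the proof of Proposition~\ref{cohomology at infinity}. The aim is a surjection
\[\textstyle\prod_b \bigl(\widehat{R[G]}^{\phi}\oplus\widehat{R[G]}^{-\phi}\bigr)\otimes(\text{coordinates})\to C_\M,\]
or more honestly, a surjection from modules $N^\pm$ consisting of functions $G\to M$ that are supported on $\{\pm\phi\geq -\alpha\}$ and locally finite in the appropriate filtered sense.

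Each such $N^\pm$ should be a ``Novikov module with coefficients'', and the hypothesis should kill its $\Homology^n$. To see this, I would use that $P_n$ is free and that $P_{n-1}$ is finitely generated. The vanishing $\Homology^n(G;\widehat{R[G]}^{\phi})=0$ is a statement about the single map $\Hom(P_{n-1},\widehat{R[G]}^\phi)\to\Hom(P_n,\widehat{R[G]}^\phi)$. With $P_{n-1}$ finitely generated, it amounts to a right-multiplication-type surjectivity with support control. That surjectivity then passes coordinate-wise to functions valued in $M$, and by Lemma~\ref{hom sets and products} it also passes to products. Right exactness of $\Homology^n$ then gives $\Homology^n(G;C_\M)=0$.

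\textbf{Step 3 (main obstacle).} The hardest step is to verify that the coordinate-wise lifts obtained from the Novikov vanishing are still \emph{finite displacement} maps into $\M$. This requires uniform control on how preimages spread, both in support and in filtration height. Here I would use the finite generation of $P_{n-1}$: the relevant preimage is produced by finitely many group-ring elements with bounded support, so the displacement stays bounded. I would also use Proposition~\ref{filtration vs quasimorphism-moved} to turn bounds on $\phi$ into bounds on distance to $X$. If the direct surjection approach proves messy, the fallback is to treat $\M=\oplus$ of copies of $R$ first, using Proposition~\ref{Novikov vs coinduced} and the short exact sequence from the proof of Proposition~\ref{cohomology at infinity}, and then pass to general $\M$ by choosing a presentation and using right exactness of $\Homology^n$.
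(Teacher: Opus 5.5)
Your outline is the paper's: coarse Shapiro, vanishing above degree $n$ from $\cd_R(G)=n$, and right exactness of $\Homology^n(G;-)$. You also use the same surjection $L^{\phi}\oplus L^{-\phi}\to L=\Hom_{\fd}(R[G]|_X,\M)$, obtained by restricting $f$ to $\{\phi\geq 0\}$ and $\{\phi<0\}$ (your $N^{\pm}$).

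\textbf{The gap.} It lies in the one step with real content: why $\Homology^n(G;N^{\pm})=0$. Surjectivity of $\Hom(P_{n-1},\widehat{R[G]}^{\phi})\to\Hom(P_n,\widehat{R[G]}^{\phi})$ does not pass ``coordinate-wise'' to $M$-valued functions. Lifting each scalar function $\delta_b\circ f$ separately produces no element of $M$, since the $\delta_b$ only embed $M$ into $R^B$. It also gives no uniform control. Two things are actually needed:
\begin{enumerate}
\item[(a)] $N^{+}$ is a left $\widehat{R[G]}^{-\phi}$-module, and $N^-$ a left $\widehat{R[G]}^{\phi}$-module, via $(\nu f)(x)=\sum_h \nu_h f(xh)$.
\item[(b)] Since $\FP_n(R)$ and $\cd_R(G)=n$ force type $\FP(R)$, one has $\Homology^n(G;A)\cong \Homology^n(G;R[G])\otimes_{R[G]}A\cong \Homology^n(G;\widehat{R[G]}^{\mp\phi})\otimes_{\widehat{R[G]}^{\mp\phi}}A$ for every such Novikov module $A$. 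Equivalently, the surjection of finitely generated projective Novikov modules splits by a map over the Novikov ring, which then acts on $N^{\pm}$.
\end{enumerate}
This is Fisher's argument, which the paper invokes.

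\textbf{Your Step~3 reasoning.} Your justification for finite displacement is also wrong. The lifts are produced by Novikov series of infinite support, not by finitely many group-ring elements of bounded support, so finite generation of $P_{n-1}$ gives no control by itself. The real reason (a) holds is the interplay of two support conditions:
\begin{itemize}
\item $f\in N^{+}$ vanishes where $\phi<\alpha_f$;
\item $|\phi|\leq\lambda(i)$ on $N_i(X)$ by Proposition~\ref{filtration vs quasimorphism-moved}.
\end{itemize}
So for $d(x,X)\leq i$, only the finitely many $h\in\supp(\nu)$ with $\phi(h)\geq \alpha_f-\lambda(i)-D(\phi)$ contribute. Hence $\nu f$ has a displacement function that grows with $i$, which is permitted.

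\textbf{Two smaller points.}
\begin{itemize}
\item Under $\FP_n(R)$ and $\cd_R(G)=n$, $P_n$ is automatically finitely generated projective. Making it free by an Eilenberg swindle destroys finite generation of $P_{n-1}$, and it is unnecessary.
\item Your fallback via presentations of $\M$ by free modules over $X$ fails in general. Finite-displacement images of free modules over $X$ contain only elements of bounded support, whereas a general $\M$ (e.g.\ $R^X$ with evaluation coordinates) need not.
\end{itemize}
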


\begin{proof}[Sketch proof]
    Denote $X=\mathcal{Q}ker(\phi)$. By Theorem \ref{Intro coarse Shapiro} there is an isomorphism of cohomology groups  \[\Homology_{\coarse}^n(X;\textbf{M})\cong \Homology^n(G; \Hom_{\fd}( R[G]|_X,\textbf{M}))\] for any $R$-module $\textbf{M}$ over  $X$. Fix~$\M$ and let $L\coloneq \Hom_{\fd}( R[G]|_X,\textbf{M})$ and define
 \begin{align*}L^\phi & \coloneq \{f\in L: \text{ there exists } \alpha\in\R \text{ s.t } f(g)=0 \text{ for all } \phi(g)<\alpha\}, \\
L^{-\phi} & \coloneq \{f\in L: \text{ there exists } \alpha\in\R \text{ s.t } f(g)=0 \text{ for all } \phi(g)>\alpha\}.
\end{align*} 
It is routine to check that the natural left $R[G]$-action on $L^{\phi}$ which is given by $(g.f)(x)\coloneq f(xg)$ extends to give the structure of a left $\widehat{R[G]}^{-\phi}$-module, and analogously for~$L^{-\phi}$. From here there is then a short exact sequence of $R[G]$-modules  \[0\to N\to L^\phi\oplus L^{-\phi}\to L\to 0\] to which the long exact sequence in cohomology may be applied. An approach identical to that of the proof of \cite[Theorem $3.5$]{MapstoZ} readily yields that \[H^n_{\coarse}(X;\M)=0.\] As this holds for $\M$ being any $R$-module over $X$, we conclude by \cite[Proposition~$8.2$]{MR123} that $\ccd_R(X)\leq n-1$.
\end{proof}

\section{Novikov homology and coarse homological finiteness properties}\label{novikov homology section} 

In this section, we prove a theorem in the style of Sikorav \cite{Sikorav}  relating the vanishing of Novikov homology to the higher dimensional coarse homological finiteness properties of $\qker(\phi)$. Our approach is of a topological nature and takes inspiration from some of the basic methods of Bestvina--Brady Morse theory \cite{MR1465330}. The dimension $1$ case of our result was obtained in \cite{QBNS}, but we do not rely on this. In the case that $\phi$ is a group homomorphism, our proof is substantially different to those in the literature, see for example \cite{MR2283437} and \cite{MR4797112}.

\begin{theorem}\label{Novikov homology new}
    Let $G$ be a group of type $\FP_{n+1}(R)$ for $R$ a commutative ring and $\phi\colon G\to\R$ be a nontrivial homogeneous quasimorphism. If for all $i\leq n$
    we have
    \[\Homology_i(G;\widehat{R[G]}^{\pm\phi})=0,\] then $\qker(\phi)$ is coarsely $(n-1)$-acyclic over $R$.
\end{theorem}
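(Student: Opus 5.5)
We follow Sikorav's argument, recast in terms of Rips complexes and sublevel sets of $\phi$ in the spirit of Bestvina--Brady Morse theory. Since $G$ is of type $\FP_{n+1}(R)$, Proposition~\ref{FP_n and coarse acyclicity} gives a radius $r_0$ such that the Rips complex $K\coloneq P_{r_0}(G)$ is $n$-acyclic over $R$ and carries a free cocompact simplicial $G$-action. Its cellular chain complex $C_\bullet(K)$ is a free $R[G]$-resolution of $R$, finitely generated in degrees $\le n+1$. Extend $\phi$ to a ``height'' on $K$ by declaring the height of a simplex to be the range of $\phi$ on its vertices. Because $\phi$ is a quasimorphism and simplices have diameter $\le r_0$, this range is uniformly bounded, by $\lambda(r_0)$ as in Proposition~\ref{filtration vs quasimorphism-moved}. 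Vanishing of $\Homology_i(G;\widehat{R[G]}^{\phi})$ for $i\le n$ means that $\widehat{R[G]}^\phi\otimes_{R[G]}C_\bullet(K)$ is exact in degrees $\le n$. Concretely, every cycle in this completed complex bounds a chain whose support is unbounded only in the direction of increasing $\phi$. The analogous statement holds for $-\phi$.

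The first step is to turn Novikov acyclicity into a finite ``pushing'' statement. Take a finite $i$-cycle $z$ in $C_i(K)$ with $i\le n$ and support in a sublevel/superlevel band. Using a chain contraction of the Novikov complex on the finitely generated free modules $C_{\le n+1}$, we will show that there is a uniform constant $c>0$ and a chain $w$ with the following property: $z-\partial w$ is a cycle supported in $\phi^{-1}([\min\phi|_z + c,\infty))$, while $w$ itself lies within a uniformly bounded neighbourhood of $\supp(z)$. The contraction should come from a finite $R[G]$-map $h\colon C_i\to \widehat{R[G]}^\phi\otimes C_{i+1}$ on basis elements. Truncating each $h(e)$ at a fixed height gives a finite element $h'(e)$ such that $\mathrm{id}-\partial h'-h'\partial$ has all coefficients at $\phi$-height $\ge c$ above the basis element. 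The height shift under left translation is controlled up to the defect by Lemma~\ref{Normal subgroupness} and the quasimorphism inequality, so we choose $c$ larger than a multiple of $D(\phi)$ to absorb errors.

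The second step iterates this pushing upward and downward. Start from an $i$-cycle $z$ ($i\le n-1$) in $P_r(\qker(\phi))$, which lies in $K$ after enlarging $r_0$ if necessary. Since $K$ is $n$-acyclic over $R$, $z=\partial u$ for some finite chain $u$ in $K$, but $u$ may wander far in $\phi$-height. We repeatedly apply the $-\phi$ pushing to the portion of $u$ lying above height $\alpha$, and the $\phi$ pushing to the portion below $-\alpha$, keeping the boundary fixed. This is a Morse-theoretic retraction in the style of Bestvina--Brady: we need to push $(i+1)$-chains relative to their boundary, which uses the hypothesis in degree $i+1\le n$. It will eventually produce a filling of $z$ supported in $\phi^{-1}([-\alpha',\alpha'])$ for a uniform $\alpha'$. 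Proposition~\ref{filtration vs quasimorphism-moved}(2) then places this filling in $N_{\mu}(\qker(\phi))$. Applying a closest-point projection to $\qker(\phi)$, and using the fact that simplices of bounded diameter map to simplices of bounded diameter, yields a filling in $P_s(\qker(\phi))$ with $s$ depending only on $r$. This verifies Definition~\ref{Coarse acyclicity}.

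The main obstacle is making the pushing terminate with bounded constants. Each push moves support by a uniform amount, and the height gain $c$ must dominate the quasimorphism error, which is why homogeneity and Lemma~\ref{Good element for quasimorphisms} matter. Above all, after finitely many pushes the chain must actually leave the region beyond height $\alpha$: finite supports and a positive gain of $c$ per step guarantee that, but we must check that new cells created near the boundary of the band do not accumulate. I would handle this by pushing only the part of the chain at the extreme height level at each stage, inducting on the maximal height, in the style of the proof of Sikorav's theorem.
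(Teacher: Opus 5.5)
\textbf{The gap: there is no single $n$-acyclic Rips complex.} Your opening claim is not supplied by Proposition~\ref{FP_n and coarse acyclicity}. That proposition only gives \emph{coarse} acyclicity: for every $r$ there is some $s\geq r$ such that $\tilde H_i(P_r(G);R)\to\tilde H_i(P_s(G);R)$ is zero for $i\leq n$. It does not give a single scale $r_0$ with $P_{r_0}(G)$ itself $n$-acyclic. That is a much stronger statement about Rips complexes. It does not follow from coarse acyclicity and is not available in general; eventual acyclicity of Rips complexes is known only in special cases such as hyperbolic groups. Moreover, if $G$ has torsion, $C_\bullet(P_{r_0}(G))$ is not even a complex of free modules. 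So you may not assume any of the following:
\begin{itemize}
\item that $\widehat{R[G]}^{\phi}\otimes_{R[G]}C_\bullet(K)$ computes Novikov homology;
\item that $z$ can be filled inside $K$;
\item that you can ``enlarge $r_0$'' while keeping acyclicity.
\end{itemize}
This is load-bearing because your Step~2 is an iteration. The truncated contraction $h'$ and the operator $E=\mathrm{id}-\partial h'-h'\partial$ are applied a number of times that depends on how far the filling $u$ strays in height, and this is not bounded in terms of $r$. So these operators must act on one fixed complex. What the hypotheses actually give on Rips complexes is a two-scale statement (Lemmas~\ref{Rips complexes compute group cohomology} and~\ref{Novikov pushing chains}): Novikov fillings of simplices of $P_r(G)$ that live in $P_s(G)$. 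With only that, every push raises the Rips parameter. After $N$ pushes you are in $P_{s_N}(G)$ with $N$ unbounded, so you never obtain an $s$ depending only on $r$.

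\textbf{Two repairs, and comparison with the paper.} The first repair is to run your iteration in an abstract free resolution $F_\bullet$ of $R$ over $R[G]$, finitely generated in the relevant degrees, with height $\phi(g)$ assigned to the basis element $ge_j$. There the partial Novikov contraction lives on a single complex. Write $u-\partial h'(u_{\mathrm{low}})=u_{\mathrm{high}}+E(u_{\mathrm{low}})+h'(\partial u_{\mathrm{low}})$. Since $\partial u_{\mathrm{low}}$ sits in a window of bounded width around $-\alpha$, the lower pushes only alter the chain below $-\alpha+C$ and the upper pushes only above $\alpha-C$. So the two sides do not interfere, and the process terminates by finiteness of $u$. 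Finally, transport back using augmentation-preserving chain maps $C_\bullet(P_r(G))\to F_\bullet\to C_\bullet(P_s(G))$ (the second defined in degrees $\leq n$) and a bounded-displacement chain homotopy to the inclusion.

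The second repair avoids iterating altogether: use the untruncated equivariant Novikov chains $\zeta_\Delta$ of Lemma~\ref{Novikov pushing chains} and cut them off once. This is the paper's device.

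The paper, however, does not push a filling of a level-set cycle from both ends. It first shows that the half-spaces $G_{\pm\phi}$ are coarsely $(n-1)$-acyclic, by a single truncation of $\zeta_\rho$ (Proposition~\ref{Novikov homology and sublevel sets}). It then recovers $\qker(\phi)$, whose Rips complex is $P_r(G_\phi)\cap P_r(G_{-\phi})$, by a Mayer--Vietoris argument (Theorem~\ref{BNSR invariant and finiteness properties}) that uses the retraction of Lemma~\ref{Union of Rips complexes and group}. That Mayer--Vietoris step is where $\FP_{n+1}$ is spent. Your two-sided push, repaired as in the first option, bypasses the Mayer--Vietoris and subdivision machinery. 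As far as I can see, it uses the Novikov contraction only up to degree $n$, so it would plausibly give the statement for groups of type $\FP_n(R)$, as the paper's remark anticipates.
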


The remainder of this section is dedicated to proving results which will combine to give a proof of Theorem~\ref{Novikov homology new}; in fact, it will be a straightforward consequence of Proposition~\ref{Novikov homology and sublevel sets} and Theorem~\ref{BNSR invariant and finiteness properties}.
\begin{remark}
    We note here that our result is not quite optimal -- it is almost surely the case that Theorem~\ref{Novikov homology new} holds when~$G$ is assumed only to be of type $\FP_n(R)$, as is the case when $n=1$ as in \cite{QBNS} or when $\phi$ is a homomorphism. This extra assumption is used to make our proof more efficient in two places. The first is in Lemma~\ref{Rips complexes compute group cohomology}, which we later use to recover Novikov homology from the homology of Rips complexes; this is analogous to using a classifying space with finite $(n+1)$-skeleton to compute group homology up to degree~$n$, and simplifies some arguments. The other place is in the proof of Theorem \ref{BNSR invariant and finiteness properties}, where a Mayer--Vietoris argument is used to prove a generalisation of the BNSR theorem, but at the cost of assuming~$G$ to be of type~$\FP_{n+1}(R)$.
\end{remark}

Recall that a metric space is coarsely $(n-1)$-acyclic over $R$ if for every $r\in\R$ there is $s\geq r$ such that the map on reduced simplicial homology \[\tilde{H}_i(P_r(X);R)\to\tilde{H}_i(P_s(X);R)\] is trivial for all $i\leq n-1$. A finitely generated group $G$ is of type $\FP_n(R)$ if and only if it is coarsely $(n-1)$-acyclic as a metric space. 
Observe that, in the following, it is sufficient to confirm the above condition for $r\in \N$, as we are considering groups and their subspaces where distances are integer-valued.
We now state a well-known fact that will allow us to recover the vanishing of Novikov homology from the equivariant homology of Rips complexes, a proof can be given following a similar approach to the proof of \cite[Theorem $2.2$]{MR885095}.

\begin{lemma}\label{Rips complexes compute group cohomology}
    Let $G$ be a group of type $\FP_{n+1}(R)$ and $M$ a right $R[G]$-module. Then for every $ i\leq n$ we have $\Homology_i(G;M)=0$ if and only if for every $r\in\N$ there exists $s\geq r$ such that we obtain triviality of the map on equivariant homology\[\tilde{\Homology}_i^G(P_r(G);M)\to \tilde{\Homology}^G_i(P_s(G);M).\]
\end{lemma}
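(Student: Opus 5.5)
The plan is to compare the augmented simplicial chain complexes of the Rips complexes with a genuine projective resolution, in the spirit of Brown's criterion \cite[Theorem $2.2$]{MR885095}. Write $C_\bullet(P_r(G))$ for the simplicial $R$-chain complex of $P_r(G)$, augmented to $R$. Since $G$ acts freely on simplices with finitely many orbits in each dimension, each $C_k(P_r(G))$ is a finitely generated free $R[G]$-module. We read the equivariant homology as $\tilde{\Homology}^G_i(P_r(G);M)=\Homology_i(M\otimes_{R[G]}\tilde C_\bullet(P_r(G)))$, where $\tilde C_\bullet$ denotes the augmented complex. The family $\{P_r(G)\}_{r}$ is a directed system whose colimit $P_\infty(G)$ is contractible, so $\varinjlim_r \tilde C_\bullet(P_r(G))$ is a free resolution of $R$. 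Tensor products commute with filtered colimits, which gives
\[\varinjlim_r \tilde{\Homology}^G_i(P_r(G);M)\cong \Homology_i(G;M)\]
for all $i\geq 0$. (In degree $0$ one must compare reduced homology with the augmentation correctly; this is routine.)

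For the direction ``$\Leftarrow$'', fix $i\leq n$ and a class in $\Homology_i(G;M)$. By the colimit identification, this class comes from some $\tilde{\Homology}^G_i(P_r(G);M)$. By hypothesis it then dies in some $P_s(G)$, and hence it is zero in the colimit. Thus $\Homology_i(G;M)=0$.

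For the direction ``$\Rightarrow$'', use type $\FP_{n+1}(R)$ together with Proposition~\ref{FP_n and coarse acyclicity}: $G$ is coarsely $n$-acyclic. Fix $r$. The key claim is that there exist $s\geq r$ and an $R[G]$-chain map, defined in degrees $\leq n+1$,
\[\sigma\colon \tilde C_{\leq n+1}(P_r(G))\to \tilde C_\bullet(P_s(G)),\]
that factors through a partial resolution. Concretely, choose a projective resolution $F_\bullet\to R$ with $F_k$ finitely generated for $k\leq n+1$. Using freeness and finite generation of the Rips chain modules, together with coarse $n$-acyclicity to fill cycles, build $R[G]$-chain maps
\[\tilde C_{\leq n+1}(P_r(G))\xrightarrow{\ \alpha\ } F_\bullet\xrightarrow{\ \beta\ } \tilde C_\bullet(P_s(G)).\]
The map $\alpha$ exists because $F_\bullet$ is acyclic. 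For $\beta$, only finitely many generators of $F_k$ (for $k\leq n+1$) must be sent, and each boundary lands in a finite subcomplex; by acyclicity of some larger $P_{s'}(G)$ in degrees $\leq n$ it can be filled, and equivariance lets us extend. We then arrange, by the same filling argument one degree higher, that $\beta\circ\alpha$ is $G$-chain homotopic, in degrees $\leq n$, to the inclusion $P_r(G)\subseteq P_s(G)$. Both maps are augmentation-preserving chain maps out of a finitely generated free complex into a complex that is acyclic up to degree $n$ after enlarging the scale.

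Now tensor with $M$. The inclusion-induced map on $\tilde{\Homology}_i^G$ for $i\leq n$ factors through $\Homology_i(M\otimes F_\bullet)=\Homology_i(G;M)=0$, so it is trivial. The main obstacle is this homotopy step: one has to ensure the chain homotopy exists in degree $n$, which requires killing cycles of dimension $n$. This is exactly where the hypothesis $\FP_{n+1}(R)$, i.e.\ coarse $n$-acyclicity, enters, and it is why only finitely many generators need to be handled at each stage, so that a single finite $s$ suffices.
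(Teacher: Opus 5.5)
Your argument is correct in substance, but it takes a different route from the one the paper points to. The paper gives no proof beyond a reference to Brown's proof of \cite[Theorem $2.2$]{MR885095}, and the natural adaptation of that proof goes through direct products. Since each $C_k(P_r(G))$ is a finitely generated free $R[G]$-module, $\prod_J\tilde{\Homology}^G_i(P_r(G);M)\cong\tilde{\Homology}^G_i(P_r(G);M^J)$. The colimit over $r$ of the right-hand side is $\Homology_i(G;M^J)$, and this equals $\prod_J\Homology_i(G;M)$ for $i\leq n$ because $G$ is of type $\FP_{n+1}(R)$. Brown's Lemma~$2.1$ (a directed system is essentially trivial if and only if $\varinjlim\prod_J$ of it vanishes for every $J$) then gives both implications at once.

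You instead prove ``$\Rightarrow$'' by an explicit chain-level comparison $\tilde C_{\leq n+1}(P_r(G))\to F_\bullet\to\tilde C_\bullet(P_s(G))$, together with a homotopy to the inclusion, filling cycles by coarse $n$-acyclicity. The product route is shorter and symmetric, and it uses the same $\varinjlim\prod_J$ device as the proof of Theorem~\ref{BNSR invariant and finiteness properties}. Your route has three advantages:
\begin{itemize}
\item it avoids the Bieri--Eckmann product criterion;
\item it shows exactly where $\FP_{n+1}(R)$ enters, namely finite generation of $F_{n+1}$ and filling $n$-cycles via Proposition~\ref{FP_n and coarse acyclicity};
\item it yields a scale $s$ depending only on $r$, uniformly over all $M$ with $\Homology_i(G;M)=0$.
\end{itemize}

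Two conventions need to be made explicit.

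First, in degree $0$ your augmented reading breaks the colimit identification. By right exactness, $\varinjlim_r\tilde{\Homology}^G_0(P_r(G);M)=0$ always, whereas $\Homology_0(G;M)=M_G$. Under that reading the statement itself fails for $M=R$: $\tilde{\Homology}^G_0(P_r(G);R)=0$ for every $r$, since the quotient has one vertex, while $\Homology_0(G;R)=R$. So for $i=0$ you must use unreduced equivariant homology. This is how the paper uses the lemma: Lemma~\ref{Novikov pushing chains} handles $k=0$ separately, via $\Homology_0(G;\widehat{R[G]}^\phi)=0$. Your argument then goes through verbatim on the non-augmented complexes, because $\alpha$ and $\beta$ preserve augmentations, so the homotopy can be chosen to vanish on the degree $-1$ term.

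Second, $G$ acts freely on the simplices of $P_r(G)$ only when $G$ is torsion-free: for large $r$, a finite cyclic subgroup spans a simplex that it stabilises. Freeness is exactly what you need for $\alpha$ to exist and for $M\otimes_{R[G]}C_\bullet(P_\infty(G))$ to compute $\Homology_\ast(G;M)$. The paper makes the same implicit assumption, since it writes Novikov chains as formal sums of simplices. So this is a hypothesis you should state, rather than a flaw in your argument.
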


     The following result will allow us some more flexibility in relating coarse geometry and the topology of Rips complexes.
    \begin{lemma}\label{Neighbourhoods in Rips complexes}
    Let~$Y$ be a discrete bounded-geometry metric space and  $X\subset Y$ a subspace. If for every $r\in\N$ there are $K_r\geq 0$ and $s\geq r$ such that the map on reduced homology\[\tilde{H}_i(P_r(X);R)\to \tilde{H}_i(P_s(N_{K_r}(X));R)\] is trivial for all $i\leq n$, then $X$ is coarsely $(n-1)$-acyclic.
\end{lemma}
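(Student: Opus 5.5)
The plan is to compare Rips complexes of $X$ with Rips complexes of $X$ itself at a larger scale, using a closest-point projection $\pi\colon N_{K}(X)\to X$ (which exists since $Y$ is discrete of bounded geometry). Fix $r\in\N$ and take $K_r$ and $s$ as in the hypothesis. Every point of $N_{K_r}(X)$ lies within distance $K_r$ of its image under $\pi$. So if $d(y,y')\leq s$ for $y,y'\in N_{K_r}(X)$, then
\[
d(\pi(y),\pi(y'))\leq s+2K_r.
\]
Hence $\pi$ extends simplicially to a map
\[
\pi_\ast\colon P_s(N_{K_r}(X))\to P_{s+2K_r}(X),
\]
since flag complexes are determined by their edges. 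On $X$ we may take $\pi$ to be the identity.

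Consider the composite
\[
P_r(X)\hookrightarrow P_s(N_{K_r}(X))\xrightarrow{\pi_\ast} P_{s+2K_r}(X).
\]
This composite is exactly the inclusion $P_r(X)\hookrightarrow P_{s+2K_r}(X)$, because $\pi$ restricts to the identity on $X$. By hypothesis the first map is zero on $\tilde H_i(\,\cdot\,;R)$ for all $i\leq n$. Therefore the inclusion-induced map
\[
\tilde H_i(P_r(X);R)\to\tilde H_i(P_{s+2K_r}(X);R)
\]
is trivial for all $i\leq n$, and in particular for all $i\leq n-1$. Setting $s'=s+2K_r\geq r$ verifies Definition~\ref{Coarse acyclicity}, so $X$ is coarsely $(n-1)$-acyclic.

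The only point requiring care is well-definedness. If $X\cap N_{K_r}(X)$ contained points not fixed by $\pi$, the composite would be a different simplicial map. But for $x\in X$ the closest point is $x$ itself, at distance $0$, so we can choose $\pi$ to fix $X$. The restriction to $r\in\N$ is harmless, as noted before Lemma~\ref{Rips complexes compute group cohomology}, since monotonicity in $r$ handles real values. I expect no real obstacle: the main step is simply the observation that the projection is Lipschitz up to an additive $2K_r$ and hence induces a simplicial map of Rips complexes.
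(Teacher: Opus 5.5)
Your proposal is correct and follows the paper's argument. A closest-point projection $N_{K_r}(X)\to X$ fixing $X$ induces a simplicial map $P_s(N_{K_r}(X))\to P_{s'}(X)$ that restricts to the inclusion on $P_r(X)$, so the inclusion $P_r(X)\hookrightarrow P_{s'}(X)$ factors through a map that is zero on $\tilde H_i$. The differences are cosmetic: you obtain $s'=s+2K_r$ directly from the triangle inequality where the paper cites Lemma~\ref{Projecting is a coarse equivalence} and Alonso's lemma, and you argue via functoriality on homology where the paper pushes forward an explicit filling chain.
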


\begin{proof}
    Given an $i$-cycle $\sigma$ in $P_r(X)$ for $i\leq n-1$, there exists an $(i+1)$-chain~$\rho$ in $P_s(N_{K_r}(X))$ such that $\partial \rho=\sigma$. 
   By Lemma \ref{Projecting is a coarse equivalence}, any closest-point projection $\pi_r\colon N_{K_r}(X)\to X $ is a quasiisometry that restricts to identity map on $X$. By \cite[Lemma $3$]{MR1293049}, $\pi_r$ extends to a simplicial map \[\bar{\pi}_{r}\colon P_s(N_{K_r}(X))\to P_{s'}(X) , \quad \{x_0,\dots,x_n\}\mapsto \{\pi_r(x_0),\dots, \pi_r(x_n)\},\] where $s'\geq s$ is chosen large enough relative to the quasiisometry constants of $\pi_r$. By construction we have $\bar{\pi}_r|_{P_r(X)}=\id$, so the induced map on simplicial chains satisfies 
   \[\partial\bar{\pi}_r(\rho)=\bar{\pi}_r(\partial \rho)=\sigma.\qedhere\]
   \end{proof}

   We now set up some notation concerning the equivariant homology of Rips complexes with coefficients in the Novikov ring. Let $G$ be a finitely generated group and denote by $C_\bullet(P_r(G))$ the left-equivariant simplicial $R$-chain complex of $P_r(G)$. As $P_r(G)$ is a finite $G$-simplicial complex, this is a finite and finitely generated $R[G]$-chain complex.

   \begin{definition}
    Let $\widehat{R[G]}^\phi$ be the Novikov ring of a  a nontrivial homogeneous quasimorphism $\phi\colon G\to\R$. Observe that an element of $\widehat{R[G]}^\phi\otimes_{R[G]}C_i(P_r(G))$  can be written as a (possibly infinite) formal sum of $i$-simplices in $P_r(G)$ with coefficients in $R$. We make the following definitions.
    \begin{enumerate}
        \item We call elements $x\in \widehat{R[G]}^\phi\otimes_{R[G]} C_i(P_r(G))$ \textit{Novikov $i$-chains}; Novikov cycles are defined analagously.
        \item For a Novikov $i$-chain $x$, we define the \textit{support} of $x$, denoted $\supp(x)$ to be the set of simplices in $P_r(G)$ with nonzero coefficients in the formal sum representing $x$. 

        \item If $x$ is a Novikov $i$-chain and $A\subset \supp(x)$ is a subset of simplices, we form a Novikov $i$-chain $x\vert_A$ with $\supp(x\vert_A)=A$ which consists of the summands of $x$ with simplices in $A$. We call $x\vert_A$ the \textit{restriction} of $x$ to $A$. 
    \end{enumerate}
    \end{definition}

We now introduce the quasi-BNSR invariant of a group, following \cite{MR914846}, \cite{MR960770} and \cite{QBNS}. Let $\mathcal{Q}(G)$ denote the $\R$-vector space of homogeneous quasimorphisms $\phi\colon G\to\R$ under pointwise addition and scalar multiplication and set 
\[G_\phi\coloneq \{g\in G : \phi(g)\geq -2D(\phi)\}.\]

\begin{definition}
    Let $R$ be a commuative ring and $G$ be a group of type $\FP_n(R)$. For $k\leq n$ we define the $k$th \textit{quasi-BNSR} invariant by \[\mathcal{Q}\Sigma^k_R(G)\coloneq\{\phi\in\mathcal{Q}(G): G_\phi \text{ is coarsely } (k-1)\text{-acyclic over } R\}.\] 
\end{definition}

\begin{remark}
    When $\phi$ is a homomorphism, the BNSR invariant is often defined in terms of the homological finiteness properties of $G_{\phi}$ as a monoid instead of its coarse finiteness properties as a metric subspace $G_{\phi}\subseteq G$. That these notions coincide follows from an argument analogous to that of \cite[Theorem $2.2$]{MR885095}.
\end{remark}

The following technical lemma allows us to assign Novikov chains to simplices in $P_r(G)$, which we will use in the proof of Proposition \ref{Novikov homology and sublevel sets} to `push' filling chains into the Rips complex of the half-space $G_\phi\subseteq G$.

\begin{lemma}\label{Novikov pushing chains}
Let $G$ be a group of type $\FP_{n+1}(R)$ and $\phi\colon G\to\R$ be a nontrivial homogeneous quasimorphism. For every $r\in\N$ there is $s\geq r$ such that for every $k\leq n$ and all $k$-simplices $\Delta$ in $P_r(G)$, there are choices of Novikov $(k+1)$-chains $\zeta_\Delta$ in $P_{s}(G)$ such that for all $g\in G$ we have
\[\partial\zeta_\Delta=\Delta+\sum_{\Delta'\in\supp(\partial\Delta)}\zeta_{\Delta'}\quad\text{and}\quad \zeta_{g\Delta}=g\zeta_\Delta.\]
\end{lemma}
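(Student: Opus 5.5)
The plan is to build the chains $\zeta_\Delta$ by induction on $k$. At each stage I define $\zeta_\Delta$ only on a set of orbit representatives and then extend equivariantly. Since $G$ acts freely on the vertices of $P_r(G)$, it acts freely on ordered (oriented) simplices. So setting $\zeta_{g\Delta}\coloneq g\zeta_\Delta$ on each $G$-orbit is well defined, and the equivariance condition comes for free. The boundary formula should be read with the incidence signs of $\partial\Delta$, i.e. as $\partial\zeta_\Delta=\Delta+\zeta(\partial\Delta)$ with $\zeta$ extended linearly; I will adjust the sign conventions so that the telescoping below works. Throughout I fix an element $t\in G$ with $\phi(t)>0$ large, which exists since $\phi$ is nontrivial and homogeneous.

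\emph{Base case $k=0$.} Here $\partial\Delta=0$ in the reduced complex, so I need $\partial\zeta_v=v$. For the representative vertex $v=1$, I take the infinite ray $\zeta_1\coloneq -\sum_{m\geq 0}[t^m,t^{m+1}]$, which lives in $P_s(G)$ once $s\geq\max\{r,|t|\}$. It telescopes to $\partial\zeta_1=1$. By homogeneity, $\phi(t^m)=m\phi(t)\to\infty$, and every $g\in G$ has the form $g=t^m$ for at most one $m$. Hence only finitely many edges of the ray have $\phi$-value below any given $\alpha$, so $\zeta_1$ is a genuine Novikov $1$-chain. I then set $\zeta_g\coloneq g\zeta_1$. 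Because $\phi(gt^m)\geq \phi(g)+m\phi(t)-D(\phi)$, each $\zeta_g$ is supported in $\{\phi\geq \phi(g)-D(\phi)\}$. I will carry this ``support goes upward'' estimate through the induction.

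\emph{Inductive step.} Suppose the $\zeta_{\Delta'}$ are constructed for all simplices of dimension $<k$, with supports in $\{\phi\geq \phi(\Delta')-C_{k-1}\}$, where $\phi(\Delta')$ denotes $\phi$ of the first vertex. For an orbit representative $\Delta$ of dimension $k$, form $c_\Delta\coloneq\Delta+\zeta(\partial\Delta)$. The inductive identity gives $\partial c_\Delta=\partial\Delta+\partial\zeta(\partial\Delta)=\partial\Delta-\partial\Delta-\zeta(\partial\partial\Delta)=0$, with signs arranged appropriately. So $c_\Delta$ is a Novikov $k$-cycle in $P_{s_{k-1}}(G)$ supported in a half-space $\{\phi\geq\phi(\Delta)-C\}$. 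There are only finitely many orbit representatives, because $P_r(G)$ is $G$-cofinite; hence it suffices to find, for each of them, a Novikov $(k+1)$-chain $\zeta_\Delta$ in some $P_{s_k}(G)$ with $\partial\zeta_\Delta=c_\Delta$ and support again in an upper half-space.

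\emph{Main obstacle: filling $c_\Delta$.} Here I use $\FP_{n+1}(R)$, via Proposition~\ref{FP_n and coarse acyclicity} and Lemma~\ref{Uniform acyclicity vs acyclicity}, in the form of coarse \emph{uniform} $n$-acyclicity of $G$. The plan is to decompose $c_\Delta$ as a locally finite sum of pieces $c_\Delta=\sum_{m\geq 0}c_m$, where each $c_m$ is a finite cycle of uniformly bounded diameter located near $\phi$-height $\phi(\Delta)+m\phi(t)$.

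Concretely, $\zeta(\partial\Delta)$ is built from the $G$-translates $g\zeta_{\Delta'}$ of finitely many chains. Grouping their summands into ``layers'' according to the stage $m$ of the telescoping series gives pieces $x_m$, each a finite chain of bounded diameter. Their partial sums have boundaries $b_m$, which are $(k-1)$-cycles of uniformly bounded diameter near height $m$. Uniform acyclicity then fills each $b_m$ (after coning appropriately) by a finite chain $y_m$ whose support lies within a uniform distance $\Omega$ of $\supp(b_m)$. This gives cycles $c_m\coloneq x_m+y_m-y_{m-1}$ of bounded diameter, and these can in turn be filled by chains $z_m$ with uniformly controlled support, again by uniform acyclicity. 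Setting $\zeta_\Delta\coloneq\sum_m z_m$ (plus the $y$-corrections), the bounded-distance support control combined with Proposition~\ref{filtration vs quasimorphism-moved} shows that $\phi$ stays large on $\supp(z_m)$ as $m\to\infty$. So $\zeta_\Delta$ is a Novikov chain, lying in $P_{s_k}(G)$ with $s_k$ depending only on the uniform constants.

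The delicate point is making the layer decomposition precise and checking that the diameters of the $b_m$ really stay bounded. This needs the telescoping structure from the previous stages to be preserved, and that is why I would keep track of each $\zeta_{\Delta'}$ as an explicit sum of bounded pieces at heights increasing linearly in $m$. Taking $s\coloneq s_n$ finishes the proof.
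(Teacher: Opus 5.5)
There is a genuine gap, and it sits at the step you flag as the main obstacle. No argument that uses only $\FP_{n+1}(R)$ can produce the fillings, because the conclusion of the lemma forces $\Homology_i(G;\widehat{R[G]}^{\phi})=0$ for all $i\le n$.

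To see this, take equivariant $\zeta_\Delta$ as in the statement and extend $\zeta$ $\widehat{R[G]}^{\phi}$-linearly over the finitely many orbit representatives. This is legitimate because products of Novikov series are Novikov, $\phi$ being a quasimorphism. For any Novikov $i$-cycle $c$ in $P_r(G)$ with $i\le n$ you then get $\partial\zeta(c)=c\pm\zeta(\partial c)=c$. So the maps $\tilde{\Homology}^G_i(P_r(G);\widehat{R[G]}^\phi)\to\tilde{\Homology}^G_i(P_s(G);\widehat{R[G]}^\phi)$ vanish, and Lemma~\ref{Rips complexes compute group cohomology} gives vanishing of Novikov homology.

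That vanishing fails in simple cases. Take $G=F(a,b)$, $\phi(a)=1$, $\phi(b)=0$ and $n=1$. Since $a-1$ is a unit in $\widehat{R[G]}^{\phi}$, the kernel of $(x,y)\mapsto x(a-1)+y(b-1)$ on $(\widehat{R[G]}^{\phi})^2$ is free of rank one, so $\Homology_1(G;\widehat{R[G]}^{\phi})\neq 0$.

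So the statement as printed is missing the hypothesis $\Homology_i(G;\widehat{R[G]}^{\phi})=0$ for $i\le n$. This is the hypothesis of Proposition~\ref{Novikov homology and sublevel sets}, the only place the lemma is used, and the paper's proof relies on it. By Lemma~\ref{Rips complexes compute group cohomology}, the cycle $c_\Delta$ you construct, a Novikov $k$-cycle in $P_{s_{k-1}}(G)$, bounds a Novikov $(k+1)$-chain in some $P_{s_k}(G)$. One chooses such a filling for each orbit representative and extends equivariantly. Your base case, orbit bookkeeping and construction of $c_\Delta$ agree with the paper; the layer decomposition is the part that must be replaced by the Novikov hypothesis.

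Concretely, the false claim is that the layer boundaries $b_m$ have uniformly bounded diameter. In the example above take $t=a^N$ and $\Delta=[1,b]$. Then $b_m=bt^m-t^m$ has diameter $|t^{-m}bt^m|=2mN+1\to\infty$. Uniform acyclicity controls fillings only through a function $\Omega(i,D)$ of the diameter $D$, so it gives no uniform bound here.

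Worse, since the Cayley graph is a tree, every $1$-chain in $P_s(G)$ with boundary $b_m$ contains a path from $t^m$ to $bt^m$. Such a path must cross the cut at the edge $[1,b]$ with a step of length at most $s$, so it meets the $s$-neighbourhood of $\{1,b\}$, where $|\phi|\le s+1$. Your fillings therefore never move up in $\phi$.

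In general $d(vt^m,wt^m)=|t^{-m}v^{-1}wt^m|$ has no reason to stay bounded. The coarse geometry of $G$ alone cannot tell you whether the upward rays attached to neighbouring vertices can be coherently connected at increasing heights; that is exactly the information carried by Novikov homology.
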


\begin{proof}
Lemma \ref{Rips complexes compute group cohomology} implies that for every $r\in\N$ there exists $s\geq r$ such that the map on equivariant homology\[\tilde{\Homology}_i^G(P_r(G);\widehat{R[G]}^\phi)\to \tilde{\Homology}^G_i(P_s(G);\widehat{R[G]}^\phi)\] is trivial for $ i\leq n$. 
    
  We now proceed by induction on $k$. The case of $k=0$ is a consequence of the fact that~$\phi$ is unbounded, so that  to any vertex $v$ we may attach a suitable path on whose vertices the value of $\phi$ tends to $\infty$, corresponding to a Novikov $1$-chain with boundary $v$. The inductive step follows from the vanishing of Novikov homology.
\end{proof}

\begin{proposition}\label{Novikov homology and sublevel sets}

    Let $G$ be a group of type $\FP_{n+1}(R)$ and $\phi\colon G\to\R$ be a nontrivial homogeneous quasimorphism. If for all $i\leq n$ we have
    \[\Homology_i(G;\widehat{R[G]}^{\phi})=0,\] 
    then $\phi\in\mathcal{Q}\Sigma^n_R(G)$.
\end{proposition}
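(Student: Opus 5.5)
We must show that $G_\phi=\{g:\phi(g)\geq -2D(\phi)\}$ is coarsely $(n-1)$-acyclic. By Lemma~\ref{Neighbourhoods in Rips complexes} (applied with $Y=G$, $X=G_\phi$), it suffices to find, for each $r\in\N$, constants $K_r\geq 0$ and $s\geq r$ such that every reduced $i$-cycle $\sigma$ in $P_r(G_\phi)$ with $i\leq n-1$ bounds an $(i+1)$-chain in $P_s(N_{K_r}(G_\phi))$. Since $\phi$ is homogeneous, the half-space is coarsely the superlevel set $\{\phi\geq c\}$ for any fixed $c$. By Proposition~\ref{filtration vs quasimorphism-moved} and Corollary~\ref{cor:filtration_vs_quasimorphism-quantitative}, it is therefore enough to produce fillings lying in $\{g:\phi(g)\geq -C_r\}$ for some $C_r$ depending only on $r$.

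\textbf{Step 1: an honest filling.} Fix $r$ and let $s_0\geq r$ be such that every $i$-cycle of $P_r(G)$ with $i\leq n-1$ bounds in $P_{s_0}(G)$. This exists by Proposition~\ref{FP_n and coarse acyclicity}, since $G$ is of type $\FP_{n+1}(R)$. Given $\sigma$ in $P_r(G_\phi)$, choose a finite chain $\rho$ in $P_{s_0}(G)$ with $\partial\rho=\sigma$. This $\rho$ may dip arbitrarily far into negative $\phi$-values, and the task is to push it up.

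\textbf{Step 2: pushing with Novikov chains.} Apply Lemma~\ref{Novikov pushing chains} with $r$ replaced by $s_0$. This gives $s\geq s_0$ and equivariant Novikov chains $\zeta_\Delta$ in $P_s(G)$ satisfying $\partial\zeta_\Delta=\Delta+\sum_{\Delta'}\zeta_{\Delta'}$, with the recursion taken with the appropriate signs and boundary coefficients. One first checks that, after choosing the $\zeta_\Delta$ for a finite set of $G$-orbit representatives, there is a constant $C$ such that each $\zeta_\Delta$ is supported in $\{\phi\geq \min_{v\in\Delta}\phi(v)-C\}$. Equivariance transfers this bound to all translates, at the cost of an additive error of order $D(\phi)$, because $\phi(gh)\geq\phi(g)+\phi(h)-D(\phi)$. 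Now form
\[\rho'=\rho-\sum_{\Delta\in\supp(\rho)}\rho_\Delta\,\zeta_\Delta+\sum_{\Delta\in\supp(\sigma)}\sigma_\Delta\,\zeta_\Delta,\]
correcting the lower-dimensional terms inductively in the manner of the chain homotopy built from the $\zeta$'s. Then $\rho'$ is a Novikov chain with $\partial\rho'=\sigma$. Because it is a Novikov chain, only finitely many of its simplices lie below any given height, although it may be infinite upwards.

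\textbf{Step 3: truncation.} Truncate $\rho'$ at a large height $T$, keeping only the simplices with $\phi\leq T$. The boundary of the truncation equals $\sigma$ plus a cycle living near height $T$. To dispose of this error, and to ensure the finite chain produced does not go below $-C_r$, I would use translation by a fixed element $c$ with $\phi(c)\gg 0$. The identity $\zeta_{g\Delta}=g\zeta_\Delta$ together with the recursion shows that $\rho'$ can be chosen with $\zeta$-contributions lying above the minimum height of $\sigma$ minus $C$. The remaining issue is the part of $\rho$ itself, and this is removed by the cancellation: subtracting $\sum\rho_\Delta\zeta_\Delta$ together with its boundary corrections exactly cancels $\rho$ modulo chains built from $\zeta$'s, so the low part of $\rho'$ lies above $\min\phi|_\sigma - C'$.

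\textbf{Main obstacle.} The hard step is converting the infinite Novikov chain $\rho'$ into a finite filling while retaining the lower bound. The point is that $\rho'$ is a Novikov cycle relative to $\sigma$, and finite chains are obtained only by cutting it off. I would first try the following: the upper-truncation boundary error is a finite cycle in $\{\phi\approx T\}$, which is itself filled by the same procedure applied to its translate by $c^{-m}$. I would then argue via a telescoping or inductive-in-dimension scheme in the spirit of Bestvina--Brady that the fillings stay above $-C_r$, using the inductive hypothesis on lower dimensions $i<n-1$. If this fails, the fallback is to work in $R$-chains with locally finite support and apply the finiteness of the Rips-complex homology from Lemma~\ref{Uniform acyclicity vs acyclicity}.
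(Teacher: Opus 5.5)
Your Steps 1 and 2 match the paper's set-up: the honest filling $\rho$, the equivariant Novikov chains $\zeta_\Delta$ of Lemma~\ref{Novikov pushing chains}, and a uniform lower bound for $\phi$ on $\supp(\zeta_\Delta)$ when $\Delta$ lies near $G_\phi$. There is, however, a genuine gap exactly where you flag the main obstacle: no finite filling is ever produced.

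Your displayed $\rho'$ adds the $(i+2)$-chains $\zeta_\Delta$, $\Delta\in\supp(\rho)$, to $(i+1)$-chains. With the consistent recursion $\partial\zeta_\Delta=\Delta-\zeta_{\partial\Delta}$ and $\zeta_\rho=\sum_{\Delta\in\supp(\rho)}\rho_\Delta\zeta_\Delta$, one gets $\rho-\partial\zeta_\rho=\zeta_\sigma$, and already $\partial\zeta_\sigma=\sigma$. So your construction really produces the Novikov filling $\zeta_\sigma$, and the honest filling $\rho$ plays no role.

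Truncating $\zeta_\sigma$ at height $T$ leaves an $i$-cycle near height $T$. Filling that cycle by a finite chain whose depth is controlled uniformly is a problem of the same kind you started with. Everything is $G$-equivariant, so translating by $c^{-m}$ gains nothing. The error cycle has the same dimension $i$, so induction on dimension does not reach it. Neither the telescoping scheme nor the locally-finite fallback closes this loop.

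The missing idea is to truncate from \emph{below}, and to truncate the $(i+2)$-chain $\zeta_\rho$ rather than a filling. A Novikov chain has only finitely many simplices below any given level. Hence the restriction $\kappa$ of $\zeta_\rho$ to the simplices having some vertex $v$ with $\phi(v)<-2D(\phi)$ is a finite chain. Then $\rho-\partial\kappa$ is a finite $(i+1)$-chain with boundary $\sigma$, and the identity of Novikov chains
\[\rho-\partial\kappa=\zeta_\sigma+\partial(\zeta_\rho-\kappa)\]
shows that each simplex in its support falls into one of two cases. Either it lies in $\supp(\zeta_\Delta)$ for some $\Delta\in\supp(\sigma)$, where $\phi$ is bounded below by a constant depending only on $r$, or it is a face of a simplex all of whose vertices lie in $G_\phi$. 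Proposition~\ref{filtration vs quasimorphism-moved} then puts this filling inside $P_s(N_K(G_\phi))$, and Lemma~\ref{Neighbourhoods in Rips complexes} concludes. This is the paper's argument, and it never needs an upward truncation.
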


\begin{proof}

Fix $k\leq n-1$ and consider a $k$-cycle $\sigma\in\tilde{H}_k(P_r(G_\phi);R)$, which we identify with a cycle in $P_r(G)$ by inclusion. As $G$ is of type $\FP_{n}(R)$, it is coarsely $(n-1)$-acyclic, hence there exists $s\geq r$ and a $(k+1)$-chain $\rho$ in $P_s(G)$ such that $\partial\rho=\sigma$. It now remains to argue that we may replace $\rho$ by a chain inside $P_s(G_\phi)$. Applying Lemma \ref{Novikov pushing chains} to all simplices in $\supp(\rho)$, we may assume, possibly after increasing~$s$, that there is a Novikov $(k+2)$-chain $\zeta_\rho$ in $P_s(G)$ such that \[\partial\zeta_\rho= \rho+\sum_{\Delta\in\supp(\sigma)}\zeta_{\Delta}.\] 

Let $\kappa$ be the restriction of $\zeta_\rho$ to all simplices having some vertex on which $\phi$ is less than $-2D(\phi)$. We claim that $\rho-\partial\kappa$ is a $(k+1)$-chain filling $\sigma$ whose support is contained in $P_s(N_K(G_\phi))$ for some $K>0$ depending only on $r\in\N$. Together with Lemma~\ref{Neighbourhoods in Rips complexes}, this concludes the proof. 

To prove the claim, observe that $\partial\kappa=\partial(\kappa-\zeta_\rho)+\partial\zeta_\rho$, so that every simplex in $\supp(\rho-\partial\kappa)$ satisfies $\phi (v)\geq -2D(\phi)$ for all its vertices~$v$ or is contained in $\bigcup_{\Delta\in\supp(\sigma)}\supp(\zeta_{\Delta})$. Equivariance of the Novikov chains $\zeta_\Delta$  and the fact that $P_r(G)$ is a finite $G$-simplicial complex imply that~$\phi$ is uniformly bounded below on $\supp(\zeta_\Delta)$ for $\Delta\in\supp(\sigma)$, where the lower bound depends only on $r\in\N$, from which we conclude by Proposition \ref{filtration vs quasimorphism-moved}.
\end{proof}

We now prove some lemmata that will later allow us to apply the Mayer--Vietoris sequence to Rips complexes to obtain the main result of this section. The following is proven using a slight modification of a standard result in combinatorial topology concerning the extension of a subdivision of a subcomplex of a simplicial complex..

\begin{lemma}\label{subdivision of simplicial complex}
    For any $s\in\N$, there is a subdivision $\mathrm{sd}(P_s(G))$ of $P_s(G)$ with the following properties, where $\Delta$ is a simplex in $P_s(G)$.

    \begin{enumerate}
        \item If $\Delta\subseteq P_s(G_\phi)\cup P_s(G_{-\phi})$, then it is not subdivided.
        \item If $\Delta\cap \qker(\phi)\ne \emptyset$, then the interior of $\Delta$ does not contain new vertices in $\mathrm{sd}(P_s(G))$.
        \item Otherwise, a vertex $y_\Delta$ is added at the barycentre of $\Delta$ in $\mathrm{sd}(P_s(G))$.
        
    \end{enumerate}
\end{lemma}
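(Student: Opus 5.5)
We will build $\mathrm{sd}(P_s(G))$ with the standard technique of extending a subdivision skeleton by skeleton: new vertices are placed only in the interiors of simplices of type (3), and then every simplex is triangulated compatibly with its already-subdivided boundary. First we check that the three cases are well defined and mutually compatible. Type (1) simplices, those contained in $P_s(G_\phi)\cup P_s(G_{-\phi})$, form a subcomplex $L$, since a face of a simplex of $P_s(G_\phi)$ lies in $P_s(G_\phi)$. The simplices meeting $\qker(\phi)$ in a vertex do not form a subcomplex, but this is harmless: condition (2) only prohibits new vertices in the interior, and it says nothing about the boundary. A simplex $\Delta$ with a vertex in $\qker(\phi)=G_\phi\cap G_{-\phi}$ that is not in $L$ must contain vertices $v,w$ with $\phi(v)<-2D(\phi)$ and $\phi(w)>2D(\phi)$. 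We will treat such simplices by coning from a vertex of $\Delta$ lying in $\qker(\phi)$.

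The construction proceeds by induction on dimension. Fix a total order on $\qker(\phi)$, and for each type (2) simplex let $q_\Delta$ be its least vertex in $\qker(\phi)$. Suppose the $(k-1)$-skeleton has been subdivided, so that each $(k-1)$-simplex carries a triangulation $\mathrm{sd}(\partial\Delta)$ of its boundary. Let $\Delta$ be a $k$-simplex. If $\Delta\in L$, every face of $\Delta$ lies in $L$ and is therefore unsubdivided, so we leave $\Delta$ alone. If $\Delta$ is of type (3), we take the cone from the barycentre $y_\Delta$ over the subdivided boundary. If $\Delta$ is of type (2), we take the cone from $q_\Delta$ over the union of the subdivided faces of $\Delta$ that do not contain $q_\Delta$. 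The faces that do contain $q_\Delta$ are themselves of type (1) or type (2), and for those of type (2) the least $\qker$-vertex is again $q_\Delta$. Their subdivisions are therefore already cones from $q_\Delta$ over the opposite faces (or they are unsubdivided), and this makes the cone structure of $\Delta$ agree with them. This introduces no interior vertex, and the result is a genuine triangulation of $\Delta$, since the cone of a subdivided face from an opposite vertex triangulates the join.

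The main obstacle is the compatibility check in the type (2) case: we need that every face of $\Delta$ containing $q_\Delta$ was subdivided as a cone from $q_\Delta$. This is exactly where the consistent ordering of $\qker(\phi)$ is used, because a face $\Delta'\ni q_\Delta$ has least $\qker$-vertex equal to $q_\Delta$, since its $\qker$-vertices form a subset of those of $\Delta$. We also have to handle faces $\Delta'\ni q_\Delta$ that lie in $L$ and are therefore unsubdivided; an unsubdivided simplex is trivially a cone from any of its vertices, so they cause no problem. Once this check is done, the union over all simplices is a subdivision of $P_s(G)$ satisfying (1)--(3). If one wants the subdivision to be $G$-equivariant, the chosen order can be replaced by any $G$-invariant rule for choosing $q_\Delta$, or the cone point can be made an equivariant choice of face. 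Equivariance is not required by the statement, however.
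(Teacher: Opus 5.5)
Your proof is correct and is essentially the paper's argument: induct over skeleta, leave simplices of $P_s(G_\phi)\cup P_s(G_{-\phi})$ unsubdivided, cone type (3) simplices from the barycentre, and cone type (2) simplices from a vertex lying in $\qker(\phi)$. The one difference is that you fix a total order on $\qker(\phi)$, so that every face through $q_\Delta$ was visibly coned from $q_\Delta$; this makes explicit the well-definedness that the paper only asserts for an arbitrary choice of $x_\Delta$. An arbitrary choice does in fact work, because inductively a type (2) simplex $Q\ast N$, with $Q$ the face spanned by its vertices in $\qker(\phi)$, is always subdivided as $Q\ast\mathrm{sd}(N)$.
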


\begin{proof}  We proceed by induction on $k$-skeleta to build a subdivision of $P_s(G)$, where the $k=0$ case is straightforward. 

Let $\Delta$ be a $k$-simplex in $P_r(G)$, and assume that we have subdivisions of the simplices in the support of $\partial \Delta$. If $\Delta\cap X=\emptyset$ and $\Delta\not\subseteq P_s(G_\phi)\cup P_s(G_{-\phi})$, then we subdivide $\Delta$ by taking the join of a vertex at its barycentre $y_\Delta$ with each simplex in the subdivisions of those simplices in the boundary of $\Delta$. If $\Delta\cap \qker(\phi)\ne \emptyset$, then we may replace the vertex $y_\Delta$ with one of its vertices $x_\Delta\in X$ and take the join as before. That this is well defined follows from the fact that any simplex in the boundary of $\Delta$ that contains $x_\Delta$ also contains no new vertices in its interior. Finally, if $\Delta\subseteq P_s(G_\phi)\cup P_s(G_{-\phi})$, we see by induction that the simplices in its boundary are not subdivided, so that we do not need to subdivide $\Delta$ either. \end{proof}

\begin{lemma}\label{Union of Rips complexes and group}

Let $G$ be of type $\FP_n(R)$ and $\phi\colon G\to\R$ be a nontrivial homogeneous quasimorphism.  For every $r\in\N$ there exists $s\geq r$ such that the map \[\tilde{H}_i( P_r(G_\phi)\cup P_r(G_{-\phi});R)\to \tilde{H}_i(P_s(G_\phi)\cup P_s(G_{-\phi});R)\] is trivial for all $i\leq n-1$.

\end{lemma}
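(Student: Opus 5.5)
The plan is to use the subdivision of Lemma~\ref{subdivision of simplicial complex} to deform cycles in $P_r(G_\phi)\cup P_r(G_{-\phi})$ off the ``mixed'' simplices, and then to fill them in $P_s(G)$ using coarse $(n-1)$-acyclicity of $G$ (Proposition~\ref{FP_n and coarse acyclicity}). Concretely, fix $i\le n-1$ and an $i$-cycle $\sigma$ in $P_r(G_\phi)\cup P_r(G_{-\phi})$. Since $G$ is of type $\FP_n(R)$, there is $s_1\ge r$, depending only on $r$, and an $(i+1)$-chain $\rho$ in $P_{s_1}(G)$ with $\partial\rho=\sigma$. The task is to replace $\rho$ by a filling whose support lies in $P_s(G_\phi)\cup P_s(G_{-\phi})$ for some $s$ depending only on $r$.

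The first step is to pass to $\mathrm{sd}(P_{s_1}(G))$ and define a simplicial retraction $\Theta$ onto (a subdivision of) $P_{s}(G_\phi)\cup P_{s}(G_{-\phi})$, as follows.
\begin{enumerate}
\item Simplices already in $P_{s_1}(G_\phi)\cup P_{s_1}(G_{-\phi})$ are fixed. By Lemma~\ref{subdivision of simplicial complex}(1) these are not subdivided, so $\Theta$ restricts to the identity on the subcomplex containing $\sigma$.
\item A simplex $\Delta$ meeting $\qker(\phi)$ contains a vertex $x_\Delta\in\qker(\phi)\subseteq G_\phi\cap G_{-\phi}$. By Lemma~\ref{subdivision of simplicial complex}(2), no interior vertices of $\Delta$ need to be sent anywhere new.
\item A barycentre $y_\Delta$ of any other simplex is sent to a point of $\qker(\phi)$ within bounded distance of $\Delta$. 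Such a point exists because every remaining $\Delta$ has vertices with $\phi<-2D(\phi)$ and with $\phi>2D(\phi)$ that are $s_1$-close. Hence $\phi$ takes values in a bounded window along $\Delta$, so $\Delta\subseteq N_{\mu(\cdot)}(\qker(\phi))$ with a bound depending only on $s_1$, by Proposition~\ref{filtration vs quasimorphism-moved}(2) and the estimate in its part (1).
\end{enumerate}
Original vertices $v$ are kept fixed; since $v\in G=G_\phi\cup G_{-\phi}$, they already lie in the target.

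The second step is to check that this vertex map is simplicial into $P_s(G_\phi)\cup P_s(G_{-\phi})$ for $s$ large in terms of $s_1$. Each simplex of the subdivision of $\Delta$ is a join of $y_\Delta$ or $x_\Delta$ with simplices of subdivided faces, so it has vertices consisting of original vertices of $\Delta$ together with points of $\qker(\phi)$ chosen near faces of $\Delta$. All of these are within a uniform distance $s$ of one another. The key point is that they must lie entirely in $G_\phi$ or entirely in $G_{-\phi}$. The join structure gives a flag of faces, and the original vertices appearing in a subdivided simplex are those of the smallest face in the flag. If that face lies in $P(G_{\pm\phi})$, we are in the $\pm$ side; otherwise all of its vertices are replaced. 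Here I use that the chosen points of $\qker(\phi)$ lie in both $G_\phi$ and $G_{-\phi}$. This combinatorial verification, that every top simplex of $\mathrm{sd}$ has all its original vertices on one side, is the step I expect to be the main obstacle. If it fails as stated, I would refine the choice by mapping barycentres of mixed faces to $\qker(\phi)$ and arguing face by face inductively, mirroring the induction in Lemma~\ref{subdivision of simplicial complex}.

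Finally, subdivision induces a chain map $C_\bullet(P_{s_1}(G))\to C_\bullet(\mathrm{sd}P_{s_1}(G))$ commuting with $\partial$, and this map fixes $\sigma$ by Lemma~\ref{subdivision of simplicial complex}(1). Composing with $\Theta_\#$ gives a chain $\Theta_\#(\mathrm{sd}\,\rho)$ in $P_s(G_\phi)\cup P_s(G_{-\phi})$ with boundary $\Theta_\#(\sigma)=\sigma$. Hence $[\sigma]$ dies in $\tilde H_i(P_s(G_\phi)\cup P_s(G_{-\phi});R)$, with $s$ depending only on $r$, as required.
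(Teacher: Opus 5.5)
Your proposal is correct and is essentially the paper's argument. The paper also builds, on the subdivision of Lemma~\ref{subdivision of simplicial complex}, a simplicial map $\tau$ into $P_t(G_\phi)\cup P_t(G_{-\phi})$ that fixes original vertices, sends each barycentre $y_\Delta$ to a nearby $x_\Delta\in\qker(\phi)$ and restricts to the inclusion on $P_s(G_\phi)\cup P_s(G_{-\phi})$; it then factors the map through $P_r(G)\to P_s(G)$, which vanishes on $\tilde H_i$ for $i\le n-1$. The combinatorial check you flagged does go through as you describe, since the only other original vertices in a subdivided simplex are cone points of faces meeting $\qker(\phi)$, and these lie in $\qker(\phi)=G_\phi\cap G_{-\phi}$; your flag argument simply replaces the paper's inductive coning over faces.
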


\begin{proof}

Let $X=\qker(\phi)$. From Proposition \ref{filtration vs quasimorphism-moved} we know that for every $s\in\N$ there is $s'\geq s$ such that for every simplex \[\Delta\subseteq P_s(G)\setminus (P_s(G_\phi)\cup P_s(G_{-\phi}))\] there exists $x_\Delta\in X$ with $d(v,x_\Delta)\leq s'$ for every vertex $v$ of $\Delta$. For each $\Delta$ we fix such a choice of $x_\Delta$, requiring that~$x_\Delta$ be a vertex of~$\Delta$ where possible.
We may further assume that there is $t\geq s'$ such that for every simplex $\Delta$ in $P_s(G)$, there is a simplex $U(\Delta)$ in $P_t(G)$ containing both $\Delta$ and the vertices $x_\sigma$ for each subsimplex~$\sigma$ of~$\Delta$.

 We now recall the subdivisions $\mathrm{sd}(P_s(G))$ and $\mathrm{sd}(P_{s'}(G))$ from Lemma \ref{subdivision of simplicial complex}, where by construction we may assume that there is an inclusion of subcomplexes $\mathrm{sd}(P_s(G))\subseteq\mathrm{sd}(P_{s'}(G))$.
 
 We will define a simplicial map \[\tau\colon \text{sd}(P_s(G))\to P_t(G_\phi)\cup P_t(G_{-\phi})\]
 such that the restriction of $\tau$ to $P_s(G_\phi)\cup P_s(G_{-\phi})$ is the natural inclusion. We achieve this by inductively proving the existence of compatible simplicial maps \[\tau_\Delta\colon\Delta\to (P_t(G_\phi)\cup P_t(G_{-\phi}))\cap U(\Delta)\] for $\Delta$ being either the image of a $k$-simplex in $P_s(G)$ under the subdivision $\text{sd}(P_{s}(G))$, or the image of a $k$-simplex in $P_{s'}(G)$ in $\text{sd}(P_{s'}(G))$ having a vertex inside $X$. For the latter simplices we extend the definition of $x_\Delta$ to be a choice of one of its vertices lying in $X$. These maps will also have the property that they restrict to the natural inclusion on $P_s(G_\phi)\cup P_s(G_{-\phi})$.
The base case $k=0$ is the identity map.

Let $\Delta$ be a $k$-simplex as above for $k\geq 1$. If $\Delta\subseteq P_{s'}(G_\phi)\cup P_{s'}(G_{-\phi})$, then $\tau_\Delta$ is the natural inclusion, which is simplicial as $\Delta$ is not subdivided in $\mathrm{sd}(P_{s'}(G_\phi))$. We now consider the case that $\Delta$ does not contain a vertex in $X$. Recall that then there exists a vertex $x_\Delta\in X$ such that the join $\Delta\star x_\Delta$ is a $(k+1)$-simplex in $P_{s^\prime}(G)$. Note also that there is vertex $y_\Delta$ at the barycentre of $\Delta$, and define a simplicial map on subdivisions $\Delta\to \Delta\star x_\Delta$ sending $y_\Delta$ to $x_\Delta$ whilst fixing all other vertices. This reduces to the case that $\Delta$ contains a vertex in~$X$.

Assume that $\Delta$ is a $k$-simplex containing $x_\Delta$ as a vertex, and let $\sigma$ be the face of $\Delta$ opposite to $x_\Delta$. By induction we may assume that there is a map \[\tau_\sigma\colon\sigma\to (P_t(G_\phi)\cup P_t(G_{-\phi}))\cap U(\sigma).\] But then, as $U(\sigma)\subseteq U(\Delta)$, this extends to the desired simplicial map  
\[\tau_\Delta\colon \Delta\to (P_t(G_\phi\cup P_t(G_{-\phi}))\cap U(\Delta)\] by fixing $x_\Delta$. We can now define $\tau(x)\coloneq \tau|_\Delta(x)$ for any $x\in \Delta\subseteq \text{sd}(P_s(G))$, which is well defined by compatibility of the maps $\tau_\Delta$.

  We denote also by $\tau$ the induced continuous map on $P_s(G)$. There is then the following commutative square for any $r,s\in\N$,  
\[\begin{tikzcd}
	{P_r(G_\phi)\cup P_r(G_{-\phi})} & {P_{t}(G_\phi)\cup P_{t}(G_{-\phi})} \\
	{P_r(G)} & {P_s(G)}
	\arrow[hook, from=1-1, to=1-2]
	\arrow[hook, from=1-1, to=2-1]
	\arrow[hook, from=2-1, to=2-2]
	\arrow["\tau", from=2-2, to=1-2],
\end{tikzcd}\]
where commutativity follows from the fact that $\tau$ restricts to the natural inclusion on $P_s(G_\phi)\cup P_s(G_{-\phi})$.
As $G$ is of type $\FP_n(R)$, we may assume that the map on reduced homology induced by $P_r(G)\hookrightarrow P_s(G)$ is trivial in degrees $i\leq n-1$. But then the map on reduced homology induced by the inclusion in the top row is trivial in the same degrees, from which we conclude.
\end{proof}

\begin{theorem}\label{BNSR invariant and finiteness properties}
    If $G$ is a group of type $\FP_{n+1}(R)$ for $R$ a commutative ring and $\phi\colon G\to\R$ is a nontrivial homogeneous quasimorphism, then $\qker(\phi)$ is coarsely $(n-1)$-acyclic over $R$ if and only if $\{\phi,-\phi\}\subseteq\mathcal{Q}\Sigma^n_R(G)$.
\end{theorem}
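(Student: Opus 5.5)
The plan is a Mayer--Vietoris argument on Rips complexes, with $X=\qker(\phi)$ and the half-spaces $G_\phi=\{\phi\geq -2D(\phi)\}$ and $G_{-\phi}=\{\phi\leq 2D(\phi)\}$. First observe that $G_\phi\cap G_{-\phi}=\phi^{-1}([-2D(\phi),2D(\phi)])=X$ and $G_\phi\cup G_{-\phi}=G$. On the level of Rips complexes, $P_r(G_\phi)\cap P_r(G_{-\phi})=P_r(X)$, since a simplex lies in both complexes exactly when all its vertices lie in both half-spaces. For each $r$ we therefore have a Mayer--Vietoris sequence in reduced homology
\[
\tilde H_{i+1}(P_r(G_\phi)\cup P_r(G_{-\phi}))\to \tilde H_i(P_r(X))\to \tilde H_i(P_r(G_\phi))\oplus\tilde H_i(P_r(G_{-\phi}))\to \tilde H_i(P_r(G_\phi)\cup P_r(G_{-\phi})),
\]
natural in $r$ with respect to the inclusions $P_r\hookrightarrow P_s$. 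The point is that this behaves like an ``ind-system'' sequence: the relevant vanishing notion, that a map $P_r\to P_s$ is zero on homology for some $s\geq r$, is exactly pro-/ind-triviality. It is preserved by a diagram chase along exact sequences, at the cost of passing through several successive radii.

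For ($\Leftarrow$), fix $i\leq n-1$ and $r$, and take a class $\sigma\in\tilde H_i(P_r(X))$. By hypothesis there is $s_1$ such that $\sigma$ dies in both $\tilde H_i(P_{s_1}(G_{\pm\phi}))$. Exactness then gives that the image of $\sigma$ in $P_{s_1}(X)$ comes from $\tilde H_{i+1}(P_{s_1}(G_\phi)\cup P_{s_1}(G_{-\phi}))$. Lemma~\ref{Union of Rips complexes and group} applies in degree $i+1\leq n$ because $G$ is of type $\FP_{n+1}(R)$. It gives $s_2$ such that this $(i+1)$-class dies in $P_{s_2}(G_\phi)\cup P_{s_2}(G_{-\phi})$, and naturality of the connecting map then kills $\sigma$ in $\tilde H_i(P_{s_2}(X))$. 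This is exactly where the stronger hypothesis $\FP_{n+1}$ is used.

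For ($\Rightarrow$), suppose $X$ is coarsely $(n-1)$-acyclic and fix $i\leq n-1$. Take a class $\alpha\in\tilde H_i(P_r(G_\phi))$ and pair it with $0$ in the $G_{-\phi}$ summand. By Lemma~\ref{Union of Rips complexes and group} in degree $i\leq n-1$, the image of $(\alpha,0)$ dies in the union at some radius $s_1$. Hence $(\alpha,0)$ at radius $s_1$ is the image of some $\beta\in\tilde H_i(P_{s_1}(X))$. By acyclicity of $X$, $\beta$ dies at some $s_2$, and so $\alpha$ dies in $\tilde H_i(P_{s_2}(G_\phi))$. The argument for $-\phi$ is symmetric, since $G_{-\phi}$ is the half-space for $-\phi$ and $D(-\phi)=D(\phi)$.

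The main subtlety I expect is the uniformity of $s$. For $\Rightarrow$ this is unproblematic, because the chase gives radii depending only on $r$ and not on the class. The degree-$0$ case also needs care, where reduced homology and the augmentation matter. There, $P_r(X)$ may be empty for small $r$, though it is nonempty since $1\in X$. The reduced Mayer--Vietoris sequence still holds because the intersection is nonempty. I would check that each step is stated as the vanishing of the whole induced map rather than of individual classes, which the diagram chase naturally provides.
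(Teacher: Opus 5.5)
Your proposal is correct and follows the paper's argument: the Mayer--Vietoris sequence for $P_r(G_\phi)\cup P_r(G_{-\phi})$ with intersection $P_r(\qker(\phi))$, combined with Lemma~\ref{Union of Rips complexes and group}, applied in degree $i+1\leq n$ for the direction where type $\FP_{n+1}(R)$ is needed. The only difference is in packaging: the paper turns essential triviality of these directed systems into honest vanishing by applying the exact functor $\varinjlim_r\prod_J(-)$ and citing Brown's Lemma~2.1, whereas you carry out the same chase directly with explicit radii, which equally gives radii depending only on $r$.
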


\begin{proof} Let $X=\qker(\phi)$. Consider the following truncated portion of the Mayer--Vietoris sequence over $R$ induced by the intersection $P_r(G_\phi)\cap P_r(G_{-\phi})=P_r(X)$, where $i\geq 0$ and $r\in\N$,

  \[
\adjustbox{scale=1,center}{
     \begin{tikzcd}[column sep=tiny] 
         & \phantom{31}\cdots
          \arrow[r] \arrow[d, phantom, ""{coordinate, name=ZZ}]
         & {\tilde{H}_{i+1}(P_r(G_\phi)\cup P_r(G_{-\phi}))} 
           \arrow[r]
         & {\tilde{H}_i(P_r(X))}
          \arrow[dll,  rounded corners, to path={ --
([xshift=2ex]\tikztostart.east)|- (ZZ) [near end]\tikztonodes-|
([xshift=-2ex]\tikztotarget.west)-- (\tikztotarget)}] \\
         & {\tilde{H}_i(P_r(G_\phi))\oplus \tilde{H}_i( P_r(G_{-\phi}))}
          \arrow[r]
         & \makebox[0pt][l]{$\cdots$} 
     \end{tikzcd}
}
\]

Note that direct products and direct limits are exact in the category of $R$-modules, so that we may apply the functor $\varinjlim_{r\in\N}(\prod_{J}(-)) $ for any index set $J$ to obtain a new long exact sequence. But now by \cite[Lemma $2.1$]{MR885095}, a metric space $Y$ is coarsely $(n-1)$-acyclic over $R$ if and only if for all index sets $J$ and $i\leq n-1$ we have \[\varinjlim_{r\in\N}\left(\prod_J\tilde{H}_i(P_r(Y);R)\right)=0.\] By combining this with Lemma \ref{Union of Rips complexes and group} and the fact that $G$ is of type $\FP_{n+1}(R)$, we obtain for every $i\leq n-1$ the vanishing of the first term of the truncated Mayer--Vietoris sequence under the functor $\varinjlim_{r\in\N}(\prod_{J}(-))$. This implies that \[{\varinjlim_{r}(\prod_J \tilde{H}_i(P_r(X);R))}\cong{\varinjlim_{r}(\prod_J \tilde{H}_i(P_r(G_\phi);R)\oplus \tilde{H}_i( P_r(G_{-\phi});R))} \] for every $i\leq n-1$. It now follows by definition of $\mathcal{Q}\Sigma^k(G)$ that $\{\phi,-\phi\}\subseteq\mathcal{Q}\Sigma^k(G)_R$ if and only if $X$ is coarsely $(n-1)$-acyclic over $R$. 
\end{proof}

\section{Coarse Poincar\'{e} duality in dimension two}\label{section:corase_pr2_spaces}

In this section we classify coarse Poincar\'{e}-duality spaces in dimension two as being either amenable or quasiisometric to~$\HH^2$. 
This will later be of use as the quasikernel appearing in the statement of Theorem \ref{thm:manifold} will turn out to be such a space. We will assume that our metric spaces are discrete and of bounded geometry, as this is required for the following definition of amenability.
\begin{definition}
    Let $X$ be a metric space and $A\subset X$ be a finite subset. For  $L\geq 0$ we define the \textit{$L$-boundary of $A$} as  
\[\partial_{L}A\coloneq \{x\in X: d(x, A)\leq L \text{ and } d(x, X\setminus A)\leq L\}.\]
\end{definition}

\begin{definition}
    A discrete bounded-geometry metric space $X$ is \textit{amenable} if for every $L\geq 0$ and $\epsilon>0$ there is a finite subset $A\subseteq X$ such that \[\frac{|\partial_LA|}{|A|}<\epsilon.\] We call such $A\subseteq X$ \textit{F\o lner sets}, and a sequence of F\o lner sets for which $\epsilon\to 0$ with $L$ fixed a \textit{F\o lner sequence}.
\end{definition}

\begin{remark}\label{basic facts about amenability}
    
Amenability of a discrete bounded-geometry metric space is invariant under coarse equivalence, see for instance \cite[Proposition 3.C.29]{MR3561300}. Further, amenability of a finitely generated group as such a metric space is equivalent to amenability as a group \cite[Proposition 3.1.7]{MR4646531}.

\end{remark}

The main result of this section is the following partial classification of coarse $\PD^2(R)$ spaces. We remark that there are no coarse homotopic finiteness assumptions on our spaces (only the coarse homological ones coming from the definition of coarse Poincar\'e duality), but that $\HH^2$ is coarsely simply-connected.

\numberedtheorem{Theorem~\ref{intro-characterising PD^2}}{}{ If $X$ is a discrete quasigeodesic bounded-geometry coarse $\PD^2(R)$ space over a commutative ring $R$, then it is either amenable or quasiisometric~to~$\HH^2$.}

\begin{remark}
    We note here that,  without additional conditions such as coarse homogeneity,  amenability of a metric space is quite a weak condition. In this generality it should be thought of as a sort of `local amenability', as it only requires the existence of a F\o lner sequence. 
\end{remark}

Our approach follows closely that of Kielak--Kropholler \cite{kielak2021isoperimetric}, who derive a linear homological isoperimetric inequality from the converse of amenability in the case of $\PD^2(R)$ groups. This choice of method requires us to set up a sufficiently general framework for such inequalities. It turns out that projective resolutions over metric spaces provide a natural setting for this purpose, since finite displacement maps allow one to control the supports of module elements in a precise manner. Our definitions are partly inspired by those in \cite{MR32141849}, where implicitly a chain complex very similar to the standard resolution is used to define a notion of linear isoperimetric inequality for a metric graph.

\begin{definition}
    Given a projective $R$-resolution $\textbf{C}_\bullet$ over a metric space $X$ and an increasing function $\Theta\colon \N\to\N$, we define the \emph{$\Theta$-filling norm}~$|\cdot|_{\Theta}^j$ of an element $x\in \textbf{C}_{n}(j)\cap \partial\textbf{C}_{n+1}$ by \[|x|^j_\Theta\coloneq \inf\{|\supp_{\textbf{C}_{n+1}}(y)| :  y\in \textbf{C}_{n+1}(\Theta(j)) \text{ and } \partial y=x\},\]
where we set $|x|_\Theta^j:=\infty$ whenever the set we take the infimum of is empty.
\end{definition}

\begin{definition}
    Given a projective $R$-resolution $\textbf{C}_\bullet$ over a metric space $X$ and a set of increasing functions $\F=\{\eta_j\colon\R_{\geq 0}\to\R_{\geq 0}\}_{j\in\N}$, we say that $\textbf{C}_\bullet$ satisfies an \textit{$\F$-homological isoperimetric inequality in dimension $n$} if there is an increasing function $\Theta\colon \N\to\N$ such that for every $j\geq 0$ and $x\in \textbf{C}_{n}(j)\cap \partial\textbf{C}_{n+1}$ we have \[|x|^j_\Theta\leq \eta_j(|\supp_{\textbf{C}_{n}}(x)|).\]
    If the set of functions $\F$ consists only of linear (resp. quadratic, polynomial, exponential etc.) functions, we say that $X$ satisfies a linear (resp. quadratic, polynomial, exponential etc.) homological isoperimetric inequality in dimension $n$.
\end{definition}

\begin{remark}
    If $\textbf{C}_{n+1}$ has finite height, we may replace the set of functions $\F$ with a single function in the obvious way. This can be applied for instance when~$X$ is a group of type $\FP_{n+1}(R)$, see \cite[Proposition $9.3$]{MR123}.
\end{remark}

Recall that for increasing functions $\eta,\mu\colon\R_{\geq 0}\to\R_{\geq 0}$, there is a partial order defined by setting $\eta\preccurlyeq \mu$ if and only if there is a constant $K>0$ such that $\eta(x)\leq K\mu(Kx+k)+Kx+K$ for all $x\in\R_{\geq 0}$. We say that $\eta\approx \mu$ if $\eta\preccurlyeq \mu$ and $\mu\preccurlyeq \eta$. For sets of functions $\F=\{\eta_j\colon\R_{\geq 0}\to\R_{\geq 0}\}_{j\in\N}$ and $\F'=\{\eta'_j\colon\R_{\geq 0}\to\R_{\geq 0}\}_{j\in\N}$, we define $\F\preccurlyeq\F'$ and $\F\approx \F'$ by comparing functions at each $j\in\N$, where we do not require uniformity of the constant $K_j$. One can confirm that our notion of $\F$-homological isoperimetric inequality is invariant under canonical isomorphisms of chain modules in $\textbf{C}_{\bullet}$, up to~$\approx$. We now show that for a discrete bounded-geometry metric space this does not depend on the choice of  $R$-resolution.

\begin{lemma}\label{iso inequality and different resolutions}

        Let $\textbf{C}_{\bullet}$, $\textbf{D}_\bullet$ be projective $R$-resolutions over a discrete metric space~$X$ of bounded geometry. If $\textbf{D}_\bullet$ satisfies a $\F$-homological isoperimetric inequality in dimension $n\geq 1$, then $\textbf{C}_\bullet$ satisfies a $\F'$-homological isoperimetric inequality in dimension $n\geq 1$ for some $\F'\approx\F$.
 \end{lemma}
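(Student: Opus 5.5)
We will compare the two resolutions by finite displacement chain maps. Since both $\textbf{C}_\bullet$ and $\textbf{D}_\bullet$ are projective $R$-resolutions over $X$, the fundamental lemma of coarse homological algebra in Margolis's framework \cite{MR123} provides finite displacement augmentation-preserving chain maps $f_\bullet\colon \textbf{C}_\bullet\to\textbf{D}_\bullet$ and $g_\bullet\colon\textbf{D}_\bullet\to\textbf{C}_\bullet$, together with a finite displacement chain homotopy $h_\bullet$ with $\partial h+h\partial=\id-g\circ f$ on $\textbf{C}_\bullet$. By Remark~\ref{staggering} we may fix a single displacement function $\Phi$ that works for $f_n,g_{n+1},h_n$ and the boundary maps in the relevant degrees.

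Given $x\in\textbf{C}_n(j)\cap\partial\textbf{C}_{n+1}$, we will build a filling of $x$ in three pieces. First, $f_n(x)\in\textbf{D}_n(\Phi(j))$ is a boundary, since $f$ is a chain map and $x=\partial z$. By \ref{finite_displacement_2}, $\supp(f_n(x))\subseteq N_{\Phi(j)}(\supp(x))$. Bounded geometry gives $|N_{\Phi(j)}(A)|\leq C(\Phi(j))|A|$, so $|\supp f_n(x)|\leq C(\Phi(j))|\supp x|$. Applying the inequality for $\textbf{D}_\bullet$, we get $y\in\textbf{D}_{n+1}(\Theta(\Phi(j)))$ with $\partial y=f_n(x)$ and $|\supp y|\leq\eta_{\Phi(j)}(C(\Phi(j))|\supp x|)$. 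Next, we set
\[w\coloneq g_{n+1}(y)+h_n(x).\]
Then $\partial w=g_nf_n(x)+\partial h_n(x)=x-h_{n-1}(\partial x)=x$, since $x$ is a cycle; this uses $n\geq1$ so that $h_{n-1}$ and the augmentation issue are harmless, and $\partial x=0$ in either case.

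It remains to control the filtration level and the support size of $w$. Both $g_{n+1}(y)$ and $h_n(x)$ lie in $\textbf{C}_{n+1}(\Theta'(j))$ with $\Theta'(j)\coloneq\Phi(\Theta(\Phi(j)))+\Phi(j)$. By Lemma~\ref{lem:Margolis-Lemma3_4} and \ref{finite_displacement_2} together with bounded geometry,
\[|\supp w|\leq C\bigl(\Phi(\Theta\Phi(j))\bigr)\,\eta_{\Phi(j)}\bigl(C(\Phi(j))|\supp x|\bigr)+C(\Phi(j))|\supp x|.\]
We define $\eta'_j$ to be the right-hand side viewed as a function of $|\supp x|$. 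Then $\eta'_j\preccurlyeq\eta_{\Phi(j)}$ with a constant depending on $j$, which is permitted because the definition of $\F\preccurlyeq\F'$ does not require uniform constants.

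The main obstacle is matching the indices. The bound naturally involves $\eta_{\Phi(j)}$ rather than $\eta_j$, so to conclude $\F'\approx\F$ we will additionally need $\eta_{\Phi(j)}\preccurlyeq\eta_j$ at each level. We plan to obtain this either by noting that the filtrations can be staggered up to canonical isomorphism, which Remark~\ref{staggering} allows and under which the inequality is invariant up to $\approx$ as observed above, or by interpreting the conclusion as $\F'\preccurlyeq\F$ after reindexing. The reverse comparison follows by symmetry, exchanging the roles of $\textbf{C}_\bullet$ and $\textbf{D}_\bullet$.
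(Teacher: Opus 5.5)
Your construction is the paper's proof. You take comparison maps and a homotopy from \cite[Corollary~6.14]{MR123}, fill $f_n(x)$ in $\textbf{D}_\bullet$, push the filling back by $g_{n+1}$, and correct by the homotopy applied to $x$; your $g_{n+1}(y)+h_n(x)$ is the paper's $d'-s(\gamma)$ with the opposite sign convention for the homotopy. Your bookkeeping is more accurate than the paper's. The paper writes $\eta_j$ where the argument actually produces $\eta_{\Phi(j)}$, and $C(\Phi(j))$ where $C(\Phi(\Theta(\Phi(j))))$ is needed.

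The index mismatch you flag is therefore genuine, and the paper passes over it silently. Of your two proposed remedies, only the second is sound.

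\begin{itemize}
\item Restaggering cannot remove the mismatch. Run your own computation with $f=g=\id$ of displacement $\Psi$: it shows that a canonical isomorphism $\textbf{C}_\bullet\cong\textbf{C}'_\bullet$ turns an $\F$-inequality for $\textbf{C}_\bullet$ into one for $\textbf{C}'_\bullet$ with functions $\approx\eta_{\Psi(j)}$. So the invariance under canonical isomorphisms you would appeal to holds only up to the same kind of reindexing.
\item What makes the lemma adequate for its use is that the constants $K_j$ may depend on $j$. Reindexing does not change the type of a family (linear, polynomial, exponential, \dots), and for linear families $\eta_{\Phi(j)}\approx\eta_j$ holds outright. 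That is all Proposition~\ref{Linear iso implies hyperbolic} requires.
\item For an arbitrary family, the conclusion should be read up to reindexing, which is your second option.
\end{itemize}

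Finally, the appeal to symmetry for the reverse comparison is unnecessary and also misdirected. Exchanging $\textbf{C}_\bullet$ and $\textbf{D}_\bullet$ yields an inequality for $\textbf{D}_\bullet$, not a lower bound on $\F'$. Instead, $\eta_{\Phi(j)}\leq\eta'_j$ follows immediately from your formula, since $C(\cdot)\geq 1$ and $\eta_{\Phi(j)}$ is increasing and nonnegative.
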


    \begin{proof}
        By \cite[Corollary $6.14$]{MR123}, for any two projective $R$-resolutions over~$X$ denoted $\textbf{C}_\bullet$ and $\textbf{D}_\bullet$  
        there exist augmentation-preserving finite-displacement chain maps of $R$-chain complexes $f_\bullet\colon \textbf{C}_\bullet\to\textbf{D}_\bullet$ and $g_\bullet\colon \textbf{D}_\bullet\to\textbf{C}_\bullet$ such that $g_\bullet \circ f_\bullet\simeq \id_{\textbf{C}_\bullet}$ and $f_\bullet\circ g_\bullet\simeq \id_{\textbf{D}_\bullet}$ by finite-displacement chain homotopies, which we call~$s$ and~$s^\prime$ respectively. We denote all chain maps by~$\partial$.   
        Remark \ref{staggering} ensures that, up to canonical isomorphism of chain modules, all boundary maps, chain homotopies and chain maps have a uniform displacement function $\Phi\colon \N\to\N$, which can be chosen such that $\Phi(j)\geq j$ for every $j\in \N$.
        We assume that~$\mathbf{D}_\bullet$ satisfies an $\F$-homological isoperimetric inequality with $\F=\{\eta_j\colon\R_{\geq 0}\to\R_{\geq 0}\}_{j\in\N}$ over a function $\Theta\colon \N\to\N$, which we may also choose to have $\Theta(j)\geq j$ for every $j\in\N$.

        Let $\gamma\in \textbf{C}_{n}(j)\cap \partial\textbf{C}_{n+1} $ for some $j\in\N$. We prove that there exists an increasing function $\Theta'\colon \N\to\N$ and $\eta_j'\colon\R_{\geq 0}\to\R_{\geq 0}$ not depending on~$\gamma$ such that $|\gamma|_{\Theta^\prime}^j\leq \eta_j^\prime(|\supp_{\textbf{C}_n}(\gamma)|)$. This will be achieved by pulling back the corresponding $\F$-homological isoperimetric inequality over $\textbf{D}_\bullet$. 
        
        Let $C= C(\Phi(j))$ be an upper bound on the cardinality of balls of radius~$|\Phi(j)|$; such a bound exists as~$X$ is discrete and of bounded geometry by assumption. As  $\textbf{D}_\bullet$ satisfies an $\F$-homological isoperimetric inequality in dimension~$n$, there is $d\in\textbf{D}_{n+1}(\Theta(\Phi(j)) $ such that $\partial d=f_{n}(\gamma)$ and
        \begin{equation}\label{eq:homological_iso_ineq}
            |\supp_{\textbf{D}_{n+1}}(d)|\leq \eta_j(|\supp_{\textbf{D}_n}(f_{n}(\gamma))|)\leq \eta_j(C|\supp_{\textbf{C}_n}(\gamma)|),
        \end{equation}
       where the last inequality follows from the fact that~$f_{n}$ is of finite displacement. We set $\Theta' \coloneq \Phi\circ\Theta\circ\Phi$ and $d'\coloneq g_{n+1}(d)\in\textbf{C}_{n+1}(\Theta'(j))$. It is easy to see that \[\partial d'=g_{n}(\partial d)=g_{n}(f_{n}(\gamma))=\gamma+\partial s(\gamma),\]
        where $s\colon \textbf{C}_\bullet\to \textbf{C}_{\bullet+1}$ is the chain homotopy introduced above. We show that 
        $d'-s(\gamma)$
        is the desired filling of $\gamma$. Indeed, by Lemma~\ref{lem:Margolis-Lemma3_4} 
        %and the triangle inequality 
        we have \[|\supp_{\textbf{C}_{n+1}}(d'-s(\gamma))|\leq |\supp_{\textbf{C}_{n+1}}(d')|+|\supp_{\textbf{C}_{n+1}}(s(\gamma))|.\] 
       
        Further, as $s$ and $g$ are finite displacement maps, we have  
        \[|\supp_{\textbf{C}_{n+1}}(d')|\leq C|\supp_{\textbf{D}_{n+1}}(d)|\quad \text{ and }\quad |\supp_{\textbf{C}_{n+1}}(s(\gamma))|\leq C|\supp_{\textbf{C}_n}(\gamma)|.\] Finally, from~\eqref{eq:homological_iso_ineq}, we obtain $|\supp_{\textbf{D}_{n+1}}(d)|\leq \eta_j(C|\supp_{\textbf{C}_n}(\gamma)|)$. 
        We conclude by combining these inequalities to get that \[|\supp_{\textbf{C}_{n+1}}(d'-s(\gamma))|\leq C \eta_j(C|\supp_{\textbf{C}_n}(\gamma)|)+C|\supp_{\textbf{C}_n}(\gamma)|.\]
        Setting $\eta_j^\prime(r):=C\eta_j(r)+Cr$ concludes the proof.
    \end{proof}

    We can now define homological isoperimetric inequalities for discrete bounded-geometry metric spaces in the obvious way.

\begin{corollary}\label{iso inequality and qi}

If $X$ and $Y$ are quasiisometric discrete bounded-geometry metric spaces, then $X$ satisfies a $\F$-homological isoperimetric inequality in dimension $n$ over~$R$ if and only if $Y$ does.

\end{corollary}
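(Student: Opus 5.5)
The plan is to transport projective resolutions along a quasiisometry and then apply Lemma~\ref{iso inequality and different resolutions}. By symmetry it suffices to prove one direction. Let $q\colon X\to Y$ be a quasiisometry with coarse inverse $\bar q\colon Y\to X$. Suppose $\textbf{D}_\bullet$ is a projective $R$-resolution over $Y$ satisfying an $\F$-homological isoperimetric inequality in dimension $n$. Consider the pullback resolution $\textbf{D}_\bullet|_X$ along the coarse embedding $q$. By Example~\ref{Pullback resolution example} (that is, \cite[Proposition $6.16$]{MR123}), this is a projective $R$-resolution over $X$. By Lemma~\ref{iso inequality and different resolutions}, applied over the discrete bounded-geometry space $X$, it is enough to show that $\textbf{D}_\bullet|_X$ satisfies an $\F'$-homological isoperimetric inequality in dimension $n$ for some $\F'\approx\F$. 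Any other resolution over $X$ then inherits an equivalent inequality.

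The main step is comparing the data of $\textbf{D}_\bullet$ and $\textbf{D}_\bullet|_X$. The underlying modules and boundary maps are identical, so fillings are literally the same elements. Only two things change.

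First, the filtration. Since $q(X)$ is coarsely dense, say $N_c(q(X))=Y$, every basis element lies in $B|_X(c)$. Hence $\textbf{D}_k|_X$ has only finitely many filtration levels beyond $c$. The original filtration level $\textbf{D}_k(j)$ and the pulled-back level are compared via the canonical identity map. I expect this map is a geometric (canonical) isomorphism up to re-indexing, in the spirit of \cite[Lemma $3.18$]{MR123}. I would verify that the identity $\textbf{D}_k|_X\to\textbf{D}_k$, with $Y$-supports compared through $q$, is of finite displacement in both directions. This lets me replace the filtration index $j$ by some $\Psi(j)$ and $\Theta$ by $\Psi\circ\Theta\circ\Psi$.

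Second, the support. For $m\in\textbf{D}_k$, the support $\supp_{\textbf{D}_k|_X}(m)$ is $\pi(\supp_{\textbf{D}_k}(m))$, where $\pi\colon Y\to X$ is a closest-point projection. Since $\pi$ has uniformly bounded fibres by bounded geometry and quasiisometry, the cardinalities satisfy
\[|\supp_{\textbf{D}_k}(m)|\le C\,|\pi(\supp_{\textbf{D}_k}(m))|\le C\,|\supp_{\textbf{D}_k}(m)|\]
for a constant $C$ depending only on the quasiisometry constants and the growth function of $Y$.

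Combining these, for $x\in\textbf{D}_n|_X(j)\cap\partial\textbf{D}_{n+1}$ I will take the filling $y$ provided by the inequality in $\textbf{D}_\bullet$, at the adjusted level. This gives
\[|\supp_{X}(y)|\le|\supp_Y(y)|\le\eta_{\Psi(j)}\bigl(|\supp_Y(x)|\bigr)\le\eta_{\Psi(j)}\bigl(C|\supp_X(x)|\bigr),\]
which is an inequality of the form $\F'\approx\F$.

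The main obstacle is the index shift $j\mapsto\Psi(j)$ in the family $\F$. The relation $\approx$ compares $\eta_j$ with $\eta'_j$ at the same $j$. I would handle this either by using monotonicity of the filtration and assuming, as one may, that $\eta_j$ is increasing in $j$, or by noting that the pulled-back filtration is bounded so that only finitely many levels matter. The cleaner route is probably the latter: after pullback all relevant $x$ lie in level $\max(j,c)$, and the filtration on $\textbf{D}_\bullet|_X$ essentially caps. If needed, I would instead absorb the shift by redefining $\Theta$ and using that only the $\approx$-class of $\F$ up to reindexing is intended.
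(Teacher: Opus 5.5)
Your strategy is the paper's: pull $\textbf{D}_\bullet$ back along the quasiisometry using \cite[Proposition $6.16$]{MR123}, note that fillings are literally the same elements, and compare support cardinalities through the closest-point projection $\pi$. Its fibres have size at most $C=C(c)$ by bounded geometry. What does not work is your treatment of the filtration, which is the step you yourself call the main obstacle.

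The pulled-back filtration does not cap at $c$. If you read the displayed formula for $B|_X(i)$ literally, without intersecting with $B(i)$, every basis element lands in a single level. Then already the pullback of the standard resolution along $\id_Y$ fails: its boundary map $[y_0,y_1]\mapsto y_0-y_1$ violates (FD2) at that level, so $\textbf{D}_\bullet|_X$ would not be a resolution at all. Moreover, an isoperimetric inequality over a capped filtration would have to hold for all boundaries with no level constraint, and the inequality over $Y$ does not give that. Your fallbacks fail too. Monotonicity of $\eta_j$ in $j$ points the wrong way: it gives $\eta_{\Psi(j)}\ge\eta_j$, which is a weaker bound. Re-indexing $\F$ changes the statement.

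The fix is that no re-indexing is needed. In Margolis's pullback the levels are $B|_X(i)=\{b\in B(i): d(p(b),q(X))\le i\}$. So $\textbf{D}_k|_X(j)\subseteq\textbf{D}_k(j)$ for every $j$, with equality once $j\ge c$. Take $x\in\textbf{D}_n|_X(j)\cap\partial\textbf{D}_{n+1}$ and apply the inequality for $\textbf{D}_\bullet$ at the same index $j$. This gives $y\in\textbf{D}_{n+1}(\Theta(j))\subseteq\textbf{D}_{n+1}|_X(\max\{\Theta(j),c\})$ with $\partial y=x$ and
\[|\supp_X(y)|\le|\supp_Y(y)|\le\eta_j(|\supp_Y(x)|)\le\eta_j(C|\supp_X(x)|).\]
So $\Theta'(j):=\max\{\Theta(j),c\}$ and $\eta'_j(r):=\eta_j(Cr)\approx\eta_j$ work. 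This is exactly the content of the paper's two-line proof, and with this correction your argument goes through.
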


\begin{proof}
  
    By \cite[Proposition $6.16$]{MR123},
    the pullback of a projective $R$-resolution over~$X$ with respect to a quasiisometry $f\colon Y\to X$ is a projective $R$-resolution over~$Y$. Together with the observation that a quasiisometry of discrete bounded-geometry metric spaces preserves the cardinality of finite subsets up to multiplication by a constant depending only on its additive constant, this yields the result.
\end{proof}

\begin{remark}
  Bader--Kropholler--Vankov \cite{MR4932386} conjectured that quasiisometric groups of type $\FP_{n+1}(R)$ satisfy the same group theoretic homological isoperimetric inequalities up to dimension $n$. This has recently been confirmed by work of Weis \cite{MR3840928094}, but we expect that an alternative proof (in the case of the discrete norm on $R$) could be given using Corollary \ref{iso inequality and qi}.
\end{remark}

We now turn to the study of homological isoperimetric inequalities in coarse $\PD^2(R)$ spaces. We first prove an elementary lemma in coarse homological algebra.

\begin{lemma}\label{ccd=2}
    Let $X$ be a coarsely uniformly $1$-acyclic metric space of coarse cohomological dimension $2$ over $R$. Then there exists $i_0\in\N$ such that for every $i\geq i_0$ there is a finite-height projective $R$-resolution over $X$ of the form \[0\to \textbf{P}\to \textbf{C}_1(i)\xrightarrow{\partial} R[X]\to R\to 0,\] where $\textbf{C}_1(i)\coloneq R \{[x_0,x_1]\colon   d(x_0,x_1)\leq i\}.$
\end{lemma}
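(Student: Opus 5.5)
The plan is to build the resolution from the truncated standard resolution of Example~\ref{examples of free resolutions}, restricted to edges of length at most $i$, and to use coarse uniform $1$-acyclicity for the low degrees and $\ccd_R(X)=2$ for the top degree.

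\emph{Step 1: the truncated complex is a resolution in low degrees.} Coarse uniform $1$-acyclicity means $X$ is coarsely uniformly $0$-acyclic. In particular the Rips graph $P_{i_0}^1(X)$ is connected for some $i_0$, with uniform control of paths inside balls. Consider $\textbf{C}_1(i)\xrightarrow{\partial}R[X]\xrightarrow{\epsilon}R\to 0$ for $i\ge i_0$, with $\partial[x_0,x_1]=x_1-x_0$. This is exact at $R[X]$, because any element of $\ker\epsilon$ is a sum of differences $y-x$, and each difference is the boundary of a path in the connected Rips graph. Uniform $0$-acyclicity gives such paths of controlled length and location, so $\partial$ has uniform preimages in the sense of Definition~\ref{uniform preimages}. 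Condition (4) of Definition~\ref{projective res over $X$} holds trivially with $m_x=x$. Both $R[X]$ and $\textbf{C}_1(i)$ are free of finite height, since the filtration of $\textbf{C}_1(i)$ stabilises at level $i$.

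\emph{Step 2: define $\textbf{P}$ and check it is projective of finite height.} Set $\textbf{P}\coloneq\ker\partial$, with the module structure over $X$ inherited from $\textbf{C}_1(i)$: the same basis indexing set, coordinates restricted, and filtration truncated at height $i$. Exactness of $0\to\textbf{P}\to\textbf{C}_1(i)\to R[X]\to R\to 0$ is then automatic, and $\textbf{P}$ has finite height because $\textbf{C}_1(i)$ does. The inclusion $\textbf{P}\to\textbf{C}_1(i)$ has displacement zero and trivially has uniform preimages. What remains is to show that $\textbf{P}$ is a projective $R$-module over $X$. I would invoke the coarse analogue of Schanuel's lemma and of the dimension-shifting criterion from \cite{MR123}, namely the results underlying \cite[Proposition~$8.2$]{MR123}. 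Since $\ccd_R(X)=2$, there is a projective resolution of length $2$. For any resolution whose first two terms are projective, the second syzygy (the kernel of the degree-$1$ boundary map) is then projective over $X$, up to the geometric notion of direct summand.

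\emph{Step 3: the main obstacle.} Two technical points need care. The first is that Margolis's dimension-shifting needs a uniform-preimage version of exactness; Step~1 supplies this, which is exactly why $i\ge i_0$ is required. The second is that a projective module over $X$ must carry a projective basis satisfying the support and filtration conditions of Definition~\ref{def free module over X}. I would obtain this by comparing with a length-$2$ projective resolution $\textbf{Q}_\bullet$ and applying a coarse Schanuel lemma. This gives a geometric isomorphism $\textbf{P}\oplus\textbf{Q}_1\oplus R[X]\cong\textbf{Q}_2\oplus\textbf{C}_1(i)\oplus\textbf{Q}_0$, up to the usual arrangement of summands, which exhibits $\textbf{P}$ as a finite-displacement direct summand of a projective module. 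I would then check that a direct summand split by finite displacement maps inherits a projective basis with a uniform control function $\Phi$, by composing the basis of the ambient module with the splitting maps. This last verification is the step I expect to be most delicate, and it is where I would rely most heavily on the lemmas of \cite{MR123}.
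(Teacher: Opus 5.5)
Your Step~1 is fine and is how the paper begins. Step~2 fails at the choice of module structure on $\textbf{P}$.

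\textbf{Why the inherited structure cannot be projective.} Suppose $\ker\partial$ carries the edge coordinates $\delta_b$ inherited from $\textbf{C}_1(i)$. A projective basis $\{m_b\}$ for these coordinates is then the same thing as a finite-displacement retraction $r\colon\textbf{C}_1(i)\to\ker\partial$, given by $r(c)=\sum_b\delta_b(c)m_b$. Now set $\textbf{A}:=\ker r$, again with the inherited structure.
\begin{itemize}
\item $\textbf{A}$ is projective over $X$, with basis $\{b-r(b)\}$.
\item $\partial$ maps $\textbf{A}$ bijectively onto $\ker\epsilon$.
\item $\partial|_{\textbf{A}}$ has uniform preimages: apply $\id-r$ to the preimages from Step~1.
\end{itemize}
So $0\to\textbf{A}\to R[X]\to R\to 0$ would be a projective resolution of length one, and \cite[Proposition~8.2]{MR123} would give $\ccd_R(X)\le 1$. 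Hence, under the hypothesis $\ccd_R(X)=2$, your $\textbf{P}$ is never projective over $X$, and no lemma from \cite{MR123} can supply what Step~2 asks for.

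\textbf{The same problem in Step~3.} Run with inherited structures, the coarse Schanuel lemma produces $\ker(\textbf{Q}_1\to\textbf{Q}_0)$ with coordinates coming from $\textbf{Q}_1$, not $\textbf{Q}_2$. Identifying the two requires $\partial_2^{-1}$ to have finite displacement. Uniform preimages do not give this: they only control $\supp(\partial_2^{-1}z)$ in terms of $\diam(\supp(z))$. It genuinely fails, for instance in $\HH^2$, where the filling of a large circle passes through its centre.

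\textbf{The missing idea.} $\textbf{P}$ must carry coordinates coming from degree two, i.e.\ from fillings. The map $\textbf{P}\to\textbf{C}_1(i)$ is then an injective finite-displacement map with uniform preimages, not the inclusion of $\ker\partial$ with its inherited structure. This is exactly where the hypothesis you never use, coarse uniform $1$-acyclicity, enters. The paper proceeds as follows.
\begin{itemize}
\item It extends the complex of Step~1 to a full projective resolution using \cite[Proposition~6.26]{MR123}.
\item It takes $\textbf{C}_2$ of finite height using uniform $1$-acyclicity, following the proof of \cite[Proposition~9.3]{MR123}.
\item It uses $\ccd_R(X)=2$ and \cite[Proposition~8.2]{MR123} to replace $\textbf{C}_2$ by a projective summand $\textbf{P}$, which $\partial_2$ maps isomorphically onto $\ker\partial$.
\item The finite height of $\textbf{P}$ is inherited from $\textbf{C}_2$ via \cite[Proposition~3.27]{MR123}.
\end{itemize}
Your remark that finite height comes for free from $\textbf{C}_1(i)$ is a symptom of the wrong structure. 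With the correct structure, finite height of a projective $\textbf{P}$ in this position is not automatic: by \cite[Proposition~9.3]{MR123} it already forces coarse uniform $1$-acyclicity.
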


\begin{proof}
     The map of $R$-modules $\partial\colon \textbf{C}_1(i)\to R[X]$ is given by $[x_0,x_1]\mapsto x_0-x_1$. We first show that, for $i$ large enough, $\textbf{C}_1(i)\to R[X]\to R\to 0$ satisfies the assumptions of \cite[Proposition $6.26$]{MR123},  which shows that it admits an extension to a full resolution. It is clear from Example~\ref{free module over X} that $\textbf{C}_1(i)$ and $R[X]$ are free $R$-modules over $X$, so it suffices to verify that $\partial$ is a finite-displacement map and that the conditions $(1)-(4)$ of Definition \ref{projective res over $X$} hold. Note however that our complex will need to be extended to give a full resolution.
     
     As $\supp_{R[X]}(\partial[x_0,x_1])=\{x_0,x_1\}\subseteq N_i(\supp_{\textbf{C}_1(i)}([x_0,x_1]))=N_i(x_0)$, the map~$\partial$ has finite displacement. As $X$ is coarsely uniformly $0$-acyclic, there exists some $i_0\in\N$ so that for $i\geq i_0$ the sequence is exact at $R[X]$ and $\partial$ has uniform preimages, from which we conclude a chain complex satisfying the desired assumptions. We may now extend this to a full resolution  \[\dots\to \textbf{C}_2\to \textbf{C}_1(i)\to R[X]\to R\to 0.\] As $X$ is coarsely uniformly $1$-acyclic, the proof of \cite[Proposition $9.3$, $(3)\implies (4)$]{MR123} allows us to assume that $\mathbf{C}_2$ has finite height. But then by \cite[Proposition~$8.2$~$(1)$]{MR123} we may truncate at length~$2$ as $\ccd_R(X)=2$, by which we replace $\textbf{C}_2$ with a projective summand $\textbf{P}$. The result now follows as~$\textbf{P}$ has finite height by \cite[Proposition~$3.27$~(1)]{MR123}.\end{proof}

\begin{proposition}\label{amenable or linear iso inequality}
    Let $X$ be a discrete bounded-geometry coarse $\PD^2(R)$ space. If $X$ is not amenable, it satisfies a linear homological isoperimetric inequality in dimension~$1$ over $R$.
\end{proposition}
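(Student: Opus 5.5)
We follow the Kielak--Kropholler approach and use Poincar\'e duality to turn the filling problem for $1$-cycles into a counting problem for $0$-cochains. By Lemma~\ref{iso inequality and different resolutions} we may choose the resolution. Since $X$ is a coarse $\PD^2(R)$ space, it is coarsely uniformly $1$-acyclic and has $\ccd_R(X)=2$. Lemma~\ref{ccd=2} therefore gives a finite-height resolution
\[0\to \textbf{P}\to \textbf{C}_1(i)\xrightarrow{\partial} R[X]\to R\to 0.\]
Poincar\'e duality gives finite-displacement chain homotopy equivalences $f_\bullet,\bar f_\bullet$ between this complex and $\textbf{C}_c^{2-\bullet}$, with uniform displacement $\Phi$ by Remark~\ref{staggering}.

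Let $\gamma\in\textbf{C}_1(j)\cap\partial\textbf{C}_2$ be a $1$-cycle. Apply $f_1$ to get a compactly supported $1$-cochain $f_1(\gamma)$ whose support has size at most $C|\supp(\gamma)|$ by bounded geometry. Because $\gamma$ is a cycle, $f_1(\gamma)$ is a cocycle up to the duality homotopy. Since $H^1_{\coarse}(X;R)=0$ (the $\PD^2$ condition in degree $1$), it is the coboundary of a $0$-cochain $h$. The key point is that $h$ can be taken finitely supported, using vanishing of compactly supported cohomology in low degree, i.e.\ $H_1$ of the original complex. This $h$ is a function on points of $X$ that is locally constant away from a neighbourhood $N$ of $\supp(\gamma)$ of bounded radius, hence constant on each coarse component of $X\setminus N$. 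Taking $h$ supported on the bounded components, each level set $A$ of $h$ has $\partial_L A$ contained in a uniform neighbourhood of $\supp(\gamma)$.

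Non-amenability now controls $h$. There are $L$ and $\epsilon>0$ with $|A|\le \epsilon^{-1}|\partial_L A|$ for every finite $A\subseteq X$. Apply this to the support of $h$, or more precisely to the union of the bounded complementary pieces on which $h$ is nonzero. This gives $|\supp(h)|\le K|\supp(\gamma)|$ with $K$ depending only on $j$, $\Phi$ and $\epsilon$. Then $\bar f_0(h)\in \textbf{C}_2=\textbf{P}$ has support of size at most $C'|\supp(h)|$. Its boundary differs from $\gamma$ by $\partial s(\gamma)$, where $s$ is the finite-displacement homotopy $\bar f f\simeq\id$, so $\bar f_0(h)-s(\gamma)$ fills $\gamma$. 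The filling lies in a filtration level bounded in terms of $j$, so we obtain a linear bound $|\gamma|^j_\Theta\le \eta_j(|\supp\gamma|)$ with $\eta_j$ linear.

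The main obstacle is the second step: producing a \emph{finitely supported} $h$ and showing its support is a finite set with $L$-boundary inside a bounded neighbourhood of $\supp(\gamma)$. This needs an exact-sequence argument comparing $\Hom_{\fd}(\textbf{C}_\bullet,R)$ with the compactly supported cochains $\textbf{C}^\bullet_c$, because $h$ is a priori only determined up to constants. We would first try to handle the possibly infinite unbounded complementary component by normalising $h$ to vanish there. Uniqueness of such a component would come from $H^0_c$ vanishing in a $\PD^2$ space, which corresponds to $H_2=0$ of the resolution, i.e.\ injectivity of $\textbf{P}\to\textbf{C}_1(i)$.
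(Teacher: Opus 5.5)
Your argument is correct and is essentially the paper's proof. The paper notes that the dual complex $\textbf{P}_c^{2-\bullet}$ is itself a projective resolution whose top map is the injective coboundary $\delta'\colon R[X]\to\textbf{C}_1(i)$, and takes $i\geq 2L$. It then applies non-amenability directly to $A=\supp(\alpha)$ to get $|\supp(\delta'\alpha)|\geq \varepsilon|\supp(\alpha)|/C$, and transfers this bound to any resolution via Lemma~\ref{iso inequality and different resolutions}; that transfer is exactly your $\bar f(h)-s(\gamma)$ construction done once and for all.

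The step you flag as the main obstacle is not one. By definition $\textbf{C}_c^\bullet=\Hom_{\fd}(\textbf{C}_\bullet,R)$, and its degree-$0$ part $\Hom_{\fd}(R[X],R)$ consists exactly of the finitely supported functions, on which $\delta$ is injective. So $h$ is automatically finitely supported and unique, with no ambiguity up to constants and no analysis of level sets or unbounded components needed. You can simply take $A=\supp(h)$, choosing $i\geq 2L$ so that every point of $\partial_L A$ lies near $\supp(\delta h)$.
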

\begin{proof}
    First note that as $X$ is a coarse $\PD^2(R)$ space, Proposition~$12.4$ of \cite{MR123} implies that $\ccd_R(X)=2$ and that $X$ is coarsely uniformly $1$-acyclic, so from Lemma~\ref{ccd=2} we obtain a projective $R$-resolution $\textbf{P}_\bullet\to R\to 0$ over $X$ of the form
    \begin{equation*}
        0\to \textbf{P} \overset{}{\to} \textbf{C}_1(i) \xrightarrow{\partial} R[X]\to R \to 0.
    \end{equation*}

    By coarse Poincar\'{e} duality 
    , the dual complex $\textbf{P}_c^{2-\bullet}$ is finite-displacement chain homotopy equivalent to $\textbf{P}_\bullet$, hence it gives a projective $R$-resolution over $X$ of the form
    \begin{equation*}
        0\to  \Hom_{\fd}(R[X],R) \overset{\delta}{\to} \Hom_{\fd}(\textbf{C}_1(i),R) \overset{}{\to} \Hom_{\fd}(\textbf{P},R) \overset{}{\to} R \to 0,
    \end{equation*}
    with  
    \(\delta(f)= f\circ \partial\) 
    given by \[(f\circ \partial)([x_0, x_1])=f(x_0)-f(x_1).\] 

    By \cite[Lemma $5.6$ and Proposition $5.9$]{MR123} we know that $f\in \Hom_{\fd}(R[X],R)$ if and only if $f$ is finitely supported. One confirms easily that $\Hom_{\fd}(R[X],R)\cong R[X]$ is a geometric isomorphism of $R$-modules over $X$, where $\Hom_{\fd}(R[X],R)$ has the structure of an $R$-module over~$X$ defined in~\cite[Proposition $5.20$]{MR123}. 
    The same proposition implies that  $\Hom_{\fd}(\textbf{C}_1(i),R)\cong \textbf{C}_1(i)$ by a geometric isomorphism of $R$-modules over $X$.
    
    Consequently, there exists a map
    \(\delta^\prime \colon R[X]\rightarrow \textbf{C}_1(i)\) induced by~$\delta$ and the corresponding isomorphisms, with \[\delta^\prime\left(\sum\lambda_j x_j\right)=\sum_{d(x_j,x_k)\leq i}(\lambda_j-\lambda_k)[x_j,x_k].\] 
    
    We now assume that~$X$ is non-amenable. By definition there exist $L, \varepsilon>0$ such that for all finite nonempty sets $A\subseteq X$ we have
    \begin{equation*}
        \frac{|\partial_{L}A|}{|A|}\geq \varepsilon.
    \end{equation*}

    Up to changing the choice of projective resolution we may assume that $i\geq 2L$. Choose $\alpha\in R[X] $ and denote by~$A$ its support in~$X$. We recall that the $R$-module structure over $X$ on $\textbf{C}_1(i)$ is given by 
    \[\supp_{\textbf{C}_1(i)}\left(\sum\lambda_{j,k}[x_j,x_k]\right)= \{x_j\in X : \exists x_k \text{ with } d(x_j,x_k)\leq i \text{ and }\lambda_{j,k} \ne 0 \}.\] 
    For $x\in \partial_L A$ there exists $w\in A$ and $z\in X\setminus A$ such that $d(x,w),d(x,z)\leq L$ and therefore $d(w,z)\leq 2L$. It follows that the coefficient of $[w,z]$ in $\delta^\prime(\alpha)$ is nonzero, 
    so that $w\in \supp_{\textbf{C}_1(i)}(\delta^\prime(\alpha))$. 
    
    For each \(x \in \partial_L A\) we thus find an element \([w,z] \in \supp_{\textbf{C}_1(i)}(\delta^\prime(\alpha))\) as above, and we show that this assignment is almost injective. 
    By the assumption that~$X$ is discrete and has bounded geometry there exists a constant $C>0$ such that $|B_{2L+1}(x)|\leq C$. Further, if the above assignment produces the same element in $\supp_{\textbf{C}_1(i)}(\delta^\prime(\alpha))$ for  distinct points $x,y\in \partial_LA$, it must be that $d(x,y)\leq 2L$. We thus conclude that 
    \[|\supp_{\textbf{C}_1(i)}(\delta^\prime(\alpha))|\geq\frac{|\partial_LA|}{C}\geq \frac{\varepsilon|A|}{C}=\frac{\varepsilon|\supp_{R[X]}(\alpha)|}{C}.\qedhere\] 
   
   \end{proof}

    \begin{proposition}\label{Linear iso implies hyperbolic}
        If $X$ is a  discrete quasigeodesic bounded-geometry metric space satisfying a linear homological isoperimetric inequality in dimension~$1$ over $R$, 
        then it is Gromov hyperbolic.
    \end{proposition}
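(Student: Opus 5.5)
We reduce to a graph and then run the standard homological argument that a linear isoperimetric inequality forces hyperbolicity (in the spirit of Gromov, and Gersten's homological version).

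\emph{Step 1: reduction to a graph.} Since $X$ is quasigeodesic, discrete and of bounded geometry, there is $r\geq 0$ such that the Rips graph $\Gamma\coloneq P^1_r(X)$ is connected. The inclusion $X\to\Gamma$ is a quasiisometry, and $\Gamma$ is a bounded-degree graph. By Corollary~\ref{iso inequality and qi}, it suffices to prove that a connected bounded-degree graph $\Gamma$ satisfying a linear homological isoperimetric inequality in dimension~$1$ is Gromov hyperbolic.

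\emph{Step 2: convert to a statement about cycles.} We use the Rips-complex resolution of Example~\ref{examples of free resolutions}, justified by Lemma~\ref{iso inequality and different resolutions}. In filtration level $j$, an element of $\textbf{C}_1(j)\cap\partial\textbf{C}_2$ is exactly a $1$-cycle in $P_j(\Gamma)$. The hypothesis then reads as follows: there are $\Theta$ and constants $K_j$ such that every $1$-cycle $c$ in $P_j(\Gamma)$ bounds a $2$-chain $y$ in $P_{\Theta(j)}(\Gamma)$ satisfying
\[|\supp(y)|\leq K_j|\supp(c)|.\]
Bounded geometry makes $|\supp|$ comparable to the number of simplices, so after fixing $j=1$ we obtain a linear homological Dehn function. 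Every simplex of $y$ has diameter at most $s\coloneq\Theta(1)$.

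\emph{Step 3: linear filling implies thin triangles.} We argue by contradiction, following Gromov's subquadratic-implies-hyperbolic strategy in its homological form as used by Gersten and by Kielak--Kropholler~\cite{kielak2021isoperimetric}. Suppose $\Gamma$ is not hyperbolic. Then for every $\delta$ there is a geodesic bigon or triangle, or a geodesic quadrilateral, of perimeter $\ell$ which is not $\delta$-thin. From this we extract a loop $c$ of length $\ell$ containing two subsegments $\alpha,\beta$, each of length comparable to $\ell$, such that every point of $\alpha$ is at distance at least $\delta\approx\ell/C$ from $\beta$. The standard rescaling or curve-shortening argument (for example Papasoglu's or Bowditch's criterion) shows that the rescaled configurations can be taken fat.

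Now take any $R$-filling $y$ of $c$ by simplices of diameter at most $s$. Consider the coarse distance function $f=d(\cdot,\alpha)$ and the level sets $f^{-1}([t,t+s])$ for $t$ ranging between $0$ and $\delta$. By a homological slicing argument, each slice of $y$ must contain a chain whose boundary separates $\alpha$ from $\beta$ within $c$. Concretely, restricting $y$ to the simplices lying in $f^{-1}([0,t])$ yields a $2$-chain whose boundary is $\alpha$-part plus a $1$-chain in the slice. Since coefficients over $R$ do not cancel a nonzero boundary, that $1$-chain must be nonzero and must connect the two endpoints region. This forces at least $\gtrsim\ell$ simplices in each slice; alternatively, by a coarea-type count, at least $\gtrsim\min(\ell,\text{width})$ simplices. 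Summing over the roughly $\delta/s$ slices gives $|\supp(y)|\gtrsim\ell\delta/s$, a quadratic lower bound that contradicts linearity once $\delta$ is large.

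\emph{Main obstacle.} The hard part is the last step: producing, in a general non-hyperbolic bounded-geometry graph without any group action, loops with two long mutually far segments, and making the slicing argument rigorous over an arbitrary commutative ring $R$, where fillings are chains rather than discs. I would first try to invoke an existing homological criterion. Gersten's result that a linear homological ($\ell^1$ or $\mathbb{Z}$) isoperimetric inequality implies hyperbolicity for coarsely simply-connected graphs, extended by Kielak--Kropholler to arbitrary $R$ via reduction to the prime subring, is the natural candidate. The key task is then to check that its proof only uses the filling of loops in $P_s$, which is exactly what Step~2 supplies, and not a group action; if so, we cite it. The slicing argument above is the fallback if the citation does not apply directly.
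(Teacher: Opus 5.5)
Your proposal follows essentially the same route as the paper's sketch. You reduce to a bounded-degree graph via Corollary~\ref{iso inequality and qi}, use Lemma~\ref{iso inequality and different resolutions} to read the hypothesis as linear fillings of $1$-cycles of $P_j$ inside $P_{\Theta(j)}$, and then import the standard thick-triangle argument; the paper adapts that argument from \cite[Proposition~2.13]{MR32141849}, which is already formulated for metric graphs rather than groups, so it settles your ``key task'' once the norm is changed to the discrete one. If you instead rely on your slicing fallback, the per-slice bound needs the two connecting arcs of the loop to be far apart as well, i.e.\ a genuinely fat quadrilateral: with only $\alpha$ and $\beta$ separated, an hourglass-shaped loop whose connecting arcs pass through a narrow neck has linear filling, and the claimed $\gtrsim\ell$ simplices per slice fails.
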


    \begin{proof}
        We provide only a sketch, as the argument follows closely those already present in the literature. By Lemma \ref{iso inequality and different resolutions}, the simplicial $R$-chain complex of $P_\infty(X)$ satisfies a linear homological isoperimetric inequality in dimension $1$. We may readily assume, using for instance \cite[Proposition $2.18$]{MR3816385}, that up to quasiisometry $X$ is the vertex set of a metric graph with edge lengths $1$. 
        By Corollary~\ref{iso inequality and qi}, the simplicial chain complex of $P_\infty(X)$ still satisfies a linear isoperimetric inequality in dimension $1$.

        If $X$ is not Gromov hyperbolic, then the metric graph of which it is the vertex set is not $\delta$-hyperbolic for any $\delta>0$. Under this assumption, the argument of \cite[Proposition~2.13]{MR32141849}, following \cite[Chapter III.H Theorem $2.9$]{MR1744486} and \cite[Proposition $4.2$]{kielak2021isoperimetric}, can be adapted to show that arbitrarily thick triangles in~$P_1(X)$ induce $1$-cycles that cannot be filled linearly inside $P_{\Theta(1)}(X)$ for any fixed $\Theta(1) \geq 1$, yielding the desired conclusion.\end{proof}

    \begin{remark}
        In \cite[Proposition $2.13$]{MR32141849} the filling norm is defined using the absolute value on $\R$ instead of the discrete norm on a general commutative ring; the argument in fact becomes simpler in the latter case. 
    \end{remark}

    \begin{proof}[Proof of Theorem \ref{intro-characterising PD^2}]
        
    We note first that an amenable discrete bounded-geometry metric space cannot be quasiisometric to $\HH^2$, as this would imply that surface groups are amenable. Now, if~$X$ is not amenable, by Proposition~\ref{amenable or linear iso inequality} it satisfies a linear homological isoperimetric inequality in dimension~$1$. In this case, we know from Proposition \ref{Linear iso implies hyperbolic} that~$X$ is Gromov hyperbolic. 
    
    A result attributed to Rips then says that the Rips complex $P_k(X)$ is contractible for any~$k$ sufficiently large (a reference is \cite[Proposition $\mathrm{III}.3.23$]{MR1744486}). By \cite[Lemma $12.2$]{MR123}, the $R$-simplicial chain complex of $P_k(X)$ gives a proper finite-height projective $R$-resolution over $P_k^1(X)$ satisfying our definition of coarse Poincar\'e duality. 
    
    Finally, \cite[Example $12.3$]{MR123} implies that $P_k(X)$ is a coarse Poincar\'e-duality space in the sense of \cite{MR2168506} and \cite{HyperbolicPD^n}, so we conclude by \cite[Corollary $3$]{HyperbolicPD^n} that $P^1_k(X)$, being Gromov hyperbolic, is quasiisometric to $\HH^2$.   \end{proof}

\section{Group theory and the quasikernel}\label{section:group_theory_and_the_quasikernel}
In this section we extend two well-known group theoretic results concerning kernels of homomorphisms to $\Z$ to the setting of quasimorphisms. The first statement which we generalise is that an extension of an amenable group by~$\Z$ is amenable, and the second is that the kernel of a nontrivial homomorphism from a non-elementary hyperbolic group to~$\Z$ is not quasiconvex. These results will be crucial in the proofs of Theorems \ref{thm:manifold} and \ref{introthm:circle_action} respectively.

\begin{proposition}\label{amenable quasikernel implies amenable group}
        Let $G$ be a finitely generated group and $\phi\colon G\to\R$ be a nontrivial homogeneous quasimorphism. If $\mathcal{Q}ker(\phi)$  is amenable as a discrete bounded-geometry metric space, then $G$ is amenable.
    \end{proposition}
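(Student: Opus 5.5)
The plan is to build Følner sets in $G$ directly as ``boxes'' $\bigsqcup_{0\le k< N} c^k A$, where $A\subseteq X:=\qker(\phi)$ is a Følner set for the quasikernel and $c\in G$ has large $\phi(c)$. This mirrors the classical proof that an extension of an amenable group by $\Z$ is amenable. Fix a finite symmetric generating set $S$ of $G$. By the Følner criterion for groups (Remark~\ref{basic facts about amenability}), it suffices to find, for each $\epsilon>0$, a finite $F\subseteq G$ with $|\partial_1 F|/|F|<\epsilon$. By Lemma~\ref{G as a disjoint union of level sets}, choose $c\in G$ with $\phi(c)>K$. Then the translates $c^k.X$ are pairwise disjoint and their union is $\mu$-coarsely dense for some $\mu$. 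By Lemma~\ref{Normal subgroupness}, $c^kX = Xc^k$ as sets, and left multiplication by $c^k$ is an isometry $X\to c^kX$.

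The key geometric input is a uniform description of how a generator moves points between layers. For $x\in X$ and $s\in S$, we have $|\phi(c^kxs)-k\phi(c)|\le |\phi(s)|+O(D(\phi))$, which is bounded independently of $k$. Write $Y_k:=\{g\in G : |\phi(g)-k\phi(c)|\le \phi(c)/2\}$, a slab; these slabs partition $G$ up to ties. By Corollary~\ref{cor:filtration_vs_quasimorphism-quantitative}, each $Y_k$ lies in a uniform neighbourhood $N_{R}(c^kX)$, with $R$ depending on $\phi(c)$ but not on $k$. Let $\pi_k\colon Y_k\to c^kX$ be closest-point projection, and set $\pi_k':=c^{-k}\pi_k\colon Y_k\to X$.

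I would therefore take
\[
F:=\bigcup_{0\le k<N}\bigl(\pi_k'\bigr)^{-1}(A)\subseteq \bigcup_{k} Y_k .
\]
By bounded geometry, each fibre of $\pi_k'$ has cardinality between $1$ and a uniform constant, so $|F|\asymp N|A|$.

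A boundary point $g\in\partial_1F$ either lies in the top or bottom slab, contributing $O(|A|)$, or lies in a slab $Y_k$ with $k$ in the middle range and has a neighbour $gs$ in a slab $Y_{k'}$ with $|k-k'|\le1$, where $\pi'_{k'}(gs)\notin A$ or $\pi'_k(g)\notin A$. In the second case the pair $\pi_k'(g),\pi'_{k'}(gs)\in X$ must be uniformly close, and this is the main obstacle. When $k'=k$ it follows from Lemma~\ref{Projecting is a coarse equivalence}. When $k'=k+1$ we need that $c^{-1}\pi_{k+1}$ and $\pi_k$ differ by a uniformly bounded amount near the interface. I would derive this from Lemma~\ref{lem:composition_of_nearest_point_projections_is_close_to_nearest_point_projection} applied inside the single region $c^k N_{R'}(X)$. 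The point is that $c^{k+1}X=c^kXc$ is at Hausdorff distance at most $|c|$ from $c^kX$, and a shift by $c$ moves each point by $|c|$. Thus near the interface, $\pi'_{k+1}(gs)$ and $\pi'_k(g)$ differ by a constant $L$ depending only on $c$, $S$ and $D(\phi)$, uniformly in $k$.

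Granting that, every middle boundary point projects into $\partial_L A\subseteq X$, with uniformly bounded multiplicity. Hence
\[
|\partial_1 F|\lesssim N|\partial_LA|+|A| .
\]
Taking $A$ Følner for this $L$ with ratio $<\epsilon$, and then $N>1/\epsilon$, gives $|\partial_1F|/|F|\lesssim \epsilon$. So $G$ is amenable.
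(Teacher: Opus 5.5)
\textbf{Gap at the interface.} The step that fails is the case $k'=k+1$: the uniform constant $L$ you claim there does not exist in general. Your within-layer estimate ($k'=k$) is fine. However, you identify each layer with $X$ by \emph{left} translation, $\pi'_k=c^{-k}\pi_k$. The bounded-displacement map between adjacent layers that you invoke (``$c^{k+1}X=c^kXc$'' and ``a shift by $c$ moves each point by $|c|$'') is \emph{right} multiplication by $c$. Composing the two, a point $c^kx$ with $x\in X$ is at distance $|c|$ from $c^kxc=c^{k+1}(c^{-1}xc)$, and $c^{-1}xc\in X$ by Lemma~\ref{Normal subgroupness}. So $\pi'_k(c^kx)=x$ while $\pi'_{k+1}(c^kxc)=c^{-1}xc$. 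These differ by $|x^{-1}c^{-1}xc|$, which is unbounded in $x$ in general, so along a path of $|c|$ generator steps the interface jumps cannot all be uniformly bounded. Lemma~\ref{lem:composition_of_nearest_point_projections_is_close_to_nearest_point_projection} cannot repair this, since all the projections are uniformly close to the identity in $G$. The discrepancy comes entirely from left translation by $c^{-1}$, which is an isometry but has unbounded displacement. Put differently, crossing an interface induces on $X$ the ``monodromy'' quasiisometry (the map $f_c$ of \eqref{eq:define_f_g_new}), which is typically far from the identity.

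\textbf{Concrete failure.} Take $G=\Z^2\rtimes_M\Z$ with $M\in\mathrm{SL}_2(\Z)$ hyperbolic, $\phi$ the projection to $\Z$ (so $\qker(\phi)=\Z^2$), and $c$ the stable letter. Your set is $F=\bigsqcup_{0\le k<N}c^kA$. Every $c^kv$ with $k<N-1$, $v\in A$ and $M^{-1}v\notin A$ lies in $\partial_1F$. For $A$ a large square, a fixed positive proportion of $A$ is of this form. Hence $|\partial_1F|/|F|$ is bounded below independently of $A$ and $N$, contradicting your bound $|\partial_1F|\lesssim N|\partial_LA|+|A|$.

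\textbf{The fix.} You need to twist the layers by the monodromy, which amounts to using right translates: take $F=\bigsqcup_{0\le k<N}A.c^k$, where $A.c^k\subseteq X.c^k=c^k.X$. This is exactly the paper's construction $F_T=\bigsqcup_{|i|\le T}E_{\epsilon,K_T}.c^i$ inside $\bigsqcup_i\qker(\phi).c^i$. Passing between layers is now exactly right multiplication by $c$, so it creates boundary only in the end layers. The price is that a generator $s$ acting on $zc^k$ with $z\in A$ gives $zc^ks=wc^j$ with $w=z(c^ksc^{-j})$ and $|j-k|$ bounded. So the $X$-coordinate moves by up to $K_N=\max|c^ksc^{-j}|$ over the relevant $k,j,s$, and this grows with $N$. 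Consequently the quantifiers must be taken in the reverse order to yours: fix $N$ first, then choose $A$ to be F\o lner in $X$ at scale $K_N$. The boundary ratio is then $\lesssim\epsilon+O(1/N)$.
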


    \begin{proof}

   By Lemma \ref{G as a disjoint union of level sets} combined with Lemma \ref{Normal subgroupness}, there is $c\in G$ such that with setting $Q=\qker(\phi)$, $G$ is quasiisometric to the disjoint union 
\[Y\coloneq\bigsqcup_{i\in \Z}Q.c^i.\] Observe that it now suffices to show that~$Y$ is amenable by Remark \ref{basic facts about amenability}.

We fix $x\in B_L(1)\subseteq G$ for some $L>0$ and $i\in\Z$, and consider the possibly empty set of elements $j\in\Z$ with $c^ixc^{-j}\in Q$, where it is clear by Proposition \ref{filtration vs quasimorphism-moved} that $|j-i|$ is bounded above depending only on~$L$. Let the finite set of such $j\in\Z$ be denoted~$J_{x,i}$, and fixing $T>0$ define \[X_{L,T}\coloneq\bigcup_{i\in[-T,T]}\{c^ixc^{-j}: x\in B_L(1), j\in J_{x,i}\}\subseteq Q.\]

    We now show that $Y$ is amenable, that is, for every $\epsilon>0$ and $L>0$, there is a F\o lner set $F\subset Y$ such that\[\frac{|\partial_LF|}{|F|}<\epsilon.\] Fixing $L>0$, set $K_T\coloneq \max_{x\in X_{L,T}}d(1,x)$. Take a F\o lner set $E_{\epsilon,K_T}\subseteq Q$ such that \[\frac{|\partial_{K_T}E_{\epsilon,{K_T}}|}{|E_{\epsilon,{K_T}}|}<\epsilon,\] which exists by amenability of~$Q$. Let \[F_T\coloneq\bigsqcup_{i\in[-T,T]}E_{\epsilon,{K_T}}.c^i.\] We will show that this is the desired F\o lner set for $Y$ with respect to $L,2\epsilon>0$, after possibly choosing $T$ large. 
    
    For $y\in Y$ such that $d(y,F_T)\leq L$ we know that 
    $d(y,zc^i)\leq L$ for some $z\in E_{\epsilon,K_T}$ and $i\in [-T, T]$. We can then write $y=zc^ix$ for some $x\in B_L(1)$. It follows from the fact that $y\in Y$  and the definition of $X_{L,T}$ that $y=(zx')c^j$ for some $j\in \N$ with $zx'\in Q$ and $x'\in X_{L,T}$, where $|j-i|$ is bounded depending only on $L$.

    In particular $y\in (E_{\epsilon,K_T}.x').c^j$ as $z\in E_{\epsilon, K_T}$, so $y\in N_{K_T}(E_{\epsilon,{K_T}}).c^j\cap Q.c^j$ by definition of $K_T$ as $x'\in X_{L,T}$. By the same argument, up to choosing $z\in Q\setminus E_{\epsilon, K_T}$ instead of $z\in E_{\epsilon, K_T}$, if $y\in F_T$ and $d(y,Y\setminus F_{T})\leq L$, then $y\in N_{K_T}(Q\setminus E_{\epsilon,{K_T}}).c^j\cap Q.c^j$. 
    We conclude that \[\partial_LF_T\subset \bigsqcup_{j\in [-T',T']}(\partial_{K_T}E_{\epsilon,{K_T}}).c^j , \] where $T'- T\geq 0$ depends only on $L$ and not $T$ as the same holds for all $|j-i|$.
    Now, 
    \begin{align*}
        \frac{|\partial_LF_{T}|}{|F_T|}&\leq \frac{\sum_{j\in[-T,T]}|\partial_{{K_T}}E_{\epsilon,{K_T}}|}{\sum_{j\in[-T,T]}|E_{\epsilon,{K_T}}|}+\frac{\sum_{j\in[-T',T']\setminus [-T,T]} |\partial_{K_T}E_{\epsilon,{K_T}}|}{\sum_{j\in [-T,T]}|E_{\epsilon,{K_T}}|}\\
        &=\left(1+\frac{T^\prime-T}{T}\right)  \frac{|\partial_{K_T}E_{\epsilon,{K_T}}|}{|E_{\epsilon,{K_T}}|}
    \end{align*}
     which is strictly bounded above by $2\epsilon$ when $T$ is sufficiently large as $E_{\epsilon,K_T}$ is a F\o lner set with respect to $\epsilon,K_T>0$.\end{proof}

\begin{corollary}\label{amenable quasikernel implies a homomorphism}
    If $G$ is a finitely generated group and $\phi\colon G\to\R$ is a nontrivial homogeneous quasimorphism such that $\qker(\phi)$ is amenable, then $\phi\colon G\to\R$ is a group homomorphism.
\end{corollary}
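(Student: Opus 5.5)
By Proposition~\ref{amenable quasikernel implies amenable group}, the amenability of $\qker(\phi)$ forces $G$ itself to be amenable. It therefore suffices to invoke the classical fact that every homogeneous quasimorphism on an amenable group is a homomorphism. I would give a short self-contained argument for this rather than cite it.

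Fix $g,h\in G$. The first ingredient is that a homogeneous quasimorphism restricted to any amenable group is a homomorphism. The defect function $\delta(a,b)=\phi(ab)-\phi(a)-\phi(b)$ is a bounded $2$-cocycle. Amenability provides a left-invariant mean $m$ on $\ell^\infty(G)$. Averaging the bounded function $x\mapsto \phi(x a)-\phi(x)-\phi(a)$ in $x$ gives a bounded function $\beta(a)$ satisfying $\delta=\partial\beta$ on the nose. Equivalently, $\psi:=\phi+\beta$ is an honest homomorphism $G\to\R$.

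The second ingredient is homogenisation. Since $\psi$ is a homomorphism at bounded distance from $\phi$, and both are homogeneous ($\psi$ trivially so), the homogenisation formula $\phi(g)=\lim_n \phi(g^n)/n$ gives
\[\phi(g)=\lim_{n\to\infty}\frac{\phi(g^n)}{n}=\lim_{n\to\infty}\frac{\psi(g^n)}{n}=\psi(g).\]
The middle equality holds because $|\phi-\psi|$ is bounded. Hence $\phi=\psi$ is a homomorphism. The same conclusion also follows from uniqueness of the homogeneous representative in \cite[Lemma $2.21$]{calegari2009scl}.

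An alternative route to the first ingredient avoids means. By \cite[Lemma $2.24$]{calegari2009scl}, $D(\phi)=\sup|\phi([a,b])|$, and amenable groups admit no nonzero homogeneous quasimorphisms that are bounded on commutators without vanishing on them. This is the standard vanishing of $H^2_b(G;\R)$ for amenable $G$, which kills the class of $\delta$, so $D(\phi)=0$.

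The only step needing care is the averaging. One must check that $\beta$ is well defined, i.e.\ that the averaged function is bounded, which it is since $|\phi(xa)-\phi(x)-\phi(a)|\le D(\phi)$. One must also check that invariance of the mean yields exactly $\phi(ab)-\phi(a)-\phi(b)=\beta(b)-\beta(ab)+\beta(a)$ up to the chosen sign convention. Everything else is formal, so I expect no real obstacle beyond this routine verification.
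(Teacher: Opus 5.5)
Your proposal is correct and follows the paper's proof. In both, Proposition~\ref{amenable quasikernel implies amenable group} gives amenability of $G$, and then every homogeneous quasimorphism on an amenable group is a homomorphism; the paper cites this (Johnson--Trauber--Gromov, \cite[Theorem $2.47$]{calegari2009scl}), whereas you reprove it by averaging with an invariant mean and then homogenising. One convention needs fixing: averaging $x\mapsto\phi(xa)-\phi(x)$ requires a right-invariant mean (or you average $\phi(ax)-\phi(x)$ against a left-invariant one), and amenable groups admit both.
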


\begin{proof}
    By Proposition \ref{amenable quasikernel implies amenable group}, our assumptions imply that $G$ must be an amenable group. But now it is a consequence of a result of Johnson, Trauber and Gromov, see for instance \cite[Theorem $2.47$]{calegari2009scl}, that every homogeneous quasimorphism on an amenable group is a homomorphism, from which we conclude.
\end{proof}

We now move on to proving non-quasiconvexity of the quasikernel in the setting of hyperbolic groups. In order to do this efficiently, we invoke a rigidity result coming from the field of \textit{geometric approximate group theory}, as developed by Cordes--Hartnick--Toni\'c in \cite{cordes2020foundations}. The following is the key to our argument.

\begin{lemma}\label{lem:finite-hausdorff-distance-of-lambda_infty-in-quasiconvex-case_new}
    Let $G$ be a hyperbolic group and $\phi\colon G\to\R$ be a nontrivial homogeneous quasimorphism. If~$\qker(\phi)$ is quasiconvex, then it is at finite Hausdorff distance from the subgroup $\langle\qker(\phi)\rangle\leqslant G$.
\end{lemma}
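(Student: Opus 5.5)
Set $Q\coloneq\qker(\phi)$ and $D\coloneq D(\phi)$. The plan is to show that $Q$ is a uniform approximate subgroup of $G$, and then to use quasiconvexity to control its iterated product sets.

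\textbf{Approximate subgroup structure.} By homogeneity $\phi(g^{-1})=-\phi(g)$, so $Q=Q^{-1}$ and $1\in Q$. For $x,y\in Q$ we have $|\phi(xy)|\le 5D$. Part~(2) of Proposition~\ref{filtration vs quasimorphism-moved} then gives $QQ\subseteq N_{\mu(5)}(Q)$. Since $G$ has bounded geometry, the finite set $F\coloneq B_{\mu(5)}(1)$ satisfies $QQ\subseteq QF$. More generally, each $k$-fold product $Q^k$ lies in $\phi^{-1}([-(3k-1)D,(3k-1)D])$, hence in $N_{\mu(3k)}(Q)$.

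\textbf{Reduction.} The subgroup $\langle Q\rangle=\bigcup_k Q^k$ is an increasing union. It therefore suffices to show that $Q^k\subseteq N_C(Q)$ for a constant $C$ independent of $k$. Equivalently, we must show that $\phi$ is bounded on $\langle Q\rangle$, using part~(1) of Proposition~\ref{filtration vs quasimorphism-moved}. The reverse inclusion $Q\subseteq\langle Q\rangle$ is trivial.

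\textbf{Main step: quasiconvexity.} This is where quasiconvexity enters, and I expect it to be the main obstacle. The approach I would try first is to invoke the rigidity theory of Cordes--Hartnick--Toni\'c~\cite{cordes2020foundations}. There, a quasiconvex (uniform) approximate subgroup $\Lambda$ of a hyperbolic group is shown to be commensurable with, and hence at finite Hausdorff distance from, the group $\Lambda^\infty=\langle\Lambda\rangle$. Their framework likely also requires $Q$ to be coarsely connected, or at least that the quasiconvex sets $Q^k$ be uniformly quasiconvex.

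If that theorem is not directly applicable, I would argue by hand as follows.
\begin{itemize}
\item Since $Q$ is $\delta'$-quasiconvex and the translates $xQ$ with $x\in Q$ lie within uniformly bounded Hausdorff distance of $Q$ (via Lemma~\ref{Normal subgroupness} and Corollary~\ref{cor:filtration_vs_quasimorphism-quantitative}), the union $QQ$ lies in a bounded neighbourhood of $Q$. That neighbourhood is again quasiconvex with controlled constants.
\item Quasiconvexity should upgrade this to a statement that does not degrade with $k$. Take $g=x_1\cdots x_k$ with $x_j\in Q$, and consider a geodesic from $1$ to $g$. Its pieces $x_1\cdots x_j$ pass near $Q$ only through points where $\phi$ changes by at most $3D$ per step. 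Using thin triangles together with quasiconvexity of $Q$ and of its translates, one shows that the broken path $x_1\cdots x_j$ stays within a uniform distance of $Q$.
\item The key claim is the following. If $g\in N_R(Q)$ for $R$ large, then the whole geodesic $[1,g]$ stays within a fixed neighbourhood $N_{R_0}(Q)$. This follows from quasiconvexity of $N_R(Q)$ and the coarse inclusion $N_R(Q)\subseteq N_{R_0}(Q)$ when $g$ is reached in steps of bounded size from $Q$.
\end{itemize}
Induction on $k$ then gives $Q^k\subseteq N_{R_0}(Q)$ with $R_0$ independent of $k$, which completes the proof.
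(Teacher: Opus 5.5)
Your first route is the paper's route. The paper observes that $(Q,\langle Q\rangle)$, with $Q=\qker(\phi)$, is an approximate group; it cites Heuer--Kielak, and your derivation via Proposition~\ref{filtration vs quasimorphism-moved}(2) is also fine. Quasiconvexity then gives geometric finite generation in the sense of Cordes--Hartnick--Toni\'c, and their Corollary~6.36 supplies a finite $F$ with $Q\subseteq\langle Q\rangle\subseteq QF$. Your worry about coarse connectedness is unfounded, since a $k$-quasiconvex set is coarsely connected at scale $2k+1$. However, the slogan you attribute to them cannot hold for arbitrary quasiconvex approximate subgroups. For example, $\{-1,0,1\}\subset\Z$ is a quasiconvex approximate subgroup of a hyperbolic group, yet it is not at finite Hausdorff distance from $\Z$. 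So this step rests on the precise hypotheses of the cited corollary, not on a general principle.

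The genuine gap is the ``by hand'' main step, and it cannot be repaired as a direct argument, because the uniform bound you aim for is false whenever $\phi$ is not a homomorphism.
\begin{itemize}
\item If $D(\phi)>0$, the commutator $b$ constructed in the proof of Lemma~\ref{Good element for quasimorphisms} satisfies $\tfrac23 D(\phi)\le|\phi(b)|\le D(\phi)$. Hence $b\in Q$, while $\phi(b^k)=k\phi(b)$ is unbounded. By Proposition~\ref{filtration vs quasimorphism-moved}(1), $b^k\in Q^k$ eventually leaves every $N_R(Q)$.
\item So ``$Q^k\subseteq N_{R_0}(Q)$ for all $k$'' fails, and your claim that the partial products $x_1\cdots x_j$ stay uniformly close to $Q$ already fails for $x_j=b$.
\item Your key claim is also false as written: $g$ itself lies on $[1,g]$ and can have $d(g,Q)=R>R_0$.
\end{itemize}
For $D(\phi)>0$ the lemma can therefore only hold vacuously. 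What must actually be shown is that $Q$ is never quasiconvex, and your sketch never produces such a contradiction. For $D(\phi)=0$, $Q=\ker\phi$ is a subgroup and there is nothing to prove.

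Non-quasiconvexity can be shown without $\langle Q\rangle$ or approximate groups.
\begin{itemize}
\item By Lemma~\ref{Normal subgroupness}, $gQ=Qg$, so $d_{\Hd}(gQ,Q)\le|g|$. Hence the limit set of $Q$ is $G$-invariant.
\item Suppose $G$ is non-elementary. The limit set is nonempty, since otherwise $Q$ is finite and Lemma~\ref{G as a disjoint union of level sets} makes $G$ virtually cyclic. By minimality it is therefore all of $\partial G$.
\item A quasiconvex set with full limit set is coarsely dense. Indeed, every point lies on a bi-infinite geodesic, and geodesics between points of $Q$ near its endpoints pass uniformly close to that point.
\item Coarse density contradicts the unboundedness of $\phi$, via Proposition~\ref{filtration vs quasimorphism-moved}(1).
\item If instead $G$ is elementary, it is amenable, so $\phi$ is a homomorphism and we are in the case $D(\phi)=0$.
\end{itemize}
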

\begin{proof}
    We first point out that~$(\qker(\phi), \langle\qker(\phi)\rangle)$ is an approximate group in the sense of~\cite{cordes2020foundations}, see for instance~\cite[Proposition $2.8$]{QBNS}. Moreover, quasiconvexity readily implies being geometrically finitely-generated \cite[Definition~3.32]{cordes2020foundations}. 
    Then, by \cite[Corollary~6.36]{cordes2020foundations}, there is a finite set $F\subset G$ such that \[\qker(\phi)\subset \langle\qker(\phi)\rangle\subset \qker(\phi).F.\] We conclude that
    \begin{align*}
        d_{\Hd}(\qker(\phi), \langle\qker(\phi)\rangle)&\leq d_{\Hd}(\qker(\phi), \qker(\phi).F)
    \end{align*}
    which is finite by Proposition~\ref{filtration vs quasimorphism-moved} and finiteness of~$F$. 
\end{proof}

\begin{proposition}\label{quasiconvex}
    If $G$ is a non-elementary hyperbolic group and $\phi\colon G\to\R$ is a nontrivial homogeneous quasimorphism, then $\qker(\phi)$ is not quasiconvex in $G$.
\end{proposition}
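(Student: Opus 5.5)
Argue by contradiction: suppose $X\coloneq\qker(\phi)$ is quasiconvex in $G$. By Lemma~\ref{lem:finite-hausdorff-distance-of-lambda_infty-in-quasiconvex-case_new}, $X$ is then at finite Hausdorff distance from the subgroup $H\coloneq\langle X\rangle$. The plan is to show that $H$ must be a quasiconvex subgroup of infinite index that is normal, or at least commensurated up to finite Hausdorff distance by all of $G$, and then use the classical fact that an infinite quasiconvex subgroup of a hyperbolic group has finite index in its commensurator.

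First I check that $H$ is quasiconvex and has infinite index. Quasiconvexity transfers across finite Hausdorff distance, so $H$ is a quasiconvex, hence finitely generated, subgroup. If $H$ had finite index, then $X$ would be coarsely dense in $G$. This contradicts Proposition~\ref{filtration vs quasimorphism-moved}(1): $\phi$ is bounded on $N_i(X)$, while $\phi$ is unbounded on $G$ because it is nontrivial and homogeneous.

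Next I show that $H$ is infinite and that every $g\in G$ lies in its commensurator. By Lemma~\ref{Normal subgroupness}, $gXg^{-1}=X$, so $gHg^{-1}=H$ and $H$ is in fact normal in $G$. For infiniteness, note that if $H$ were finite then $X$ would be finite. But $X$ contains every element on which $\phi$ vanishes, for instance all torsion elements and all products of the form $ab$ with $\phi(a)=-\phi(b)$. Concretely, take $a$ with $\phi(a)>0$. Non-elementarity gives infinitely many distinct conjugates $g^{-1}a^{-1}g$, and each product $a\cdot g^{-1}a^{-1}g$ satisfies $|\phi|\le D(\phi)$, so all of them lie in $X$. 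Hence $X$ is infinite.

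Finally I conclude. An infinite quasiconvex normal subgroup of a hyperbolic group has finite index. The standard argument is that its limit set is a closed nonempty $G$-invariant set, so it equals $\partial G$ by minimality of the $G$-action when $G$ is non-elementary, and a quasiconvex subgroup with full limit set has finite index. This contradicts the infinite-index conclusion above, so $X$ cannot be quasiconvex.

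The main obstacle is the infiniteness of $H$, which needs care. I will handle it with the commutator-type elements constructed above, using that $G$ is non-elementary so that the conjugates of $a$ are unbounded in number.
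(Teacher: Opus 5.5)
Your proof is correct and follows essentially the same route as the paper: you pass to $H=\langle\qker(\phi)\rangle$ via Lemma~\ref{lem:finite-hausdorff-distance-of-lambda_infty-in-quasiconvex-case_new}, show $\Lambda(H)$ is $G$-invariant, use minimality to get $\Lambda(H)=\partial G$, and then contradict unboundedness of $\phi$ through quasiconvexity (your step ``quasiconvex with full limit set implies finite index'' is the same coarse-density fact the paper's last line relies on). Two small improvements over the paper: you observe that $H$ is actually normal, since $g\,\qker(\phi)\,g^{-1}=\qker(\phi)$ holds exactly, which replaces the paper's Hausdorff-distance estimate; and your argument that $\qker(\phi)$ is infinite fills in a point the paper only asserts.
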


\begin{proof}
    Let us assume that $\qker(\phi)$ is quasiconvex, and denote $H\coloneq \langle\qker(\phi)\rangle$. By Lemma~\ref{lem:finite-hausdorff-distance-of-lambda_infty-in-quasiconvex-case_new}, we know that $d_{\Hd}(\qker(\phi), H)$ is finite. Using this, it follows from
   Lemma~\ref{Normal subgroupness} and Corollary \ref{cor:filtration_vs_quasimorphism-quantitative}  that $d_{\Hd}(g^{-1}Hg, H)$ is uniformly bounded from above for every $g\in G$. 
   
   Observe now that by left-invariance of the metric, $d_{\Hd}(H, Hg)\leq |g|$, so that the triangle inequality yields
    \begin{equation*}
        d_\Hd(gH, H)\leq d_{\Hd}(gH,Hg) + |g|<\infty.
    \end{equation*}
  This implies that if $\Lambda(H)\subseteq\partial G$ is the limit set of $H$, $g.\Lambda(H)\subseteq \Lambda(H)$ for every $g\in G$. By minimality of the action of a non-elementary hyperbolic group on its boundary we obtain that either $\Lambda(H)=\emptyset$ or $\Lambda(H)=\partial G$. The first case is in contradiction with the fact that $H$ is infinite and quasiconvex, and the second is in contradiction with unboundedness of~$\phi$ by Corollary~\ref{cor:filtration_vs_quasimorphism-quantitative} and quasiconvexity. We conclude that $\qker(\phi)$ cannot be quasiconvex.
\end{proof}

    \section{Quasimorphisms on $\PD^3$ groups}\label{section:quasimorphisms_on_pd3_groups}

In this section we combine the results of the previous sections to obtain the characterisation of $\qker(\phi)$ stated in Theorem \ref{thm:manifold}. We in fact obtain this part of the result for Poincar\'e-duality groups over any~PID. We first recall a homological result that will allow us to reduce our finiteness hypotheses on $\qker(\phi)$. The proof is identical to that of~\cite[Corollary $3.2$]{PD^nfibring}, where Novikov rings of homomorphisms to $\Z$ are considered, and is likewise implicit in \cite{MR2282258}.

    \begin{lemma}\label{Novikov homology and cohomology}
    Let~$R$ be a commutative ring,~$G$ be a $\PD^3(R)$ group and $\phi\colon G\to\R$ a nontrivial homogeneous quasimorphism such that\[\Homology_1(G; \widehat{R[G]}^{\pm\phi})=0.\] Then  for all $j\in \{0,1,2,3\}$, we have
    \[\Homology_j(G;\widehat{R[G]}^{\pm\phi})=\Homology ^j(G;\widehat{R[G]}^{\pm\phi})=0.\] 
\end{lemma}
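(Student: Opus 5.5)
Write $\widehat{R[G]}$ for either $\widehat{R[G]}^{\phi}$ or $\widehat{R[G]}^{-\phi}$; the argument is symmetric in $\pm\phi$. The plan follows the pattern of \cite[Corollary 3.2]{PD^nfibring}: handle degrees $0$ and $3$ directly, use the hypothesis in degree $1$, and transfer between homology and cohomology with Poincar\'e duality. The one genuinely new input is a ring-theoretic fact for quasimorphisms, which replaces the usual Novikov ring of a homomorphism.

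\emph{Degree $0$.} We have $\Homology_0(G;\widehat{R[G]})=\widehat{R[G]}\otimes_{R[G]}R=\widehat{R[G]}/\widehat{R[G]}I_G$, where $I_G$ is the augmentation ideal. We use the element $a$ of Lemma~\ref{Good element for quasimorphisms}, or its inverse, chosen so that $\phi(a)>0$ in the orientation relevant to the Novikov ring. Then $1-a$ is invertible in $\widehat{R[G]}$, with inverse $\sum_{k\ge0}a^k$. This series lies in the Novikov ring because homogeneity gives $\phi(a^k)=k\phi(a)\to\infty$. Since $1-a\in I_G$, the ideal $\widehat{R[G]}I_G$ contains a unit, so $\Homology_0=0$.

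\emph{Degree $3$.} By duality $\Homology^3(G;\widehat{R[G]})\cong \Homology_0(G;\widehat{R[G]}\otimes R^{w})$, where $R^w$ is the orientation twist. The same unit argument applies: $1-\pm a$ is still a unit, so this also vanishes.

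\emph{Transfer via duality.} For a $\PD^3(R)$ group we have a resolution $P_\bullet$ of finitely generated projectives of length $3$. Poincar\'e duality gives $\Homology^j(G;M)\cong \Homology_{3-j}(G;M\otimes R^{w})$ for every module $M$; for the right/left switch we use the involution $g\mapsto w(g)g^{-1}$. This involution sends $\widehat{R[G]}^{\phi}$ to $\widehat{R[G]}^{-\phi}$, because $\phi(g^{-1})=-\phi(g)$ for a homogeneous quasimorphism. This is exactly why the statement is phrased with $\pm\phi$. So $\Homology^j(G;\widehat{R[G]}^{\pm\phi})\cong\Homology_{3-j}(G;\widehat{R[G]}^{\mp\phi})$, and it suffices to show that all homology groups vanish.

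\emph{Degrees $1$ and $2$.} $\Homology_1=0$ by hypothesis and $\Homology_0=0$ as shown, so $\Homology^2=0$ by duality. $\Homology^3=0$ was shown above, so $\Homology_0=0$ is consistent. The remaining groups are $\Homology_2\cong\Homology^1$ and $\Homology_3\cong\Homology^0$. For $\Homology^0(G;\widehat{R[G]})$, the $G$-invariants, I would argue that an invariant element $x$ satisfies $(1-a)x=0$, hence $x=0$.

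For $\Homology^1$ I would use the universal coefficient spectral sequence, as in \cite{PD^nfibring}:
\[E_2^{p,q}=\mathrm{Ext}^p_{\widehat{R[G]}}\bigl(\Homology_q(G;\widehat{R[G]}),\widehat{R[G]}\bigr)\Rightarrow \Homology^{p+q}(G;\widehat{R[G]}).\]
This is available because $\widehat{R[G]}\otimes_{R[G]}P_\bullet$ is a finite complex of finitely generated projective $\widehat{R[G]}$-modules, and $\Hom_{R[G]}(P_\bullet,\widehat{R[G]})\cong\Hom_{\widehat{R[G]}}(\widehat{R[G]}\otimes P_\bullet,\widehat{R[G]})$. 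Since $\Homology_0=\Homology_1=0$, the lowest nonvanishing homology would sit in degree $\ge2$, and this forces $\Homology^1=0$. Then $\Homology_2=0$ by duality, and $\Homology_3=0$ is $\Homology^0=0$. Cohomology follows by the same correspondence.

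\emph{Main obstacle.} The key step is checking that $\widehat{R[G]}^{\phi}$ is really a ring, and that it carries the needed $R[G]$-bimodule structure, when $\phi$ is only a quasimorphism. For a product $xy$, the pairs $(g,h)$ with $\phi(gh)\le\alpha$ must be finite in number. This follows from $\phi(gh)\ge\phi(g)+\phi(h)-D(\phi)$, together with the lower bounds on $\phi$ over the supports of $x$ and $y$ and the finiteness of the support sets below each level. Once this is in place, the units $\sum_k a^k$ behave exactly as in the homomorphism case.
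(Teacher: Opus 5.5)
Your proof is correct and is essentially the argument of \cite[Corollary~3.2]{PD^nfibring}, which the paper invokes without writing out. That argument has three parts: $\Homology_0$ vanishes because $1-a$ is a unit; the vanishing of $\Homology_0$ and $\Homology_1$ passes to $\Homology^0$ and $\Homology^1$ through the finite projective Novikov complex; and Poincar\'e duality, which exchanges $\phi$ and $-\phi$, handles the remaining degrees. The only input specific to quasimorphisms is that $\widehat{R[G]}^{\pm\phi}$ is a ring containing $R[G]$, and you verify this correctly using $\phi(gh)\geq\phi(g)+\phi(h)-D(\phi)$.
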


We now prove the key dichotomy concerning the coarse geometry of the quasikernel in the setting of $3$-dimensional Poincar\'e-duality groups.

\begin{proposition}\label{thm: approx_kernel_qi_to_H2}
   Let $G$ be a $\PD^3(R)$ group for $R$ a PID and $\phi\colon G\to\R$ be a nontrivial homogeneous quasimorphism. If $\qker(\phi)$ is coarsely connected, the following hold.

    \begin{enumerate}
        \item If $G$ is amenable, then $\phi$ is a homomorphism and $\ker(\phi)$ is virtually $\Z^2$. Moreover, in the case that $R=\Z$, we have that $\ker(\phi)\cong\Z\rtimes \Z$.
        \item If $G$ is non-amenable, then $\qker(\phi)$ is coarsely equivalent to $\HH^2$.
    \end{enumerate}

\end{proposition}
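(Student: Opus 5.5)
The strategy is to show first that $X\coloneq\qker(\phi)$ is a coarse $\PD^2(R)$ space, and then to split according to amenability.

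\emph{Step 1: Novikov homology and coarse acyclicity.} Coarse connectedness of $X$ gives, via Theorem~\ref{BNSR invariant and finiteness properties} with $n=1$ ($G$ is $\FP_3$, so $\FP_2$ holds), that $\{\phi,-\phi\}\subseteq\mathcal{Q}\Sigma^1_R(G)$. I would next deduce $\Homology_1(G;\widehat{R[G]}^{\pm\phi})=0$ and $\Homology_0(G;\widehat{R[G]}^{\pm\phi})=0$. This needs a converse to Proposition~\ref{Novikov homology and sublevel sets} in degree one. In the homomorphism case this is Sikorav's theorem; here I would run the argument of Proposition~\ref{Novikov homology and sublevel sets} backwards. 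Connectedness of $P_r(G_\phi)$ lets one push any Novikov $1$-cycle, written as a locally finite sum of edges, into regions of increasing $\phi$, exactly as in the quasi-BNS case of \cite{QBNS}.

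Once $\Homology_1$ vanishes, Lemma~\ref{Novikov homology and cohomology} gives the vanishing of all Novikov (co)homology in degrees $0$ to $3$. Theorem~\ref{Novikov homology new} with $n=2$ then shows that $X$ is coarsely $1$-acyclic. By Lemmas~\ref{coarselyhomogeneous} and~\ref{Uniform acyclicity vs acyclicity}, $X$ is in fact coarsely uniformly $1$-acyclic.

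\emph{Step 2: coarse $\PD^2$.} Proposition~\ref{cohomology at infinity} applies for each $i\in\{0,1,2\}$, since $\Homology^i$ and $\Homology^{i+1}$ of the Novikov rings vanish. It gives
\[\Homology^i_{\coarse}(X;R)\cong\Homology^{i+1}(G;R[G]),\]
which equals $R$ for $i=2$ and $0$ for $i\le1$ by Poincar\'e duality. Theorem~\ref{charactersing PD^n} then says $X$ is a coarse $\PD^2(R)$ space. By Proposition~\ref{X and Rips graph}, $X$ is coarsely equivalent to a connected Rips graph, so we may take it to be discrete, quasigeodesic and of bounded geometry. Theorem~\ref{intro-characterising PD^2} now says that $X$ is either amenable or quasiisometric to $\HH^2$.

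\emph{Step 3: the two cases.}

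If $G$ is non-amenable, Proposition~\ref{amenable quasikernel implies amenable group} rules out $X$ being amenable, so $X$ is quasiisometric to $\HH^2$. This proves (2).

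If $G$ is amenable, $\phi$ is a homomorphism by Corollary~\ref{amenable quasikernel implies a homomorphism}, or directly by the Johnson--Trauber--Gromov result. Its image is finitely generated and free abelian. The kernel is coarsely equivalent to $X$, hence finitely generated. The same Novikov vanishing, via Hillman's theorem \cite[Theorem~1.19]{MR1943724}, shows that $\ker\phi$ is a $\PD^2(R)$ group and $G/\ker\phi\cong\Z$. Since $\ker\phi$ is amenable, it is virtually $\Z^2$, because amenable $\PD^2(R)$ groups are exactly the virtually-$\Z^2$ ones. For $R=\Z$, the Eckmann--Linnell classification identifies $\ker\phi$ as the torus or Klein bottle group, $\Z^2$ or $\Z\rtimes\Z$.

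\emph{Main obstacle.} I expect the hardest step to be the passage from coarse connectedness of $X$ to $\Homology_1(G;\widehat{R[G]}^{\pm\phi})=0$, that is, the converse Sikorav direction for quasimorphisms. The statement ``$\ker(\phi)\cong\Z\rtimes\Z$'' should be read as covering both the torus and the Klein bottle group.
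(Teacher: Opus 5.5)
Your chain of reductions is the paper's: vanishing of first Novikov homology, then Lemma~\ref{Novikov homology and cohomology}, Theorem~\ref{Novikov homology new}, Proposition~\ref{cohomology at infinity} and Theorem~\ref{charactersing PD^n} to make $\qker(\phi)$ a coarse $\PD^2(R)$ space. The amenability split then also matches: Johnson--Trauber--Gromov plus Hillman in one case, Proposition~\ref{amenable quasikernel implies amenable group} plus Theorem~\ref{intro-characterising PD^2} in the other.

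The step you call the main obstacle needs no new argument. It is exactly \cite[Theorem~1.2]{QBNS}, which the paper cites directly and which holds over any commutative ring, so the detour through Theorem~\ref{BNSR invariant and finiteness properties} and a re-run of Sikorav is unnecessary. Corollary~\ref{amenable quasikernel implies a homomorphism} assumes that $\qker(\phi)$ is amenable, not $G$, so only your direct appeal to Johnson--Trauber--Gromov applies. When passing to the Rips graph you also need that being coarse $\PD^2(R)$ is a coarse invariant, \cite[Lemma~12.2]{MR123}.

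The genuine gap is in the amenable case, where you assert that $\ker\phi$ is $\PD^2(R)$ and $G/\ker\phi\cong\Z$. Once $\phi$ is a homomorphism you only know $\phi(G)\cong\Z^k$. Nothing in your argument forces $k=1$, and that is exactly what is needed to invoke Hillman's theorem.

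It cannot be forced. Take $G=\Z^3$ and $\phi(a,b,c)=a+\sqrt{2}\,b$. Then $D(\phi)=0$, so $\qker(\phi)=\ker\phi=\langle(0,0,1)\rangle\cong\Z$, which is coarsely connected but not virtually $\Z^2$. The integer Heisenberg group with an irrational character, whose kernel is the centre, is another example. So the statement is false as written, and the paper's own proof makes the same tacit assumption when it cites \cite[Theorem~1.19]{MR1943724}.

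The problem already appears in your Step~2. When $D(\phi)=0$ and $\phi(G)$ is dense, the slabs $\phi^{-1}([-\alpha,\alpha])$ with $\alpha>0$ lie in no neighbourhood $N_i(\qker(\phi))$. Hence the upper bound in Corollary~\ref{cor:filtration_vs_quasimorphism-quantitative} and the identification in Proposition~\ref{Novikov vs coinduced} both fail, and the chain would ``prove'' that $\Z$ is a coarse $\PD^2$ space.

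Your argument, like the paper's, becomes correct under the extra hypothesis that $\phi$ has cyclic image whenever $D(\phi)=0$. Then each slab is a finite union of cosets of $\ker\phi$, the comparison results hold, and $G/\ker\phi\cong\Z$ is immediate. Step~3 then goes through, using \cite[Theorem~1.2]{Quasiplanes}, as the paper does, for ``amenable $\PD^2(R)$ implies virtually $\Z^2$'' over a general PID.
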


\begin{proof}

As $G$ is a $\PD^3(R)$ group, it has cohomology at infinity \[\Homology^i(G;R[G])=
\begin{cases}
    R&\text{  if } i=3,\\
    0  &\text{  if } i\in\{0,1,2\}.
\end{cases}
\]The quasikernel $\qker(\phi)$ is coarsely connected by assumption, so by \cite[Theorem $1.2$]{QBNS}, the first Novikov homology of~$G$ vanishes,
\[\Homology_1(G;\widehat{R[G]}^{\pm\phi})=0.\] 
Note that the result of Heuer--Kielak is stated for $R=\Z$, but works for any commutative ring. From this it follows by  Lemma \ref{Novikov homology and cohomology} that\[\Homology_i(G;\widehat{R[G]}^{\pm\phi})=\Homology^i(G;\widehat{R[G]}^{\pm\phi})=0\] for all $i\in\{0,1,2,3\}$. 
The hypotheses of Theorem \ref{Novikov homology new} are satisfied, so $\qker(\phi)$ is coarsely uniformly $1$-acyclic by Lemma \ref{Uniform acyclicity vs acyclicity} as it is coarsely homogeneous by Lemma \ref{coarselyhomogeneous}.
From Proposition \ref{cohomology at infinity} we obtain that that 
\[\Homology_{\coarse}^{i-1}(\qker(\phi),R)\cong \Homology^i(G;R[G])\quad\text{ for } i\in\{1,2,3\}.\]

The quasikernel now satisfies the assumptions of Theorem \ref{charactersing PD^n}, and is hence is a coarse $\PD^2(R)$ space.

In the case that $G$ is amenable, the proof of Corollary \ref{amenable quasikernel implies a homomorphism} says that $\phi\colon G\to\R$ must be a group homomorphism. It then follows from \cite[Theorem $1.19$]{MR1943724} that $\ker(\phi)$ is a $\PD^2(R)$ group, from which an application of \cite[Theorem $1.2$]{Quasiplanes} yields that it is in fact virtually $\Z^2$. When $R=\Z$,  \cite[Proposition $4.6$]{kielak2021isoperimetric} implies that $\ker(\phi)\cong\Z\rtimes\Z$.

In the case that~$G$ is non-amenable, Proposition \ref{amenable quasikernel implies amenable group} implies that~$\qker(\phi)$ is non-amenable. As $\qker(\phi)$ is coarsely connected as a subspace of $G$, it is coarsely equivalent to the Rips graph~$P_r^1(\qker(\phi))$ for some $r\geq 0$ by Proposition~\ref{X and Rips graph}. Observe that~$P_r^1(\qker(\phi))$ is a geodesic metric space, and that it is also non-amenable by Remark~\ref{basic facts about amenability}. 
By~\cite[Lemma~12.2]{MR123}, the property of being a coarse $\PD^2(R)$ space is invariant under coarse equivalence, so  Theorem~\ref{intro-characterising PD^2} implies that the vertex set of~$P_r^1(\qker(\phi))$ is quasiisometric to~$\HH^2$, and thus $\qker(\phi)$ is coarsely equivalent to~$\HH^2$.\end{proof}

\section{A faithful action on~$S^1$ for hyperbolic $\PD^3$  groups}\label{section:circle_action}

Let $R$ be a PID.
In preparation of the proofs of Theorems~\ref{thm:manifold} and~\ref{introthm:circle_action}, for a $\PD^3(R)$ group~$G$ with homogeneous quasimorphism~$\phi\colon G\to \R$ whose quasikernel is coarsely connected and coarsely equivalent to~$\HH^2$, we associate to every~$g\in G$ a quasiisometry~$f_g$ of the hyperbolic plane. Furthermore, our assignment commutes with group multiplication up to bounded distance, so that the induced boundary maps define an action of~$G$ on~$S^1$, as introduced in \cite{calegari2010boundedcochains3manifolds},  and in the case that~$G$ is hyperbolic, we prove that this action is nontrivial and faithful.

By Proposition~\ref{thm: approx_kernel_qi_to_H2}, there is a coarse equivalence $\theta_1\colon \qker(\phi)\to \HH^2$.
Since left-translation is an isometry of~\(G\), the subspace metrics on all $G$-translates of~\(\qker(\phi)\) are fully determined by the metric on~\(\qker(\phi)\).
This allows us to obtain coarse equivalences \(\theta_g\colon g.\qker(\phi)\to\mathbb{H}^2\) for each~\(g\in G\) by setting \(\theta_g := \theta_1 \circ L_{g^{-1}}\), where $L_{g^{-1}}$ denotes left-multiplication by~$g^{-1}$.
Choose a quasiinverse~\(\theta^{-1}_g\) and let~\(\pi_g\colon g.\qker(\phi)\to \qker(\phi)\) be a closest-point projection. We define
\begin{equation}\label{eq:define_f_g_new}
    \begin{aligned}
        f_g\colon \mathbb{H}^2&\to \mathbb{H}^2, \\
        x&\mapsto \theta_1\circ \pi_g\circ \theta^{-1}_{g}(x).
    \end{aligned}
\end{equation}

\begin{lemma}
    For every~\(g\in G\) the map~\(f_g\) defined in~\eqref{eq:define_f_g_new} is a quasiisometry.
\end{lemma}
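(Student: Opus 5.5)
The plan is to write $f_g$ as a composition of three maps, each of which is a coarse equivalence, and then use the fact that a coarse equivalence between quasigeodesic spaces is a quasiisometry. Since $f_g=\theta_1\circ \pi_g\circ \theta_g^{-1}$, the three maps are $\theta_g^{-1}\colon \HH^2\to g.\qker(\phi)$, $\pi_g\colon g.\qker(\phi)\to\qker(\phi)$ and $\theta_1\colon\qker(\phi)\to\HH^2$. The outer two are coarse equivalences by construction. Indeed, $\theta_g=\theta_1\circ L_{g^{-1}}$, and $L_{g^{-1}}$ restricts to an isometry $g.\qker(\phi)\to\qker(\phi)$ for the subspace metrics. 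So $\theta_g$ is a coarse equivalence, and any quasiinverse $\theta_g^{-1}$ is one too.

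The middle map is the only real point. By Lemma~\ref{Normal subgroupness} we have $g.\qker(\phi)=\qker(\phi).g$, so every point of $g.\qker(\phi)$ lies within distance $|g|$ of $\qker(\phi)$, by left-invariance of the metric. Conversely, $\qker(\phi)=g.\qker(\phi).g^{-1}$, so every point of $\qker(\phi)$ lies within $|g|$ of $g.\qker(\phi)$. Hence
\[
d_{\Hd}(g.\qker(\phi),\qker(\phi))\leq |g|<\infty.
\]
One could alternatively get this bound from Corollary~\ref{cor:filtration_vs_quasimorphism-quantitative}, as in the proof of Lemma~\ref{coarselyhomogeneous}. Both sets are subspaces of the discrete bounded-geometry space $G$, so Lemma~\ref{Projecting is a coarse equivalence} shows that the closest-point projection $\pi_g$ is a quasiisometry, and in particular a coarse equivalence.

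A composition of coarse equivalences is a coarse equivalence, since control functions compose. So $f_g\colon\HH^2\to\HH^2$ is a coarse equivalence. Because $\HH^2$ is geodesic, the remark in the preliminaries (a coarse equivalence between quasigeodesic spaces is a quasiisometry) shows that $f_g$ is a quasiisometry.

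The only mild subtlety is that $\qker(\phi)$ itself need not be quasigeodesic with the subspace metric, so we cannot argue factor by factor with quasiisometries; we only need coarse control in the middle. This is why the plan composes coarse equivalences throughout and passes to a quasiisometry only at the end, where both ends are $\HH^2$. (Alternatively, $\qker(\phi)$ is quasigeodesic, being coarsely equivalent to the Rips graph by Proposition~\ref{X and Rips graph}.)
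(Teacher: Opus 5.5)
Your argument is correct and is the paper's: $\pi_g$ is a quasiisometry by Lemma~\ref{Projecting is a coarse equivalence}, so $f_g$ is a coarse equivalence of the geodesic space $\HH^2$ and hence a quasiisometry. The only differences are that you get the Hausdorff bound $|g|$ from Lemma~\ref{Normal subgroupness} where the paper cites Proposition~\ref{filtration vs quasimorphism-moved}, and that your final parenthetical is wrong: coarse equivalence to the Rips graph only makes $\qker(\phi)$ coarse geodesic, not quasigeodesic (it is typically distorted in $G$), though your main argument rightly never uses it.
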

\begin{proof}
    First recall that by Lemma~\ref{Projecting is a coarse equivalence} and Proposition~\ref{filtration vs quasimorphism-moved} the maps~$\pi_g$ are quasiisometries.
As a composition of coarse equivalences, \(f_g\) is a coarse equivalence, and since~\(\HH^2\) is a geodesic metric space, \(f_g\) is a quasiisometry.
\end{proof}

By classical results in hyperbolic geometry, every quasiisometry \(f_g\) defines a quasisymmetric homeomorphism~\(\varphi_g\) of~\(S^1\). This yields a map 
\begin{align*}
    \Phi\colon G&\to \mathrm{Homeo}_{\mathrm{qs}}(S^1)\\
    g&\mapsto \varphi_g, 
\end{align*}
and to prove that~$\Phi$ is a group homomorphism, we need to show that 
\(d_\infty(f_{g}\circ f_{h}, f_{gh})\) is finite, which will be achieved by choosing the closest-point projections in~\eqref{eq:define_f_g_new} suitably.
As any two closest-point projections between spaces at bounded Hausdorff distance are at bounded distance from each other, the map~$\Phi$ is independent of this choice.
\begin{lemma}\label{lem:phi_for_circle_action_is_a_group_homomorphism}
    The map~$\Phi\colon G\to \mathrm{Homeo}_{\mathrm{qs}}(S^1)$ is a group homomorphism.
\end{lemma}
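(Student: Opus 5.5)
The map $g\mapsto\varphi_g$ sends a quasiisometry of $\HH^2$ to its boundary extension, and two quasiisometries at bounded $d_\infty$-distance have the same boundary map. The boundary extension of a composition is also the composition of the extensions. So it suffices to show that $d_\infty(f_g\circ f_h,f_{gh})<\infty$ for all $g,h\in G$. The constant may depend on $g$ and $h$. Since $\varphi_1=\id$ (take $\pi_1=\id$), this gives a homomorphism.

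First unwind the definitions. Write $Q=\qker(\phi)$. Since $\theta_g=\theta_1\circ L_{g^{-1}}$, we can take $\theta_g^{-1}=L_g\circ\theta_1^{-1}$, so
\[
f_g=\theta_1\circ\pi_g\circ L_g\circ\theta_1^{-1}.
\]
Up to bounded distance, $\theta_1^{-1}\circ\theta_1$ is the identity on $Q$. Since $\theta_1$ is a coarse equivalence, it suffices to compare the self-maps $\psi_g:=\pi_g\circ L_g\colon Q\to Q$. The claim becomes $d_\infty(\psi_g\circ\psi_h,\psi_{gh})<\infty$ as maps $Q\to Q$. This is where coarse (not just metric) control enters: a bounded error in $Q$ is mapped by $\theta_1$ to an error bounded by $\rho_+$ of it.

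Next compute $\psi_g\circ\psi_h(x)=\pi_g(g\,\pi_h(hx))$. The point $\pi_h(hx)$ lies within $C_h:=d_{\Hd}(hQ,Q)$ of $hx$. Left multiplication by $g$ is an isometry, so $g\pi_h(hx)$ lies within $C_h$ of $ghx\in ghQ$. The Hausdorff distances $d_{\Hd}(gQ,Q)$, $d_{\Hd}(ghQ,Q)$ and $d_{\Hd}(gQ,ghQ)=d_{\Hd}(Q,hQ)$ are all finite, by Proposition~\ref{filtration vs quasimorphism-moved} as in the proof of Lemma~\ref{coarselyhomogeneous}.

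The key observation is that a closest-point projection is coarsely Lipschitz between subspaces at finite Hausdorff distance, by Lemma~\ref{Projecting is a coarse equivalence}. The more robust version is this: if $y,y'\in G$ with $d(y,y')\le C$ and $y,y'$ lie within bounded distance of $Q$, then the closest points of $Q$ to $y$ and to $y'$ are within $2\max\{d(y,Q),d(y',Q)\}+C$ of each other in $G$. Apply this with $y=g\pi_h(hx)\in gQ$ and $y'=ghx\in ghQ$. Their distances to $Q$ are bounded by $d_{\Hd}(gQ,Q)$ and $d_{\Hd}(ghQ,Q)$ respectively, and $d(y,y')\le C_h$. Hence
\[
d_G\bigl(\pi_g(g\pi_h(hx)),\,\pi_{gh}(ghx)\bigr)\le C_h+2\max\{d_{\Hd}(gQ,Q),d_{\Hd}(ghQ,Q)\},
\]
uniformly in $x$. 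This is the sort of estimate Lemma~\ref{lem:composition_of_nearest_point_projections_is_close_to_nearest_point_projection} packages, and one could instead invoke that lemma directly for the triple $g\,hQ$, $gQ$, $Q$ after translating.

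Finally, the subspace metric on $Q$ agrees with that of $G$, and $\theta_1$ is a coarse equivalence, so the bounded distance passes to $d_\infty(f_g\circ f_h,f_{gh})<\infty$. This requires checking the bounded errors from the choice of quasi-inverse $\theta_1^{-1}$, which are handled by the control functions of $\theta_1$. The only real obstacle is bookkeeping: in the definition of $f_g\circ f_h$, the projection $\pi_g$ is applied to points of $g.Q$ that are images of projections, not to points $gx$ for arbitrary $x$. The robustness estimate above handles this mismatch, and independence of the choice of projections was already noted before the statement.
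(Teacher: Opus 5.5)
Your proposal is correct and follows essentially the same route as the paper. You reduce to $d_\infty(f_g\circ f_h,f_{gh})<\infty$, which amounts to comparing $\pi_g\circ L_g\circ\pi_h\circ L_h$ with $\pi_{gh}\circ L_{gh}$ on $\qker(\phi)$, using that the translates $\qker(\phi)$, $g.\qker(\phi)$, $gh.\qker(\phi)$ are at pairwise bounded Hausdorff distance. The paper obtains the composition estimate from Lemma~\ref{lem:composition_of_nearest_point_projections_is_close_to_nearest_point_projection} applied to $\pi_g\circ(L_g\circ\pi_h\circ L_{g^{-1}})$, whereas you prove it directly by the triangle inequality. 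You also handle the middle factor $\theta_1^{-1}\circ\theta_1$ explicitly, via coarse Lipschitzness of $\pi_g\circ L_g$, a step the paper passes over silently.
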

\begin{proof}
Fix $g, h\in G$ and choose nearest point projections 
\(\pi_g, \pi_h\) 
and~\(\pi_{gh}\) from the respective translates of~$\qker(\phi)$ onto~$\qker(\phi)$.

Further, we set
\[\pi_{h,g} (x) := L_g\circ \pi_h\circ L_{\inv{g}},\] 
 which 
is a nearest point projection from $gh.\qker(\phi)$ to $g.\qker(\phi)$ by left-invariance. 
Following~\eqref{eq:define_f_g_new}, we define quasiisometries $f_g, f_h$ and $f_{gh}$ of $\HH^2$ using the projections~$\pi_g, \pi_h$ and~$\pi_{gh}$.

We then have
\begin{align*}
    f_{g}\circ f_h 
    &=\theta_1\circ \pi_g\circ L_g\circ \pi_h\circ L_h\circ \inv{\theta_1},
\end{align*}
and
\begin{align*}
    f_{gh} 
    &= \theta_1\circ \pi_{gh}\circ L_{gh}\circ \inv{\theta}_1.
\end{align*}

Proposition~\ref{filtration vs quasimorphism-moved} implies that the translates of~$\qker(\phi)$ by $g, h$ and $gh$ are at pairwise bounded Hausdorff distance from each other.
We invoke Lemma~\ref{lem:composition_of_nearest_point_projections_is_close_to_nearest_point_projection} to conclude that the composition of nearest point projections \(\pi_g\circ \pi_{h,g}\) is bounded distance from~\(\pi_{gh}\). 
It follows that
\begin{equation*}
    d_\infty(\pi_{gh}\circ L_{gh}, \pi_g\circ L_g\circ \pi_h \circ L_{\inv{g}}\circ L_g\circ L_h)<\infty,
\end{equation*}
so that $f_g\circ f_h$ is at bounded distance from~$f_{gh}$.
\end{proof}

In the case that the group $G$ is hyperbolic, we will show that $\Phi$ is injective, and in doing so prove the following theorem.

\numberedtheorem{Theorem~\ref{introthm:circle_action}}{}{ Let $G$ be a hyperbolic $\PD^3$ group and $\phi\colon G\to\R$ be a nontrivial homogeneous quasimorphism. If $\qker(\phi)$ is coarsely connected, then $G$ acts faithfully on $S^1$ by quasisymmetric homeomorphisms.}

The remainder of this section is dedicated to the proof of this result, so that we fix the following notation.
\begin{convention}\label{convenctions_for_faithfulness}
From now on, in this section we let~$G$ always be a hyperbolic $\PD^3$ group with a left-invariant word metric~$d$ and with a nontrivial homogeneous quasimorphism~$\phi$ with coarsely connected quasikernel.
\end{convention}

We first observe a basic property of self-quasiisometries of $\HH^2$.

\begin{lemma}\label{lem:quasiisometries_with_identical_boundary_maps_are_uniformly_close}
    If $f, g\colon \HH^2\to \HH^2$ are 
    quasiisometries such that the induced boundary maps $\partial f, \partial g\colon S^1\to S^1$ coincide, then there exists a constant  $C>0$  
    depending only on the quasiisometry constants
    with 
    \begin{equation*}
        d_{\HH^2}(f(x),g(x))\leq C \qquad \text{ for all }x\in \HH^2.
    \end{equation*}
\end{lemma}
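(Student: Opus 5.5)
The plan is the standard Morse-lemma argument: a point of $\HH^2$ is coarsely determined by the boundary data of geodesics through it, and quasiisometries carry geodesics to uniformly Hausdorff-close geodesics with the prescribed endpoints. Let $(\lambda,c)$ be common quasiisometry constants for $f$ and $g$. Fix $x\in\HH^2$ and choose two bi-infinite geodesics $\gamma_1,\gamma_2$ through $x$ meeting orthogonally at $x$, with endpoints $a_1^\pm$ and $a_2^\pm$ in $S^1$.

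First, $f\circ\gamma_k$ is a $(\lambda,c)$-quasigeodesic, and its endpoints at infinity are $\partial f(a_k^\pm)$. By the Morse lemma it lies within Hausdorff distance $M=M(\lambda,c)$ of the geodesic $\beta_k$ joining $\partial f(a_k^-)$ to $\partial f(a_k^+)$. The same holds for $g\circ\gamma_k$ with the same constant $M$. Since $\partial f=\partial g$, the geodesic $\beta_k$ is the same in both cases. Hence $f(x)$ and $g(x)$ both lie in $N_M(\beta_1)\cap N_M(\beta_2)$.

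Second, I bound the diameter of $N_M(\beta_1)\cap N_M(\beta_2)$ by a constant depending only on $\lambda,c$. The geodesics $\gamma_1,\gamma_2$ diverge exponentially from $x$, so for $t$ large compared with $M$ the points $\gamma_1(t)$ and $\gamma_2(s)$ are far apart. Because $f$ is a quasiisometric embedding, $f\circ\gamma_1$ and $f\circ\gamma_2$ then diverge at least linearly from $f(x)$, with rate controlled by $(\lambda,c)$. So $\beta_1$ and $\beta_2$ fellow-travel for at most a length $L(\lambda,c)$, in the sense that the set of points of $\beta_1$ within $2M$ of $\beta_2$ has diameter at most $L$. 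Thin triangles in $\HH^2$ then show that the intersection of the $M$-neighbourhoods has diameter bounded by some $C(\lambda,c)$, which gives $d(f(x),g(x))\le C$.

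The main obstacle is making this fellow-travelling bound uniform. To do it, I would compare distances: $d(f(\gamma_1(t)),f(\gamma_2(t)))\ge \lambda^{-1}\cdot 2t\cdot\kappa-c$, where $\kappa>0$ is the universal constant coming from orthogonality, since $d(\gamma_1(t),\gamma_2(t))\ge 2t-\mathrm{const}$. Then, for points $p_1\in\beta_1$ and $p_2\in\beta_2$ each within $M$ of the image quasigeodesics, I would show that $d(p_1,p_2)\le 2M$ forces the corresponding parameters to be bounded by a constant $T(\lambda,c)$. That confines the overlap region to $B_{\lambda T+c+2M}(f(x))$, and the same argument for $g$ gives the same ball. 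Only the Morse constant and the quasiisometry constants enter, so $C$ depends only on the quasiisometry constants, as the statement requires.
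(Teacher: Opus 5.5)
Your argument is correct, but it takes a different route from the paper's.

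\textbf{The paper's route.} The paper first composes both maps with a quasi-inverse of $g$, reducing to the case $g=\mathrm{id}$ and hence $\partial f=\mathrm{id}$. It then takes one geodesic $\gamma$ through $x$, chosen orthogonal to $[x,f(x)]$. (This must be a complete geodesic, as in your argument, so that both endpoints are fixed by $\partial f$.) By the Morse lemma, $f(\gamma)$ is uniformly Hausdorff-close to $\gamma$ itself. Since $x$ is the foot of the perpendicular from $f(x)$ to $\gamma$, this gives $d(x,f(x))=d(f(x),\gamma)\le C$ at once.

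\textbf{Your route.} You treat $f$ and $g$ symmetrically and fix two orthogonal geodesics that do not depend on $f$. You trap both $f(x)$ and $g(x)$ in $N_M(\beta_1)\cap N_M(\beta_2)$, and then you need an extra step bounding the diameter of this coarse intersection. That extra step is where the lower quasiisometry bound enters, through the divergence of $\gamma_1$ and $\gamma_2$.

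\textbf{Comparison.} The paper's point-dependent choice of geodesic makes the diameter estimate unnecessary, at the cost of the quasi-inverse normalisation. Your version avoids quasi-inverses and produces an explicit constant.

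\textbf{Two small points in your write-up.}

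First, the parameter bound needs $d(\gamma_1(t),\gamma_2(s))\ge\max\{|t|,|s|\}$ for arbitrary $t,s$, not only at equal parameters. This follows from the hyperbolic Pythagorean theorem $\cosh d(\gamma_1(t),\gamma_2(s))=\cosh t\cosh s$. With it, $d(f\gamma_1(t),f\gamma_2(s))\le 4M$ forces $\max\{|t|,|s|\}\le T:=\lambda(4M+c)$.

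Second, no thin-triangle argument is needed at the end. The triangle inequality already gives
\[
N_M(\beta_1)\cap N_M(\beta_2)\subseteq B_{\lambda T+c+2M}(f(x)).
\]
Since $g(x)$ lies in this intersection, $d(f(x),g(x))\le \lambda T+c+2M$ follows directly. Your closing remark about repeating the argument for $g$ is therefore superfluous; it would give a ball centred at $g(x)$ rather than the same ball anyway.
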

\begin{proof}
    We compose $f, g$ with the quasiinverse of~$g$, so that, up to a change of the quasiisometry constants, we may assume that~$g$ is the identity map. Let $x\in X$ and $\gamma$ be a geodesic ray containing $x$ that is orthogonal to the geodesic segment $[x,f(x)]$. 
    By the Morse lemma~\cite[Theorem 11.72 and Lemma 11.105]{MR3753580}, there exists a constant $C$ depending only on the quasiisometry constants such that~$f(\gamma)$ is at Hausdorff distance at most~$C$ from~$\gamma$. It follows that 
    \begin{equation*}
        d_{\HH^2}(x,f(x))=d(\gamma, f(x))\leq C.\qedhere
    \end{equation*}
\end{proof}

 We now exhibit faithfulness of the circle action, first establishing a criterion for a group element~$g$ to be in the kernel of~$\Phi$.

\begin{lemma}\label{lemma:characterise_trivial_action_on_circle_new}
    \(\Phi(g)=\mathrm{id}_{S^1}\) if and only if \(d_\infty(\pi_g \circ L_g, \id_{\qker(\phi)})<\infty\).
\end{lemma}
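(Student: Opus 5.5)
We prove the two implications separately, working directly with the definition $f_g=\theta_1\circ\pi_g\circ\theta_g^{-1}$. The first step is to rewrite $f_g$ in a convenient form. Since $\theta_g=\theta_1\circ L_{g^{-1}}$, we may take the quasiinverse $\theta_g^{-1}=L_g\circ\theta_1^{-1}$. This gives
\[
f_g=\theta_1\circ(\pi_g\circ L_g)\circ\theta_1^{-1}.
\]
Here $\pi_g\circ L_g\colon\qker(\phi)\to\qker(\phi)$ is a quasiisometry, by Lemma~\ref{Projecting is a coarse equivalence} together with Proposition~\ref{filtration vs quasimorphism-moved}, which bounds $d_{\Hd}(g.\qker(\phi),\qker(\phi))$ in terms of $\phi(g)$. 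Moreover, $\theta_1$ is a coarse equivalence with quasiinverse $\theta_1^{-1}$.

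\emph{($\Leftarrow$)} Assume $d_\infty(\pi_g\circ L_g,\id_{\qker(\phi)})\le C$. Then
\[
d_\infty\bigl(f_g,\ \theta_1\circ\theta_1^{-1}\bigr)\le\rho_+(C),
\]
where $\rho_+$ is the upper control function of $\theta_1$. We also have $d_\infty(\theta_1\circ\theta_1^{-1},\id_{\HH^2})<\infty$. Hence $f_g$ is at bounded distance from $\id_{\HH^2}$. Quasiisometries of $\HH^2$ at bounded distance from one another induce the same boundary map, since a sequence converging to a boundary point is sent to sequences at bounded distance, and these have the same limit. Therefore $\varphi_g=\id_{S^1}$.

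\emph{($\Rightarrow$)} Assume $\Phi(g)=\id_{S^1}$. Then $f_g$ and $\id_{\HH^2}$ induce the same boundary map. By Lemma~\ref{lem:quasiisometries_with_identical_boundary_maps_are_uniformly_close}, there is $C>0$ with $d_\infty(f_g,\id_{\HH^2})\le C$.

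Next, conjugate back by $\theta_1$. Set $A=\theta_1^{-1}\circ\theta_1$, which satisfies $d_\infty(A,\id_{\qker(\phi)})\le C'$. We then have
\[
\theta_1^{-1}\circ f_g\circ\theta_1=A\circ(\pi_g\circ L_g)\circ A.
\]
Because $\pi_g\circ L_g$ has controlled expansion (it is a quasiisometry), this map is at bounded distance from $\pi_g\circ L_g$. On the other hand, $\theta_1^{-1}\circ f_g\circ\theta_1$ is at distance at most $\rho'_+(C)$ from $A$, where $\rho'_+$ is the upper control function of $\theta_1^{-1}$, and $A$ is in turn close to the identity. Combining these estimates gives $d_\infty(\pi_g\circ L_g,\id_{\qker(\phi)})<\infty$.

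The main point needing care is the repeated use of the fact that pre- and post-composition by coarse maps preserves being at bounded distance. Precomposition is trivial. Postcomposition uses the upper control function, and for this we need the coarse equivalences $\theta_1$ and $\theta_1^{-1}$ to have proper upper control functions. This holds by definition of a coarse embedding.

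The step I expect to be the main obstacle is the invocation of Lemma~\ref{lem:quasiisometries_with_identical_boundary_maps_are_uniformly_close} in the forward direction. It requires $f_g$ to be a genuine quasiisometry of $\HH^2$, which the preceding lemma provides. The constant $C$ may depend on $g$ through the quasiisometry constants of $f_g$, but this is harmless because only finiteness of the distance is claimed. Independence of the choices of $\pi_g$ and of the quasiinverses also causes no trouble: any two such choices are at bounded distance from each other, so the criterion $d_\infty(\pi_g\circ L_g,\id_{\qker(\phi)})<\infty$ does not depend on them.
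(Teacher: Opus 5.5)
Your proof is correct and follows essentially the same route as the paper: in the forward direction you apply Lemma~\ref{lem:quasiisometries_with_identical_boundary_maps_are_uniformly_close} to $f_g=\theta_1\circ\pi_g\circ L_g\circ\theta_1^{-1}$ and then transport the bound back to $\qker(\phi)$ using the upper control function of $\theta_1^{-1}$, exactly as the paper does. Your conjugation $\theta_1^{-1}\circ f_g\circ\theta_1=A\circ(\pi_g\circ L_g)\circ A$ is a little more careful than the paper's substitution $y=\theta_1^{-1}(x)$, which directly covers only points in the (coarsely dense) image of $\theta_1^{-1}$; you also write out the implication that the paper calls clear.
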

\begin{proof}
    One implication is clear, so that it remains to show that $\Phi(g)=\id_{S^1}$ implies that $\pi_g\circ L_g$ is at bounded distance from the identity.

    Let \(g\in \ker(\Phi)\), then by Lemma~\ref{lem:quasiisometries_with_identical_boundary_maps_are_uniformly_close} there exists \(C\geq 0\) with \(d_{\HH^2}(f_g(x), x)\leq C\) for all \(x\in \HH^2\). Then 
    \begin{align*}
        d_{\HH^2}(f_g(x), x)
        =d_{\HH^2}(\theta_1\circ\pi_g\circ L_g\circ \theta_1^{-1}(x),x)\leq C.
    \end{align*}
    Writing \(y=\inv{\theta_1}(x)\) choosing \(\tilde{C}\geq 0\) such that \(d_\infty(\theta_1^{-1}\circ \theta_1, \id_{\qker(\phi)})\leq \tilde{C}\), we reformulate the above condition as  
    \begin{equation*}
        d(\pi_g\circ L_g(y),y)\leq \rho_+(C)+\tilde{C}
    \end{equation*}
    for all \(y\in \qker(\phi)\), where $\rho_+$ is the upper control function of the coarse equivalence $\HH^2\to \qker(\phi)$. 
\end{proof}

We recall here that as $\qker(\phi)$ is coarsely connected as a subspace of $G$, it is coarsely equivalent to the Rips graph $P_r^1(\qker(\phi))$ for some $r\geq 0$ by Proposition~\ref{X and Rips graph}. Under our assumptions on~$G$ and~$\phi$, the Rips graph is then quasiisometric to~$\HH^2$.  We now complete the proof of Theorem~\ref{introthm:circle_action}.

\begin{proposition}\label{prop:injectivity_of_Phi}
    Under the assumptions of Convention~\ref{convenctions_for_faithfulness}, the action of~$G$ on~$S^1$ defined by~$\Phi$ is faithful.
\end{proposition}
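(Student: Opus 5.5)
The plan is to show that $\ker(\Phi)$ is trivial by showing that any $g\in\ker(\Phi)$ would force $\qker(\phi)$ to be quasiconvex, or force $g$ to have a very restricted form, and then to use Proposition~\ref{quasiconvex} together with torsion-freeness of $\PD^3$ groups. Since $G$ is a hyperbolic $\PD^3$ group it is torsion-free and non-elementary, and $\partial G\cong S^2$.

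Let $g\in\ker(\Phi)$. By Lemma~\ref{lemma:characterise_trivial_action_on_circle_new}, $d_\infty(\pi_g\circ L_g,\id_{\qker(\phi)})\le C$ for some $C$. In other words, every $x\in\qker(\phi)$ satisfies $d(gx,x)\le C'$, where $C'=C+d_{\Hd}(g.\qker(\phi),\qker(\phi))$. Hence $x^{-1}gx\in B_{C'}(1)$ for every $x\in\qker(\phi)$, so $g$ has only finitely many conjugates by elements of $\qker(\phi)$.

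I would aim to upgrade this to: $g$ has finitely many conjugates overall, i.e.\ its centraliser has finite index. Two routes seem possible.

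\textbf{Route (a).} Replace $\qker(\phi)$ by $\langle\qker(\phi)\rangle$, using that the conjugation bound is preserved under products with controlled growth. This looks awkward.

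\textbf{Route (b), which I would take first.} Use the boundary. Since $x\mapsto gx$ moves every point of $\qker(\phi)$ a bounded amount, $g$ acts trivially on the limit set $\Lambda(\qker(\phi))\subseteq\partial G$. I would then argue that $\Lambda(\qker(\phi))=\partial G$. Indeed, $\Lambda(\qker(\phi))$ is closed, nonempty because $\qker(\phi)$ is infinite, and $G$-invariant. Invariance holds because $h.\qker(\phi)$ is at finite Hausdorff distance from $\qker(\phi)$ by Proposition~\ref{filtration vs quasimorphism-moved}, so the two have the same limit set, and $h$ maps $\Lambda(\qker(\phi))$ onto $\Lambda(h.\qker(\phi))$. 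Minimality of the action of $G$ on $\partial G$ then forces $\Lambda(\qker(\phi))=\partial G$. (This matches the proof of Proposition~\ref{quasiconvex}: the contradiction there came from quasiconvexity, which we do not assume here.)

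Consequently $g$ fixes $\partial G$ pointwise. In a hyperbolic group, an infinite-order element fixes exactly two boundary points, and $\partial G\cong S^2$ has more than two points. So $g$ has finite order, and torsion-freeness gives $g=1$.

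The main obstacle is the step from ``$gx$ is uniformly close to $x$ for all $x\in\qker(\phi)$'' to ``$g$ fixes every point of $\Lambda(\qker(\phi))$''. I would handle it as follows. Take a sequence $x_n\in\qker(\phi)$ converging to $\xi\in\partial G$. Then $gx_n$ stays at distance at most $C'$ from $x_n$, so $gx_n\to\xi$ as well. On the other hand, $gx_n\to g\xi$ by continuity of the extension of left multiplication to $\partial G$. Hence $g\xi=\xi$. This is standard Gromov product reasoning, so the argument seems robust.

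It remains to check that Lemma~\ref{lemma:characterise_trivial_action_on_circle_new} indeed gives a uniform bound on $d(gx,x)$ rather than on $d(\pi_g(gx),x)$. The difference is at most the Hausdorff distance between $g.\qker(\phi)$ and $\qker(\phi)$, which is finite for fixed $g$ by Proposition~\ref{filtration vs quasimorphism-moved}.
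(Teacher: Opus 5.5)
Your proof is correct, and it takes a genuinely different and much shorter route than the paper's.

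\textbf{The paper's proof.} It splits on the value of $\phi(g)$.
\begin{itemize}
\item If $\phi(g)=0$, then $n\mapsto g^n$ is a quasigeodesic inside $\qker(\phi)$. So $\langle g\rangle$ quasi-acts on $\HH^2$ with a coarse axis. Hinkkanen's theorem straightens this quasi-action to an action by a hyperbolic isometry, so $\Phi(g)$ has exactly two fixed points on $S^1$.
\item If $\phi(g)\neq 0$, the paper uses the bounded displacement from Lemma~\ref{lemma:characterise_trivial_action_on_circle_new} to make $k\mapsto g^k.x$ a uniform quasigeodesic. It then runs a Morse-lemma and geodesic-quadrilateral estimate to force $\qker(\phi)$ to be quasiconvex. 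This contradicts Proposition~\ref{quasiconvex}, which itself rests on the approximate-group rigidity of Cordes--Hartnick--Toni\'c.
\end{itemize}

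\textbf{Your proof.} You stop as soon as you have bounded displacement on $\qker(\phi)$.
\begin{itemize}
\item Bounded displacement already forces $g$ to fix $\Lambda(\qker(\phi))$ pointwise.
\item $\Lambda(\qker(\phi))=\partial G$. This uses the same minimality argument that appears inside the proof of Proposition~\ref{quasiconvex}. It needs no quasiconvexity: coarse invariance of $\qker(\phi)$ under left translation (Proposition~\ref{filtration vs quasimorphism-moved}) already makes the limit set $G$-invariant.
\item Hence $g$ lies in the kernel of the boundary action. That kernel is the maximal finite normal subgroup, which is trivial because $G$ is torsion-free and non-elementary.
\end{itemize}

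Your preliminary remarks about finitely many conjugates and Route~(a) are not needed. For the last step, non-elementarity of $G$ suffices; you do not need $\partial G\cong S^2$.

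\textbf{Comparison.} Your argument treats $\phi(g)=0$ and $\phi(g)\neq 0$ uniformly. It avoids Hinkkanen's theorem, the quadrilateral estimate and Proposition~\ref{quasiconvex} altogether. It also proves more: for any non-elementary hyperbolic group on which $\Phi$ can be defined, $\ker\Phi$ lies in the maximal finite normal subgroup. The paper's case split buys some dynamical information. For $g\neq 1$ with $\phi(g)=0$, it shows that $\Phi(g)$ acts on $S^1$ with exactly two fixed points, whereas your argument gives only that $\Phi(g)$ is nontrivial.
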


\begin{proof}
    Let $X=\qker(\phi)$ and consider first the case that $\phi(g)=0$. By homogeneity of $\phi$, we then have $\phi(g^n)=0$ for every $n\in\mathbb{Z}$, so that $g^n \in X$ for every $n\in\mathbb{Z}$. Since~$G$ as a $\PD^3$  group is torsion free, the map $n\mapsto g^n$ describes a bi-infinite quasigeodesic in $X$; this is a standard fact for hyperbolic groups, see for instance \cite[Theorem III.$\Gamma$.4.10]{MR1744486}. The metric on $P_r^1(X)$ satisfies 
    \[d_{P_r^1(X)}(x,y)\leq \frac{1}{r}d(x,y)\] 
    for all $x,y\in G$,
    where~$r$ is as described above.
    It follows that $n\mapsto g^n$ is also a quasigeodesic in $P_r^1(X)$, which  we denote by~$\gamma$.

We claim that there is a quasi-action of $\langle g\rangle$ on $\HH^2$ by the quasiisometries $f_{g^n}$ for $n\in\Z$.  Indeed, the quasiisometry constants can be chosen uniformly for all~$n$, as they only depend on those of $\pi_{g^n}$, which are uniformly bounded due to homogeneity of~$\phi$ and Corollary~\ref{cor:filtration_vs_quasimorphism-quantitative}.    The image of $\gamma$ under the quasiisometry $P_r^1(X)\to\HH^2$ further defines a coarse axis for the quasi-action.

By~\cite{hinkkanen1985uniformly}, this quasi-action of $\langle g\rangle$ is quasiconjugate to an action of $\langle g\rangle$ on $\HH^2$ by isometries $\tilde{f}_{g^n}$, and the image of $\gamma$ serves as a coarse axis. 
It is clear that $\tilde{f}_g$ is a hyperbolic isometry of $\mathbb{H}^2$, hence fixes exactly two points on the boundary.
Since quasiconjugate actions on $\mathbb{H}^2$ induce conjugate actions on the boundary, it follows that $f_g$ also fixes exactly two boundary points. This completes the first case.

We now consider the case $\phi(g)\ne 0$. 
We will show that in this case the assumption $g\in \ker\Phi$ implies that $\qker(\phi)$ is a quasiconvex subspace of $G$, contradicting Proposition \ref{quasiconvex}. Suppose therefore that $\qker(\phi)$ is not quasiconvex. 
Since $\phi(g)\neq 0$, analogously to Lemma~\ref{G as a disjoint union of level sets}~(ii) one obtains the existence of a constant $K\ge 0$ such that\[
d_{\mathrm{Hd}} \Bigl(G, \bigcup_{n\in\mathbb{Z}} g^n. \qker(\phi)\Bigr)=K<\infty.
\]

Let $n\in\mathbb{Z}$ be minimal in absolute value such that $[x,y]$ does not intersect $g^m. \qker(\phi)$ for any $|m|>|n|$, such an~$n$ exists by Corollary~\ref{cor:filtration_vs_quasimorphism-quantitative}. 
Choose $z\in g^n. \qker(\phi)$ with $d(z,[x,y])\le K$, and note that there exists  $r=r(n)$ such that\[
d(z,\qker(\phi))\ge r-K.
\] By non-quasiconvexity of $\qker(\phi)$, we may choose the configuration so that $r$ is arbitrarily large, where Corollary~\ref{cor:filtration_vs_quasimorphism-quantitative} forces $|n|\to\infty$ as $r\to\infty$. We also note that there exists a point $z'\in [x,y]$ with $d(z,z')\le K$,
and the same corollary yields a constant $C_0=C_0(K)$ such that \[
|\phi(z)-\phi(z')|\le C_0.
\]

The key observation is that the map $k\mapsto g^k.x$ defines a quasigeodesic whose constants depend on $g$ and $\phi$, but crucially not on $x$. This follows from the fact that  $L_{g}$ is at finite distance from a nearest-point  projection from $ \qker(\phi)$ onto its $g$-translate by Lemma~\ref{lemma:characterise_trivial_action_on_circle_new}.  

By the Morse lemma,
there exists $C_1=C_1(C_0,g,\phi)$ such that this quasigeodesic lies at Hausdorff distance at most $\tau$ from any geodesic segment $[x,g^n.x]$.
It then follows that $[x,g^n.x]$ does not intersect $g^m\cdot \qker(\phi)$ for $|m|\ge |n|+C_2$, where $C_2=C_2(C_1)$.
Corollary~\ref{cor:filtration_vs_quasimorphism-quantitative} then implies that there exists $C_3=C_3(C_2)$ such that for all $p\in [x,g^n.x]$ we have
\begin{equation*}\label{eq:estimate_phi}
    |\phi(p)|\le |\phi(g^n.x)|+C_3\le n|\phi(g)|+3D(\phi)+C_3,
\end{equation*}
and the same estimate holds for all $p\in [y,g^n.y]$. 

Consider the geodesic quadrilateral with vertices $g^n.x, x, y, g^n.y$. Observe that a geodesic with endpoints $g^n.x,g^n.y$ is given by $g^n.[x,y]$. By hyperbolicity, each side is contained in the $2\delta$-neighbourhood
of the union of the other three sides. In particular, there exists a point 
\[
q\in [g^n.x,x]\cup [x,y]\cup [y,g^n.y]
\]
such that $d(q,g^n.z')\le 2\delta$. If $q\in[x,y]$, Corollary~\ref{cor:filtration_vs_quasimorphism-quantitative} implies that there exists $C_4=C_4(\phi, \delta)$ with $|\phi(q)|\leq |n||\phi(g)|+C_4$, otherwise the bound~\eqref{eq:estimate_phi} applies. It follows that \begin{equation*}
    |\phi(q)|\leq |n||\phi(g)|+3D(\phi)+C_3+C_4.
\end{equation*} 
Writing $z=g^n.u$ for some $u\in \qker(\phi)$, we obtain
\begin{align*}
|\phi(g^n.z')|
&\ge |\phi(g^n.z)|-2D(\phi)-C_0\\
&\ge |\phi(g^{2n})|-|\phi(u)|-3D(\phi)-C_0\\
&\ge 2|n|\,|\phi(g)|-5D(\phi)-C_0.
\end{align*} 
Comparing with the upper bound on~$|\phi(q)|$ and letting~$|n|$ tend to infinity yields a contradiction with Corollary~\ref{cor:filtration_vs_quasimorphism-quantitative}.
\end{proof}

\section{Constructing Riemannian manifolds quasiisometric to $\PD^3$  groups}\label{section:constructing_Riemannian_manifolds}

In this final section we complete the proof of Theorem \ref{thm:manifold}. Given Proposition \ref{thm: approx_kernel_qi_to_H2}, it remains to prove the following.

\begin{theorem}\label{Riemannian manifold}
    Let $G$ be a $\PD^3(R)$ group for $R$ a PID and $\phi:G\to\R$ be a nontrivial homogeneous quasimorphism. If $\qker(\phi)$ is coarsely equivalent to $\HH^2$, then $G$ is quasiisometric to a complete Riemannian manifold $(\R^3,g)$. 
\end{theorem}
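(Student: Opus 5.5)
The plan is to realise $G$ coarsely as an iterated mapping cylinder over $\HH^2$, following the outline in the introduction. First, by Lemma~\ref{G as a disjoint union of level sets}, I choose $c\in G$ with $\phi(c)>K$. Then the translates $\{c^i.\qker(\phi)\}_{i\in\Z}$ are pairwise disjoint and their union is coarsely dense in $G$. Consecutive translates $c^i.\qker(\phi)$ and $c^{i+1}.\qker(\phi)$ lie at Hausdorff distance bounded independently of $i$, by Corollary~\ref{cor:filtration_vs_quasimorphism-quantitative}. Hence the closest-point projections between them are quasiisometries with uniform constants (Lemma~\ref{Projecting is a coarse equivalence}).

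Next, I fix a quasiisometry $\theta_1\colon \qker(\phi)\to\HH^2$, which exists because a coarse equivalence between quasigeodesic spaces is a quasiisometry. I transport it to $\theta_{c^i}=\theta_1\circ L_{c^{-i}}$. Conjugating each projection $c^{i+1}.\qker(\phi)\to c^i.\qker(\phi)$ by these charts gives the single ``monodromy'' quasiisometry $f_c$ of \eqref{eq:define_f_g_new}, up to uniformly bounded error; this uses left-invariance exactly as in Lemma~\ref{lem:phi_for_circle_action_is_a_group_homomorphism}. The boundary map of $f_c$ is quasisymmetric, and the Douady--Earle extension gives a diffeomorphism $F\colon\HH^2\to\HH^2$ with the same boundary values. $F$ is bi-Lipschitz, with derivatives bounded in terms of the quasisymmetry constant. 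By Lemma~\ref{lem:quasiisometries_with_identical_boundary_maps_are_uniformly_close}, $F$ is at bounded distance from $f_c$.

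Then I build the manifold $M=\HH^2\times\R$, diffeomorphic to $\R^3$. The slab $\HH^2\times[i,i+1]$ is glued so that the slice $\HH^2\times\{i+1\}$ is identified with $\HH^2\times\{i\}$ via $F$. Concretely, I take the metric $g=g_t+dt^2$, where $g_t$ interpolates smoothly between the hyperbolic metric $g_{\HH^2}$ and $F^*g_{\HH^2}$ on each unit interval. The same periodic interpolation is used in every slab, so the construction is $\Z$-equivariant under $(x,t)\mapsto(F(x),t+1)$. The bi-Lipschitz bounds make $g_t$ uniformly comparable to $g_{\HH^2}$, so $g$ is complete.

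The quasiisometry $G\to M$ sends $c^i.x$, for $x\in\qker(\phi)$, to $(F^{-i}\theta_1(x),i)$ after normalising coordinates. Its coarse Lipschitz property and coarse surjectivity follow from the uniform control on adjacent projections together with the identification of $F$ with $f_c$ up to bounded error. The lower bound uses that $d(g,h)$ is coarsely controlled by $|\phi(g)-\phi(h)|$ together with the distance after projecting to a common level set. This comparison, uniform over all $i$ rather than just adjacent layers, is where I expect the main care to be needed. I would prove it by comparing discrete paths in $G$ to concatenations of vertical and horizontal paths in $M$, using Corollary~\ref{cor:filtration_vs_quasimorphism-quantitative} to bound the number of layers crossed. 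Bounded geometry of $g$, meaning bounds on curvature and injectivity radius, is deferred to the next section as in the paper's outline; it needs higher-derivative bounds on the Douady--Earle extension.
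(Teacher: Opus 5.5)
Your plan is essentially the paper's proof. You pick $c$ via Lemma~\ref{G as a disjoint union of level sets}, replace the monodromy $f_c$ by its Douady--Earle extension, and build the iterated mapping cylinder $g_t+dt^2$ from a partition of unity. You then show that sending $c^i.\qker(\phi)$ to the $i$th leaf is a coarse embedding with coarsely dense image: the number of layers is controlled by $\phi$ in $G$ and by the $dt^2$ term in $M$ (Lemma~\ref{vertical geodesics in manifold}), and distances are compared after projecting to a common level (Lemma~\ref{Leaves are properly embedded}). The only difference is that the paper works directly with $i$-dependent constants and then bounds $i$, rather than decomposing paths.

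Two slips need fixing.

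\emph{$\theta_1$ is only a coarse equivalence.} With the subspace metric, $\qker(\phi)$ need not be quasigeodesic. For hyperbolic $G$ it never is: a quasiisometry to $\HH^2$ would make it quasiconvex by the Morse lemma, contradicting Proposition~\ref{quasiconvex}. This is harmless, since $f_c$ is still a quasiisometry of $\HH^2$, and a coarse embedding with coarsely dense image between quasigeodesic spaces is a quasiisometry.

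\emph{The monodromy is glued backwards as written.} With height-one metric $F^*g_{\HH^2}$, map $c^i.x\mapsto(F^{-i}\theta_1(x),i)$ and $F\approx f_c$, the points $\pi_c(c.b)$ and $c.b$ would land at distance growing with $d_{\HH^2}(f_c(u),f_c^{-1}(u))$, where $u=\theta_1(b)$. Instead take leaf metrics $(F^{-i})^*g_{\HH^2}$ and the map $c^i.x\mapsto(F^{i}\theta_1(x),i)$. This is exactly what makes your deck map $(x,t)\mapsto(F(x),t+1)$ an isometry and the map equivariant. The paper's displayed leaf metrics $(f^i)^*g_{\HH^2}$ need the same flip to match its map $c^i.x\mapsto(f^i(\theta(x)),i)$.
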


Our approach is to show that the geometry of such a $\PD^3(R)$ group can be characterised by a certain self-quasiisometry of $\HH^2$, which should be thought as the analogue of the monodromy of a closed fibred $3$-manifold in the case that the fibre is a hyperbolic surface. This approach takes inspiration from unpublished work of Danny Calegari \cite{calegari2010boundedcochains3manifolds}.

\begin{remark}
    
The analogy with fibred $3$-manifolds can be made precise in the following sense. If the manifold $M$ is equipped with a Riemannian metric, and the monodromy is lifted to a homeomorphism of $\R^2$, then, when $\R^2$ is equipped with the Riemannian metric induced by that of the universal cover $\tilde{M}$, it is quasiisometric to $\HH^2$, and the lift of the monodromy becomes a self-quasiisometry. 

\end{remark}

In the spirit of this, we construct Riemannian metrics on~$\R^3$ in a way that can best be described as building \textit{iterated mapping cylinders} of self-diffeomorphisms of~$\R^2$. If we choose the self-diffeomorphism to be an appropriate smoothening of the self-quasiisometry of $\HH^2$ characterising the geometry of a $\PD^3(R)$ group as above, such a manifold will naturally yield a quasiisometric model for the group.

For the remainder of this section, let~$G$ be a $\PD^3(R)$  group with a nontrivial homogeneous quasimorphism $\phi\colon G\to \R$ such that~$\qker(\phi)$ is coarsely equivalent to~$\HH^2$.
Recall from Lemma~\ref{G as a disjoint union of level sets} that there exists an element $c\in G$ such that the disjoint union of the left $c^i$-translates of~$\qker(\phi)$ is coarsely dense in~$G$. This observation serves as inspiration for the construction of our metric on~$\R^3$. The topological planes $\R^2\times\{i\}$ for $i\in \Z$ will serve as analogues of the $c^i$-translates of the quasikernel, and the change of metric between them will be determined by the quasiisometry induced by left multiplication and closest-point projection.

More precisely, let $\pi_c\colon c.\qker(\phi)\to \qker(\phi)$ be a closest-point projection and let~$f_c$ be the corresponding quasiisometry of~$\HH^2$ associated to the group element~$c$ defined by~\eqref{eq:define_f_g_new} in Section~\ref{section:circle_action}.
It is necessary to smoothen~$f_c$ into a bi-Lipschitz diffeomorphism of~$\HH^2$ which we denote~$f$; that such a map exists at bounded distance from~$f_c$ can be seen for example using the work of Douady--Earle, who prove that every quasisymmetric circle homeomorphism is the extension to the visual boundary of such a diffeomorphism \cite{MR857678}. Extending~$f_c$ to the boundary and defining~$f$ to be the thus obtained bi-Lipschitz diffeomorphism with identical boundary extension together with Lemma~\ref{lem:quasiisometries_with_identical_boundary_maps_are_uniformly_close} yields the result. The map $f\colon\HH^2\to\HH^2$ will be the `monodromy' of $G$ alluded to previously.

We will construct a Riemannian metric~$g$ on~$\R^3$ such that $(\R^3, g)$ has the properties described in Theorem~\ref{Riemannian manifold}.
Fix a global parametrisation of~$\HH^2$ by~$\R^2$; by abuse of notation we will often conflate the two. In particular, we will treat~$f$ also as a diffeomorphism of~$\R^2$.   
Let~$g_{\HH^2}$ denote the pullback metric on~$\R^2$ under this parametrisation 
and set
\[g_i\coloneq (f^i)^\ast g_{\HH^2}\] for $i\in\Z$.
 The geometry of~$G$ will be reflected in our manifold by choosing the metric on the leaves $\R^2\times \{i\}$ to be equal to $g_i+dt^2$, where $t$ is the third direction, and interpolating smoothly between these.

Consider therefore the open cover of~$\R$ given by
\begin{equation}\label{eq:open_cover_for_partition_of_unity}
    \mathcal{U}=\{ (i-2/3, i+2/3)\}_{i\in \Z}
\end{equation}
and let $\{\psi_i\}_{i\in \Z}$ be a partition of unity subordinate to~$\mathcal{U}$.
For every $t\in\R$, we define a metric on $\R^2$ given by \[(g_t)_x\coloneq\underset{i\in\Z}{\sum}\psi_i(t).(g_i)_x\] for every $x\in\R^2$, which will be the induced metric on the leaves $\R^2\times\{t\}\subseteq \R^3$. For $(x,t)\in\R^2\times\R$, we set
\begin{equation}\label{eq:definition_metric_iterated_mapping_cylinder}
    g_{(x,t)}=(g_t)_x+dt^2,
\end{equation}
which defines the desired Riemannian metric on~$\R^3$.

We now prove some key properties of the manifold $(\R^3,g)$. For a smooth curve~$\gamma$ on~$(\R^3, g)$, denote by~$l_g(\gamma)$ its length and by~$d_g$ the induced length metric.

\begin{lemma}\label{vertical geodesics in manifold}
    The Riemannian manifold $(\R^3,g)$ has the following properties.
    \begin{enumerate}
        \item For every $p\in \R^3$ there exist $i\in\Z$ and $q\in \R^2\times\{i\}$ with $d_g(p,q)\leq 1$.
        \item For all $x, y\in\R^2$ and $i,j\in\Z$, we have that \begin{align*}
            &d_g((x,i),(x,i+1))=1,\\
            &d_g((x,i),(y,j))\geq |j-i|.
        \end{align*}
    \end{enumerate}
\end{lemma}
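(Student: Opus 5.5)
The whole argument rests on one structural fact about the metric \eqref{eq:definition_metric_iterated_mapping_cylinder}: it is a product-type metric $g_t+dt^2$, where each $g_t$ is a Riemannian metric on $\R^2$. Consequently, for any piecewise smooth curve $\gamma(s)=(x(s),t(s))$ we have
\[
|\dot\gamma|_g^2=(g_{t(s)})_{x(s)}(\dot x,\dot x)+\dot t^2\geq \dot t^2 .
\]
This gives $l_g(\gamma)\geq \int|\dot t|\,ds\geq |t(b)-t(a)|$. So the projection $(x,t)\mapsto t$ from $(\R^3,g)$ to $\R$ is $1$-Lipschitz, and every length bound in the statement reduces to this inequality together with explicit vertical curves.

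For part (1), take $p=(x,t)$ and let $i$ be an integer nearest to $t$, so $|t-i|\leq 1/2$. Consider the vertical segment $s\mapsto (x,s)$ for $s$ between $t$ and $i$. Its velocity is $\partial_t$, whose $g$-norm is exactly $1$ because the $g_t$ part only sees the $\R^2$ directions. Hence its length is $|t-i|\leq 1/2\leq 1$, and we may take $q=(x,i)$.

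For part (2), the upper bound $d_g((x,i),(x,i+1))\leq 1$ comes from the same vertical segment, which has length exactly $1$. For the lower bound, any curve from $(x,i)$ to $(y,j)$ has length at least $|j-i|$ by the displayed inequality. Taking the infimum over curves gives $d_g((x,i),(y,j))\geq |j-i|$, and the special case $y=x$, $j=i+1$ gives equality in the first identity.

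I expect no real obstacle. The only points to check are the following.
\begin{itemize}
\item Each $g_t$ is positive definite, since it is a locally finite convex combination of the metrics $g_i$, and it varies smoothly in $t$. Therefore $g$ is a genuine Riemannian metric with no cross terms $dx\,dt$.
\item The distance $d_g$ is the infimum of lengths of piecewise smooth curves, so the pointwise inequality integrates to the required bound.
\end{itemize}
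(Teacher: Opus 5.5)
Your proposal is correct and follows essentially the same route as the paper: vertical segments $s\mapsto(x,s)$ have unit speed because $g(\partial_t,\partial_t)=1$, which gives the upper bounds, and integrating the pointwise inequality $|\dot\gamma|_g\geq|\dot t|$ via the fundamental theorem of calculus gives $d_g((x,i),(y,j))\geq|j-i|$. The differences are only cosmetic: you round $t$ to the nearest integer, getting the bound $1/2$ where the paper moves to the next integer above and gets $1-r\leq 1$, and you phrase the lower bound as the height function $(x,t)\mapsto t$ being $1$-Lipschitz.
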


\begin{proof} 
 Write $p=(x,i+r)\in \R^2\times\R$ for some $i\in\Z$ and $r\in[0,1)$.  Consider the smooth curve $\gamma\colon[0,1-r]\to \R^3$ from $(x,i+r)$ to $(x,i+1)$ given by $s\mapsto (x,r+s)$. 
In coordinates $(a,b,t)$ on $\R^3$, we have $\gamma'(s)=dt$, and so \[l_{g}(\gamma)=\int_0^{1-r}\sqrt{g_{\gamma(s)}(dt,dt)}ds.\] 
 
We have $g_{(a,b,t)}(dt,dt)=1$, so that $l_{g}(\gamma)=1-r\leq 1$, proving $(1)$.

For the proof of~$(2)$, we first observe that it is immediate from the above that there is a smooth curve $\gamma$ of length $1$ with endpoints $(x,i)$ and $(x,i-1)$.  
Suppose there exists another smooth curve $\eta\colon I\to \R^3$ between these two points of length $l_{\tilde{g}}(\eta)<1$. 
We may write $\eta(s)=(x(s),t(s))$ for smooth functions $x\colon I\to\R^2$ and $t\colon I\to \R$, and compute that \[l_g(\eta)\geq\int_0^{1}|t'(s)|ds,\] which is bounded from below by $1$ using the fundamental theorem of calculus. The latter part of $(2)$ follows from an analogous argument.\end{proof}

In the following proposition we consider the induced Riemannian metric on 
\begin{equation*}
    N_r(\R^2\times\{i\}))=\R^2\times[i-r,i+r]\subseteq \R^3
\end{equation*} 
for $i\in\Z$ and $r\in\N$. We will denote this metric, which is simply the restriction of~$g$ to $\R^2\times[i-r,i+r]$, by~$g_{i,r}$.

 \begin{lemma}\label{Leaves are properly embedded}
 For every $i\in \Z$ and $r\in \N$, the inclusion 
 \[(\R^2\times\{i\},g_i)\to(\R^2\times [i-r,i+r],g_{i,r})\] 
 induces a quasiisometric embedding whose constants depend only on~$r$. In particular, for every~$i\in\Z$ the inclusion
 \[(\R^2\times\{i\},g_i)\to (\R^3,g)\] 
 induces a coarse embedding with control functions independent of~$i$.

 \end{lemma}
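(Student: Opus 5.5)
The plan is to compare lengths of curves in the slab with lengths measured in the fixed metric $g_i$ on the leaf. The key input is that the maps between consecutive leaf metrics are uniformly bi-Lipschitz. Since $f$ is a bi-Lipschitz diffeomorphism of $(\R^2,g_{\HH^2})$, say with constant $\lambda\geq 1$, the metrics $g_j=(f^j)^\ast g_{\HH^2}$ satisfy
\[\lambda^{-2|j-k|}g_k\leq g_j\leq \lambda^{2|j-k|}g_k\]
pointwise as quadratic forms. In particular $g_j$ and $g_i$ are comparable with constant $\lambda^{2|j-i|}$.

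On the slab $\R^2\times[i-r,i+r]$, the interpolated metric $g_t=\sum_j\psi_j(t)g_j$ only involves indices $j$ with $|j-i|\leq r+1$. Since it is a convex combination of such forms, we obtain
\[\lambda^{-2(r+1)}g_i\leq g_t\leq\lambda^{2(r+1)}g_i\quad\text{for all } t\in[i-r,i+r].\]

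The upper bound for the embedding is now immediate. A $g_i$-geodesic in the leaf $\R^2\times\{i\}$ has $g_{i,r}$-length at most $\lambda^{r+1}$ times its $g_i$-length. Hence the inclusion is $\lambda^{r+1}$-Lipschitz.

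For the lower bound, take any smooth curve $\eta(s)=(x(s),t(s))$ in the slab joining $(x,i)$ to $(y,i)$. The comparison above gives
\[\sqrt{g_{\eta(s)}(\eta',\eta')}\geq\sqrt{(g_{t(s)})(x',x')}\geq\lambda^{-(r+1)}\sqrt{(g_i)(x',x')}.\]
The projected curve $x(s)$ lies in the leaf and joins $x$ to $y$. Therefore $l_{g_{i,r}}(\eta)\geq\lambda^{-(r+1)}d_{g_i}(x,y)$, and taking the infimum over $\eta$ gives $d_{g_{i,r}}\geq\lambda^{-(r+1)}d_{g_i}$. So the inclusion is in fact bi-Lipschitz with constants depending only on $r$, which proves the first claim.

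For the second claim, consider points $(x,i),(y,i)$ with $d_g((x,i),(y,i))=D$. Any curve of length less than $D+1$ joining them stays within $t$-distance $\lceil D\rceil+1$ of level $i$, since vertical displacement is bounded by length, exactly as in the proof of Lemma~\ref{vertical geodesics in manifold}. Such a curve therefore lies in the slab with $r=\lceil D\rceil+1$. The first part then yields
\[d_{g_i}(x,y)\leq\lambda^{\lceil D\rceil+2}(D+1),\]
which is a proper control function independent of $i$. The upper control function is the identity, since $g$ restricted to the leaf is $g_i$ at integer $t$; indeed $\psi_i(i)=1$, because the cover $\mathcal U$ meets the point $i$ only in its $i$th member. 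Thus the inclusion is a coarse embedding with uniform control.

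The only step requiring care is the uniform pointwise comparison $g_j\asymp g_i$. This relies on $f$ being globally bi-Lipschitz with respect to $g_{\HH^2}$ with bounded differential, which is exactly what the Douady--Earle smoothing provides. I expect this to be the main point to justify.
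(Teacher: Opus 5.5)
Your proof is correct and follows the paper's argument. Both use a pointwise comparison $\lambda^{-2(r+1)}g_i\leq g_t\leq\lambda^{2(r+1)}g_i$ on the slab, coming from the bi-Lipschitz constant of $f$, and then project any curve in the slab onto the leaf $\R^2\times\{i\}$ to get $\lambda^{-(r+1)}d_{g_i}\leq d_{g_{i,r}}\leq d_{g_i}$. Your second part, which bounds the vertical excursion of a near-minimising curve by its length so that it stays in a slab of width $\lceil D\rceil+1$, spells out what the paper only sketches by citing Lemma~\ref{vertical geodesics in manifold}. You also correctly use the squared constant for the quadratic forms, where the paper writes $L^{|j-k|}$.
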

\begin{proof}
 From Lemma \ref{vertical geodesics in manifold} we know that for each $r\in \N$ and $i\in \Z$ we have \[N_r(\R^2\times\{i\})=\R^2\times[i-r,i+r].\] 
  Observe that for any $t\in\R$ the induced metric on the leaf $\R^2\times\{t\}$ can be written as 
\[(g_{t})_x=\psi_j(t)(g_j)_{x}+\psi_{j+1}(t)(g_{j+1})_{x}.\]

   For any $j\in\Z$, denote by~$\tau_j$ the projection onto the integer leaf~$\R^2\times\{j\}$ given by
   \begin{equation*}
       \tau_j(x,t)=(x,j) \quad \text{ for all }(x,t)\in\R^3.
   \end{equation*}
    Recall that~$f$ is a bi-Lipschitz diffeomorphism, whose bi-Lipschitz constant we denote by~$L$.
    Let $k\in\Z$ and $v\in T_{(x,j)}(\R^2\times\{j\})$, and set $w\coloneq D_{(x,j)}\tau_k(v)$.
   By definition of the metric on integer leaves and the fact that $f$ is $L$-bi-Lipschitz, we have that
    \begin{equation}\label{eq:biLipschitz_comparison_Riemannian_metrics_on_slices}
        L^{-|j-k|}(g_j)_x(w, w) \leq (g_k)_x(v, v)\leq L^{|j-k|} (g_j)_x(w, w) 
    \end{equation} for any $j\in\Z$.

    Let now $u\in T_{(x,t)}  (\R^2\times\{t\}) $ for any $t\in\R$ and denote  $u_0\coloneq D_{(x,j)}\tau_j(v)$ and $u_1\coloneq D_{(x,j+1)}\tau_{j+1}(v)$. We then have that
    \begin{equation}\label{Min-max inequality}
        \begin{split}
            \min\{(g_j)_x(u_0,u_0), (g_{j+1})_x(u_0,u_0)\} & \leq g_{(x,t)}(u,u)\\
            &\leq \max \{(g_j)_x(u_1,u_1), (g_{j+1})_x(u_1,u_1)\}
        \end{split} 
    \end{equation} 
    as~$g_t$ interpolates between $g_j$ and $g_{j+1}$ as above.

    Consider a pair of points $(x,i)$, $(y,i)$ in $\R^2\times \{i\}$, let $\varepsilon>0$ and choose a smooth curve~$\gamma$ in $\R^2\times[i-r,i+r]$ between them with $l_{g_{i,r}}(\gamma)=d_{g_{i,r}}((x,i),(y,i))+\varepsilon$.
    It is clear that $\gamma_i\coloneq \tau_i\circ\gamma$ is a smooth curve in $\R^2\times\{i\}$ from $(x,i)$ to $(y,i)$, and~\eqref{eq:biLipschitz_comparison_Riemannian_metrics_on_slices} combined with \eqref{Min-max inequality} implies that
    \begin{equation*}
        l_{g_i}(\gamma_i)\leq L^\sigma l_{g_{i,r}}(\gamma),
    \end{equation*}
    where $\sigma = \lceil r \rceil$.
    The fact that~$\gamma$ is almost length-minimising further yields 
    \begin{equation*}
        l_{g_{i,r}}(\gamma)-\varepsilon\leq l_{g_{i,r}}(\gamma_i) = l_{g_i}(\gamma_i).
    \end{equation*}
    Taking a limit as~$\varepsilon$ tends to zero, we obtain that the induced length metrics satisfy
    \begin{equation*}
        L^{-\sigma} d_{g_i}(x,y) \leq d_{g_{i,r}}((x,i),(y,i))\leq d_{g_i}(x,y),
    \end{equation*}
    which proves the first statement.

The fact that the leaves $\R^2\times\{i\}$ are coarsely embedded follows from Lemma~\ref{vertical geodesics in manifold}, and, as $L>1$ depended only on $r>0$, the control functions do not depend on $i\in\Z$, as desired.
 \end{proof}

\begin{corollary}
    $(\R^3,g)$ is a complete Riemannian manifold.
\end{corollary}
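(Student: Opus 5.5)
The plan is to use the Hopf--Rinow theorem: a connected Riemannian manifold is complete as soon as its closed and bounded subsets are compact, or equivalently as soon as every Cauchy sequence for $d_g$ converges. Connectedness is clear since the underlying space is $\R^3$. So it suffices to show that every closed $d_g$-ball $\bar B_g(p,R)$ is compact. Because the manifold topology agrees with the $d_g$-topology, it is enough to show that each such ball is contained in a compact subset of $\R^3$ in the standard topology.

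First I bound the $t$-coordinate. Write $p=(x_0,t_0)$. The argument in Lemma~\ref{vertical geodesics in manifold}(2) in fact gives $d_g((x,s),(y,t))\geq |s-t|$ for all real $s,t$, since the metric has the form $g_t+dt^2$. Hence every point of $\bar B_g(p,R)$ lies in the slab $\R^2\times[t_0-R,t_0+R]$. After enlarging $R$, I may assume this slab is $\R^2\times[i-r,i+r]$ for some integer $i$ and some $r\in\N$.

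Next I bound the planar coordinate. I compare $g$ on this slab with the product metric $g_i+dt^2$. By \eqref{eq:biLipschitz_comparison_Riemannian_metrics_on_slices} and \eqref{Min-max inequality}, on $\R^2\times[i-r,i+r]$ the horizontal part of $g$ is bounded below by $L^{-(r+1)}g_i$. Here I use that $\tau$ acts as the identity on the $\R^2$-coordinates, so tangent vectors can be compared directly. The key step is the following. Any curve $\gamma$ inside the slab satisfies
\[l_g(\gamma)\geq L^{-(r+1)/2}\, l_{g_i}(\tau_i\circ\gamma).\]
A curve from $p$ of length at most $R+1$ might leave the slab, but its first exit segment already has length greater than $R$, so it is irrelevant for points of the ball. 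It follows that every point $(y,t)$ in $\bar B_g(p,R)$ satisfies $d_{g_i}(x_0,y)\leq L^{(r+1)/2}(R+1)$.

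Now $g_i=(f^i)^*g_{\HH^2}$, so $(\R^2,g_i)$ is isometric to $\HH^2$, which is complete. Its closed metric balls are therefore compact. Thus $\bar B_g(p,R)$ is a closed subset of $\bar B_{g_i}(x_0,L^{(r+1)/2}(R+1))\times[i-r,i+r]$, which is compact, and so the ball itself is compact.

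The main obstacle is the uniform lower bound on the horizontal metric throughout the slab. This needs the bi-Lipschitz inequality to compare $g_t$ with $g_i$ at the same base point, using the fact that $g_t$ is a convex combination of $g_j$ and $g_{j+1}$ and that each $g_k$ is $L^{|k-i|}$-comparable to $g_i$. I would verify this directly from $g_k=(f^{k-i})^*g_i$ together with the fact that $Df$ has operator norm and conorm bounded by $L$.
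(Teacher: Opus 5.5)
Your proposal is correct and follows essentially the paper's route. You use Hopf--Rinow, confine bounded sets to a slab $\R^2\times J$ via the vertical estimate of Lemma~\ref{vertical geodesics in manifold}, and then compare the leaf metrics on the slab uniformly with a single copy of $\HH^2$ to land in a compact product set; you only need the lower bound on $g$, which is slightly cleaner than the paper's two-sided bi-Lipschitz claim. Two cosmetic fixes are needed, neither of which affects the argument: since $\|Df\|$ and $\|Df^{-1}\|$ are at most $L$, the quadratic forms $g_k$ and $g_i$ are comparable only up to $L^{2|k-i|}$, not $L^{|k-i|}$; and you should simply choose the slab to contain $[t_0-R-1,t_0+R+1]$, so that curves of length at most $R+1$ from $p$ never leave it, rather than arguing about exit segments.
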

\begin{proof}
    By the Hopf--Rinow Theorem, it suffices to prove that every closed and bounded subset of $(\R^3,g)$ is compact. 
    From Lemma~\ref{vertical geodesics in manifold} we know that any bounded subset of $(\R^3,g)$ is contained in a subset $\R^2\times J$ for some bounded interval $J\subset \R$. By the proof of Lemma \ref{Leaves are properly embedded}, the length metrics on all of the leaves in $\R^2\times J$ are uniformly bi-Lipschitz to $\HH^2$. In particular, the metric space $(\R^2\times J, d_{g}|_{\R^2\times J})$ is bi-Lipschitz to $\HH^2\times J$ equipped with the product metric, which is readily complete.
\end{proof}
Observe that, again by the Hopf--Rinow theorem, $(\R^3, g)$ also is geodesically complete.
We now complete the proof of Theorem~\ref{Riemannian manifold} by proving that the manifolds constructed above are quasiisometric to the corresponding $\PD^3(R)$ groups.

 \begin{proof}[Proof of Theorem \ref{Riemannian manifold}]
     Let $G$ be a $\PD^3(R)$ group and $\phi\colon G\to\R$ be a nontrivial homogeneous quasimorphism such that there is a coarse equivalence $\theta\colon \qker(\phi)\to\HH^2$ with control functions $\eta_\pm\colon\R_{\geq 0}\to\R_{\geq 0}$. Let~$(\R^3, g)$ be the Riemannian manifold described above. We define a map
     \begin{align*}
         F\colon \bigsqcup_{i\in\Z}c^i.\qker(\phi) & \to (\R^3,g)\\
         c^i.x & \mapsto (f^i(\theta(x)),i),
    \end{align*}
     which we show to be a quasiisometry.
     Coarse surjectivity follows readily from Lemma~\ref{vertical geodesics in manifold}, so that it suffices to prove that it is a coarse embedding as both domain and target are quasigeodesic metric spaces.

     Observe that there is an action of~$\langle c\rangle$ on~$(\R^3, g)$ by isometries, where $c^j$ acts by $ f^{-j}\times(t\mapsto t+j)$, as well as a natural one on~$G$ by left-multiplication. One easily confirms that~$F$ is equivariant with respect to this group action, hence it suffices to prove that~$F$ has controlled distance distortion for pairs of points $x, c^i y$ with $x,y\in \qker(\phi)$ and~ $i\in \Z $.    
  
    Denote $x_\theta=\theta(x)$ and $y_\theta=\theta(y)$, then the images of $x,c^i.y$ under $F$ are $(x_\theta,0),(f^i(y_\theta),i)$ respectively.

     Observe that~$\theta$ was called~$\theta_1$ in the previous section. By Lemma~\ref{Leaves are properly embedded}, there are proper functions $\rho_{\pm}\colon\R_{\geq0}\to\R_{\geq 0}$ such that 
     \[\rho_-(d_{{\HH^2}}(x_\theta,f^i(y_\theta)))\leq d_g((x_\theta,0),(f^{i}(y_\theta),0))\leq\rho_+(d_{{\HH^2}}(x_\theta,f^i(y_\theta))).\]
     An application of the triangle inequality together with Lemma \ref{vertical geodesics in manifold} then implies
     \[d_g((x_\theta,0),(f^i(y_\theta),i))\leq \rho_+(d_{\HH^2}(x_\theta,f^{i}(y_\theta))) + i .\]

    For each $i\in \Z$, let~$\pi_i$ be a closest-point projection from~$c^i.\qker(\phi)$ onto~$\qker(\phi)$.    
    Proposition \ref{cor:filtration_vs_quasimorphism-quantitative} yields a proper function $\Lambda\colon \R_{\geq 0}\to\R_{\geq 0}$ such that  
     \[d_G(c^i.y,\pi_{i}(c^i.y))\leq \Lambda(i).\] 
     
     By Lemma~\ref{lem:phi_for_circle_action_is_a_group_homomorphism}, there exists~$C(i)\geq 0$ such that~$f^i$ is $C(i)$-close to $f_{c^i}=\theta\circ \pi_{i}\circ c^{i}\circ \theta^{-1}$, where we choose the closest-point projection~$\pi_{c^i}$ in~\eqref{eq:define_f_g_new} to be~$\pi_i$. Precomposing with~$\theta$, we see that $f^{i}\circ \theta$ is $C(i)$-close to $\theta\circ\pi_i\circ c^{i}$, so that 
     \begin{align*}
         d_{\HH^2}(x_\theta,f^{i}(y_\theta))) & \leq d_{\HH^2}(x_\theta,\theta(\pi_i(c^{i}(y))))+C(i)\\
         & \leq \eta_+(d_G(x,\pi_i(c^{i}.y)))+C(i).
     \end{align*} 
     Note now that $ d_G(x,\pi_i(c^{i}.y))\leq d_G(x,c^i.y)+ \Lambda(i)$ by the triangle inequality, from which we conclude that 
     \begin{align*}
         d_g((x_\theta,0),(f^i(y_\theta),i)) & \leq \rho_+(d_{\HH^2}(x_\theta,f^{i}(y_\theta))) + i \\
         &\leq \rho_+(\eta_+(d_G(x,\pi_i(c^{i}.y)))+C(i))+i\\
         &\leq \rho_+(\eta_+(d_G(x,c^{i}.y)+\Lambda(i))+C(i))+i . 
     \end{align*}

    Analogously, one derives a lower bound, 
    \begin{align}\label{eq:lower_bound_on_control_function_coarse_equivalence_group_and_manifold}
         d_g((x_\theta,0),(f^i(y_\theta),i)) 
         & \geq  \rho_-(\eta_-(d_G(x,c^{i}.y)-\Lambda(i))-C(i)) - i .
    \end{align}
    In addition, observe that Lemma~\ref{vertical geodesics in manifold} implies that 
    \begin{equation*}
        d_g((x_\theta,0),(f^i(y_\theta),i))\geq i,
    \end{equation*}
    so that one can set the lower control function of~$F$ to be the maximum of~$i$ and the bound derived above. 
    
    It remains to verify that these bounds depend only on $d_G(x,c^i.y)$, and that the lower control function is proper. By Proposition \ref{cor:filtration_vs_quasimorphism-quantitative}, $i$ can be bounded from above by a function of $d_G(x,c^i.y)$. Further, if the maximum in the lower control function takes the value $i$, it must be that
     \[i\geq \rho_-(\eta_-(d_G(x,c^{i}.y)-\Lambda(i))-C(i)) - i.\] 
     Observe that $\rho_-, \eta_-\colon\R_{\geq 0}\to\R_{\geq 0}$ are proper and may be chosen to be increasing, and that $\Lambda(i),C(i)\geq 0$, hence we can rearrange and pass these proper functions over the inequality, so that the new left-hand side is a proper function of $i$ bounded below by~$d_G(x,c^i.y)$. It follows that~$i$ can be bounded from below by a proper function of $d_G(x,c^i.y)$. We conclude that both control functions can be made to depend only on $d_G(x,c^i.y)$. To see that the lower control function is proper, note that if the term given by the right-hand side of~\eqref{eq:lower_bound_on_control_function_coarse_equivalence_group_and_manifold} in the maximum is bounded from above on some sequence with $d_G(x_j,c^{i_j}.y_j)\to\infty$, it must be that $i_j\to\infty$ as subtracting nonnegative functions of~$i$ constitute the only possible source of non-properness. In particular, taking the maximum of the right-hand side of~\eqref{eq:lower_bound_on_control_function_coarse_equivalence_group_and_manifold} and~$i$ gives a proper function.
 \end{proof}

\section{Riemannian geometry of iterated
mapping cylinders}\label{section:bounded_geometry}

In this section, we discuss the geometry of the manifolds described in Section~\ref{section:constructing_Riemannian_manifolds}. For convenience, we introduce the following terminology.
\begin{definition}
    A Riemannian manifold~$(\R^3, g)$ is an \textit{iterated mapping cylinder} if its metric is defined by~\eqref{eq:definition_metric_iterated_mapping_cylinder} with respect to some diffeomorphism~$f$ of~$\HH^2$.
\end{definition}

In particular, under suitable assumptions we show that 
the iterated mapping cylinder~$M=(\R^3, g)$ has bounded geometry in the following sense.
\begin{definition}
    We say that a Riemannian manifold~$M$ has \textit{bounded geometry} if it is complete and there exist constants $a, r_0\in \R_+$ such that the sectional curvatures~$\kappa$ and the injectivity radius~$\mathrm{inj}_M$ satisfy
    \begin{equation}\label{eq:bounds_for_bounded_geometry}
        |\kappa|\leq a, \qquad \mathrm{inj}_M \geq r_0.
    \end{equation}
\end{definition}

We conclude this section by proving that the suitable assumption, which is uniform boundedness of derivatives of~$f$ up to third order, is satisfied when the diffeomorphism comes from the Douady--Earle extension. This allows us to strengthen the results of Section \ref{section:constructing_Riemannian_manifolds} by showing that the $\PD^3$~groups considered in this article are in fact quasiisometric to Riemannian manifolds of bounded geometry.

\begin{convention}\label{conv:symmetric_partition_of_unity}
    In this section, let us assume that the partition of unity subordinate to the open cover~$\mathcal{U}$ from~\eqref{eq:open_cover_for_partition_of_unity} is invariant under the obvious translation action of~$\Z$, that is, there exists a function~$\psi$ such that for every~$i\in\Z$ we have $\psi_i(t) = \psi(t-i)$ for some smooth function $\psi\colon \R\to [0,1]$ with support contained in~$(-2/3, 2/3)$. 
\end{convention}
It is easy to see that such a partition of unity exists, for completeness we provide an explicit construction.
Choose a smooth function $\tilde{\psi}\colon \R\to [0,1]$ with support contained in~$(-2/3, 2/3)$ which is strictly positive on $[-1/2, 1/2]$ and define
\begin{equation*}
    \psi(t):=\frac{\tilde{\psi}(t)}{\sum_{i\in\mathbb Z}\tilde{\psi}(t-i)}.
\end{equation*}
Setting~$\psi_i(t):=\psi(t-i)$ we have that $\sum_{i\in\mathbb Z}\psi_i(t)=1$ for all~$t\in\R$, so the~$\psi_i$ form the desired $\Z$-invariant partition of unity.

The main result of this section is the following.
\begin{proposition}\label{prop:bounded_geometry}
    Let~$M=(\R^3, g)$ be an iterated mapping cylinder, where we assume that the partition of unity satisfies Convention~\ref{conv:symmetric_partition_of_unity}. Let us further assume that~$f$ is a bi-Lipschitz diffeomorphism of~$\HH^2$ with bi-Lipschitz constant~$L$ and that there exists a constant~$C\geq 0$ such that all covariant derivatives~$\nabla^k f$ with respect to~$g_{\HH^2}$ up to third order are uniformly bounded, that is,
    \begin{equation*}
        \sup_{x\in\HH^2}\|\nabla^k f(x)\|\leq C,
    \end{equation*}
    for all $1\leq k\leq 3$, where $\|\cdot\|$ denotes the norm induced by the metric~$g_{\HH^2}$.
    Then~$M$ has bounded geometry, where the constants in~\eqref{eq:bounds_for_bounded_geometry} depend only on~$L, C$ and the function~$\psi$ determining the partition of unity. 
\end{proposition}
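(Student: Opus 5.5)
The plan is to reduce everything to a compact fundamental domain, using the isometric $\Z$-action on $M$. By Convention~\ref{conv:symmetric_partition_of_unity} the map $T(x,t)=(f^{-1}(x),t+1)$ is an isometry of $(\R^3,g)$. Indeed, $T^\ast g$ at $(x,t)$ equals $\sum_i\psi(t+1-i)\,(f^{-1})^\ast (f^i)^\ast g_{\HH^2}+dt^2$, and reindexing gives back $g_{(x,t)}$. So it suffices to bound curvature and injectivity radius on the slab $\R^2\times[0,1]$. On this slab only the metrics $g_{-1},g_0,g_1,g_2$ occur, namely $g_t=\sum_{|i|\le 2}\psi(t-i)\,(f^i)^\ast g_{\HH^2}$. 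Completeness is already known from the earlier corollary.

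For the curvature bound I would work in local charts on $\HH^2$ of uniform size, for instance normal coordinates of radius $1$ around an arbitrary point $x_0$, or equivalently the homogeneity of $\HH^2$ after composing with isometries. In these charts $g_{\HH^2}$ has uniformly $C^\infty$-bounded components. The components of $(f^i)^\ast g_{\HH^2}$ for $|i|\le 2$ are $(Df^i)^T\, g_{\HH^2}(f^i)\, Df^i$, whose derivatives up to order two involve $\nabla^k f$ for $k\le 3$ only. Here one uses that $f^{-1}$ also has bounded derivatives up to order three, since $\nabla f^{-1}$ is expressed through the inverse of $Df$, and $\|(Df)^{-1}\|\le L$ by the bi-Lipschitz bound. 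These components are therefore uniformly bounded in $C^2$ by constants depending on $L,C$. They are also uniformly positive definite, with eigenvalues between $L^{-4}$ and $L^4$ relative to $g_{\HH^2}$, as in \eqref{eq:biLipschitz_comparison_Riemannian_metrics_on_slices}. Multiplying by $\psi(t-i)$ and adding $dt^2$ gives a metric $g$ on the chart $\times[0,1]$ whose components are uniformly $C^2$-bounded and uniformly elliptic, with bounds depending on $L,C,\psi$. The Christoffel symbols and the curvature tensor are polynomial in $g^{-1}$, $\partial g$ and $\partial^2 g$, so $|\kappa|\le a$ with $a=a(L,C,\psi)$.

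The injectivity radius is the main obstacle, and I would handle it with Cheeger's lemma, or the Cheeger--Gromov--Taylor estimate. Given $|\kappa|\le a$, a lower bound on the volume of unit balls yields $\mathrm{inj}\ge r_0(a,v_0)$. The volume bound comes from the chart description: $g$ is uniformly bi-Lipschitz on a chart of fixed size to the Euclidean metric on a product of a disc and an interval. Hence the $g$-ball of radius $1$ at any point of the slab contains a Euclidean region of definite size, giving $\mathrm{vol}(B_1(p))\ge v_0(L,C,\psi)$.

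An alternative for the injectivity radius is to show directly that geodesic loops are long. Since $M$ is diffeomorphic to $\R^3$, there are no closed geodesics of topological origin, and the conjugate radius is at least $\pi/\sqrt a$. However, ruling out short geodesic loops still requires the volume or local bi-Lipschitz argument above, so I would go with Cheeger's lemma. The one point needing care is that the uniform bi-Lipschitz comparison gives the volume bound for small balls, for example by shrinking the radius to fit inside the chart, and Cheeger's lemma works for any fixed radius.
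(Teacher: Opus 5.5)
Your proposal is correct and follows essentially the same route as the paper: you reduce to the slab $\R^2\times[0,1]$ via the isometry $(x,t)\mapsto(f^{-1}(x),t+1)$, bound curvature through uniform $C^2$-bounds and uniform ellipticity of the metric coefficients in product charts built from normal coordinates on the leaves, and obtain the injectivity radius from the Cheeger--Gromov--Taylor estimate. The only differences are minor: you get the volume lower bound from a local Euclidean comparison rather than the paper's comparison with $(\R^2\times I, g_{\HH^2}+dt^2)$, and, since $\supp\psi\subset(-2/3,2/3)$, only $g_0$ and $g_1$ actually occur on the slab, so your bounds on $f^{-1}$ and $f^2$ are harmless but unnecessary.
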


From now on, let us fix a diffeomorphism~$f\colon \HH^2\to \HH^2$ and a partition of unity satisfying the assumptions of Proposition~\ref{prop:bounded_geometry} and denote for the remainder of this section the associated iterated mapping cylinder by~$M=(\R^3, g)$.
We continue to parametrise~$M$ by~$(x,t)$, with $x\in \R^2$ a coordinate on the leaf~$\Sigma_t$ and~$t$ parametrising the leaves.
We begin by observing that for all local Riemannian geometric considerations it is sufficient to consider points $p=(x,t)$ with $0\leq t\leq 1$; this is an immediate consequence of translation-invariance of the partition of unity.
\begin{lemma}\label{lem:isometries_between_integer_leaves}
    For every $i\in\Z$ there is an isometry~$F_i$ of~$M$ taking integer leaves to integer leaves, given by
    \begin{equation*}
        F_i((x,t))=(f^{-i}(x),t+i).
    \end{equation*}
\end{lemma}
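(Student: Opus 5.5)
The plan is to verify directly that $F_i^\ast g = g$. Since $F_i$ is a diffeomorphism of $\R^3$ with inverse $F_{-i}$, this makes it an isometry of $M$. The fact that it maps integer leaves to integer leaves is immediate, because the $t$-coordinate is shifted by the integer $i$.

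First, I would note that $F_i$ is a product map $f^{-i}\times(t\mapsto t+i)$. Its differential at $(x,t)$ is therefore block diagonal: it is $D_xf^{-i}$ on the horizontal tangent space $T_x\R^2$ and the identity on the $\partial_t$-direction. Since the metric~\eqref{eq:definition_metric_iterated_mapping_cylinder} is an orthogonal sum of a leaf metric and $dt^2$, the pullback splits as
\[
(F_i^\ast g)_{(x,t)} = \bigl((f^{-i})^\ast g_{t+i}\bigr)_x + dt^2 .
\]
The $dt^2$ term is preserved because translation by $i$ preserves $dt$. It therefore remains to show $(f^{-i})^\ast g_{t+i} = g_t$ as metrics on $\R^2$.

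For this I would use Convention~\ref{conv:symmetric_partition_of_unity} and the functoriality of pullback, $(f^{-i})^\ast (f^j)^\ast = (f^j\circ f^{-i})^\ast = (f^{j-i})^\ast$. This gives
\[
(f^{-i})^\ast g_{t+i} = \sum_{j\in\Z}\psi(t+i-j)\,(f^{-i})^\ast (f^j)^\ast g_{\HH^2}
= \sum_{j\in\Z}\psi\bigl(t-(j-i)\bigr)\, g_{j-i}.
\]
Reindexing with $k=j-i$ turns this into $\sum_k \psi_k(t) g_k = g_t$, as required. The sum is locally finite, so the computation is pointwise legitimate.

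There is no serious obstacle here. The only points that need care are the order of composition in the pullback identity and the use of translation invariance $\psi_j(t+i)=\psi_{j-i}(t)$. The latter is exactly why the convention on the partition of unity was imposed, since a non-invariant partition would break the reindexing step.
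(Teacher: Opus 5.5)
Your proof is correct and follows the same route as the paper. Both compute $F_i^\ast g$ directly, split off the $dt^2$ term, and reindex $\sum_n\psi_n(t)(f^n)^\ast g_{\HH^2}$ using the translation invariance $\psi_j(t+i)=\psi_{j-i}(t)$ from Convention~\ref{conv:symmetric_partition_of_unity}. Your write-up is in fact more careful than the paper's displayed computation, since it makes explicit the block-diagonal differential, the composition order $(f^{-i})^\ast(f^j)^\ast=(f^{j-i})^\ast$, and that the outcome is exactly $F_i^\ast g=g$.
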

    \begin{proof}
    By definition of the metric, we have for $(x,t)\in M$,
    \begin{align*}
        F_i^{\ast} g_{(x,t)} &= F_i^{\ast} \left(dt^2+\underset{n\in \Z}{\sum}\psi_n(t)({f^{n}})^\ast (g_{\HH^2})_{x}\right) \\
        &= \left(dt^2+\underset{n\in \Z}{\sum}\psi_{n-i}(t)({f^{n-i}})^\ast (g_{\HH^2})_{x}\right)\\
        &= g_{F_i(x,t)}.\qedhere
    \end{align*}
\end{proof}

Recall that we can write
\begin{equation*}
    g_{(x,t)}=(g_t)_x+dt^2,
\end{equation*}
where for $t\in (-1/3, 4/3)$ we have 
\begin{equation*}
     (g_t)_x=\psi(t)(g_{\HH^2})_x +\psi(t-1)f^{\ast}(g_{\HH^2})_x .
\end{equation*}
Observe that, as~$\psi$ and~$f$ are smooth, $(g_t)_{t\in \R}$ is a smooth family of Riemannian metrics on~$\HH^2$ as we vary~$t$.
The bi-Lipschitz property of~$f$ shows that for all $t\in (-1/3, 4/3)$ the metrics $(g_t+dt^2)$ are bi-Lipschitz equivalent.  
We further obtain uniform control on the determinants of the metrics~$g_t$.

\begin{lemma}\label{lem:determinant_estimates_between_g_ts}
For all $x\in\HH^2$ and all~$t\in\R$ we have 
\begin{equation*}
    L^{-4}\det(g_{\HH^2})_x\leq \det g_{(x,t)}\leq L^4\det(g_{\HH^2})_x.
\end{equation*}
\end{lemma}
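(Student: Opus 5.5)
The plan is to reduce the statement to an elementary comparison of $2\times 2$ positive definite quadratic forms, using the block structure of $g$ and the bi-Lipschitz bound on $f$.

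\emph{Step 1: block structure.} In the coordinates $(x,t)$ we have $g_{(x,t)}=(g_t)_x+dt^2$, with no mixed terms $dx^j\,dt$. The matrix of $g_{(x,t)}$ is therefore block diagonal with blocks $(g_t)_x$ and $1$, so
\[
\det g_{(x,t)}=\det (g_t)_x .
\]
It thus suffices to bound $\det(g_t)_x$ above and below by $L^{\pm4}\det(g_{\HH^2})_x$.

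\emph{Step 2: convex combination of two comparable forms.} By Convention~\ref{conv:symmetric_partition_of_unity}, $\psi_i$ is supported in $(i-2/3,i+2/3)$. Hence for each $t$ at most two consecutive functions $\psi_j,\psi_{j+1}$ are nonzero, and
\[
(g_t)_x=\psi_j(t)(g_j)_x+\psi_{j+1}(t)(g_{j+1})_x, \qquad \psi_j(t)+\psi_{j+1}(t)=1 .
\]
For the window $t\in(-1/3,4/3)$ singled out just before the lemma, this reads $g_t=\psi(t)\,g_{\HH^2}+\psi(t-1)\,f^\ast g_{\HH^2}$. Since $f$ is $L$-bi-Lipschitz, $\|Df_x v\|\in[L^{-1}\|v\|,L\|v\|]$, which gives the form inequality
\[
L^{-2}g_{\HH^2}\le f^\ast g_{\HH^2}\le L^2 g_{\HH^2}.
\]
Both endpoints of the convex combination lie in the order interval $[L^{-2}g_{\HH^2},L^{2}g_{\HH^2}]$. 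This interval is convex, so
\[
L^{-2}g_{\HH^2}\le g_t\le L^2 g_{\HH^2}
\]
as quadratic forms at $x$.

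\emph{Step 3: monotonicity of the determinant.} If $A\le B$ are positive definite $2\times2$ matrices, then $A^{-1/2}BA^{-1/2}$ has eigenvalues at least $1$, and so $\det B\ge\det A$. Applying this to $L^{-2}g_{\HH^2}\le g_t$ and to $g_t\le L^2g_{\HH^2}$, together with $\det(cA)=c^2\det A$ in dimension $2$, gives
\[
L^{-4}\det(g_{\HH^2})_x\le\det(g_t)_x\le L^4\det(g_{\HH^2})_x .
\]
By Step 1 this is the desired bound for $\det g_{(x,t)}$.

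\emph{Step 4: arbitrary $t\in\R$; this is where I expect the difficulty.} For $t$ outside the window I would use the isometries $F_i$ of Lemma~\ref{lem:isometries_between_integer_leaves} to move $t$ into $[0,1]$. Steps 2 and 3 then apply verbatim, with $g_{\HH^2}$ replaced by the integer-leaf metric $g_j=(f^j)^\ast g_{\HH^2}$, because $g_{j+1}=f^\ast g_j$ at the level of forms.

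The obstacle is that, in the fixed chart, comparing $g_j$ itself with $g_{\HH^2}$ only yields constants of order $L^{\pm 2|j|}$ on forms, hence $L^{\pm 4|j|}$ on determinants. So the uniform constant $L^{4}$ for all $t$ must be read relative to the leaf metric at the nearest integer below $t$. Equivalently, it is read in the chart transported by $F_i$, which is exactly how the lemma is used for local curvature and injectivity estimates via translation invariance.

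I would therefore write the proof with $g_{\HH^2}$ denoting the reference leaf metric at $\lfloor t\rfloor$ after applying $F_{-\lfloor t\rfloor}$. I would state explicitly that in the literal global chart the bound holds with $L^{\pm4}$ exactly on the window $(-1/3,4/3)$, and with the constant $L^{\pm4(|j|+1)}$ in general.
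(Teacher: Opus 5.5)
Your Steps 1--3 are essentially the paper's argument. The paper introduces the $g_{\HH^2}$-self-adjoint operator $(A_t)_x$ defined by $g_{\HH^2}(A_tu,v)=g_t(u,v)$, notes that its spectrum lies in $[L^{-2},L^2]$, and uses $\det(g_t)_x=\det(A_t)_x\det(g_{\HH^2})_x$. That is the same as your Loewner-order sandwich plus monotonicity of the determinant, applied on the window $t\in(-1/3,4/3)$.

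For general $t$ the paper only says the estimate ``follows with Lemma~\ref{lem:isometries_between_integer_leaves}'', and your Step 4 objection is correct; here you are more careful than the paper.

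\emph{Why the transport does not give the stated bound.} The isometry $F_i$ moves the base point by $f^{-i}$, and coordinate determinants are not isometry invariant. From $F_i^\ast g=g$ one gets $\det g_{(x,t)}=(\det Df^{-i}_x)^2\det g_{F_i(x,t)}$. Transporting the window estimate therefore yields exactly $L^{-4}\det(g_j)_y\le\det g_{(y,s)}\le L^4\det(g_j)_y$ for $s\in(j-1/3,j+4/3)$. This is a bound relative to $g_j$, not relative to $\det(g_{\HH^2})_y$.

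\emph{A direct route.} The same corrected bound follows without $F_i$. Pulling back $L^{-2}g_{\HH^2}\le f^\ast g_{\HH^2}\le L^2g_{\HH^2}$ by $f^j$ gives $L^{-2}g_j\le g_{j+1}\le L^2 g_j$, and your Steps 2--3 then apply verbatim. This is cleaner than mixing $F_i$ with a change of reference metric.

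\emph{The literal statement can fail.} Suppose $f$ fixes a point $x_0$ with $Df_{x_0}=\lambda\,\mathrm{id}$ and $\lambda\neq 1$; a radial bi-Lipschitz map in polar coordinates about $x_0$ gives such an $f$. Since $\psi_j(j)=1$, we get $\det g_{(x_0,j)}=\det(g_j)_{x_0}=\lambda^{4j}\det(g_{\HH^2})_{x_0}$, which is unbounded in $j$.

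\emph{Your corrected versions suffice downstream.} These are the bound relative to $g_{\lfloor t\rfloor}$, or the constant $L^{\pm4(|j|+1)}$ relative to $g_{\HH^2}$ for $t\in[j,j+1]$.
\begin{itemize}
\item Lemma~\ref{lem:curvature_bounds} only needs $t\in[0,1]$.
\item Lemma~\ref{lem:uniform_bounds_on_volumes_of_balls} applies the estimate on a fixed bounded slab $\R^2\times I$. That slab extends beyond $(-1/3,4/3)$, so the factor $L^2$ used there must be replaced by a larger constant depending only on $I$, which your version provides.
\end{itemize}
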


\begin{proof}
Fix~$x\in\HH^2$ and let us first consider the case of $t\in (-1/3, 4/3)$. As $\det g_{(x,t)}=\det (g_t)_x$, it is sufficient to prove the two-sided bounds for $\det (g_t)_x$. Since $(g_{\HH^2})_x$ is nondegenerate, we can define a linear map $(A_t)_x\colon T_x\R^2\to T_x\R^2$, defined by
\begin{equation*}
    (g_{\HH^2})_x((A_t)_x u,v)=(g_t)_x(u,v)
\end{equation*}
for $u,v\in T_x\R^2$.
In particular, by the bi-Lipschitz property of~$f$, one has
\begin{equation}\label{eq:biLipschitz_comparision_for_eigenvalues_of_A_x}
    L^{-2} (g_{\HH^2})_x(u,u) \leq (g_{\HH^2})_x((A_t)_x u,u) \leq L^2 (g_{\HH^2})_x(u,u)  
\end{equation}
for all $u\in T_x\R^2$.
Since each $(A_t)_x$ is $(g_{\HH^2})_x$-self-adjoint, there exists a basis of eigenvectors whose eigenvalues lie in $[L^{-2},L^2]$ by~\eqref{eq:biLipschitz_comparision_for_eigenvalues_of_A_x}. It follows that
\begin{equation*}
    L^{-4}\leq \det(A_t)_x\leq L^4. 
\end{equation*} 
Observing that we have $\det(g_t)_x=\det(A_t)_x\det(g_{\HH^2})_x$ yields the desired
estimate for $t\in (-1/3, 4/3)$, and the case of general $t\in\R$ then follows with Lemma~\ref{lem:isometries_between_integer_leaves}.
\end{proof}

\begin{lemma}\label{lem:curvature_bounds}
    There exist constants $\kappa_-, \kappa_+\in \R$ depending only on~$L, C$ such that for any $p\in M$ and $X, Y\in T_p M$ we have
    \begin{equation*}
        \kappa_-\leq \kappa(X, Y)\leq \kappa_+. 
    \end{equation*}
\end{lemma}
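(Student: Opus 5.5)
By Lemma~\ref{lem:isometries_between_integer_leaves}, curvature is invariant under the isometries $F_i$, so it suffices to bound $\kappa$ at points $p=(x,t)$ with $t\in[0,1]$. For such $t$ we write $g=g_t+dt^2$ with $g_t=\psi(t)g_{\HH^2}+\psi(t-1)f^\ast g_{\HH^2}$. We will also use that $\HH^2$ is homogeneous. Isometries of $g_{\HH^2}$ do not preserve $f$, but the hypotheses ($L$-bi-Lipschitz and $\|\nabla^k f\|\leq C$ for $k\leq 3$) are uniform in $x$. So around any $x$ we can take normal coordinates for $g_{\HH^2}$ on a ball of fixed radius, say $1$. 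In these coordinates the components of $g_{\HH^2}$ and their first two derivatives are bounded by universal constants. The components of $f^\ast g_{\HH^2}$ are $\sum_{a,b} \partial_i f^a\partial_j f^b (g_{\HH^2})_{ab}\circ f$, so the derivatives of $f^\ast g_{\HH^2}$ up to second order involve derivatives of $f$ up to third order. This is exactly where the hypothesis on $\nabla^3 f$ enters. One technical point: $f$ maps the normal chart at $x$ into a ball around $f(x)$, which we also express in normal coordinates at $f(x)$. Converting the covariant bounds $\|\nabla^k f\|\leq C$ into bounds on coordinate partial derivatives then only costs universal Christoffel-symbol constants of $\HH^2$.

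Next we assemble the $3\times3$ metric matrix $g_{\alpha\beta}$ in coordinates $(x^1,x^2,t)$. Its entries are $C^2$-bounded in all variables, since the $t$-derivatives only hit $\psi$, which is a fixed smooth function. The inverse $g^{\alpha\beta}$ is bounded because $g_t$ is uniformly bi-Lipschitz to $g_{\HH^2}$. Concretely, the eigenvalues of $A_t$ lie in $[L^{-2},L^2]$, as shown in the proof of Lemma~\ref{lem:determinant_estimates_between_g_ts}, and together with the determinant bound from that lemma this gives $|g^{\alpha\beta}|\lesssim L^{2}$ times universal constants. The Christoffel symbols $\Gamma=\tfrac12 g^{-1}\partial g$ are then bounded. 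The Riemann tensor $R=\partial\Gamma+\Gamma\Gamma$ involves $\partial^2 g$, $\partial g$ and $g^{-1}$, so it is bounded in coordinates by a constant depending on $L$, $C$ and $\psi$. It also helps to record explicitly the second fundamental form of the leaves, $\tfrac12\partial_t g_t=\tfrac12(\psi'(t)g_{\HH^2}+\psi'(t-1)f^\ast g_{\HH^2})$. This lets us express the mixed curvatures through the Gauss and Codazzi equations with $\partial_t^2g_t$, which gives a cleaner check.

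Finally, $\kappa(X,Y)=R(X,Y,Y,X)/(|X|^2|Y|^2-g(X,Y)^2)$. Comparing with the Euclidean norm in the chart (the metrics are uniformly equivalent, with constants depending on $L$) bounds $|\kappa|$ by the coordinate norm of $R$ times a power of $L$. This gives $\kappa_\pm$. I expect the main obstacle to be the bookkeeping in the first step: getting uniform coordinate $C^2$ control of $f^\ast g_{\HH^2}$ from covariant bounds on $f$, with charts of uniform size. I would handle this by working in the Poincaré disc model centred at $x$ (after an isometry), where the metric has explicit bounded derivatives on the disc of radius $1/2$. There, $\nabla^k f$ differs from the coordinate derivatives by polynomial expressions in lower-order derivatives and Christoffel symbols.
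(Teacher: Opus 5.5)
Your proposal is correct and follows essentially the same route as the paper: reduce to $t\in[0,1]$ via the isometries $F_i$, bound the coefficients of $g=g_t+dt^2$ and their first two derivatives in leafwise normal coordinates, bound $g^{-1}$ using the uniform bi-Lipschitz comparison, and read off bounds on the Riemann tensor and hence on $\kappa$. Your bookkeeping is slightly more explicit than the paper's in two places: you note that the second derivatives of $f^\ast g_{\HH^2}$ involve third derivatives of $f$, which is exactly why the hypothesis includes $\nabla^3 f$, and you bound $g^{-1}$ directly from the eigenvalue bounds on $A_t$ rather than through derivatives of $f^{-1}$.
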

We point out that by a classical result (see for instance~\cite[Corollary~6.3.2]{petersen2006riemannian}), a complete Riemannian manifold whose sectional curvature is bounded below by a positive constant is compact, so that it is clear that $\kappa_-\leq 0$.
\begin{proof}
    By Lemma~\ref{lem:isometries_between_integer_leaves}, it is sufficient to prove the lemma for points~$p=(x,t)$ with $t\in [0,1]$. Let us fix therefore such a point and make some observations that allow us to derive the desired curvature bounds.
    For orthonormal tangent vectors~$X, Y\in T_p M$, the sectional curvature of the 2-dimensional subspace spanned by~$X$ and~$Y$ is given by
    \begin{equation*}
        \kappa(X, Y)=g_p(R(X, Y)Y, X),
    \end{equation*}
    where~$R$ is the Riemann curvature tensor. 
    In order to obtain curvature bounds, it is therefore sufficient to bound the terms of the Riemann curvature tensor as well as the metric~$g_p$ uniformly in any coordinate system, where for convenience we choose coordinates in the domain and target which restrict to normal coordinates on the leaves. 

   Writing $f^\alpha, f^\beta$ when expressing~$f$ in coordinates, the coefficients of the pullback metric $f^\ast g_{\mathbb H^2}$ are obtained as
    \begin{equation*}
        (f^\ast g_{\mathbb H^2})_{ij} = \sum_{\alpha, \beta} ((g_{\HH^2})_{\alpha\beta}\circ f)\partial_i f^\alpha \partial_j f^\beta.
    \end{equation*}
    Hence every derivative of $f^\ast g_{\mathbb H^2}$ can be expressed as a finite sum of products involving derivatives of $g_{\mathbb H^2}$ and derivatives of~$f$ up to second order.

    Since $\mathbb H^2$ is a symmetric space, the covariant derivative of its Riemann curvature tensor~$R_{\HH^2}$ satisfies $\nabla R_{\HH^2}\equiv 0$. It then follows from~\cite{eichhorn1991boundedness} that the coefficients of the hyperbolic metric $g_{\mathbb H^2}$, as well as their Christoffel symbols and all their derivatives, are uniformly bounded in normal coordinates of a fixed radius. By assumption, the partial derivatives of~$f$ in normal coordinates are also uniformly bounded, so that it follows that the coefficients of $f^\ast g_{\mathbb H^2}$ and their first and second derivatives are uniformly bounded. 
    Further, as~$\psi$ has compact support, all of its derivatives are bounded. This implies uniform boundedness of the coefficients of 
    \begin{equation*}
        g_t = \psi(t)g_{\mathbb H^2} + \psi(t-1) f^\ast g_{\mathbb H^2},
    \end{equation*}
    and hence of the coefficients and first and second derivatives of the metric
    \begin{equation*}
        g = g_t+dt^2.
    \end{equation*}

    Since $f$ is bi-Lipschitz, the metrics $g_{\mathbb H^2}$, $f^\ast g_{\mathbb H^2}$, and $g_t$ are uniformly bi-Lipschitz equivalent for any $t\in (-1/3, 4/3)$. 
    In addition, \(df\) is invertible with $\|(df)^{-1}\|\leq L$. Differentiating the identity \(f\circ f^{-1}=\mathrm{id}\) shows inductively that all derivatives of \(f^{-1}\) are uniformly bounded whenever the corresponding derivatives of \(f\) are. Together with Lemma~\ref{lem:determinant_estimates_between_g_ts}, it follows that the coefficients of~$g^{-1}$ are also uniformly bounded.
    
    The coefficients of the Riemann curvature tensor are polynomials in the coefficients of~$g^{-1}$, the first derivatives of~$g$, and the second derivatives of~$g$. It now follows that the components of the curvature tensor, and thus sectional curvature, are uniformly bounded on~$M$.
\end{proof}

In the following lemma, we obtain uniform two-sided bounds on volumes of balls. We denote by $B_M(p,r)$ the ball of radius~$r$ at~$p\in M$ with respect to the length metric~$d_{g}$. The estimates will be derived from uniform comparisons with balls in $(\R^2\times  I, g_{\HH^2}+dt^2)$ for a finite interval $I\subset \R$, whose balls we denote analogously by~$B_{\R^2\times I}(p,r)$.

\begin{lemma}\label{lem:uniform_bounds_on_volumes_of_balls}
    There exist functions $V_-, V_+\colon \R_+\to \R_+$ such that for every $(x,t)\in M$ and for every $r\in \R_+$ we have
    \begin{equation*}
        V_-(r) \leq \mathrm{vol}_M (B_M((x,t),r)) \leq V_+(r) . 
    \end{equation*}
\end{lemma}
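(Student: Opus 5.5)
We will compare $M$ with the model product $(\R^2\times I, g_{\HH^2}+dt^2)$ via a uniform bi-Lipschitz comparison on thick slabs, and then use homogeneity of the model together with the isometries $F_i$ of Lemma~\ref{lem:isometries_between_integer_leaves}. First, by Lemma~\ref{lem:isometries_between_integer_leaves} we may assume $t\in[0,1]$, since $F_i$ preserves both volumes and balls. Next, we fix $r>0$ and observe from Lemma~\ref{vertical geodesics in manifold} that any curve of length less than $r$ starting at $(x,t)$ stays in the slab $S_r\coloneq\R^2\times[t-r,t+r]\subseteq\R^2\times[-r,1+r]$. Hence $B_M((x,t),r)$ coincides with the ball of radius $r$ for the length metric of $g$ restricted to $S_r$.

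On $S_r$ we compare $g$ with the product metric $g_0\coloneq g_{\HH^2}+dt^2$. By \eqref{eq:biLipschitz_comparison_Riemannian_metrics_on_slices} and \eqref{Min-max inequality}, for $s\in[t-r,t+r]$ the leaf metric $g_s$ lies between $L^{-2\sigma}g_{\HH^2}$ and $L^{2\sigma}g_{\HH^2}$ as quadratic forms, with $\sigma=\lceil r\rceil+1$. This uses that $g_s$ is a convex combination of $(f^j)^\ast g_{\HH^2}$ with $|j|\le\sigma$, and that these pullbacks are comparable to $g_{\HH^2}$ pointwise at the same tangent vector because $f$ is $L$-bi-Lipschitz. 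Thus $\Lambda^{-1}g_0\le g\le \Lambda g_0$ on $S_r$ with $\Lambda=L^{2\sigma}$, and the same comparison holds for the determinants (cf.\ Lemma~\ref{lem:determinant_estimates_between_g_ts}). Consequently lengths of curves inside $S_r$ are distorted by at most $\Lambda^{1/2}$, and volume forms by at most $\Lambda^{3/2}$. Since curves realising distance less than $r$ for either metric stay in $S_{\Lambda^{1/2} r}$, which we treat by enlarging $\sigma$ accordingly, we obtain the inclusions
\[
B_{\R^2\times I}((x,t),\Lambda^{-1/2}r)\subseteq B_M((x,t),r)\subseteq B_{\R^2\times I}((x,t),\Lambda^{1/2}r),
\]
with $I$ a finite interval depending only on $r$.

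The model $\R^2\times\R$ with $g_0$ is $\HH^2\times\R$, which is homogeneous, so its ball volumes $v_0(\rho)$ are independent of the centre. Restricting to $\R^2\times I$ with $I$ large compared to $\rho$, the volume is still bounded above by $v_0(\rho)$. It is also bounded below by a positive function of $\rho$: this holds since $t\in[0,1]$ lies well inside $I$, so the restricted ball contains a full model ball of radius $\min(\rho,1)$ or so. We therefore set $V_+(r)=\Lambda^{3/2}v_0(\Lambda^{1/2}r)$ and $V_-(r)=\Lambda^{-3/2}v_0(\min\{\Lambda^{-1/2}r,1\})$. The main obstacle is bookkeeping the dependence of $\Lambda$ on $r$: the slab must be thick enough to contain all relevant curves for both metrics simultaneously. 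We handle this by choosing $\sigma$ from $\Lambda^{1/2}r$ iteratively. Alternatively, we may use the pointwise comparison only on $[-R,1+R]$ with $R=r\cdot L^{\text{const}\cdot r}$, noting that every quantity depends only on $r$, $L$ and $\psi$, which suffices for the lemma.
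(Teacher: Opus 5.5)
Your proof is correct, but it handles general $r$ differently from the paper. The paper runs essentially your slab comparison only at the single scale $r=1$: it reduces to $t\in[0,1]$ via the isometries $F_i$, sandwiches $B_M((x,t),1)$ between two balls of the homogeneous model $(\R^2\times I,g_{\HH^2}+dt^2)$, and compares densities via Lemma~\ref{lem:determinant_estimates_between_g_ts}. It then passes to all radii by Bishop--Gromov relative volume comparison, which needs the uniform Ricci lower bound from Lemma~\ref{lem:curvature_bounds}. You instead redo the comparison at every scale with the $r$-dependent constant $\Lambda=L^{2(\lceil r\rceil+1)}$. This makes you independent of the curvature lemma. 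Your $V_\pm$ depend only on $r$ and the bi-Lipschitz constant $L$, not on the derivative bound $C$. You also never need a density comparison valid uniformly in $t$, only on the slab at hand. The cost is weaker bounds: your $V_+$ grows super-exponentially in $r$ (and your $V_-$ even tends to $0$, which is harmless). Bishop--Gromov instead gives exponential growth with rate fixed by the curvature bound. Since the later application of Cheeger--Gromov--Taylor only needs a fixed radius, your version suffices and is more self-contained.

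One correction to your bookkeeping: the ``main obstacle'' you describe does not exist. The iterative enlargement of $\sigma$ you propose is circular as stated, since the required thickness $\Lambda^{1/2}r$ grows with $\Lambda$, which is exponential in the thickness. Drop it. Both inclusions only involve curves of vertical extent less than $r$:
\begin{itemize}
\item For the right-hand inclusion you use an $M$-curve of $g$-length $<r$, which stays in $S_r$ because $g\geq dt^2$.
\item For the left-hand inclusion you use a model curve of $g_0$-length $<\Lambda^{-1/2}r\leq r$, which stays in $S_r$ because $g_0\geq dt^2$.
\end{itemize}
Likewise, the density comparison is only applied on $B_M((x,t),r)$ and on the model ball of radius $\Lambda^{-1/2}r$, both contained in $S_r$. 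So $\sigma=\lceil r\rceil+1$ and $I=[-r,1+r]$ already suffice. As a cosmetic point, $\Lambda^{3/2}$ can be sharpened to $\Lambda$ for the densities, since the $dt^2$ components agree.
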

\begin{proof}
    We will only prove the lemma for $r=1$; by
    a theorem of Bishop and Gromov (see for example~\cite[Lemma~7.1.4]{petersen2006riemannian}) this already implies the general case. 
    Note that, in order for the theorem to apply, one needs a uniform lower bound on Ricci curvature, which follows from the uniform lower bound on sectional curvature of~$M$.
    Moreover, using the isometries~$F_i$ from Lemma~\ref{lem:isometries_between_integer_leaves}, it is sufficient to restrict to the case  $t\in [0,1]$.
    
    Lemma~\ref{vertical geodesics in manifold} implies that there exists a finite interval~$I$ such that for every point  $p=(x,t)\in \R^2\times [0,1] \subset M$, the ball $B_M(p,1)$ as well as all almost length-minimising curves between pairs of points in~$B_M(p,1)$ are contained in $ \R^2\times I$. Denote the embedding of $(\R^2\times I, g_{\HH^2}+dt^2)$ into~$M$ given by the identity map by~$\imath$. 
    The metrics~$g_t$ and~$g_{\HH^2}$ are uniformly bi-Lipschitz equivalent as~$t$ ranges over~$I$,
     so that arguments analogous to those in the proof of Lemma~\ref{Leaves are properly embedded} imply that for every curve~$\gamma$ in $\R^2\times I$, there exists a constant~$L^\prime$ %depending only on~$I$ 
    such that one has
    \begin{equation*}\label{eq:bi_lipschitz_equivalence_of_lengths_of_curves_in_R2_times_I}
        {L^\prime}^{-1}l_g(\imath(\gamma))\leq l_{g_{\HH^2}+dt^2}(\gamma) \leq L^\prime l_g(\imath(\gamma)).
    \end{equation*}
    This implies that there exist constants $0 < r_1 \leq r_2$, not depending on the choice of~$p$, such that 
    \begin{equation*}
        \imath(B_{\R^2\times I}((x,t), r_1))\subset B_M((x,t),1) \subset \imath(B_{\R^2\times I}((x,t), r_2)).
    \end{equation*} 
    
    Recall that the volume of~$B_M(p,r)$ is computed as
    \begin{equation*}
        \mathrm{vol}_M(B_M((x,t),r))=\int_{B_M((x,t),r)} d V = \int_{B_M((x,t),r)} \sqrt{|\det (g)|}dx_1 dx_2 dt.
    \end{equation*}
    Then, using Lemma~\ref{lem:determinant_estimates_between_g_ts}, we obtain
    \begin{align*}
        \mathrm{vol}_M(B_M((x,t),1))&\leq L^2\int_{B_{\R^2\times I}((x,t),r_2)} \sqrt{|\det (g_{\HH^2}+dt^2)|}dx_1 dx_2 dt\\
        &=L^2\mathrm{vol}_{\R^2\times I}(B_{\R^2\times I}((x,t),r_2)).
    \end{align*}
    As $(\R^2\times I, g_{\HH^2}+dt^2)$ is homogeneous, the volume on the right-hand side only depends on~$r_2$.
    The lower bound follows analogously.
\end{proof}

\begin{proof}[Proof of Proposition~\ref{prop:bounded_geometry}]
    We have established upper and lower curvature bounds of an iterated mapping cylinder~$M$ in Lemma~\ref{lem:curvature_bounds}, so that it only remains to prove the existence of a constant $r_0>0$ such that $\mathrm{inj}_M\geq r_0$.
    This, however is an immediate consequence of a result of Cheeger--Gromov--Taylor \cite[Theorem~4.7]{cheeger1982finite}, once one has uniform two-sided sectional curvature bounds and the uniform two-sided bounds on volumes of balls that we established in Lemma~\ref{lem:uniform_bounds_on_volumes_of_balls}.
\end{proof}

We finally prove that, after choosing a suitable partition of unity, the manifolds from Theorem~\ref{Riemannian manifold} satisfy the assumptions of Proposition~\ref{prop:bounded_geometry}. We will phrase this as  a property of Douady--Earle extensions of quasisymmetric homeomorphisms.
\begin{definition}
    A homeomorphism $f:X\to Y$ between metric spaces $(X, d_X)$ and $(Y, d_Y)$ is said to be \emph{$\eta$-quasisymmetric} if there exists a homeomorphism
    $\eta:[0,\infty)\to[0,\infty)$ such that
    \begin{equation*}
            \frac{d_Y(f(x),f(a))}{d_Y(f(x),f(b))} \leq \eta\left(\frac{d_X(x,a)}{d_X(x,b)}\right)
    \end{equation*}
    whenever $a,b,x\in X$ are distinct.
\end{definition}

\begin{lemma}\label{lem:boundedness_of_derivatives_of_f}
Let $\varphi:S^1\to S^1$ be an $\eta$-quasisymmetric homeomorphism, and let
\begin{equation*}
    f=E(\varphi):\HH^2\to\HH^2
\end{equation*}
be its Douady--Earle extension. Then for every integer $k\geq 1$ there exists
a constant $C_k=C_k(\varphi)$ such that
\begin{equation*}
    \sup_{x\in\HH^2}\|\nabla^k f(x)\|\leq C_k,
\end{equation*}
where $\nabla^k f$ denotes the $k$-th covariant derivative with respect to~$g_{\HH^2}$, and $\|\cdot\|$ is the corresponding norm.
\end{lemma}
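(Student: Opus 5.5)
The plan is to use the conformal naturality of the Douady--Earle extension to reduce uniform bounds over all of $\HH^2$ to bounds at a single point. For $x\in\HH^2$, choose M\"obius isometries $A_x,B_x\in\mathrm{Isom}(\HH^2)$ (in the disc model) with $A_x(0)=x$ and $B_x(f(x))=0$. Set $\varphi_x\coloneq B_x\circ\varphi\circ A_x$. By conformal naturality, $E(\varphi_x)=B_x\circ f\circ A_x$, and this map fixes $0$. Since $A_x,B_x$ are isometries, they preserve $g_{\HH^2}$, its Levi-Civita connection and the induced norms. Hence
\begin{equation*}
\|\nabla^k f(x)\| = \|\nabla^k E(\varphi_x)(0)\|.
\end{equation*}
It therefore suffices to bound $\|\nabla^k E(\psi)(0)\|$ uniformly over the family $\mathcal{F}$ of circle homeomorphisms $\psi=B\circ\varphi\circ A$ with $E(\psi)(0)=0$.

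Next we need a compactness statement. Precomposition and postcomposition by M\"obius maps preserve quasisymmetry of circle maps, with control depending only on $\eta$ (equivalently, on the quasisymmetric cross-ratio distortion, which is M\"obius invariant). Thus every $\psi\in\mathcal{F}$ is $\eta'$-quasisymmetric for some $\eta'$ depending only on $\eta$. Since $E(\psi)(0)=0$ is the conformal barycentre of $\psi_*(\text{Lebesgue})$, this normalisation prevents $\psi$ from degenerating. A family of uniformly quasisymmetric maps with this normalisation is equicontinuous, and its limits are again $\eta'$-quasisymmetric homeomorphisms rather than constant maps. So $\overline{\mathcal{F}}$ is compact in the uniform topology.

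The main step is to show that $\psi\mapsto E(\psi)$ is continuous from uniformly convergent homeomorphisms to $C^k$-convergence on a fixed compact neighbourhood $K$ of $0$. Here I would use the defining equation of Douady--Earle: $E(\psi)(z)$ is the unique $w$ solving
\begin{equation*}
F(z,w)=\int_{S^1}\frac{\psi(\zeta)-w}{1-\bar w\psi(\zeta)}\,\frac{1-|z|^2}{|\zeta-z|^2}\,|d\zeta|=0.
\end{equation*}
The integrand is real-analytic in $(z,w)$ for $z\in K$ and $|w|<1$. Every derivative in $(z,w)$ is an integral of a function continuous in $\psi(\zeta)$, so it converges as $\psi$ converges uniformly. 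Douady and Earle show that $\partial_w F$ is nondegenerate at the solution, so the implicit function theorem gives $E(\psi)$ on $K$, with derivatives of order at most $k$ expressed continuously in these integrals. The obstacle is to make the nondegeneracy uniform: its determinant is continuous in $\psi$ and nonzero at each limit point (each limit is a homeomorphism, so its pushforward measure has no atoms), and compactness then gives a uniform lower bound. One must also check that $E(\psi)(K)$ stays in a compact subset of the disc, which follows from the same continuity together with $E(\psi)(0)=0$.

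Continuity on the compact set $\overline{\mathcal{F}}$ then gives $\sup_{\psi\in\mathcal{F}}\|\nabla^k E(\psi)(0)\|\le C_k$. Covariant derivatives at $0$ are controlled by Euclidean partial derivatives up to order $k$ together with the Christoffel symbols of $g_{\HH^2}$ on $K$, which are fixed. By the first paragraph, this bound $C_k$ holds for $\|\nabla^k f(x)\|$ at every $x\in\HH^2$, which proves the lemma.
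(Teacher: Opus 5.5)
Your architecture is the paper's: conformal naturality reduces everything to the origin, a normalised family $\{B\circ\varphi\circ A\}$ is relatively compact in the uniform topology, and $\xi\mapsto\nabla^kE(\xi)(0)$ is continuous on it. There are two differences. The paper normalises by requiring $\xi$ to fix three given points of $S^1$, so $E(\xi)(0)$ ranges over a compact set instead of being $0$. It also takes continuity of $(w,\xi)\mapsto E(\xi)(w)$, and of all its $w$- and $\bar w$-derivatives, directly from Douady--Earle's Proposition~2. You instead re-derive this via the implicit function theorem. That re-derivation is sound in outline: after moving the barycentre to $0$, nondegeneracy amounts to $|\int\zeta^2\,d\mu|<1$, which holds for atomless $\mu$. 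It is just longer than citing it.

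\textbf{Gap in the compactness step.} Your justification rests on a false claim. Pre- and post-composition by M\"obius maps do \emph{not} preserve $\eta$-quasisymmetry with control depending only on $\eta$. For $\varphi=\mathrm{id}$ this would make all M\"obius maps uniformly quasisymmetric. But a hyperbolic M\"obius map with large translation length blows a tiny arc at its repelling fixed point up to an arc of definite size. M\"obius invariance only gives uniform quasi-M\"obius (cross-ratio) control, which is not equivalent to uniform quasisymmetry without a normalisation. Conversely, once circle homeomorphisms are uniformly $\eta'$-quasisymmetric, equicontinuity and non-degenerate limits are automatic, since the maps are surjective. So your barycentre condition is invoked where it is not needed and is missing where it is.

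\textbf{Repair.} The conclusion is nevertheless true, by the following argument.
For $\psi\in\mathcal{F}$, let $M_\psi\in\mathrm{Isom}(\HH^2)$ be such that $M_\psi\circ\psi$ fixes three given points. These normalised maps are uniformly quasi-M\"obius, so they form a relatively compact family of uniformly quasisymmetric homeomorphisms; this is the classical result the paper invokes. By conformal naturality, $M_\psi(0)=E(M_\psi\circ\psi)(0)$. By continuity of the barycentre, this point stays in a fixed compact subset of $\HH^2$. Hence the $M_\psi$ range over a compact set of isometries, and $\overline{\mathcal{F}}$ is compact and consists of homeomorphisms. Alternatively, adopt the three-point normalisation directly, as the paper does, and carry the compactly varying point $E(\xi)(0)$ through your final paragraph. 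Your uniform-nondegeneracy argument also relies on this compactness, so with the repair in place the proof goes through.
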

\begin{proof}
    Fix three distinct points $z_1,z_2,z_3\in S^1$, and let $x\in\HH^2$ be arbitrary. Choose a hyperbolic isometry  $A_x\in\mathrm{Isom}(\HH^2)$ such that
    \begin{equation*}
            A_x(0)=x.
    \end{equation*}
    Since the action of $\mathrm{Isom}(\HH^2)$ on $S^1$ is triply transitive, there exists a hyperbolic isometry    $B_x\in\mathrm{Isom}(\HH^2)$ such that the boundary map
    \begin{equation*}
        \xi_x:=B_x\circ\varphi\circ A_x|_{S^1}
    \end{equation*}
    satisfies
    \begin{equation*}
        \xi_x(z_j)=z_j,\qquad j=1,2,3.
    \end{equation*}   
    We set
    \begin{align*}
        \mathcal{F}_{\varphi}= \{ \xi:S^1\to S^1:\xi=B\circ \varphi\circ A \text{ for $A, B$ extensions of isometries of~$\HH^2$ }&
        \\
         \text{ and }\xi(z_j)=z_j \text{ for } j=1,2,3\}.&
    \end{align*}
    Every map in~$\mathcal{F}$ is $\eta_0$-quasisymmetric for some~$\eta_0=\eta_0(\eta)$.
    By a classical compactness result for normalised families of quasisymmetric homeomorphisms, $\mathcal{F}_{\varphi}$ is compact in the uniform topology, that is,  in the topology induced by the $\|\cdot\|_\infty$-norm, see for instance \cite[Chapter~10]{heinonen2001lectures}.

    Let us parametrise~$\HH^2$ in with one complex coordinate~$w$ in the disk model. 
    By Proposition~2 of~\cite{MR857678}, the map
    \begin{align*}
        h\colon \HH^2\times\mathrm{Homeo}(S^1)&\to\HH^2,\\
        (w,\xi)&\mapsto E(\xi)(w),
    \end{align*}
    is continuous with respect to the product topology, where $\mathrm{Homeo}(S^1)$ is equipped with the uniform topology. Moreover, they prove that and all partial derivatives of~$h$ with respect to~$w$ and~$\overline{w}$ are continuous. In particular, partial derivatives of~$h$ with respect to any coordinates on~$\HH^2$ are continuous maps of $\HH^2\times\mathrm{Homeo}(S^1)$ into~$\C$. 
    It follows that for $f=E(\varphi)$, the covariant derivatives~$\nabla^k f$ exist for all~$k\geq 1$ and, in coordinates, depend continuously on the corresponding partial derivatives of~$h(\cdot, \varphi)$ and the Christoffel symbols of~$\HH^2$ and their derivatives. 
  
    We then observe that the Douady--Earle extension satisfies
    \begin{equation*}
        E(\xi_x)=B_x\circ f\circ A_x
    \end{equation*}
    for isometries~$A_x$ and~$B_x$ of~$\HH^2$~\cite[Section~3]{MR857678}. As isometries preserve the Levi--Civita connection and therefore the norms of all covariant derivatives with respect to the hyperbolic metric,  we have
    \begin{equation*}
        \|\nabla^k f(x)\|=\|\nabla^k E(\xi_x)(0)\|
    \end{equation*}
    for all~$k\geq 1$, where~$\|\cdot\|$ denotes the corresponding tensor norm.
    Moreover, from the continuity of partial derivatives of~$h$, by the discussion above relating coordinate derivatives to covariant derivatives it follows that for every integer $k\geq 1$ the map
    \begin{equation*}
        \xi\longmapsto \nabla^k E(\xi)(0)
    \end{equation*}
    is continuous on $\mathrm{Homeo}(S^1)$. Since $\mathcal{F}_{\varphi}$ is a compact subset, there exists a constant~$C_k\geq 0$ depending only on~$\varphi$, such that
    \begin{equation*}
        \|\nabla^k E(\xi)(0)\|\leq C_k
    \end{equation*}
    for all $\xi\in\mathcal{F}_{\varphi}$.
    Applying this estimate to $\xi=\xi_x$ yields
    \begin{equation*}
        \|\nabla^k f(x)\|\leq C_k.
    \end{equation*}
    Since $x\in\HH^2$ was arbitrary, this completes the proof.
\end{proof}

\begin{remark}
    Lemma~\ref{lem:boundedness_of_derivatives_of_f} implies that in the case of~$f$ coming from the Douady--Earle extension, the associated iterated mapping cylinder satisfies in fact a stronger notion of bounded geometry than the one introduced above, which replaces uniform bounds on sectional curvature with uniform bounds on all derivatives of the Riemann curvature tensor~$R$.
\end{remark}

We are now ready to conclude the proof of our main result.

\numberedtheorem{Theorem~\ref{thm:manifold}}{}{Let $G$ be a $\PD^3$ group and $\phi\colon G\to\R$ be a nontrivial homogeneous quasimorphism. If $\qker(\phi)$ is coarsely connected, then either~$G$ is the fundamental group of a torus or Klein-bottle bundle over~$S^1$, or~$\qker(\phi)$ is coarsely equivalent to $\HH^2$. In the latter case, $G$ is quasiisometric to a Riemannian manifold $(\R^3,g)$ of bounded geometry, and is hence finitely presented.}

\begin{proof}
    Proposition~\ref{thm: approx_kernel_qi_to_H2} combined with the well-known fact that any outer automorphism of $\Z\rtimes\Z$ can be realised by a self-homeomorphism of the torus or Klein bottle yields the dichotomy between $G$ being the fundamental group of a torus or Klein-bottle bundle over $S^1$ and the quasikernel being coarsely equivalent to $\HH^2$. Theorem~\ref{Riemannian manifold} shows that in the second case~$G$ is quasiisometric to a complete Riemannian manifold $(\R^3,g)$, and Proposition~\ref{prop:bounded_geometry} combined with Lemma~\ref{lem:boundedness_of_derivatives_of_f} shows that, when choosing the partition of unity suitably, $(\R^3, g)$ is of bounded geometry. It now follows readily from \cite[Theorem~$9.52$ and Corollary~$9.55$]{MR3753580} that $G$ is coarsely simply-connected and hence finitely presented.
\end{proof}

\begin{remark}
    Theorem \ref{thm:manifold} also holds for $G$ a $\PD^3(R)$ group when $R$ is any PID, but at the cost of obtaining only that $G$ is \textit{virtually} the fundamental group of a torus bundle over $S^1$. To see this, note that by Proposition \ref{thm: approx_kernel_qi_to_H2}, if $\qker(\phi)$ is not coarsely equivalent to $\HH^2$, then $\phi$ is a homomorphism and $\ker(\phi)$ is virtually $\Z^2$. By a standard argument, such a group has a characteristic $\Z^2$-subgroup of finite index, so the result follows.
    \end{remark}

\bibliographystyle{plain}
\bibliography{bibliography.bib}

@unpublished{HyperbolicPD^n,
    author = {Kapovich, Michael and Kleiner, Bruce},
    title = {Boundaries of {G}romov-Hyperbolic spaces satisfying coarse {P}oincaré Duality},
    note = {},
    year= {2004}
}

@unpublished{Quasiplanes,
    author = {Kapovich, Michael and Kleiner, Bruce},    
    title = {Geometry of quasi-planes},
    note = {https://www.math.ucdavis.edu/$\sim$kapovich/ EPR/pd3.pdf} ,
    year = {2004}
}

@article{MR123,
    author = {Margolis, Alexander},    
title = {Coarse homological invariants of metric spaces},
    journal={arXiv preprint},
    pages={arXiv:2411.04745},
    year = {2024}
}

@article{kielak2021isoperimetric,
  title={Isoperimetric inequalities for {P}oincar{\'e} duality groups},
  author={Kielak, Dawid and Kropholler, Peter},
  journal={Proceedings of the American Mathematical Society},
  volume={149},
  pages={4685--4698},
  year={2021}
}

@article{calegari2010boundedcochains3manifolds,
title={Bounded cochains on 3-manifolds}, 
      author={Calegari, Danny},
      year={2001},
      journal={arXiv preprint},
    pages={arXiv:0111270},
}

@article{calegari2009scl,
  title={scl, volume 20 of MSJ Memoirs},
  author={Calegari, Danny},
  journal={Mathematical Society of Japan, Tokyo},
  volume={9},
  year={2009}
}

@article {MapstoZ,
    AUTHOR = {Fisher, Sam P.},
     TITLE = {On the cohomological dimension of kernels of maps to {Z}},
   JOURNAL = {Geom. Topol.},
  FJOURNAL = {Geometry \& Topology},
    VOLUME = {30},
      YEAR = {2026},
    NUMBER = {1},
     PAGES = {373--388},
      ISSN = {1465-3060,1364-0380},
   MRCLASS = {20F65 (16S34 20J05)},
  MRNUMBER = {5020388},
       DOI = {10.2140/gt.2026.30.373},
       URL = {https://doi.org/10.2140/gt.2026.30.373},
}

@book {Brown,
    AUTHOR = {Brown, Kenneth S.},
     TITLE = {Cohomology of groups},
    SERIES = {Graduate Texts in Mathematics},
    VOLUME = {87},
      NOTE = {Corrected reprint of the 1982 original},
 PUBLISHER = {Springer-Verlag, New York},
      YEAR = {1994},
     PAGES = {x+306},
      ISBN = {0-387-90688-6},
   MRCLASS = {20J05 (20-02)},
  MRNUMBER = {1324339},
}

@article {QBNS,
    AUTHOR = {Heuer, Nicolaus and Kielak, Dawid},
     TITLE = {Quasi-{BNS} invariants},
   JOURNAL = {Groups Geom. Dyn.},
  FJOURNAL = {Groups, Geometry, and Dynamics},
    VOLUME = {20},
      YEAR = {2026},
    NUMBER = {1},
     PAGES = {93--105},
      ISSN = {1661-7207,1661-7215},
   MRCLASS = {20F65},
  MRNUMBER = {5032289},
       DOI = {10.4171/ggd/939},
       URL = {https://doi.org/10.4171/ggd/939},
}

@article{PD^nfibring,
    author ={Fisher, Sam P. and Italiano, Giovanni and Kielak, Dawid} ,
    title = {Virtual fibring of {P}oincaré-duality groups} ,
       journal={arXiv preprint},
    pages={arXiv:2506.14666},
    year = {2025}
}

@article {MR2231471,
    AUTHOR = {Sauer, R.},
     TITLE = {Homological invariants and quasi-isometry},
   JOURNAL = {Geom. Funct. Anal.},
  FJOURNAL = {Geometric and Functional Analysis},
    VOLUME = {16},
      YEAR = {2006},
    NUMBER = {2},
     PAGES = {476--515},
      ISSN = {1016-443X},
   MRCLASS = {20F65 (20F16 20J06)},
  MRNUMBER = {2231471},
MRREVIEWER = {Wolfgang Pitsch},
       DOI = {10.1007/s00039-006-0562-y},
       URL = {https://doi.org/10.1007/s00039-006-0562-y},
}

@article {MR3868227,
    AUTHOR = {Li, Xin},
     TITLE = {Dynamic characterizations of quasi-isometry and applications
              to cohomology},
   JOURNAL = {Algebr. Geom. Topol.},
  FJOURNAL = {Algebraic \& Geometric Topology},
    VOLUME = {18},
      YEAR = {2018},
    NUMBER = {6},
     PAGES = {3477--3535},
      ISSN = {1472-2747},
   MRCLASS = {20F65 (20J06 37A20 37B99)},
  MRNUMBER = {3868227},
MRREVIEWER = {Tushar Das},
       DOI = {10.2140/agt.2018.18.3477},
       URL = {https://doi.org/10.2140/agt.2018.18.3477},
}

@inproceedings {MR885095,
    AUTHOR = {Brown, Kenneth S.},
     TITLE = {Finiteness properties of groups},
 BOOKTITLE = {Proceedings of the {N}orthwestern conference on cohomology of
              groups ({E}vanston, {I}ll., 1985)},
   JOURNAL = {J. Pure Appl. Algebra},
  FJOURNAL = {Journal of Pure and Applied Algebra},
    VOLUME = {44},
      YEAR = {1987},
    NUMBER = {1-3},
     PAGES = {45--75},
      ISSN = {0022-4049},
   MRCLASS = {20J05 (11F75 20F05 22E40)},
  MRNUMBER = {885095},
MRREVIEWER = {Ralph Strebel},
       DOI = {10.1016/0022-4049(87)90015-6},
       URL = {https://doi.org/10.1016/0022-4049(87)90015-6},
}

@article{calegari2024zippers,
  title={Zippers},
  author={Calegari, Danny and Loukidou, Ino},
  journal={arxiv preprint, to appear in Geom. Top.},
  pages={arXiv:2411.15610},
  year={2026}
}

@article {MR1293049,
    AUTHOR = {Alonso, Juan M.},
     TITLE = {Finiteness conditions on groups and quasi-isometries},
   JOURNAL = {J. Pure Appl. Algebra},
  FJOURNAL = {Journal of Pure and Applied Algebra},
    VOLUME = {95},
      YEAR = {1994},
    NUMBER = {2},
     PAGES = {121--129},
      ISSN = {0022-4049},
   MRCLASS = {20J99 (20F32)},
  MRNUMBER = {1293049},
MRREVIEWER = {Thomas Brady},
       DOI = {10.1016/0022-4049(94)90069-8},
       URL = {https://doi.org/10.1016/0022-4049(94)90069-8},
}

@inproceedings {MR158375,
    AUTHOR = {Stallings, John},
     TITLE = {On fibering certain {$3$}-manifolds},
 BOOKTITLE = {Topology of 3-manifolds and related topics ({P}roc. {T}he
              {U}niv. of {G}eorgia {I}nstitute, 1961)},
     PAGES = {95--100},
 PUBLISHER = {Prentice-Hall, Inc., Englewood Cliffs, NJ},
      YEAR = {1961},
   MRCLASS = {54.78},
  MRNUMBER = {158375},
MRREVIEWER = {O. G. Harrold},
}

@article {MR2168506,
    AUTHOR = {Kapovich, Michael and Kleiner, Bruce},
     TITLE = {Coarse {A}lexander duality and duality groups},
   JOURNAL = {J. Differential Geom.},
  FJOURNAL = {Journal of Differential Geometry},
    VOLUME = {69},
      YEAR = {2005},
    NUMBER = {2},
     PAGES = {279--352},
      ISSN = {0022-040X},
   MRCLASS = {57P10 (20F65 55M05)},
  MRNUMBER = {2168506},
MRREVIEWER = {John Roe},
       URL = {http://projecteuclid.org/euclid.jdg/1121449108},
}

@article {MR32141849,
    AUTHOR = {Milizia, Francesco and Petrosyan, Nansen and Sisto, Alessandro
              and Vankov, Vladimir},
     TITLE = {Cohomological characterisation of hyperbolicity},
   JOURNAL = {Int. Math. Res. Not. IMRN},
  FJOURNAL = {International Mathematics Research Notices. IMRN},
      YEAR = {2025},
    NUMBER = {24},
     PAGES = {rnaf353, 23},
      ISSN = {1073-7928,1687-0247},
   MRCLASS = {53C23 (20F67)},
  MRNUMBER = {5000799},
       DOI = {10.1093/imrn/rnaf353},
       URL = {https://doi.org/10.1093/imrn/rnaf353},
}

@book {MR1744486,
    AUTHOR = {Bridson, Martin R. and Haefliger, Andr\'{e}},
     TITLE = {Metric spaces of non-positive curvature},
    VOLUME = {319},
 PUBLISHER = {Springer-Verlag, Berlin},
      YEAR = {1999},
     PAGES = {xxii+643},
      ISBN = {3-540-64324-9},
   MRCLASS = {53C23 (20F65 53C70 57M07)},
  MRNUMBER = {1744486},
MRREVIEWER = {Athanase Papadopoulos},
       DOI = {10.1007/978-3-662-12494-9},
       URL = {https://doi.org/10.1007/978-3-662-12494-9},
}

@article {MR3816385,
    AUTHOR = {Margolis, Alexander J.},
     TITLE = {Quasi-isometry invariance of group splittings over coarse
              {P}oincar\'{e} duality groups},
   JOURNAL = {Proc. Lond. Math. Soc. (3)},
  FJOURNAL = {Proceedings of the London Mathematical Society. Third Series},
    VOLUME = {116},
      YEAR = {2018},
    NUMBER = {6},
     PAGES = {1406--1456},
      ISSN = {0024-6115},
   MRCLASS = {20F65 (20E08 20J05 57P10)},
  MRNUMBER = {3816385},
MRREVIEWER = {Vassilis Metaftsis},
       DOI = {10.1112/plms.12117},
       URL = {https://doi.org/10.1112/plms.12117},
}

@article {MR1147350,
    AUTHOR = {Roe, John},
     TITLE = {Coarse cohomology and index theory on complete {R}iemannian
              manifolds},
   JOURNAL = {Mem. Amer. Math. Soc.},
  FJOURNAL = {Memoirs of the American Mathematical Society},
    VOLUME = {104},
      YEAR = {1993},
    NUMBER = {497},
     PAGES = {x+90},
      ISSN = {0065-9266},
   MRCLASS = {58G12 (19K56 55N35)},
  MRNUMBER = {1147350},
MRREVIEWER = {John G. Miller},
       DOI = {10.1090/memo/0497},
       URL = {https://doi.org/10.1090/memo/0497},
}

@incollection {MR1253544,
    AUTHOR = {Gromov, M.},
     TITLE = {Asymptotic invariants of infinite groups},
 BOOKTITLE = {Geometric group theory, {V}ol. 2 ({S}ussex, 1991)},
    SERIES = {London Math. Soc. Lecture Note Ser.},
    VOLUME = {182},
     PAGES = {1--295},
 PUBLISHER = {Cambridge Univ. Press, Cambridge},
      YEAR = {1993},
   MRCLASS = {20F32 (57M07)},
  MRNUMBER = {1253544},
}

@unpublished{MR49321,
    author = {Mess, Geoffrey } ,
    title = {The {S}eifert conjecture and groups which are coarse quasi–isometric to planes},
    note = {https://lamington.wordpress.com/wp-content/uploads/2014/08/mess\_seifert\_conjecture.pdf},
    year = {1990}
}

@article {MR2282258,
    AUTHOR = {Hillman, J. A. and Kochloukova, D. H.},
     TITLE = {Finiteness conditions and {${\rm PD}_r$}-group covers of
              {${\rm PD}_n$}-complexes},
   JOURNAL = {Math. Z.},
  FJOURNAL = {Mathematische Zeitschrift},
    VOLUME = {256},
      YEAR = {2007},
    NUMBER = {1},
     PAGES = {45--56},
      ISSN = {0025-5874},
}

@book {MR3561300,
    AUTHOR = {Cornulier, Yves and de la Harpe, Pierre},
     TITLE = {Metric geometry of locally compact groups},
    SERIES = {EMS Tracts in Mathematics},
    VOLUME = {25},
 PUBLISHER = {European Mathematical Society (EMS), Z\"{u}rich},
      YEAR = {2016},
     PAGES = {viii+235}
}

@book {MR4646531,
    AUTHOR = {Nowak, Piotr W. and Yu, Guoliang},
     TITLE = {Large scale geometry},
    SERIES = {EMS Textbooks in Mathematics},
      NOTE = {Second edition [of  2986138]},
 PUBLISHER = {EMS Press, Berlin},
      YEAR = {2023},
     PAGES = {xvi+197},
      ISBN = {978-3-98547-018-1; 978-3-98547-518-6},
   MRCLASS = {51F30 (19K56 20F69 46L87 53C23 54C20 54E40 58B34)},
  MRNUMBER = {4646531},
}

@article{cordes2020foundations,
  title={Foundations of geometric approximate group theory},
  author={Cordes, Matthew and Hartnick, Tobias and Toni{\'c}, Vera},
  journal={arXiv preprint, accepted for publication in Mem. Am. Math. Soc. },
  pages={arXiv:2012.15303},
  year={2024}
}

@article{hinkkanen1985uniformly,
  title={Uniformly quasisymmetric groups},
  author={Hinkkanen, Aimo},
  journal={Proceedings of the London Mathematical Society},
  volume={3},
  number={2},
  pages={318--318},
  year={1985},
  publisher={Oxford University Press}
}

@article {MR857678,
    AUTHOR = {Douady, Adrien and Earle, Clifford J.},
     TITLE = {Conformally natural extension of homeomorphisms of the circle},
   JOURNAL = {Acta Math.},
  FJOURNAL = {Acta Mathematica},
    VOLUME = {157},
      YEAR = {1986},
    NUMBER = {1-2},
     PAGES = {23--48},
      ISSN = {0001-5962,1871-2509},
   MRCLASS = {30C60 (32G15 58F08)},
  MRNUMBER = {857678},
MRREVIEWER = {William\ Abikoff},
       DOI = {10.1007/BF02392590},
       URL = {https://doi.org/10.1007/BF02392590},
}

@article{MR473827927,
    author = {Thurston, W. P.} ,
    title = {Three-manifolds, Foliations and Circles, {I}},
    journal={arXiv preprint}, pages={arXiv:9712268},
    year = {1997}
}

@inproceedings {MR624833,
    AUTHOR = {Sullivan, Dennis},
     TITLE = {On the ergodic theory at infinity of an arbitrary discrete
              group of hyperbolic motions},
 BOOKTITLE = {Riemann surfaces and related topics: {P}roceedings of the 1978
              {S}tony {B}rook {C}onference ({S}tate {U}niv. {N}ew {Y}ork,
              {S}tony {B}rook, {N}.{Y}., 1978)},
    SERIES = {Ann. of Math. Stud., No. 97},
     PAGES = {465--496},
 PUBLISHER = {Princeton Univ. Press, Princeton, NJ},
      YEAR = {1981},
      ISBN = {0-691-08264-2},
   MRCLASS = {58F11 (22E40 30C60 53C30 57R99)},
  MRNUMBER = {624833},
MRREVIEWER = {M.\ Rees},
}

@article {MR1096169,
    AUTHOR = {Bestvina, Mladen and Mess, Geoffrey},
     TITLE = {The boundary of negatively curved groups},
   JOURNAL = {J. Amer. Math. Soc.},
  FJOURNAL = {Journal of the American Mathematical Society},
    VOLUME = {4},
      YEAR = {1991},
    NUMBER = {3},
     PAGES = {469--481},
      ISSN = {0894-0347,1088-6834},
   MRCLASS = {20F32 (57M40)},
  MRNUMBER = {1096169},
MRREVIEWER = {Jerzy\ Dydak},
       DOI = {10.2307/2939264},
       URL = {https://doi.org/10.2307/2939264},
}

@article {MR4932386,
    AUTHOR = {Bader, Shaked and Kropholler, Robert and Vankov, Vladimir},
     TITLE = {Subgroups of word hyperbolic groups in dimension 2 over
              arbitrary rings},
   JOURNAL = {J. Lond. Math. Soc. (2)},
  FJOURNAL = {Journal of the London Mathematical Society. Second Series},
    VOLUME = {112},
      YEAR = {2025},
    NUMBER = {1},
     PAGES = {Paper No. e70230, 23},
      ISSN = {0024-6107,1469-7750},
   MRCLASS = {20F67 (20F65 57M07)},
  MRNUMBER = {4932386},
       DOI = {10.1112/jlms.70230},
       URL = {https://doi.org/10.1112/jlms.70230},
}

@article{MR3840928094,
    author = {Weis, Jannis} ,
    title = {Quasi-Isometry Invariance of discrete Higher Filling Functions},
    journal={arXiv preprint}, pages={arXiv:2601.15140},
    year = {2026}
}

@article {MR1465330,
    AUTHOR = {Bestvina, Mladen and Brady, Noel},
     TITLE = {Morse theory and finiteness properties of groups},
   JOURNAL = {Invent. Math.},
  FJOURNAL = {Inventiones Mathematicae},
    VOLUME = {129},
      YEAR = {1997},
    NUMBER = {3},
     PAGES = {445--470},
      ISSN = {0020-9910,1432-1297},
   MRCLASS = {20F36 (20J05 57M07)},
  MRNUMBER = {1465330},
MRREVIEWER = {John\ Meier},
       DOI = {10.1007/s002220050168},
       URL = {https://doi.org/10.1007/s002220050168},
}

@phdthesis{Sikorav,
    author = {J.-C. Sikorav} ,
    title = {Homologie de Novikov associ\'ee \'a une classe de cohomologie r\'eelle de degr\'e
un },
    school = {Thes\'e Orsay} ,
    year = {1987}
}

@article {MR4797112,
    AUTHOR = {Fisher, Sam P.},
     TITLE = {Improved algebraic fibrings},
   JOURNAL = {Compos. Math.},
  FJOURNAL = {Compositio Mathematica},
    VOLUME = {160},
      YEAR = {2024},
    NUMBER = {9},
     PAGES = {2203--2227},
      ISSN = {0010-437X,1570-5846},
   MRCLASS = {20F65},
  MRNUMBER = {4797112},
       DOI = {10.1112/S0010437X24007309},
       URL = {https://doi.org/10.1112/S0010437X24007309},
}

@article {MR2283437,
    AUTHOR = {Bieri, Robert},
     TITLE = {Deficiency and the geometric invariants of a group},
      NOTE = {With an appendix by Pascal Schweitzer},
   JOURNAL = {J. Pure Appl. Algebra},
  FJOURNAL = {Journal of Pure and Applied Algebra},
    VOLUME = {208},
      YEAR = {2007},
    NUMBER = {3},
     PAGES = {951--959},
      ISSN = {0022-4049,1873-1376},
   MRCLASS = {20J05 (20F65)},
  MRNUMBER = {2283437},
MRREVIEWER = {J.\ R. J. Groves},
       DOI = {10.1016/j.jpaa.2006.02.003},
       URL = {https://doi.org/10.1016/j.jpaa.2006.02.003},
}

@article {MR914846,
    AUTHOR = {Bieri, Robert and Neumann, Walter D. and Strebel, Ralph},
     TITLE = {A geometric invariant of discrete groups},
   JOURNAL = {Invent. Math.},
  FJOURNAL = {Inventiones Mathematicae},
    VOLUME = {90},
      YEAR = {1987},
    NUMBER = {3},
     PAGES = {451--477},
      ISSN = {0020-9910,1432-1297},
   MRCLASS = {20J05 (22E40 57R19)},
  MRNUMBER = {914846},
MRREVIEWER = {Peter\ H.\ Kropholler},
       DOI = {10.1007/BF01389175},
       URL = {https://doi.org/10.1007/BF01389175},
}

@article {MR960770,
    AUTHOR = {Bieri, Robert and Renz, Burkhardt},
     TITLE = {Valuations on free resolutions and higher geometric invariants
              of groups},
   JOURNAL = {Comment. Math. Helv.},
  FJOURNAL = {Commentarii Mathematici Helvetici},
    VOLUME = {63},
      YEAR = {1988},
    NUMBER = {3},
     PAGES = {464--497},
      ISSN = {0010-2571,1420-8946},
   MRCLASS = {20J05 (20F32 57M99)},
  MRNUMBER = {960770},
MRREVIEWER = {Ralph\ Strebel},
       DOI = {10.1007/BF02566775},
       URL = {https://doi.org/10.1007/BF02566775},
}

@book {MR3753580,
    AUTHOR = {Dru\c{t}u, Cornelia and Kapovich, Michael},
     TITLE = {Geometric group theory},
    SERIES = {American Mathematical Society Colloquium Publications},
    VOLUME = {63},
      NOTE = {With an appendix by Bogdan Nica},
 PUBLISHER = {American Mathematical Society, Providence, RI},
      YEAR = {2018},
     PAGES = {xx+819},
      ISBN = {978-1-4704-1104-6},
   MRCLASS = {20F65 (20E08 20F05 20F16 20F18 20F67 20F69 57M07)},
  MRNUMBER = {3753580},
MRREVIEWER = {Igor\ Belegradek},
       DOI = {10.1090/coll/063},
       URL = {https://doi.org/10.1090/coll/063},
}

@article {MR1452851,
    AUTHOR = {Epstein, David B. A. and Fujiwara, Koji},
     TITLE = {The second bounded cohomology of word-hyperbolic groups},
   JOURNAL = {Topology},
  FJOURNAL = {Topology. An International Journal of Mathematics},
    VOLUME = {36},
      YEAR = {1997},
    NUMBER = {6},
     PAGES = {1275--1289},
      ISSN = {0040-9383},
   MRCLASS = {20J05 (20F32)},
  MRNUMBER = {1452851},
MRREVIEWER = {Thomas\ Delzant},
       DOI = {10.1016/S0040-9383(96)00046-8},
       URL = {https://doi.org/10.1016/S0040-9383(96)00046-8},
}

@unpublished{MR3281809382,
     author = {Margolis, Alexander J.},
     title = {Groups of cohomological codimension one},
     journal = {Annales de l'Institut Fourier},
     year = {2026},
     publisher = {Association des Annales de l{\textquoteright}institut Fourier},
     doi = {10.5802/aif.3766},
     language = {en},
     note = {Online first},
}

@article{calegari2026catherinewheels,
   author={Danny Calegari and Ino Loukidou} ,
    title={CaTherine wheels},
    journal={arXiv preprint}, pages={	arXiv:2604.24619 },
    year = {2026}
}

@article{MR1381603,
    AUTHOR = {Bestvina, Mladen},
     TITLE = {Local homology properties of boundaries of groups},
   JOURNAL = {Michigan Math. J.},
  FJOURNAL = {Michigan Mathematical Journal},
    VOLUME = {43},
      YEAR = {1996},
    NUMBER = {1},
     PAGES = {123--139},
      ISSN = {0026-2285,1945-2365},
   MRCLASS = {57P10 (20J05 54C56 57N25)},
  MRNUMBER = {1381603},
MRREVIEWER = {Fr\'{e}d\'{e}ric\ Paulin},
       DOI = {10.1307/mmj/1029005393},
       URL = {https://doi.org/10.1307/mmj/1029005393},
}

@incollection {MR1130181,
    AUTHOR = {Cannon, James W.},
     TITLE = {The theory of negatively curved spaces and groups},
 BOOKTITLE = {Ergodic theory, symbolic dynamics, and hyperbolic spaces
              ({T}rieste, 1989)},
    SERIES = {Oxford Sci. Publ.},
     PAGES = {315--369},
 PUBLISHER = {Oxford Univ. Press, New York},
      YEAR = {1991},
      ISBN = {0-19-853390-X; 0-19-859685-5},
   MRCLASS = {57S30 (20F32 53C23 57M07)},
  MRNUMBER = {1130181},
}

@book{wall1979homological,
  title={Homological Group Theory},
  author={Wall, C. T. C.},
  year={1979},
  publisher={Cambridge University Press},
  series={London Mathematical Society Lecture Note Series},
  volume={36},
  address={Cambridge},
  isbn={9780521227292}
}

@article{2648284748,
    author = {Calegari, Danny and Zung, Jonathan} ,
    title = {Quasimorphisms and pseudo-{A}nosov flows} ,
        journal={arXiv preprint},
    pages={arXiv:2606.14889},
    year = {2026}
}

@article{eichhorn1991boundedness,
  title={The boundedness of connection coefficients and their derivatives},
  author={Eichhorn, J{\"u}rgen},
  journal={Mathematische Nachrichten},
  volume={152},
  number={1},
  pages={145--158},
  year={1991},
  publisher={Wiley Online Library}
}

@article{cheeger1982finite,
  title={Finite propagation speed, kernel estimates for functions of the {L}aplace operator, and the geometry of complete {R}iemannian manifolds},
  author={Cheeger, Jeff and Gromov, Mikhail and Taylor, Michael},
  journal={Journal of Differential Geometry},
  volume={17},
  number={1},
  pages={15--53},
  year={1982},
  publisher={Lehigh University}
}

@book{petersen2006riemannian,
  title={Riemannian geometry},
  author={Petersen, Peter},
  edition={Third edition (2006)},
  year={1998},
  publisher={Springer}
}

@book{heinonen2001lectures,
  title={Lectures on analysis on metric spaces},
  author={Heinonen, Juha},
  year={2001},
  publisher={Springer Science \& Business Media}
}

@book {MR1943724,
    AUTHOR = {Hillman, J. A.},
     TITLE = {Four-manifolds, geometries and knots},
    SERIES = {Geometry \& Topology Monographs},
    VOLUME = {5},
 PUBLISHER = {Geometry \& Topology Publications, Coventry},
      YEAR = {2002},
     PAGES = {xiv+379},
   MRCLASS = {57N16 (57N13 57Q45)},
  MRNUMBER = {1943724},
MRREVIEWER = {Alberto\ Cavicchioli},
}

@article {MR699010,
    AUTHOR = {Eckmann, Beno and Linnell, Peter},
     TITLE = {Poincar\'{e} duality groups of dimension two. {II}},
   JOURNAL = {Comment. Math. Helv.},
  FJOURNAL = {Commentarii Mathematici Helvetici},
    VOLUME = {58},
      YEAR = {1983},
    NUMBER = {1},
     PAGES = {111--114},
      ISSN = {0010-2571,1420-8946},
   MRCLASS = {57M20 (20F38 57P10)},
  MRNUMBER = {699010},
MRREVIEWER = {R.\ M. F. Moss},
       DOI = {10.1007/BF02564628},
       URL = {https://doi.org/10.1007/BF02564628},
}

@article {MR604709,
    AUTHOR = {Eckmann, Beno and M\"{u}ller, Heinz},
     TITLE = {Poincar\'{e} duality groups of dimension two},
   JOURNAL = {Comment. Math. Helv.},
  FJOURNAL = {Commentarii Mathematici Helvetici},
    VOLUME = {55},
      YEAR = {1980},
    NUMBER = {4},
     PAGES = {510--520},
      ISSN = {0010-2571,1420-8946},
   MRCLASS = {57M20 (57P10)},
  MRNUMBER = {604709},
MRREVIEWER = {R.\ M. F. Moss},
       DOI = {10.1007/BF02566702},
       URL = {https://doi.org/10.1007/BF02566702},
}

@incollection {MR4861506,
    AUTHOR = {Bridson, Martin R. and Kielak, Dawid and Kudlinska, Monika},
     TITLE = {Stallings's fibring theorem and {$\rm PD^3$}-pairs},
 BOOKTITLE = {Topology at infinity of discrete groups},
    SERIES = {Contemp. Math.},
    VOLUME = {812},
     PAGES = {125--132},
 PUBLISHER = {Amer. Math. Soc., [Providence], RI},
      YEAR = {2025},
      ISBN = {978-1-4704-7534-5; [9781470478636]},
   MRCLASS = {20J05 (57K30)},
  MRNUMBER = {4861506},
MRREVIEWER = {Sam\ Hughes},
       DOI = {10.1090/conm/812/16263},
       URL = {https://doi.org/10.1090/conm/812/16263},
}

@article {MR1555345,
    AUTHOR = {Nielsen, Jakob},
     TITLE = {Untersuchungen zur {T}opologie der geschlossenen zweiseitigen
              {F}l\"{a}chen. {III}},
   JOURNAL = {Acta Math.},
  FJOURNAL = {Acta Mathematica},
    VOLUME = {58},
      YEAR = {1932},
    NUMBER = {1},
     PAGES = {87--167},
      ISSN = {0001-5962,1871-2509},
   MRCLASS = {DML},
  MRNUMBER = {1555345},
       DOI = {10.1007/BF02547775},
       URL = {https://doi.org/10.1007/BF02547775},
}

@article {MR1545786,
    AUTHOR = {Mangler, W.},
     TITLE = {Die {K}lassen von topologischen {A}bbildungen einer
              geschlossenen {F}l\"{a}che auf sich},
   JOURNAL = {Math. Z.},
  FJOURNAL = {Mathematische Zeitschrift},
    VOLUME = {44},
      YEAR = {1939},
    NUMBER = {1},
     PAGES = {541--554},
      ISSN = {0025-5874,1432-1823},
   MRCLASS = {DML},
  MRNUMBER = {1545786},
       DOI = {10.1007/BF01210672},
       URL = {https://doi.org/10.1007/BF01210672},
}

\end{document}